\documentclass[letter, 11pt]{article} 
\usepackage{amsmath}
\usepackage{mathrsfs} 
\usepackage{amsthm}
\usepackage{amsfonts}
\usepackage{amssymb}
\usepackage{latexsym}
\usepackage{tikz} 
\usepackage{esint}
\usepackage{mathtools}
\usepackage{stmaryrd}

\usepackage{mathabx}

\usepackage{tikz-cd}
\usepackage{cancel}
\usepackage{sectsty}
\usepackage{hyperref}
\usepackage{caption}
\usepackage{float}
\usepackage{setspace}

\usepackage{comment}
\usepackage{quiver}

\usepackage{xcolor}

\definecolor{mediumtealblue}{rgb}{0.0, 0.33, 0.71}
\definecolor{red(munsell)}{rgb}{0.96, 0.12, 0.24}
\definecolor{forestgreen}{rgb}{0.13, 0.55, 0.13}
\definecolor{ginger}{rgb}{0.69, 0.4, 0.0}

\newcommand\myshade{85}
\colorlet{mylinkcolor}{red(munsell)}
\colorlet{mycitecolor}{mediumtealblue}
\colorlet{myurlcolor}{forestgreen}

\usepackage{lmodern}

\usepackage[utf8]{inputenc}

\usepackage[inline,shortlabels]{enumitem}

\usepackage{amssymb,mathrsfs,stmaryrd}
\usepackage{mathtools}
\usepackage{thmtools,thm-restate}
\usepackage{colonequals}
\usepackage{tikz}
\usetikzlibrary{cd}

\usepackage[capitalize,noabbrev]{cleveref}

\hypersetup{
	linkcolor  = mylinkcolor!\myshade!black,
	citecolor  = mycitecolor!\myshade!black,
	urlcolor   = myurlcolor!\myshade!black,
	colorlinks = true
}

\mathchardef\mhyphen="2D

\usepackage{titlesec}
\titleformat{\subsection}[runin]{\normalfont\bfseries}{\thesubsection}{1em}{}
\titleformat{\subsubsection}[runin]{\normalfont\bfseries}{\thesubsubsection}{1em}{}

\renewcommand{\thesubsubsection}{(\thesubsection.\arabic{subsubsection})}

\usepackage{tocloft} 

\usepackage[OT2,T1]{fontenc}
\DeclareSymbolFont{cyrletters}{OT2}{wncyr}{m}{n}
\DeclareMathSymbol{\Sha}{\mathalpha}{cyrletters}{"58}

\sectionfont{\fontsize{12}{15}\selectfont}
\subsectionfont{\fontsize{11}{15}\selectfont}
\subsubsectionfont{\fontsize{10}{15}\selectfont}

\usepackage[colorinlistoftodos,textsize=scriptsize]{todonotes}
\usepackage{marginnote}

\newcommand{\mytodo}[2][]{{%
 \let\marginpar\marginnote
 \reversemarginpar
 \renewcommand{\baselinestretch}{0.8}%
 \todo[#1]{#2}}}

\usepackage[matrix,tips,graph,curve]{xy}
\usepackage{enumitem}

\makeatletter

\usepackage[
backend=biber,
style=alphabetic,
sorting=anyt, doi=false, url=false, isbn=false, giveninits=true, maxnames = 6,
]{biblatex}

\makeatother


\makeatletter

\theoremstyle{plain}
\newtheorem{theorem}{Theorem}[section]

\newtheorem{proposition}[theorem]{Proposition}
\newtheorem{conjecture}[theorem]{Conjecture}
\newtheorem{lemma}[theorem]{Lemma}

\newtheorem{corollary}[theorem]{Corollary}

\newtheorem{question}[theorem]{Question}

\theoremstyle{definition}
\newtheorem{definition}[theorem]{Definition}
\newtheorem{notn}[theorem]{Notation}
\newtheorem{example}[theorem]{Example}
\newtheorem{construction}[theorem]{Construction}
\newtheorem{setup}[theorem]{Setup}

\newtheorem{convention}[theorem]{Convention}

\newtheorem{notation}[theorem]{Notation}

\newtheorem{remark}[theorem]{Remark}

\newcommand{\IA}{\mathbb{A}}

\newcommand{\IC}{\mathbb{C}}
\newcommand{\ID}{\mathbb{D}}

\newcommand{\IF}{\mathbb{F}}
\newcommand{\IG}{\mathbb{G}}

\newcommand{\IL}{\mathbb{L}}

\newcommand{\IN}{\mathbb{N}}

\newcommand{\IP}{\mathbb{P}}
\newcommand{\IQ}{\mathbb{Q}}
\newcommand{\IR}{\mathbb{R}}

\newcommand{\IV}{\mathbb{V}}

\newcommand{\IZ}{\mathbb{Z}}

\newcommand{\sC}{\mathcal{C}}

\usepackage{calligra}
\DeclareMathOperator{\sheafhom}{\mathscr{H}\text{\kern -3pt {\calligra\large om}}\,}

\newcommand{\End}{\mathrm{End}}

\newcommand{\Hom}{\mathrm{Hom}}
\newcommand{\Aut}{\mathrm{Aut}}

\newcommand{\Spec}{\mathrm{Spec}}

\newcommand{\Lie}{\mathrm{Lie}}

\newcommand{\NL}{\mathrm{NL}}

\newcommand{\che}[1]{\widecheck{#1}}

\newcommand{\rank}{\rm rank} 

\newcommand\tensor{{\otimes}}
 
\newcommand\union{\bigcup} 

\newcommand{\<}{\langle}
\renewcommand{\>}{\rangle}

\newcommand{\into}{\hookrightarrow}

\def\d/{/\mspace{-6.0mu}/}

\def\wt{\widetilde}

\def\what{\widehat}

\newcommand{\p}{\partial}

\newcommand{\fp}{\mathfrak{p}}

\newcommand{\sV}{\mathcal{V}}
\newcommand{\sI}{\mathcal{I}}
\newcommand{\sM}{\mathcal{M}}

\newcommand{\sJ}{\mathcal{J}}
\newcommand{\sF}{\mathcal{F}}
\newcommand{\sH}{\mathcal{H}}
\newcommand{\sX}{\mathcal{X}}
\newcommand{\sS}{\mathcal{S}}

\newcommand{\sE}{\mathcal{E}}
\newcommand{\sO}{\mathcal{O}}
\newcommand{\sL}{\mathcal{L}}
\newcommand{\sZ}{\mathcal{Z}}
\newcommand{\sP}{\mathcal{P}}

\newcommand{\Fil}{\mathrm{Fil}}
\renewcommand{\H}{\mathrm{H}}
\newcommand{\Spf}{\mathrm{Spf}}

\newcommand{\Def}{\mathrm{Def}}

\newcommand{\dR}{\mathrm{dR}}
\newcommand{\et}{\textrm{{\'e}t}}

\newcommand{\bH}{\mathbf{H}}

\newcommand{\NS}{\mathrm{NS}}
\newcommand{\GL}{\mathrm{GL}}
\newcommand{\codim}{\mathrm{codim}}

\newcommand{\w}{\omega}

\newcommand{\sfM}{\mathsf{M}}
\newcommand{\sfD}{\mathsf{D}}

\newcommand{\sfA}{\mathsf{A}}

\newcommand{\sfH}{\mathsf{H}}

\newcommand{\bL}{\mathbf{L}}

\newcommand{\sfK}{\mathsf{K}}
\newcommand{\Gal}{\mathrm{Gal}}
\newcommand{\sA}{\mathcal{A}}
\newcommand{\sB}{\mathcal{B}}
\newcommand{\sG}{\mathcal{G}}

\newcommand{\Sh}{\mathrm{Sh}}

\newcommand\restr[2]{{
	\left.\kern-\nulldelimiterspace
	#1
	\vphantom{\big|}
	\right|_{#2}
	}}

\newcommand{\Sp}{\mathrm{Sp}}

\newcommand{\GSp}{\mathrm{GSp}}

\newcommand{\shS}{\mathscr{S}}

\newcommand{\ct}{\mathrm{ct}}
\newcommand{\nc}{\mathrm{nc}}

\newcommand{\Jac}{\mathrm{Jac}}
\newcommand{\Rad}{\mathrm{Rad}}

\newcommand{\sD}{\mathcal{D}}
\newcommand{\IGr}{\mathrm{IGr}}

\newcommand{\pos}{\mathrm{pos}}

\newcommand{\MT}{\mathrm{MT}}

\newcommand{\bG}{\mathbf{G}}
\newcommand{\bg}{\mathbf{g}}
\renewcommand{\ss}{\mathrm{ss}}

\newcommand{\ab}{\mathrm{ab}}

\newcommand{\sQ}{\mathcal{Q}}

\newcommand{\can}{\mathrm{can}} 

\newcommand{\crys}{\mathrm{crys}}

\newcommand{\sfX}{\mathsf{X}}

\newcommand{\id}{\mathrm{id}}

\newcommand{\PGL}{\mathrm{PGL}}

\title{Special Loci in the Moduli of Curves}

\author{David Urbanik and Ziquan Yang}

\vspace{-1ex}
  
\date{\normalsize{\today}}

\begin{document}

\maketitle

\begin{abstract}
    We introduce new techniques to study mixed-characteristic special loci in moduli spaces and give several applications, focusing in particular on the moduli space $\sfM_{g}$ of curves. These applications include Zariski density results for split Jacobians, non-density results for supersingular loci, and lifting results for algebraic cycles, all under appropriate numerical conditions. Our methods generalize to more-or-less arbitrary moduli spaces mapping to Hodge-type Shimura varieties (and even some settings beyond the Shimura case), and relate to questions raised by Oort and de Jong.
\end{abstract}

\tableofcontents

\section{Introduction}

\subsection{Main Applications}

Curves with extra symmetry are a central topic in algebraic and arithmetic geometry. For us, ``having extra symmetry'' means that the Jacobian of the curve has a larger than usual endomorphism ring, or, more generally, admits non-generic algebraic cycle classes in some tensor power of its cohomology. A locus in $\sfM_g$ of such special curves is called a special locus of $\sfM_g$. In this paper we mostly focus on the symmetry of ``splitting up to isogeny'' and prove various density and non-density results, though as we will explain, the techniques generalize to other kinds of symmetries and special loci as well.

To frame the discussion, we make the following definition: 

\begin{definition}
    Let $g \ge 1$ be a natural number and $\bg := \{ g_i \}_{i = 1}^m$ be a partition of $g$. We say that an abelian variety $A$ of dimension $g$ over a field $k$ is \textbf{(prime-to-$p$) of splitting type} $\bg$ if there exists a set of abelian varieties $A_j$ with $\dim A_j = g_j$ and a prime-to-$p$ isogeny $\prod_{j = 1}^m A_j \to A$. Moreover, we define 
    \begin{equation}
    \label{deltaeq}
    \delta(\bg) = \frac{g(g + 1)}{2} - \sum_{j = 1}^m \frac{g_j(g_j + 1)}{2}, 
    \end{equation}
    i.e., the codimension of the image of $\prod_{j = 1}^m \sfA_{g_j}$ in $\sfA_g$.
\end{definition}

Let $\sfM_{g}$ (resp. $\sfM_g^\ct$) denote the moduli space over $\IZ$ of smooth proper curves of genus $g$ (resp. of compact type). Write $\sC$ for the universal curve. 

\begin{theorem}
\label{thm: main density}
    Suppose that $\bg$ is a partition of $g \in \IN$ such that $\delta(\bg) < 3g - 3$. Then there exists $r_\bg \in \IN$ such that for every prime $p > r_\bg$, the locus 
    \begin{equation}
         \mathrm{SP}_{\bg, p} := \{ s \in \sfM_{g, \IF_p} : \Jac(\sC_s) \textit{ is prime-to-$p$ of splitting type } \bg \}
    \end{equation}
    is Zariski dense in $\sfM_{g, \IF_p}$. 
\end{theorem}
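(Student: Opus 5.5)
We plan to deduce the characteristic-$p$ statement from a characteristic-zero density statement combined with a mixed-characteristic "specialization of special loci" argument. Over $\IC$, the locus of curves whose Jacobian has splitting type $\bg$ is the preimage under the Torelli map $\tau\colon \sfM_g \to \sfA_g$ of a countable union of Hecke translates of the image of $\prod_j \sfA_{g_j}$. This image is a special subvariety of codimension $\delta(\bg)$. Because $\dim \sfM_g = 3g-3 > \delta(\bg)$, each component of $\tau^{-1}$ of such a translate that meets $\tau(\sfM_g)$ has dimension at least $3g-3-\delta(\bg) > 0$. The first step is to show that these intersections are nonempty for a Zariski-dense set of Hecke translates. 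For this we use the Hodge-theoretic density criterion: the variation of Hodge structure on $\sfM_g$ has monodromy $\Sp_{2g}$ and Zariski-dense image in the period domain, and the expected codimension is less than $\dim \sfM_g$. Then the typical Hodge locus for the Mumford--Tate group $\prod_j \GSp_{2g_j}$ is analytically, hence Zariski, dense, as in the Eskin--Filip--Klingler/Baldi--Klingler--Ullmo framework. More concretely, one finds a point where the intersection is transverse and propagates it by density of Hecke orbits in $\Sp_{2g}(\IR)$.

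The second step passes to characteristic $p$. Fix finitely many Hecke translates $T_1,\dots,T_N$, with $N$ large, of level prime to all large $p$, such that $\bigcup_i \tau^{-1}(T_i)_{\IC}$ is not contained in any given proper closed subvariety. Density is then witnessed by finitely many components. By the first step we can choose $N$ and these components to have the expected codimension $\delta(\bg)$ and to meet transversally at some point. Spread everything out over $\IZ[1/r_\bg]$, where $r_\bg$ bounds the bad primes: the Hecke levels, the denominators of the isogenies, and the primes where flatness or transversality fails. The loci $\tau^{-1}(T_i)$ are defined over $\IZ[1/r_\bg]$ as images of the corresponding Hecke correspondences, which are finite étale away from the level. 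Their reductions mod $p$ consist of points with Jacobian isogenous, with isogeny degree prime to $p$, to a product of dimensions $g_j$. So they lie in $\mathrm{SP}_{\bg,p}$.

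The main obstacle is that the choice of translates must work uniformly for all $p > r_\bg$. A finite fixed collection only gives a subvariety of dimension less than $3g-3$ in each fibre, so it is not dense, and we need infinitely many translates, with levels growing, for each $p$. The approach would be as follows. For each prime $\ell \neq p$, use the $\ell$-adic Hecke orbits of $\tau^{-1}(T)$, where $T$ is a single translate, taken inside $\sfA_{g,\IF_p}$. The Zariski closure of the union of these orbit intersections is a closed subset $Z_p$ of $\sfM_{g,\IF_p}$. Suppose $Z_p \neq \sfM_{g,\IF_p}$. Then the monodromy of the $\ell$-adic local system on $\sfM_{g,\IF_p}$ is still $\Sp_{2g}$ for $p$ large, by reduction of the geometric monodromy. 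A dimension and excess-intersection argument, carried out in the formal neighbourhood of an ordinary point via Serre--Tate coordinates, would then force $\tau(\sfM_g)$ to lie in the special subvariety locally, which contradicts big monodromy. The hardest step is this characteristic-$p$ analogue of analytic density. I would first try to prove it at an ordinary point in Serre--Tate coordinates, where Hecke translates of $\prod \sfA_{g_j}$ become formal subtori and their density is a linear-algebra statement. Control of $p$ enters by requiring the transversal intersection point from the first step to reduce to an ordinary point of transversal intersection.
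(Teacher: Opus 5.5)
\textbf{There is a genuine gap: nothing in your proposal carries the density from $\IC$ to a fixed prime $p$, and that transfer is the whole difficulty.} Your first step, density over $\IC$, is fine and known.

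As you note, finitely many translates give only a proper closed subset of $\sfM_{g,\IF_p}$. Infinitely many translates do not help either, because density is not preserved under specialization. The closures over $\IZ_{(p)}$ of infinitely many components of the $\tau^{-1}(Y_\gamma)_{\IC}$ can all have special fibres inside one fixed proper closed subset. They can even have empty special fibre, since $\sfM_g$ is not proper. Compare the points $x = ip$, $i \in \IZ$, of $\IA^1_{\IZ_{(p)}}$: they are dense in the generic fibre and all reduce to $0$. In particular, ``$r_\bg$ bounds the bad primes'' has no meaning once the Hecke levels must grow.

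The fixed-characteristic step you sketch is not an argument:
\begin{enumerate}
\item Your characteristic-zero density is driven by the density of $\Sp_{2g}(\IQ)$ in $\Sp_{2g}(\IR)$ acting on the period domain, which has no counterpart modulo $p$.
\item Serre--Tate linearity of special subvarieties at an ordinary point describes the germ of an intersection at a point already in it. It produces no intersection points in an arbitrary open subset.
\item The hypothesis $Z_p \neq \sfM_{g,\IF_p}$ gives no reason for $\tau(\sfM_g)$ to lie locally in a special subvariety, so there is nothing for big monodromy to contradict.
\end{enumerate}
The step you call the hardest is the theorem itself, an instance of \cref{conj: AIM}, and the proposal gives no mechanism for it.

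\textbf{The missing idea is a way to create intersection points in every open subset of $\sfM_{g,\IF_p}$ for one fixed $p$.} The paper does this by induction on $g$, degenerating to the boundary divisor of $\overline{\sfM}_g$ made of curves with one non-separating node. The normalizations of these curves are parametrized by $\sfM_{g-1,2} \to \sA_{g-1}$, $([C],P,Q) \mapsto (J(C),[P-Q])$. The argument has four ingredients:
\begin{enumerate}
\item The induction hypothesis, for a suitable partition $\bg'$ of $g-1$, gives density of split Jacobians in $\sfM_{g-1,\IF_p}$.
\item A Weil~II character-sum estimate (\cref{thm: finite MM}) gives density of pairs $(P,Q)$ with $[P-Q]$ prime-to-$p$ torsion.
\item Log $1$-motif deformation theory (\cref{prop: local boundary deformation diagram}, \cref{prop: torsion implies splitting}) shows that the locus of type $\bg$ through such a boundary point has codimension at most $\delta(\bg)$ in $\overline{\sfM}_g$.
\item A mixed-characteristic Ax--Schanuel/jet-space argument for the \emph{mixed} variation on $\sfM_{g-1,2}$ (\cref{prop: main prop mixed}, \cref{cor: intersect with torsion}) shows that the part of this locus inside the boundary has codimension exactly $\delta(\bg)$ in the boundary. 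This holds away from an atypical locus $E$ whose mod-$p$ fibre is proper for $p \gg 0$.
\end{enumerate}
Since the boundary is a divisor, the locus must leave it, which produces interior curves of splitting type $\bg$ near a dense set of boundary points. Fixed-characteristic density is thus obtained from mixed-characteristic \emph{non}-density of unlikely intersections on the boundary, and this is exactly where $r_\bg$ comes from.
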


\noindent In fact, if $\bg = \{1, g - 1\}$ (i.e., we ask for those Jacobians containing an elliptic curve factor), then one may take $r_\bg = 0$.

One motivation for the above result comes from the ``AIM conjecture'' proposed by a group of experts at an AIM meeting, and documented as \cite[Conj.~5.2]{STSurvey} by Shankar and Tang. We refer the reader to a discussion in \textit{loc. cit.} on earlier evidence of the conjecture, including \cite{MST, DvH25, AHS}. 

\begin{conjecture}
\label{conj: AIM}
    \emph{(AIM)} Let $\shS$ denote the canonical integral model of a Shimura variety with reflex field $E$, $X, Y$ be generically ordinary irreducible subvarieties of $\shS_{k(v)}$, where $v$ is a prime of good reduction with residue characteristic $p$, and suppose that
    \begin{equation}
        \label{eqn: dim condition}
        \dim X + \dim Y \ge \dim \shS_{k(v)}. 
    \end{equation}
    Then the set $\{ (x, y) \in X \times Y : x \in \cup_T T(y) \}$ is Zariski dense in $X \times Y$, where $T$ runs through the prime-to-$p$ Hecke operators. 
\end{conjecture}

\noindent In \textit{loc.~cit.}, the condition in \eqref{eqn: dim condition} is stated with equality. The formulation above, with ``$\geq$'' in place of ``$=$'', is clearly equivalent. Our \cref{thm: main density} verifies the conjecture when $X$ is the Torelli locus in $\sfA_{g, \mathbb{F}_{p}}$ and $Y$ is $\sfA_{g_1, \mathbb{F}_{p}} \times \cdots \times \sfA_{g_m, \mathbb{F}_{p}}$, and the inequality (\ref{eqn: dim condition}) is strict (cf. \cref{rmk: dense in product}). 

Under suitable conditions, our method can also yield new instances of \cref{conj: AIM} when equality is achieved. Below we use $\mathrm{IJ}(-)$ to denote the intermediate Jacobian of the argument.



\begin{theorem}
    \label{cubicthreefoldthm}
    Let $\sfM$ be the moduli space of smooth cubic threefolds over $\Spec(\IZ)$, and let $\sX$ be the universal family over $\sfM$. Then there exists an $r$ such that for all $p > r$, the locus 
    \begin{equation}
    \label{cubicspliteq}
        \{ s \in \sfM_{\IF_p} : \mathrm{IJ}(\sX_s) \text{ is prime-to-$p$ of splitting type } (1, 1, 1, 1, 1) \}
    \end{equation}
    is Zariski dense in $\sfM_{\IF_p}$. 
\end{theorem}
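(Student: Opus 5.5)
Cubic threefolds form a 10-dimensional moduli space, and the intermediate Jacobian is a principally polarized abelian 5-fold, so the period map goes $\sfM \to \sfA_5$, which is 15-dimensional. The splitting type $(1,1,1,1,1)$ has $\delta = 15 - 5 = 10$, so we are exactly in the equality case of \eqref{eqn: dim condition}. The plan is to follow the same mixed-characteristic strategy as for \cref{thm: main density}. First, in characteristic zero, show that the locus of $s \in \sfM_\IC$ with $\mathrm{IJ}(\sX_s)$ isogenous to a product of five elliptic curves is analytically, hence Zariski, dense. Then spread this out and reduce modulo $p$ for $p$ large, using the general specialization/lifting machinery the paper develops for Hodge-type Shimura targets.

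\textbf{Step 1: density in characteristic zero.}
The image of $\sfM_\IC$ is the 10-dimensional locus of intermediate Jacobians in $\sfA_5$, which is open in its closure. It meets the Hecke translates of $E_1 \times \cdots \times E_5$, a 5-dimensional special subvariety of codimension 10. Because $10 + 5 = 15$, these intersections are expected to be isolated points. The density of such points is the typical-intersection statement of Hodge theory (Eskin–Mozes–Shah / Chai style, or the density-of-Noether–Lefschetz criterion). What must be checked is that the variation of Hodge structure on $\sfM$ has generic Mumford–Tate group $\GSp_{10}$. Monodromy is full, which follows from Lefschetz theory for the vanishing cohomology. One must also check that at some point the tangent map, composed with projection to the normal space of $\sfA_1^5$, is surjective. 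For the latter we would exhibit an explicit point, e.g.\ a Fermat-type or diagonal cubic whose intermediate Jacobian is a product of CM elliptic curves. There we would compute the differential of the period map via Griffiths' residue/Jacobian-ring description and verify transversality to the product stratum. The expected-dimension count then gives that the union of Hecke translates is dense.

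\textbf{Step 2: transport to $\IF_p$.}
Choose, for each irreducible component of a dense set of transverse intersection points, a model over a number field. Transversality at a point gives an étale (zero-dimensional, reduced) intersection scheme. For all $p$ outside a finite set, it specializes to isolated points of $\sfM_{\IF_p}$ where $\mathrm{IJ}$ is of splitting type $(1,\dots,1,1)$. Prime-to-$p$ holds because the Hecke operators used can be taken prime-to-$p$ after enlarging the excluded set. However, no finite collection of points is dense, and the excluded set of primes grows with the points. So instead we argue directly in characteristic $p$, using the paper's deformation-theoretic criterion, which I would prove in this setting. The criterion: if the special locus has a transverse point in characteristic $p$, then it is dense. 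To establish it, consider the Serre–Tate/crystalline description near an ordinary point of $\shS_{\IF_p}$. There the prime-to-$p$ Hecke orbit of $\sfA_1^5$ accumulates (Chai's Hecke orbit density) along formal subtori. A transverse intersection of the 10-dimensional $\sfM$ with the 5-dimensional product, taken within the 15-dimensional formal torus, propagates to nearby intersections with Hecke translates. The uniformity in $p$ comes from the transversality condition being an open condition on the characteristic-zero point. Reducing the explicit CM cubic modulo $p$, for $p$ larger than some $r$, preserves smoothness, ordinarity (after restricting to split primes, or using a generic ordinary deformation nearby), and the rank of the differential.

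\textbf{Main obstacle.}
The hard step is the equality case. Strict inequality is what gives positive-dimensional intersections in \cref{thm: main density}; here the intersections are only 0-dimensional. So density must come from infinitely many Hecke translates rather than from moving families. I would handle this by the formal-torus argument: in Serre–Tate coordinates, $\sfA_1^5$ and its Hecke translates correspond to subtori, and prime-to-$p$ Hecke translates are $p$-adically dense in the relevant Grassmannian of subtori. A transversal intersection therefore persists under small perturbation of the subtorus, giving intersection points in every Zariski open subset. Verifying the needed transversality at an explicit cubic is the computational crux.
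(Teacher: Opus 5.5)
There is a genuine gap, and it sits exactly where you put the crux: nothing in your Step~2 produces more than finitely many split points in $\sfM_{\IF_p}$. The paper has no criterion of the form ``one transverse split point in characteristic $p$ implies Zariski density,'' and the Serre--Tate mechanism you sketch cannot supply one.

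\emph{Why the formal-torus argument fails.} The formal neighbourhood of an ordinary point $x$ only sees subvarieties passing through $x$. A transverse intersection of the $10$-dimensional germ $\widehat{\tau(\sfM)}_x$ with a $5$-dimensional formal subtorus at $x$ is just $\{x\}$. ``Perturbing'' that subtorus within the set of Hecke-translate subtori through $x$ (even if that set is $p$-adically dense) again only returns $x$. Hecke translates of $\sfA_1^5$ that do not contain $x$ are invisible in the germ. The $\overline{\IF}_p$-points carry no topology in which a transverse intersection persists under small deformation; over $\IC$ this is supplied by the density of $G(\IQ)$ in $G(\IR)$, which has no characteristic-$p$ analogue. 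Any $p$-adic perturbation lives over $W(\overline{\IF}_p)[1/p]$ and reduces back to $x$.

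\emph{Why Chai does not help.} \cite{Chai95} concerns Hecke orbits of points. Density of the union of all Hecke translates of $\sfA_1^5$ in $\sfA_5$ does not imply density of their intersections with $\tau(\sfM)$. That implication, in the equality case $10+5=15$, is precisely \cref{conj: AIM} --- the statement you are trying to prove. Your characteristic-zero Step~1 therefore never feeds into the characteristic-$p$ conclusion.

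\emph{How the paper proceeds.} It circumvents the equality case rather than attacking it head on.
\begin{enumerate}
\item By \cite{CML09}, the closure of $\tau(\sfM)$ in $\sfA_5$ contains the hyperelliptic locus $B=\mathrm{im}(\sfH_5\to\sfA_5)$ as a boundary divisor.
\item On $B$ the type $(1,2,2)$ has $\delta = 8 < 9 = \dim \sfH_5$. So the positive-dimensional machinery of \cref{thm: main density3} gives density of type $(1,2,2)$ in $\sfH_{5,\IF_p}$ for $p \gg 0$, with positive-dimensional components.
\item Along general curves in these components, the genuinely ``expected dimension zero'' input is imported from \cite[Thm.~1.4, Thm.~4.8]{Tay25}, building on Maulik--Shankar--Tang and proved by global rather than formal-local arguments. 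It says a suitably general one-parameter family of abelian surfaces has infinitely many prime-to-$p$ split specializations. This gives type $(1,1,1,2)$ densely in $B_{\IF_p}$.
\item Off the atypical locus $E$, \cref{cor: intersect with torsion2} makes these boundary intersections zero-dimensional, since $9+6-15=0$.
\item The relevant Shimura varieties are lci of codimension $9$ in $\sfA_5$, so every component of their intersection with the $10$-dimensional closure has dimension $\ge 1$ and cannot lie in $B$. Since $B$ is a divisor, this gives density of type $(1,1,1,2)$ in $\sfM_{\IF_p}$, with positive-dimensional components.
\item A second application of \cite{Tay25} splits the remaining abelian-surface factor.
\end{enumerate}

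Your proposal is missing both ingredients: the degeneration to a divisor on which the expected dimension becomes positive, and an external characteristic-$p$ theorem for the final codimension-one splitting.
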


\noindent We note that the dimension of $\sfM_{\mathbb{F}_{p}}$ is 10, the dimension of $\sfA_{5,\mathbb{F}_{p}}$ is $15$, and loci in $\sfA_{5,\mathbb{F}_{p}}$ which parameterize products of $5$ elliptic curves have dimension $5$. We also remark that the constants $r_{\mathbf{g}}$ in \cref{thm: main density} and the $r$ in \cref{cubicthreefoldthm} can in principle be effectively bounded, see \cref{rmk: effectivity}. 





The methods of this paper can be used to analyze $p$-isogenies as well; for instance, they can be used to study supersingular loci, a particularly interesting class of special loci determined by ``extra symmetry''. Recall that every supersingular abelian variety is isogenous to a product of supersingular elliptic curves, though the isogeny can never be prime-to-$p$ unless the abelian variety is superspecial. While the core techniques we introduce are largely motivated by our density result for split Jacobians, we can also adapt our methods to prove \textit{non-density} results for supersingular loci. In particular, we address the following question asked by Oort (\cite[Rmk.~6.3]{Oorthyperelliptic}, \cite[\S{A}.11]{LiOort}):


\begin{question}
\emph{(Oort)}
    \label{ques: Oort} Let $\sfA_{g, \IF_p}^{\ss}$ be the supersingular locus of $\sfA_g$ and let $\tau : \sfM_g^\ct \to \sfA_g$ be the Torelli morphism. What is the dimension of $\tau(\sfM_{g, \IF_p}^\ct) \cap \sfA_{g, \IF_p}^{\ss}$?
\end{question}

The interest in the question is that, when $g$ is large (in fact $g \ge 9$), we have $\dim \tau(\sfM^\ct_{g, \IF_p}) + \dim \sfA^{\ss}_{g, \IF_p} < \dim \sfA_{g, \IF_p}$, so $\tau(\sfM^\ct_{g, \IF_p})$ and $\sfA^{\ss}_{g, \IF_p}$ are not supposed to intersect when viewed as ``random'' subvarieties. On the other hand, Oort himself observed that by clutching $h$ supersingular curves of genus $3$, with $g \ge 3h$, there is always at least one component of $\tau(\sfM^\ct_{g, \IF_p}) \cap \sfA^{\ss}_{g, \IF_p}$ of dimension $2h$ (\cite[Rmk.~6.3]{Oorthyperelliptic}). 

We do not give a full answer to Oort's question, but we can prove that $\tau(\sfM_{g, \IF_p}^\ct) \cap \sfA_{g, \IF_p}^{\ss}$ is $\le 0$ dimensional for large $g$ away from a Zariski closed locus independent of $p$.
\begin{theorem}
\label{thm: Oort}
    When $g \ge 16$, the union over all primes 
    $$ \bigcup_{p \in \Spec(\IZ)} [\tau(\sfM^\ct_{g, \IF_p}) \cap \sfA^{\ss}_{g, \IF_p}]^{> 0} $$
    \textit{is not Zariski dense in $\tau(\sfM_g^\ct)$.} Moreover, the image in $\sfA_{g,\IC}$ of the $\IC$-fiber of the Zariski closure of this union lands inside finitely many strict Shimura subvarieties of $\sfA_{g,\IC}$.
\end{theorem}
\noindent In the above and what follows we use the notation $[-]^{> 0}$ to denote the positive-dimensional part of a scheme or stack. We expect the inequality $g \geq 16$ can be improved by carrying out a more refined analysis. Our methods should yield similar statements when $\tau : T \to \sfA_{g}$ is an arbitrary map of Deligne-Mumford stacks such that the image of $\tau_{\IC}$ has large monodromy (subject to an appropriate inequality relating $\dim T_{\IC}$ and $g$). 

In practice one observes that when $g$ is large, there are essentially only two types of constructions which yield positive-dimensional families of supersingular curves: 

\begin{itemize}
    \item \textbf{Type I:} Those that lie inside unexpectedly large intersections of $\tau(\sfM_g^\ct)$ with Shimura subvarieties of $\sfA_g$. One easily checks that Oort's example is of this type.
    \item \textbf{Type II:} Those that make use of some special relations between $g$ and the characteristic $p$; e.g., \cref{ex: Artin-Schreier} below.
\end{itemize}
Our \cref{thm: Oort} states that when $g \geq 16$, these are indeed the only two types of constructions giving rise to components of $[\tau(\sfM^\ct_{g, \IF_p}) \cap \sfA^{\ss}_{g, \IF_p}]^{> 0}$. In particular, for a fixed $g$, there are only finitely many primes for which a construction which is not of Type I (and hence of Type II) is possible.

In the case of supersingular loci in orthogonal Shimura varieties, we obtain a more precise description of when such loci can be dense in a subvariety. We refer to \S\ref{orthoshimsec} for a more precise statement. 

\begin{theorem}
\label{orthogonalssthm} \emph{(\cref{preciseorthogonalshimurasupsingthm})}
    Let $\shS(L)$ be an integral model over $\mathbb{Z}[1/N]$ of an orthogonal Shimura variety associated to a quadratic lattice $L$ of signature $(2, b)$. Let $T$ be a smooth scheme over $\IZ[1/N]$, quasi-finite over $\shS(L)$, and such that the image of $T_\IC$ has Zariski dense monodromy. Write $\shS^{\mathrm{ss}, \sigma_0}_{\IF_p}$ for the supersingular locus with Artin invariant $\sigma_0$. Then the union of positive-dimensional atypical components 
    $$ \bigcup_{p \in \Spec(\IZ[1/N])} [T_{\IF_p} \cap \shS^{\mathrm{ss}, \sigma_0}_{\IF_p}]^{\mathrm{atyp}, >0} $$
    is not Zariski dense in $T$, and the $\IC$-fiber of the Zariski closure of this union is contained in a finite union of Shimura subvarieties strictly contained in $\shS(L)_{\IC}$. 
\end{theorem}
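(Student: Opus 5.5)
The plan is to reduce the statement to a characteristic-zero ``atypical intersection'' problem for the variation of Hodge structure on $T_{\IC}$, and then apply Zilber--Pink type finiteness results for such variations. I take as known the Bakker--Klingler--Tsimerman geometric Zilber--Pink theorem and the fact that the atypical locus is contained in finitely many weakly special subvarieties.

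\textbf{Step 1: lifting.} The first step is a lifting statement. Fix $p\nmid N$ and a positive-dimensional atypical component $Z\subset T_{\IF_p}\cap \shS^{\ss,\sigma_0}_{\IF_p}$. I would show that $Z$ forces the formal (crystalline) period map of $T$ along $Z$ to meet a special locus in an unlikely dimension. On supersingular points of Artin invariant $\sigma_0$, the crystalline lattice $\H^2_{\crys}$ carries a large space of Tate/special endomorphism classes. Concretely, the supersingular K3-type lattice has rank $b+2$ of Frobenius-invariant classes, and the Artin invariant controls the discriminant.

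Along the formal completion of $T_{W(\IF_p)}$ at a generic point of $Z$, I would consider the sublocus where a chosen sublattice $\Lambda$ of special endomorphisms deforms. Via Grothendieck--Messing/Kisin deformation theory, each special endomorphism deforms over a formal divisor, namely the vanishing of its pairing with $\Fil^1$. The intersection over a rank-$r$ sublattice $\Lambda$ then has codimension at most $r$.

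Atypicality says $\dim Z > \dim T - \codim \shS^{\ss,\sigma_0}$. Choosing $r$ and $\Lambda$ appropriately, with $r$ bounded by the codimension of the Artin-invariant stratum (which is $b+1-\sigma_0$ in dimension terms), I would conclude that the formal locus in characteristic zero where $\Lambda$ lifts is positive-dimensional. It also has dimension exceeding the expected value $\dim T - r$, or at least it is nonempty with excess dimension.

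\textbf{Step 2: passage to characteristic zero.} Algebraizing via the Noether--Lefschetz loci $\NL_\Lambda$ in $\shS(L)$ over $\IZ[1/N]$, I obtain that $Z$ lies in the reduction of $T\cap \NL_\Lambda$. This intersection is a union of pieces of special subvarieties $\shS(\Lambda^\perp)$ and has atypical dimension. The key claim is that the generic fibre component dominating $Z$ is an atypical intersection of $T_{\IC}$ with the special subvariety $\shS(\Lambda^\perp)_{\IC}$.

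This step is the main obstacle. One must ensure that the flat closure of the lifted component is actually positive-dimensional in characteristic zero and atypical there, rather than collapsing to a vertical component supported mod $p$. The first thing I would try is a dimension count on the Hodge-theoretic/crystalline period domains. The formal period map of $T$ is governed by the Kodaira--Spencer map, whose rank is uniform in $p$ after inverting finitely many primes, so the crystalline transversality estimate matches the complex Hodge-theoretic one. I would also use that the supersingular lattice is large enough to choose $\Lambda$ with the correct rank compatible with $\sigma_0$. The primes where the Kodaira--Spencer rank drops are finite and can be absorbed into $N$.

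\textbf{Step 3: finiteness in characteristic zero.} Given Step 2, every such $Z$ lies in the reduction of an atypical component of $T_{\IC}$ intersected with some $\shS(\Lambda^\perp)_{\IC}$. Since $T_{\IC}$ has Zariski dense monodromy, geometric Zilber--Pink (Bakker--Klingler--Tsimerman, applied in the orthogonal setting where weakly special equals special up to the trivial factor) gives finitely many strict special subvarieties $S_1,\dots,S_k\subsetneq \shS(L)_\IC$ containing all atypical components. Therefore the Zariski closure of the union lies in the closure of $\bigcup_i T\times_{\shS(L)} \mathcal S_i$. This is a proper closed subset of $T$ whose $\IC$-fibre lies in the finite union of $S_i$, which yields both assertions.
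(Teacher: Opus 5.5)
Your Steps 1--2 have a genuine gap, and it is located exactly where the content of the theorem lies. Let $\Lambda$ have rank $r$. For the atypicality of $Z$ to transfer to $\NL_\Lambda$ you need $\dim Z\geq \dim T+1-r$, which forces $r\geq b+1-\sigma_0$. At a general point $x\in Z$, the locus in $T^\wedge_{W,x}$ where $\Lambda$ deforms is cut out by $r$ equations in a formal scheme of dimension $\dim T+1$, so its dimension is at least $\dim T+1-r$. But its special fibre already contains $Z^\wedge_x$, which has dimension $\geq\dim T+1-r$. The dimension bound is therefore used up entirely in characteristic $p$, and nothing forces a point over $W[1/p]$: the deformation locus may be a nilpotent thickening of $Z^\wedge_x$ lying over $p=0$.

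An excess-dimensional component of the special fibre arises in one of two ways: it is the reduction of an excess component of the generic fibre, or $T\times_{\shS}\NL_\Lambda$ fails to be flat over $\IZ[1/N]$ at $x$. You must rule out the second case. Ruling it out outside a controlled locus is the theorem itself; compare \autoref{shimflatnessthmv2}, which is an output of the jet method, not an input to it. Uniformity of the Kodaira--Spencer rank cannot do this, because it is first-order information at a point and is the same in both cases. Note also that the statement deliberately tolerates finitely many primes with atypical components that have no characteristic-zero explanation (compare the Artin--Schreier families of \cref{ex: Artin-Schreier} in the Siegel case). In the actual proof these exceptional primes come from a jet-stabilization order $r_0$, not from Kodaira--Spencer degeneracy.

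The missing idea is to avoid lifting altogether and compare characteristic $p$ with $\IC$ through jets of the period map, which are defined over $\IZ[1/r!]$. Take a general point $x$ of such a component $C$, with $s=\sigma_0(x)\geq 2$. All special endomorphisms deform along any jet $j$ in $C$, so $j^*\Fil^2$ is orthogonal to $(M^0_x\oplus M^1_x)\otimes A^{d}_{r,k}$. Since $M^1_x$ is self-dual and $M^0_x$ is maximal isotropic in $(M^1_x)^\perp$, this forces $j^*\Fil^2\subset M^0_x\otimes A^{d}_{r,k}$. So for a suitable frame the period jet lands in $Z_s=\IP(M_s)$, where $M_s\subset L\otimes\sO_F[1/N]$ is a fixed totally isotropic summand of rank $s$. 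Consequently, once $p>r_0$, the point $x$ lies in the atypical locus $E(s)$ of \autoref{atyplocusconstrprop} with $d_s=\max\{1,\dim T+s-\dim\shS\}$.

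Non-density of $E=\bigcup_s E(s)$, and the description of $E_{\IC}$, then follow from Ax--Schanuel over $\IC$ applied to $Z_s$. This $Z_s$ has no Hodge-theoretic meaning: it is totally isotropic, whereas algebraic classes in characteristic zero span a definite space. That is why the argument has to run through an arbitrary algebraic subvariety of $\che{X}$, rather than through Noether--Lefschetz loci and Zilber--Pink for special subvarieties.
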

\noindent Notice that \autoref{orthogonalssthm} has the same flavour as \autoref{thm: Oort}, in that one is again saying that there are only finitely many primes $p$ for which a construction not explained by a characteristic zero special intersection --- that is, a construction not of Type I --- is possible.

Likewise, we can partially answer a question posed by de Jong\footnote{\url{https://www.math.columbia.edu/~dejong/wordpress/?p=3859}}:

\begin{question}
    \emph{(de Jong)} Are there infinitely many primes $p$ such that there is a 1-parameter family of supersingular quintic surfaces of characteristic $p$?
\end{question}
\noindent Assume that $X$ is a smooth proper surface over an algebraically closed field $k$ of characteristic $p$. Let $H$ be the torsion-free quotient of $\H^2_\crys(X/W)$, equipped with a Frobenius operator $\varphi$. Let $T = H^{\varphi = p}$ be the Tate module. Then we say that $X$ is supersingular if $H$ is pure of slope $1$, or equivalently, if the natural map $T \tensor_{\IZ_p} W \to H$ has finite cokernel. In this case, the cokernel is always $p$-primary torsion, and we say $X$ is $p$-torsion if this cokernel is killed by $p$ itself. Restricted to this subclass, we obtain the following theorem addressing de Jong's question.

\begin{theorem}
\label{supsingquintic}
    Let $\sfM$ be the open substack of $|\sO_{\IP^3}(5)| / \mathrm{PGL}_4$ parametrizing smooth quintics. Then there exists a strict Zariski closed substack $Z \subset \sfM$, with $Z_{\IC}$ a finite union of special subloci, such that on $\sfM \smallsetminus Z$ there are only finitely many primes $p$ for which there exists a one-parameter family of $p$-torsion supersingular quintics. 
\end{theorem}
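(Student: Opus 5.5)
The plan is to reduce the statement to the orthogonal setting of \cref{orthogonalssthm} via the period map of quintic surfaces. Write $H$ for the primitive second cohomology of a smooth quintic $X \subset \IP^3$. It has $h^{2,0} = 4$, $h^{1,1}_{\mathrm{prim}} = 44$, $h^{0,2} = 4$. This is not of K3 type, so there is no orthogonal Shimura variety directly. Instead I would use a Kuga--Satake type construction, or more generally a Hodge-type Shimura variety, receiving a quasi-finite map from (a level cover of) $\sfM$ over $\IZ[1/N]$. I would aim for the general version of the machinery that the introduction says applies to ``arbitrary moduli spaces mapping to Hodge-type Shimura varieties (and even some settings beyond the Shimura case)''.

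Concretely, the steps in order are as follows.

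\textbf{Step 1.} Pass to a finite étale level cover $T \to \sfM$ over $\IZ[1/N]$. Fix a polarized $\IZ$-local system $\IV$ (primitive $\H^2$ with cup product, signature $(2h^{2,0}, h^{1,1}_{\mathrm{prim}})$). Use the Kuga--Satake abelian scheme associated to its even Clifford algebra to obtain a morphism $T \to \shS$ into a Hodge-type Shimura variety with good integral model, compatible with crystalline realizations at $p \nmid N$. Since $h^{2,0} > 1$ the period map is not a Shimura map. I would therefore state the needed input as an abstract mixed-characteristic ``atypicality'' theorem for variations whose monodromy is Zariski dense in $\mathrm{SO}(\IV)$ (known for quintics by Deligne's monodromy theorem for hypersurfaces).

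\textbf{Step 2.} Translate ``$p$-torsion supersingular'' into a condition on the $F$-crystal $\H^2_\crys$. Its Tate module $T$ must have full rank, with $p\,H \subset T \otimes W \subset H$. This defines a closed locus of bounded Hodge--Newton type in $T_{\IF_p}$. Show that a one-parameter family of such quintics produces, after lifting to $W$ as in the paper's methods, a formal family along which the crystal carries rank $b_2$ of ``Tate classes'' with controlled $p$-adic deviation. The $p$-torsion hypothesis is exactly what gives this control, analogous to fixing the Artin invariant $\sigma_0$.

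\textbf{Step 3.} Compute dimensions. The supersingular $p$-torsion locus imposes roughly $h^{2,0}\cdot h^{1,1}_{\mathrm{prim}}$-many conditions in the period domain. This far exceeds $\dim \sfM = 40$, so positive-dimensional components are atypical. Then apply the key result as in \cref{thm: Oort} and \cref{orthogonalssthm}. The union over $p$ of positive-dimensional atypical components has Zariski closure whose $\IC$-fiber lies in finitely many strict weakly special (special) subvarieties, via Ax--Schanuel/geometric Zilber--Pink for the complex VHS. Take $Z$ to be this closure. Away from $Z$, each $p$ admits a family only if the ``non-Type I'' mechanism occurs. The finiteness argument (the mixed-characteristic density of these loci would force a characteristic zero atypical locus) shows this happens for only finitely many $p$.

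The main obstacle I expect is Step 2 combined with the non-Shimura nature of the period map. One must show that a positive-dimensional $p$-torsion supersingular family gives enough $p$-adic analytic constraints on a characteristic zero lift to feed the Zilber--Pink-type finiteness, uniformly in $p$. My first attempt would be to bound the order of vanishing of the relevant Frobenius-invariant sections on formal neighbourhoods: $p$-torsion means the condition holds modulo $p^2$ on Hodge filtrations. I would then combine this with an effective point-counting/degree bound that is independent of $p$.
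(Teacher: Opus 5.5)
There is a genuine gap, and it sits where you place your ``main obstacle''. Step~1 fails as stated. Kuga--Satake requires $h^{2,0}=1$; for quintics $h^{2,0}=4$, so the period domain is not Hermitian symmetric and no Hodge-type Shimura variety receives the period map. Your fallback is the correct one, and it is what the paper does. It applies \cref{atyplocusconstrprop} directly to $\sfM$ with the variation $\mathrm{PH}^2$ (monodromy $\IQ$-simple by Beauville, period map quasi-finite by local Torelli), taking $\che{D}=\IGr(4,V)$ to parametrize $\Fil^2$ in the rank-$52$ primitive lattice, and $d=1$.

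What Step~2 lacks is a characteristic-$p$ mechanism. You need to force the period jets of a curve $C$ of such quintics, computed in characteristic $p$ with no lift to $W$, into a fixed subvariety of $\che{D}$ defined over $\sO_L[1/N]$. The paper does this in three steps.
(i) The $p$-torsion hypothesis enters only through Ogus's lattice argument. With $E_H=T\otimes W$ and $L_H=E_H^\vee$ one gets $pL_H\subseteq E_H\subseteq H\subseteq L_H$ and $\mathrm{length}(L_H/E_H)=2\sigma_0$, hence $2\sigma_0\le\mathrm{rank}\,H$. So the image $I_s$ of $T\otimes k$ in $\mathrm{PH}^2_{\dR}(\sX_s/k)$ has dimension at least $m=\lceil 40/4\rceil=10$. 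This is not a ``mod $p^2$'' condition on Hodge filtrations.
(ii) At a general point $s\in C$, Katz's equivalence between unit-root F-crystals and $\IZ_p$-local systems (\cref{prop: semi-continuity}, \cref{cor: Frob inv horizontal}) extends every element of $I_s$ to a horizontal section over $C^\wedge_s$.
(iii) By the Mazur--Ogus inequality these sections stay in $\Fil^1$. Hence $\Fil^2$ stays in $M^\perp$ for a chosen $10$-dimensional $M\subseteq I_s$. The period jets therefore land in $Z_j=\IGr(M_j^\perp,4)$ with $j=\dim(M\cap M^\perp)$, which has codimension $hm=40\ge\dim\sfM$ by \cref{lem: codim IGr orthogonal}.

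Your count of ``roughly $h^{2,0}\cdot h^{1,1}_{\mathrm{prim}}$'' conditions is not the relevant one. The Tate classes do not impose independent conditions, because $T\otimes k\to H\otimes k$ has a $\sigma_0$-dimensional kernel and because their span can be degenerate.

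Once (i)--(iii) are in place, uniformity in $p$ needs no lifting, vanishing-order estimates or point counts; it is built into the jet formalism. Let $E^1_r$ be the scheme-theoretic image of non-degenerate $1$-jets whose period jets lie in some $J^1_rZ_j$. Then $E=\bigcap_r E^1_r$ is a closed subscheme of $\sfM$ over $\sO_L[1/N]$, which stabilizes at some $r_0$ by Noetherianity. Any curve as above in characteristic $p>r_0$ with $p\nmid N$ lies in $E$. Ax--Schanuel for the complex variation then shows $E$ is strict, with $E_\IC$ in finitely many special subvarieties.

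The characteristic-zero input must be Ax--Schanuel for arbitrary algebraic $Z_{j,\IC}\subset\che{D}_\IC$, not a Zilber--Pink statement about Hodge loci as in your Step~3. For $j>0$ the $Z_j$ have no Hodge-theoretic meaning, since by Hodge--Riemann primitive Hodge classes never span a degenerate subspace. This proves \cref{thm: supersingular surface general}. The statement follows from it because $p$-torsion gives $\sigma_0\le 26\le\sigma_0(5)=42$.
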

There is nothing special about degree $5$, see \cref{thm: supersingular surface general}.

\begin{remark} As de Jong observed, the only explicitly known example of a positive-dimensional family of supersingular quintics is the one defined by Shioda, when $p = 5$, that is, 
    \begin{equation}
    \label{eqn: Shioda}
        X_0^5 = F(X_1, X_2, X_3)
    \end{equation}
    for a general quintic homogeneous form $F$ (see also \cref{rmk: singular quintic}). We note that these surfaces are indeed $5$-torsion: (\ref{eqn: Shioda}) defines a singular quintic with 13 $A_4$ singularities (\cite[Thm.~1.2]{Bla85}). Then the Tate module $T$ of the resolution is given by 13 copies of the $A_4$ root lattice (after tensoring with $\IZ_p$), plus the Chern class of the polarization.
    
    The atypical locus $Z_{\IC}$ is needed to allow for the possibility of ``type I constructions'' --- that is, constructions making use of extra geometry present along special subvarieties --- as happened for curves. Without a more precise understanding of this atypical locus, we believe a statement like that appearing in \autoref{supsingquintic} is ``as good as it gets''. 
\end{remark}

\subsection{Special Intersections --- Geometry and Arithmetic}

The above results can all be understood as questions about special intersections. For concreteness, let $\sfM^\ct_{g}$ and $\sfA_{g}$ be the moduli spaces of curves and principally polarized abelian varieties over the base scheme $\Spec\, R$, respectively, and let $\varphi : \sfM^\ct_{g} \to \sfA_{g}$ be the Torelli map. The variety $\sfA_{g}$ admits many special loci, for instance given by split abelian varieties or supersingular abelian varieties, which we can think of as corresponding to a morphism $w : Y \to \sfA_{g}$ whose image is the desired locus. The corresponding special intersection with $\sfM^\ct_{g}$ is the fibre product
\begin{equation}
\label{fibprod}
\varphi^{*} Y := \sfM^\ct_{g} \times_{\varphi,\sfA_{g},w} Y .
\end{equation}
Then if the pairs $(Y, w)$ belong to some collection $\sC$ of special loci, we can ask if the union
\[ \bigcup_{(Y, w) \in \sC} \varphi^{*} Y \]
has Zariski dense image in $\sfM^\ct_{g}$. In other words, does there exist a proper closed sublocus $E \subsetneq \sfM^\ct_{g}$ such that for each $(Y, w) \in \sC$, the natural map $\varphi^{*} Y \to \sfM^\ct_{g}$ factors through $E$?

Whether one expects density to hold depends on both $R$ and $\sC$. To illustrate, we consider the following examples.
\begin{itemize}
\item[(i)] Suppose first that $R = \mathbb{C}$, and let $\sC$ be the collection of all Hecke translates of a fixed Shimura subvariety $Y \subset \sfA_{g}$. Then one expects density to hold if and only if one has the ``expected dimension'' inequality
\begin{equation}
\label{expdimineq}
\dim Y + \dim \sfM^\ct_{g} \geq \dim \sfA_{g} .
\end{equation}
The ``if'' direction is known, and is a consequence of \cite[Thm. 1.9]{zbMATH07967077}. The ``only if'' direction is open, and would follow from the Zilber-Pink conjecture for Shimura varieties formulated by Pink in \cite{richard2005common}. The special case of the ``only if'' direction corresponding to $\dim Y = 0$ is known, as it is a consequence of the Andr\'e-Oort conjecture for $\sfA_{g}$, resolved in \cite{zbMATH06841543}.
\item[(ii)] Suppose on the other hand that, in the situation of (i), one wishes to show the ``only if'' direction for just those components of $\varphi^{*} Y$ that are positive dimensional. That is, denoting by $[\varphi^{*} Y]^{> 0}$ the union of those components which have positive dimension, one wishes to show there exists a strict subvariety $E \subsetneq \sfM^\ct_{g}$ such that, when (\ref{expdimineq}) fails, the maps
\[ [\varphi^{*} Y]^{>0}(\mathbb{C}) \to \sfM^\ct_{g}(\mathbb{C}) \]
factor through $E(\mathbb{C})$ for all $(Y, w) \in \sC$. Then this case is known as a consequence of the ``geometric Zilber-Pink'' statement formulated in \cite[\S5, \S6]{BKU}. In fact, the locus $E$ constructed there is independent of the fixed Shimura variety $Y$ chosen, and depends only on the natural variation of Hodge structure $\IV$ on $\sfM^\ct_{g,\mathbb{C}}$. The components $E$ of $E_{\mathbb{C}}$ are special subvarieties for $\IV$, and $E_{\mathbb{C}}$ is called the atypical locus of $\IV$. Each component $E$ is ``atypical'' in the sense that a certain Hodge-theoretically defined expected dimension inequality is satisfied: the component $E$ has larger dimension than would be predicted by a na\"ive dimension count in $\sfA_{g}$.

\item[(iii)] Now suppose instead that $R = \mathbb{F}_{p}$, and again that $Y \subset \sfA_{g}$ is (the reduction of the canonical model of) a Shimura subvariety. Take again for $\sC$ the collection of all Hecke translates of $Y$. Then if $Y$ is not generically ordinary, then the reduced loci $[\varphi^{*} Y]^{\textrm{red}}$ are never Zariski dense in $\sfM^\ct_{g}$: the ordinary locus is non-empty and open in $\sfM^\ct_{g}$, and necessarily each component of $[\varphi^{*} Y]^{\textrm{red}}$ maps into the closed non-ordinary locus. (The situation where one does not pass to reduced subschemes is more subtle.)
\item[(iv)] Similarly if $R = \mathbb{F}_{p}$, na\"ive analogues of the Andr\'e-Oort conjecture are false: every abelian variety mod $p$ carries a large enough endomorphism ring to force the Frobenius action on its Tate module to factor through a torus, and so in particular such ``CM-like'' points are dense in $\sfM^\ct_{g}$.
\item[(v)] On the other hand, even when $R = \mathbb{F}_{p}$, certain more refined density questions for intersections with generically ordinary loci do not seem to be resolvable just by looking at the Frobenius action. A case of central interest to us is when $Y = \sfA_{g-k} \times \sfA_{k}$ with $k \in \{ 1, 2, 3 \}$, and $\sC$ is again taken to be the collection of Hecke translates of $Y$. In this case \eqref{expdimineq} holds, so one expects Zariski density. However Chavdarov has shown \cite[Cor. 2.6]{zbMATH01011427} that in any sufficiently general family of curves over $\mathbb{F}_{q}$ with $q = p^{k}$, the curves defined over $\overline{\mathbb{F}_{q}}$ with split Jacobian up to isogeny have density zero in a certain precise sense. In particular, one cannot deduce the \emph{Zariski} density in $\sfM^\ct_{g}$ of the loci $\varphi^{*} Y$ by showing that along any sufficiently general curve $C \subset \sfM^\ct_{g}$ the locus of split Jacobians has positive density in the sense of point counting (this technique does work for large enough collections $\sC$ of Hilbert modular varieties).
\item[(vi)] Finally there is the mixed-characteristic situation, where one takes $R = \mathbb{Z}$, and takes $\sC$ to be a collection of loci $Y \subset \sfA_{g}$ which lie entirely in positive characteristic $p = p(Y)$ depending on $Y$. Then the analogue of the geometric Zilber-Pink theorem of \cite[\S5, \S6]{BKU} is known in this case, and given in \cite[\S5.1, Prop. 5.8]{urbanik2023algebraiccyclelociintegral}; in particular, one can produce a strict subscheme $E \subset \sfM^\ct_{g}$ which factors all components of $[\varphi^{*} Y]^{>0}$ that have ``unexpected dimension'', and such that $E_{\mathbb{C}}$ is contained in the atypical locus of $\sfM^\ct_{g,\mathbb{C}}$. In \cite{zbMATH08159582} we used the techniques developed in \cite{urbanik2023algebraiccyclelociintegral} to establish structural results for Noether-Lefschetz loci, which we will extend to more general special loci in this paper.
\end{itemize}


We note that nothing in points (i) through (vi) depended on us working with $\sfM^\ct_{g}$, and one can carry out a similar analysis for any locally closed locus $T \subset \sfA_{g}$ with Hodge-generic complex fibre $T_{\IC}$, provided that one adjusts the dimension conditions accordingly. Similarly, although we focus on loci $T \subset \sfA_{g}$ in this paper, we expect our techniques to apply to any sufficiently general locus in any Hodge-type Shimura variety, or even to more general parameter spaces $T$ for smooth projective algebraic varieties. The application to the question of de Jong is of the latter type. In these more general cases one replaces the atypical locus of $\sfM^\ct_{g, \mathbb{C}}$ with an atypical locus associated to a natural variation of Hodge structure on $T_{\mathbb{C}}$.

\subsection{Proof Strategy for Density}

Our results in this paper will be obtained by applying the mixed characteristic techniques of (vi) to deduce novel density and non-density results in large positive characteristic. Since the techniques of (vi) deduce \emph{mixed-characteristic non-density}, it is not obvious how one could expect to obtain \emph{fixed characteristic density} results from them. To illustrate, we briefly recall the method of Ciliberto-Harris-Miranda \cite{CHM} for establishing the density of Noether-Lefschetz loci in the parameter space $S(d)$ of degree $d$ smooth surfaces in $\IP^3$, which works over any field $k$. 

The theorem of \cite{CHM} can be proven directly in the case $d = 4$. Given this, the strategy of \cite{CHM} proceeds in two steps, described in \cite[\S3]{CHM}. For $d > 4$, they consider the full parameter space $\Sigma(d)$ of degree $d$ surfaces in $\IP^3$, not necessarily smooth, which contains $S(d)$ as a Zariski open subset. Within the boundary $\Sigma(d) \setminus S(d)$ they consider the subset $R(d)$ of surfaces which admit a plane as a component. In particular, a surface in $R(d)$ has a defining homogeneous equation $H L = 0$, with $L$ a linear form, and where $H = 0$ defines a surface of degree $d-1$. In the first step of the argument, one argues using the inductive hypothesis that the surfaces $H L = 0$ for which the degree $d-1$ surface $H = 0$ carries an extra line bundle are dense in $R(d)$. In the second step, one argues that enough such surfaces can be deformed out from $R(d)$, together with the line bundle, to give rise to a dense locus in $S(d)$. Because the locus $R(d)$ is not codimension one in $\Sigma(d)$, one has to argue that such deformations emanate out in all normal directions to $R(d)$ inside $\Sigma(d)$, which \cite{CHM} accomplish by replacing $R(d)$ with an appropriate blow-up.

It is tempting to attempt the same sort of strategy to study special loci inside $\sfM^\ct_{g}$. But while the geometry of the loci $\Sigma(d), S(d)$ and $R(d)$ is quite explicit --- $\Sigma(d)$ in particular is just a projective space --- the geometry of $\sfM^\ct_{g}$ can be harder to understand. Correspondingly, it is less obvious that, given an appropriate boundary component $B$ of some compactification $\overline{\sfM}_{g}$, special loci in $B$ can be deformed into $\sfM^\ct_{g}$ in all directions normal to $B$ in $\overline{\sfM}_{g}$. Indeed, the work of \cite{zbMATH04179531}, which proves most of \autoref{thm: main density} over $\mathbb{C}$, uses a different complex analytic approach inspired by the Green density criterion (cf. the discussion of Green's criterion in \cite{CHM}). (In \cite{zbMATH04179531} the $\bg := (1, g-1)$ case of \autoref{thm: main density} over $\mathbb{C}$ is attributed to unpublished work of Harris; we do not know whether his proof was valid in positive characteristic.)

\vspace{0.5em}

It turns out that making the Ciliberto-Harris-Miranda approach for density work for $\sfM^\ct_{g}$ naturally leads to unlikely-intersection-type \emph{non-density} questions for special intersections with boundary components. To understand the situation, let $\overline{\sfA}_{g}$ be Mumford's canonical partial compactification of $\sfA_{g}$ obtained by adding semiabelian degenerations of toric rank one (reviewed in \autoref{Mumfcompprop}) and let $\overline{\sfM}_{g}$ be the corresponding partial compactification of $\sfM^\ct_{g}$ by stable curves whose Jacobian is semiabelian of toric rank one. We have an extended Torelli map $\overline{\varphi} : \overline{\sfM}_{g} \to \overline{\sfA}_{g}$. We consider a Shimura morphism $w : Y \to \sfA_{g}$ whose image is a Hecke translate of $\sfA_{g-k} \times \sfA_{k}$ and construct a partial compactification $\overline{w} : \overline{Y} \to \overline{\sfA}_{g}$. Extending the notation from \eqref{fibprod} we can consider the pullback 
\begin{equation}
\label{fibprod2}
\overline{\varphi}^{*} \overline{Y} := \overline{\sfM}_{g} \times_{\overline{\varphi},\overline{\sfA}_{g},\overline{w}} \overline{Y} .
\end{equation}
To argue that the pullbacks $\overline{\varphi}^{*} \overline{Y}$ give rise to a Zariski dense subset of $\overline{\sfM}_{g}$ as the pair $(Y, w)$ varies, we start with an analysis of the situation on the boundary. The boundaries $\partial \sfM^\ct_{g}$ and $\partial \sfA_{g}$ of the compactifications $\overline{\sfM}_{g}$ and $\overline{\sfA}_{g}$ can be identified, up to a quasi-finite covering, with $\sfM_{g-1,2}$ and $\sA_{g-1}$, respectively; here $\sA_{g-1}$ is the total space of the universal abelian family of $\sfA_{g-1}$. (In what follows, for illustrative purposes, we abusively make the identifications $\partial \sfM^\ct_{g} := \sfM_{g-1,2}$ and $\partial \sfA_{g} = \sA_{g-1}$.) The map $\overline{\varphi}$ gives rise to a boundary map $\partial \varphi : \sfM_{g-1,2} \to \sA_{g-1}$ which sends the pair $(C, a, b)$ to $(J(C), \sO_{C}(a-b))$. This boundary map, in turn, fits into a diagram
\begin{equation}
\label{bounddiag}
\begin{tikzcd}
	\sfM_{g-1,2} & \sA_{g-1} \\
	\sfM_{g-1} & \sfA_{g-1}
	\arrow["\partial \varphi", from=1-1, to=1-2]
	\arrow[from=1-1, to=2-1]
	\arrow[from=1-2, to=2-2]
	\arrow["\varphi", from=2-1, to=2-2]
\end{tikzcd} .
\end{equation}
Now as (the image of) $Y$ approaches the boundary stratum $\sA_{g-1}$, the corresponding boundary strata of $Y$ come in two flavours:
\begin{itemize}
    \item[(L)] Hecke translates of $\partial \sfA_{g-k} \times \sfA_{k} \simeq \sA_{g-k-1} \times \sfA_{k}$. That is, loci in $\sA_{g-1}$ which lie over a Hecke translate of $\sfA_{g-k-1} \times \sfA_{k}$ and whose fibres are unions of torsion translates of abelian subvarieties of dimension $g-k-1$.
    \item[(R)] Hecke translates of $\sfA_{g-k} \times \partial\sfA_{k} \simeq \sfA_{g-k} \times \sA_{k-1}$. That is, loci in $\sA_{g-1}$ which lie over a Hecke translate of $\sfA_{g-k} \times \sfA_{k-1}$ and whose fibres are unions of torsion translates of abelian subvarieties of dimension $k-1$.
\end{itemize}
One can easily check that both loci of type (L) and loci of type (R) are dense in $\sA_{g-1, \mathbb{F}_{p}}$ for each prime $p$, and each such locus appears in the boundary of some $Y$. For $k$ in the appropriate range, one also deduces by induction on $g$ that for $p \gg 0$ intersections between $\sfM_{g-1, \mathbb{F}_{p}}$ and Hecke translates of either $\sfA_{g-k-1, \mathbb{F}_{p}} \times \sfA_{k, \mathbb{F}_{p}}$ or $\sfA_{g-k, \mathbb{F}_{p}} \times \sfA_{k-1, \mathbb{F}_{p}}$ are dense in $\sfM_{g-1, \mathbb{F}_{p}}$. The task is then twofold:
\begin{itemize}
    \item[(1)] Show that the latter density of intersections with $\sfM_{g-1, \mathbb{F}_{p}}$ implies the density in $\sfM_{g-1,2, \mathbb{F}_{p}}$ of intersections with strata of type (L) or (R); that is, lift the density statement along the left vertical arrow in \eqref{bounddiag}.
    \item[(2)] Show that the loci produced in (1) deform into the interior $\sfM_{g, \mathbb{F}_{p}}$ of $\overline{\sfM}_{g, \mathbb{F}_{p}}$.
\end{itemize}
To show (1), we want to show that for points $[C] \in \sfM_{g-1, \mathbb{F}_{p}}$ with split Jacobian, the surface in $J(C)$ swept out by $\sO_{C}(a-b)$ as $a, b$ varies intersects a prime-to-$p$ torsion translate of an abelian subvariety of the appropriate dimension. We accomplish this in \S\ref{weilIIest} by a point counting argument, which uses Frobenius eigenvalue estimates from Deligne's Weil II \cite{zbMATH03713855}. Having completed step (1), we are left with the situation where the loci $(\partial \varphi)^{*} (\partial Y_{\mathbb{F}_{p}})$ are dense in $\sfM_{g-1,2, \mathbb{F}_{p}}$. Showing (2) then amounts to showing that the loci $\overline{\varphi}^{*} (\overline{Y}_{\mathbb{F}_{p}})$ are not contained, up to passing to reduced subspaces, inside $\partial \sfM_{g, \mathbb{F}_{p}}$; that is, that we do not have
\begin{equation}
\label{badcontainment}
[\overline{\varphi}^{*} (\overline{Y}_{\mathbb{F}_{p}})]^{\textrm{red}} \subset (\partial \varphi)^{*} (\partial Y_{\mathbb{F}_{p}}) .
\end{equation}

We may observe that this is an unlikely intersection problem. Inside $\sA_{g-1, \mathbb{F}_{p}}$, the locus $\partial Y_{\mathbb{F}_{p}}$ and $\sfM_{g-1,2, \mathbb{F}_{p}}$ has a certain expected dimension of intersection. If in fact this intersection $(\partial \varphi)^{*} (\partial Y_{\mathbb{F}_{p}})$ contains the entirety of the intersection of $\overline{\sfM}_{g, \mathbb{F}_{p}}$ with $\overline{Y}_{\mathbb{F}_{p}}$, then this means that the locus $(\partial \varphi)^{*} (\partial Y_{\mathbb{F}_{p}})$ has larger-than-expected dimension. If such loci with larger-than-expected dimension are not dense in $\sfM_{g-1,2, \mathbb{F}_{p}}$, then necessarily \eqref{badcontainment} cannot happen away from a strict closed locus in $\sfM_{g-1,2, \mathbb{F}_{p}}$, and one obtains the desired density in $\sfM_{g, \mathbb{F}_{p}}$ by deforming out from the boundary. In particular, a question of \emph{density} for expected-dimensional intersections with $\sfM_{g, \mathbb{F}_{p}}$ reduces to a question of \emph{non-density} for unlikely-dimensional intersections with $\sfM_{g-1,2, \mathbb{F}_{p}}$. 

To make this strategy work, we have to adapt the techniques of \cite{urbanik2023algebraiccyclelociintegral} and \cite{zbMATH08159582} to study the period map $\partial \varphi$. Whereas \cite{urbanik2023algebraiccyclelociintegral} and \cite{zbMATH08159582} are concerned with period maps $\psi$ such that $\psi_{\IC}$ is associated to a pure variation of Hodge structure, the period map $(\partial \varphi)_{\mathbb{C}}$ is associated to a mixed variation of Hodge structure. This presents some additional subtleties; in particular instead of giving a mixed characteristic analogue of the geometric Zilber-Pink of \cite{BKU}, one should give a mixed characteristic analogue of \cite[Thm. 8.2]{zbMATH07305885} (cf. \cite[Thm. 7.1]{arXiv:2406.16628} for the general statement for admissible, polarizable $\mathbb{Z}$MVHS). We carry this out in \S\ref{Mg2flatsec}. Because our non-density is a ``mixed characteristic'' result, one is able to deduce the desired non-density in $\sfM_{g-1,2, \mathbb{F}_{p}}$ only for $p \gg 0$, which explains why we require $p \gg 0$ in \autoref{thm: main density}.

\subsection{Proof Strategy for Supersingular Loci}

We now explain the idea behind our results on supersingular loci. These results arise from a somewhat different application of the atypical-intersection techniques used above, which is perhaps more surprising in the following sense. Supersingularity is an intrinsically positive-characteristic condition, defined in terms of Frobenius on crystalline cohomology, and there is no reason a priori to expect transcendence results for complex period maps to constrain supersingular loci in characteristic $p$.

Let us first briefly recall the relevant techniques of \cite{urbanik2023algebraiccyclelociintegral} and \cite{zbMATH08159582} alluded before in more detail. Suppose that $T$ is equipped with a filtered flat vector bundle $(\sH,\Fil^\bullet,\nabla)$, together with a collection of global flat tensors lying in the middle part of the Hodge filtration. We construct a suitable frame bundle $\sM$ over $T$ and a period morphism $\sM\to\che{D}$, where $\che{D}$ is an appropriate flag variety. Using the flat connection, for $r!$ invertible on $T$, this upgrades to a morphism between jet spaces $\sM^d_r\to J^d_r\che{D}$ (see \autoref{refinedjetthm}). Now fix a closed subscheme $Z\subseteq\che{D}$. Suppressing the factor issues discussed in \S\ref{refinedjetcorrsec}, if the expected dimension of an intersection of the period image with $Z$ is $e$, we take $d=\max\{1,e+1\}$. We construct a descending sequence $\sE_1\supseteq\sE_2\supseteq\cdots$ where $\sE_r$ is the Zariski closure of the points $s\in T$ for which, for a suitable choice of frame, there exists a non-degenerate jet $j:\ID^d_r\to T$ based at $s$ whose period jet factors through $J^d_rZ$. Since $T$ is Noetherian, $\sE_\infty:=\cap_{r\geq1}\sE_r$ stabilizes at some finite order. Compatible jets to arbitrarily high order then produce a formal intersection of dimension at least $d$, and Ax--Schanuel over $\IC$ implies that $\sE_{\infty,\IC}$ lies in a proper locus; in the situations relevant to us, this locus is contained in a finite union of strict special subvarieties.

The key point for the applications below is that $Z$ need not have any characteristic-zero Hodge-theoretic interpretation. In more familiar applications to Hodge loci, $Z \subseteq \che{D}$ is obtained by requiring flat tensors to remain Hodge and is therefore closely related to a Mumford--Tate domain. Here the varieties $Z$ instead arise from linear-algebraic constraints which appear only after reduction modulo $p$, and can even be incompatible with the linear algebra of polarized Hodge structures in characteristic zero. Nevertheless, they define algebraic subvarieties of the compact dual \textit{which makes sense over $\IZ$}. This is enough for the Ax--Schanuel argument.

The orthogonal case gives the cleanest illustration of this phenomenon. For geometric intuition, consider the moduli space of polarized K3 surfaces. In characteristic zero, the condition $h^{2,0}=1$ means that the obstruction to deforming a line bundle is locally given by a single equation. More generally, deforming a primitive lattice of rank $r$ inside the N\'eron--Severi group is expected to impose $r$ conditions. In characteristic $p$, however, the supersingular locus has dimension $9$ inside the $19$-dimensional moduli space, contrary to this expectation. Indeed, if $X$ is a supersingular K3 surface over an algebraically closed field $k$, then the map on primitive parts
$$
c_{1,k}:\NS(X)_{\mathrm{prim}}\tensor_\IZ k\longrightarrow\H^2_\dR(X/k)_{\mathrm{prim}}
$$
fails to be injective, a phenomenon which cannot occur for a polarized K3 surface in characteristic zero. In fact, $\dim\ker(c_{1,k})$ is the Artin invariant $\sigma_0(X)$. The directions in this kernel impose no constraints on the period map. Since $\sigma_0 \leq 10$, a naive count would therefore predict codimension $21-10=11$, and hence dimension at most $8$. This is still off by one.

The correction comes from the quadratic form. If $\sigma_0(X)=\sigma_0$, the image of $c_{1,k}$ admits an orthogonal decomposition $M=M_I\oplus M_{\mathrm{sd}}$, where $M_I$ is totally isotropic of dimension $\sigma_0$ and $M_{\mathrm{sd}}$ is non-degenerate of dimension $21-2\sigma_0$. Writing $\w=\Fil^2$, the condition that the classes in $M$ remain in $\Fil^1$ is $\w\perp M$. Since $M_I$ is a maximal isotropic subspace of $M_{\mathrm{sd}}^\perp$, this is equivalent to $\w\subset M_I$. Thus the relevant subvariety of the compact dual is
$$
Z=\IP(M_I)\subset\che{D},
$$
which has dimension $\sigma_0-1$, and hence codimension $20-\sigma_0$ in the $19$-dimensional quadric $\che{D}$. This gives the correct maximal dimension $9$ when $\sigma_0=10$. Although the same projective linear space $Z$ can of course be considered inside $\che{D}_\IC$, its origin here is entirely characteristic $p$. By the Hodge--Riemann bilinear relations, primitive algebraic classes on a polarized complex K3 surface form a definite space, so the isotropic configuration above has no characteristic-zero Hodge-theoretic analogue. It is precisely this kind of non-Hodge-theoretic $Z$ to which we apply Ax--Schanuel in \cref{preciseorthogonalshimurasupsingthm}.

Once this picture is understood, it is natural to apply the same idea to de Jong's question. The main difference is that for surfaces of degree $d>4$ one has $h^{2,0}>1$, which creates two new issues. First, we do not know the Tate conjecture in this generality, so we cannot obtain the required horizontal classes by deforming actual line bundles. Instead we use the $p$-adic Tate module of the relevant F-crystal. Katz's equivalence between $\IZ_p$-local systems and unit-root F-crystals, a basic characteristic-$p$ analogue of the Riemann--Hilbert correspondence, allows the generic Tate module to spread to a unit-root sub-F-crystal after suitable shrinking and an \'etale cover. This produces the horizontal sections needed for the jet argument; see \cref{prop: semi-continuity,cor: Frob inv horizontal}. Second, unlike for K3 surfaces, when $h^{2,0}>1$ there appears to be no useful general analogue of the bound $\sigma_0\leq10$. We therefore restrict to the $p$-torsion condition appearing in \cref{supsingquintic}, which may be viewed as a ``weakly minuscule'' condition and gives the required bound on the Artin invariant.

For principally polarized abelian varieties, supersingularity is harder to detect infinitesimally because the isogeny to a product of supersingular elliptic curves is very inseparable. We therefore use only a partial consequence of supersingularity: a supersingular abelian variety $A$ of dimension $g$ contains an abelian subvariety $B$ of dimension $\lfloor (g + 1) / 3\rfloor$. Writing $Q=A/B$, strictness of de Rham cohomology gives $H^1_\dR(Q)\subset H^1_\dR(A)$. The polarization induces a possibly degenerate alternating form on this subspace. After fixing the dimension of its radical, these subspaces form a single symplectic orbit, and the corresponding condition on the Hodge filtration defines a subvariety $Z_M\subset\IGr(g,V)$. Although this remembers only part of the supersingularity condition, the codimension of $Z_M$ grows quadratically with $g$, while $\dim\sfM_g=3g-3$ grows linearly. This suffices to prove \cref{thm: Oort} for $g\geq 16$. We expect that a more refined description of the infinitesimal constraints imposed by supersingularity can improve this numerical bound.

\subsection{Flatness and Lifting} \label{flatliftintrosec} Our methods also allow us to give structural results for arithmetic special loci inside moduli spaces which lie over a Shimura variety. In the following result the reader can keep in mind the Torelli map $\varphi : \sfM_{g} \to \sfA_{g}$, viewed as a morphism of Deligne-Mumford stacks over $\Spec\, \mathbb{Z}$.

\begin{theorem}
\label{genericflatnessMg}
Let $L$ be a number field and $N \in \mathbb{Z}$ be a non-zero integer. Let $\varphi : T \to \sfA_{g}$ be a quasi-finite morphism of Deligne-Mumford stacks over $\sO_{L}[1/N]$, with $T$ smooth over $\sO_{L}[1/N]$, and such that $\varphi_{\mathbb{C}}$ has Hodge-generic image. Then after possibly increasing $N$, there exists a strict Zariski closed locus $E \subsetneq T$ with the following property: for every morphism of (integral canonical models of) Shimura varieties $w : Y \to \sfA_{g}$ such that
\[ \dim \varphi(T_{\mathbb{C}}) + \dim Y_{\IC} \geq \dim \sfA_{g,\IC} \]
the restriction $(\varphi^{*} Y)^{\circ}$ of $\varphi^{*} Y$ to the fibre above $T - E$ is syntomic (i.e., flat and lci) over $\sO_{L}[1/N]$. Moreover $E_{\IC}$ is a union of special loci of $T_{\IC}$, and each non-empty fibre of $(\varphi^{*} Y)^{\circ} \to \Spec\, \sO_{L}[1/N]$ has expected dimension
\[ \dim \varphi(T_{\IC}) + \dim Y_{\IC} - \dim \sfA_{g,\IC} . \]
\end{theorem}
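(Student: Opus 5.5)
The plan is to reduce flatness to a dimension statement. I will then control the dimension using the mixed-characteristic atypicality machinery of \cite{urbanik2023algebraiccyclelociintegral, zbMATH08159582}, as recalled in the proof strategy above.

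\textbf{Step 1: local structure of $\varphi^{*}Y$.} After increasing $N$ so that $w$ and $\varphi$ are defined over $\sO_L[1/N]$ with $Y$ smooth, the fibre product $\varphi^{*}Y$ is étale-locally cut out inside the smooth scheme $T\times Y$. It is the preimage of the diagonal-type locus in $\sfA_g\times\sfA_g$, or equivalently, étale locally on $T$, the locus where $\varphi$ meets a smooth subscheme of codimension $c=\dim\sfA_{g,\IC}-\dim Y_\IC$. Hence $\varphi^{*}Y$ is locally defined by $c$ equations in a scheme smooth over $\sO_L[1/N]$ of relative dimension $\dim T_\IC$.

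Every irreducible component of every fibre over a closed point $\fp$ therefore has dimension at least $e:=\dim\varphi(T_\IC)+\dim Y_\IC-\dim\sfA_{g,\IC}$; here we use quasi-finiteness, which gives $\dim T_\IC=\dim\varphi(T_\IC)$. If every fibre component has dimension exactly $e$ (or the fibre is empty), then the $c$ equations form a regular sequence fibrewise. By the local criterion of flatness (fibrewise regular sequence in a flat ambient scheme), $\varphi^{*}Y$ is flat and lci over the base, hence syntomic, with fibres of the expected dimension. So it suffices to find $E$ such that, over $T-E$, no fibre of $\varphi^{*}Y$ over any prime $\fp$ (including the generic fibre) has a component of dimension $>e$, uniformly in $w$.

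\textbf{Step 2: constructing $E$ via jets.} For each Shimura datum type, the loci $w(Y)$ correspond, in the period domain picture, to closed subschemes $Z\subseteq\che{D}$ defined over $\IZ$. These are orbits of the relevant subgroup, stable under the flat tensors, and only finitely many $\GL$-conjugacy types occur up to Hecke translation. For each type I take $d=e+1$ and form the descending sequence $\sE_1\supseteq\sE_2\supseteq\cdots$ in $T$ using \autoref{refinedjetthm}. Here $\sE_r$ is the closure of points admitting a non-degenerate $d$-dimensional $r$-jet whose period jet lands in $J^d_rZ$ (for some frame). Since the frame choice absorbs Hecke translates, $\sE_r$ is independent of $w$ within the type. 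By Noetherianity the sequence stabilizes at some $\sE_\infty$, and I set $E=\bigcup\sE_\infty$ over the finitely many types, after increasing $N$ to invert the $r!$ needed for the stable order. A fibre component of dimension $\geq e+1$ through a smooth point $s$ yields such jets at all orders, so it lies in $E$.

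\textbf{Step 3: $E$ is strict with $E_\IC$ special.} This is the main obstacle. Over $\IC$, points of $\sE_{\infty,\IC}$ carry compatible jets of all orders, which assemble into a formal germ of dimension $e+1$ whose period image lies in $Z$. This is an atypical intersection. Ax–Schanuel, in the form used in the geometric Zilber–Pink argument of \cite{BKU}, forces $\sE_{\infty,\IC}$ into the atypical locus, which is a finite union of strict special subvarieties since $\varphi_\IC$ is Hodge-generic.

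The delicate point is passing from the Zariski closure over $\IZ$ to the complex fibre. Since $T$ is flat over $\sO_L[1/N]$ and $\sE_\infty$ is closed, any component of $\sE_\infty$ that is not flat lives over finitely many primes, which I remove by enlarging $N$. The remaining flat components are determined by their $\IC$-fibres, which are strict. The issues to be careful about are the correct uniformity in $w$ (so that finitely many $Z$-types suffice) and the factor/frame issues of \S\ref{refinedjetcorrsec}. I would handle both exactly as in \cite[Prop.~5.8]{urbanik2023algebraiccyclelociintegral}.
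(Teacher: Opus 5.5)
Your proposal is correct and follows essentially the same route as the paper. The paper packages your Steps 1--2 as \autoref{shimflatnessthmv2}: $\varphi^{*}Y$ is lci of codimension $c$, flatness follows from the fibrewise criterion, and fibre components of excess dimension produce non-degenerate jets whose periods land in one of finitely many types $Z_i$. It packages your Step 3 as \autoref{atyplocusconstrprop}, using Ax--Schanuel together with the $\IQ$-simplicity of $\Sp_{2g}$ and quasi-finiteness. The remaining differences are bookkeeping: the paper first passes to a level cover $\sfA_g[n]$ so that $T$ is a scheme, takes representatives $Z_i$ over some $\sO_{L'}[1/N]$ (rather than over $\IZ$), and descends $E$ to $\sO_L[1/N]$ by adding Galois conjugates. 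Also, the finiteness of types holds up to conjugacy by the group acting on frames, not merely up to Hecke translation; this is exactly what your frame-choice argument absorbs.
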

\noindent A generalization of \autoref{genericflatnessMg} where the target is an arbitrary Shimura variety is given in \autoref{shimflatnessthmv2}.

Flatness results like the above imply results on lifting algebraic cycles. We state some consequences for prime-to-$p$ splittings of Jacobians; other results for lifting more general algebraic cycles are easily derived from \autoref{shimflatnessthmv2} (cf. our previous work \cite[Thm. 1.1.1]{zbMATH08159582}).

\begin{theorem}
\label{liftisodecompthm}
Outside the locus $E \subsetneq \sfM_{g}$ of \autoref{genericflatnessMg}, the following holds: for every partition $\bg := \{ g_i \}_{i = 1}^m$ with $\delta(\bg) \leq 3g-3$ (recall \eqref{deltaeq}), if $[C] \in \sfM_{g}(\overline{\mathbb{F}}_{p})$ is a curve such that $J(C)$ admits a prime-to-$p$ splitting of type $\bg$, then $[C]$ lifts to a point $[\widetilde{C}] \in \sfM_{g}(R)$ together with the splitting, with $R$ finite over $W(\overline{\mathbb{F}}_{p})$.
\end{theorem}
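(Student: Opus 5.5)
Apply \autoref{genericflatnessMg} to the Torelli morphism $\varphi : T := \sfM_{g} \to \sfA_{g}$ over $\IZ[1/N]$, after passing to a finite étale level cover so that $T$ is a smooth quasi-finite stack over $\sfA_g$ and $\varphi_\IC$ has Hodge-generic image. The primes dividing $N$ have to be absorbed: either we enlarge $E$ by the fibres over those primes, which are strict closed subsets of $\sfM_g$, or we read the statement as concerning $p \nmid N$. This gives the locus $E$ of the statement. Now fix a partition $\bg$ with $\delta(\bg) \le 3g-3$ and a curve $[C] \in \sfM_g(\overline{\IF}_p) \smallsetminus E$ whose Jacobian has a prime-to-$p$ splitting $\prod_j A_j \to J(C)$ of type $\bg$.

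First I realize the splitting as a point of a Shimura morphism. Let $n$ be the degree of the isogeny; it is prime to $p$. Pull back the principal polarization of $J(C)$ to $\prod_j A_j$, and choose suitable auxiliary level structure prime to $p$. This produces a Hecke translate $w : Y \to \sfA_g$ of $\prod_j \sfA_{g_j}$ (with polarization types on the factors fixed by the data) together with a point $y \in Y(\overline{\IF}_p)$ with $w(y) = \varphi([C])$. Here $Y$ is an integral canonical model over $\IZ[1/nN]$, and $p \nmid nN$. We have $\dim Y_\IC = \sum_j g_j(g_j+1)/2$ and $\dim \varphi(\sfM_{g,\IC}) = 3g-3$, so $\delta(\bg) \le 3g-3$ is exactly the inequality $\dim \varphi(T_\IC) + \dim Y_\IC \ge \dim \sfA_{g,\IC}$. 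Thus $x := ([C], y)$ is an $\overline{\IF}_p$-point of $(\varphi^*Y)^\circ$.

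Next I lift $x$. By \autoref{genericflatnessMg}, $(\varphi^*Y)^\circ$ is flat, indeed syntomic, over $\IZ[1/N]$ near $x$. A flat finite-type scheme over a Dedekind base has no irreducible component supported in the special fibre. So $x$ lies in the closure of the generic fibre, and there is an irreducible component $Z$ of $(\varphi^*Y)^\circ \otimes W(\overline{\IF}_p)$ through $x$ that dominates $\Spec W(\overline{\IF}_p)$. Take a curve through $x$ in $Z$ not contained in the special fibre, normalize it, and complete at a point over $x$. This gives a morphism $\Spec R \to (\varphi^*Y)^\circ$ sending the closed point to $x$, with $R$ a complete DVR finite over $W(\overline{\IF}_p)$. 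Its two components give $[\widetilde C] \in \sfM_g(R)$ and a point of $Y(R)$ with the same image in $\sfA_g$, so the splitting is carried along.

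Finally I check that the splitting lifts as a prime-to-$p$ splitting. The point of $Y(R)$ gives abelian schemes $\widetilde A_j / R$ and an isogeny $\prod_j \widetilde A_j \to J(\widetilde C)$ of degree $n$, prime to $p$, specializing to the given one. I expect the main obstacle to be this bookkeeping, not a geometric argument. One must make sure the fibre product with $Y$ really encodes the isogeny, so that an $R$-point of $Y$ over $J(\widetilde C)$ is literally a splitting rather than something recording only its image. One must also handle the stacky and level-structure issues, including the need for $E$ to be independent of $Y$. That independence is part of \autoref{genericflatnessMg}; the rest follows the moduli interpretation of the integral canonical models of Hecke translates of $\prod_j \sfA_{g_j}$.
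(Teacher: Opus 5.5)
Your proposal is correct and follows the paper's route: apply \autoref{genericflatnessMg} to the Torelli map, and use the moduli interpretation to view the splitting as a point of a prime-to-$p$ Hecke translate $Y$ of $\prod_j \sfA_{g_j}$, where $\delta(\bg)\le 3g-3$ is exactly the dimension hypothesis. You then use flatness of $(\varphi^*Y)^\circ$ to lift; the only difference is that you derive the flatness-implies-lifting step by hand instead of citing \cite[Lem.~2.1.5]{zbMATH08159582}. One point that both you and the paper leave implicit: for the given splitting to define a point of such a $Y$ at all, the pulled-back polarization on $\prod_j A_j$ must be of product type, i.e.\ the polarized notion of splitting used in \cref{thm: main density2}.
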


We can also lift isogenies between Jacobians for $g \leq 9$.

\begin{theorem}
\label{isogenyliftthm}
Fix an integer $g \leq 9$. Then there exists a strict closed locus $E \subsetneq \sfM_{g} \times \sfM_{g}$ with the following property: for every point $([C], [C']) \in (\sfM_{g} \times \sfM_{g} - E)(\overline{\mathbb{F}}_{p})$ and prime-to-$p$ isogeny $\beta : J(C) \to J(C')$, there exists a finite extension $R$ of $W(\overline{\mathbb{F}}_{p})$ and lifts $(\widetilde{C}, \widetilde{C}', \widetilde{\beta})$ to $R$ of the data $(C, C', \beta)$. Moreover $E_{\IC}$ is a union of special subvarieties.
\end{theorem}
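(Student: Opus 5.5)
We deduce \autoref{isogenyliftthm} from \autoref{genericflatnessMg}, or more precisely from its generalization \autoref{shimflatnessthmv2}. We take $T := \sfM_{g} \times \sfM_{g}$, mapping quasi-finitely via $\varphi \times \varphi$ to the Shimura variety $\sfA_{g} \times \sfA_{g}$. The relevant special loci are the Hecke correspondences. For each prime-to-$p$ isogeny type, meaning an integer $n$ and a level structure describing $\ker \beta$, we let $w : Y \to \sfA_{g} \times \sfA_{g}$ be the graph of the corresponding Hecke operator. This is a Shimura subvariety isomorphic (up to level) to $\sfA_{g}$ and embedded via the pair of isogenous abelian varieties.

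The first step is the numerical check. We have $\dim T_{\IC} = 2(3g-3)$, $\dim Y_{\IC} = g(g+1)/2$ and $\dim (\sfA_{g}\times\sfA_{g})_{\IC} = g(g+1)$, so the condition of \autoref{genericflatnessMg} reads
\[ 6g-6 \ge \tfrac{g(g+1)}{2}. \]
Equivalently $g^{2} - 11g + 12 \le 0$, which holds for $2 \le g \le 9$ (and $g=9$ gives $48 \geq 45$). The low cases $g \le 1$ are trivial or handled separately.

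We also need the Hodge-genericity hypothesis: the image of $T_{\IC}$ must be Hodge generic in $\sfA_{g}\times\sfA_{g}$. This holds because the monodromy of $\sfM_{g}$ is all of $\Sp_{2g}$. By Goursat's lemma, the monodromy of the product is then $\Sp_{2g}\times\Sp_{2g}$ rather than a graph, since the two factors vary independently. So the generic Mumford--Tate group is $\GSp_{2g}\times_{\IG_m}\GSp_{2g}$, or its full product version depending on the conventions of \autoref{shimflatnessthmv2}.

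Applying the theorem produces, after possibly enlarging $N$, a strict closed $E$ whose complex fibre is a union of special subvarieties. We absorb the finitely many primes dividing $N$ into $E$ by adding the fibres $T_{\IF_{p}}$ for $p\mid N$; these are strict closed subsets. Then $(\varphi^{*}Y)^{\circ}$ is flat over $\IZ[1/N]$ for every Hecke graph $Y$. A pair $([C],[C'],\beta)$ over $\overline{\IF}_{p}$ with $([C],[C'])\notin E$ gives a point of $(\varphi^{*}Y)^{\circ}_{\overline{\IF}_{p}}$ for the appropriate $Y$. Flatness, together with the fact that the generic fibre has the expected dimension so that no component is concentrated in characteristic $p$, implies that this point lies in the closure of the generic fibre. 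Concretely, the local ring is flat over $\IZ_{(p)}$, so some irreducible component through the point dominates $\Spec \IZ[1/N]$. Going-down then gives a point over a finite extension of $W(\overline{\IF}_{p})[1/p]$ specializing to our point. The resulting $R$-point of $\varphi^{*}Y$ is exactly a lift $(\widetilde C,\widetilde C',\widetilde\beta)$. The isogeny lifts because $Y$ carries the universal isogeny over its integral canonical model, and prime-to-$p$ isogenies with level structure are étale-local data on $Y$.

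The main subtlety is ensuring that the point $([C],[C'],\beta)$ really corresponds to a point of $\varphi^{*}Y$ in the sense required by \autoref{shimflatnessthmv2}. This means the Hecke graph must be defined over $\IZ[1/N]$ uniformly, and $\beta$ must be compatible with the principal polarizations up to a scalar. Polarization compatibility is part of the definition of a Hecke correspondence, so if $\beta$ is an arbitrary isogeny we first reduce to that case. We replace $\beta$ by $\beta$ together with its Rosati data. Alternatively we take $Y$ to be the Shimura subvariety of the isogeny correspondence for $\GSp_{2g}$, so that only polarization-compatible $\beta$ are covered. If the statement intends arbitrary isogenies, we would instead use the larger family of Shimura subvarieties parametrizing $(A,A',\beta)$ with $\beta^{*}\lambda'$ an arbitrary polarization. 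We expect this still to fit the dimension count, since such loci have dimension at most that of $\sfA_{g}$.

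The remaining point is uniformity of $E$ over all $Y$ and all $p$. This is exactly what \autoref{genericflatnessMg} provides, since $E$ there is independent of $Y$.
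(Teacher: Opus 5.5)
Your argument is essentially the paper's. The paper takes $Z$ to be the diagonal $\IGr(g,V)\subset\IGr(g,V)\times\IGr(g,V)$ and builds $E$ from \autoref{atyplocusconstrprop} with $d=2\dim\sfM_{g,\IC}-\dim\sfA_{g,\IC}+1$. It then reruns the argument of \autoref{shimflatnessthmv2} locally on $\Def(J(C),J(C'),\beta)$: non-flatness forces a component of dimension $\geq d$, whose jets land in $(\beta,1)Z$ and hence lie over $E$. Working locally also removes your worry about the level of a global Hecke graph at primes dividing $\deg\beta$, since only flatness at $p\nmid\deg\beta$ is ever used.

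One correction: your fallback for isogenies that do not respect the principal polarizations is backwards. If $\beta^{*}\lambda'\notin\IQ\lambda$, the locus where $\beta$ deforms is a strictly smaller Shimura subvariety, at best of type $\sfA_1\times\sfA_{g-1}$. The inequality $\dim T_{\IC}+\dim Y_{\IC}\geq 2\dim\sfA_{g,\IC}$ therefore gets harder, not easier, and it already fails for $g=8,9$. The paper's proof uses that this locus is cut out by exactly $g(g+1)/2$ equations, so it too only covers $\beta$ compatible with the polarizations up to a scalar. Your argument has the same scope, and you should state that restriction rather than suggest a reduction from the general case.
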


\noindent That the isogeny $\beta$ of \autoref{isogenyliftthm} should lift to an isogeny of abelian varieties is an easy consequence of deformation theory, but that it should lift to an isogeny of Jacobians is more difficult. One does not expect this to be true unless $g \leq 9$, since the deformation locus of $\beta$ in $\sfA_{g} \times \sfA_{g}$ has dimension $g(g+1)/2$ over $\Spec\, W(\overline{\mathbb{F}}_{p})$, and the inequality
\[ 2 \dim \sfM_{g,\mathbb{C}} + g(g+1)/2 \geq 2 \dim \sfA_{g,\IC} \]
for positive integers $g$ is equivalent to $g \leq 9$. 

\begin{remark}
\label{rmk: effectivity}
    All atypical loci --- that is all the loci labelled ``$E$'' in the theorems in both this and the prior sections --- are constructed effectively using an explicit algebro-geometric procedure. This ultimately reduces to effective procedures for computing the filtered vector bundles with connection $(\mathcal{H}, F^{\bullet}, \nabla)$ coming from algebraic de Rham cohomology, as well as the explicit construction of the maps $\eta^{d}_{r}$ in \autoref{bigjetthm} using this data. For more details see the discussion in \cite[\S1.3]{zbMATH08159582}. 
\end{remark}

\subsection{Notations and Conventions}


We adopt the following convention. Suppose that $T \to S$ is a quasi-finite morphism between Noetherian schemes. Then by the ``codimension of $T$ in $S$'', we just mean the codimension of the image of $T$ in $S$. If $\sfX$ is a stack and $W \subset \sfX$ is a closed substack, we sometimes refer to $W$ as a ``subvariety'' of $\sfX$. For instance, we will usually refer to ``special subvarieties'' of $\sfX$ in analogy with the situation when $\sfX$ is an actual algebraic variety, even though these substacks are not technically varieties but stacks.

For a curve $C$, we often simply write $J(C)$ for $\Jac(C)$.

\section{Jet Theory}
\label{sec: jet theory}

\subsection{Jet Schemes}
\label{subsec: jets}
Our proof will use a jet-theoretic construction developed in \cite{urbanik2023algebraiccyclelociintegral} that allows one to algebro-geometrically parameterize ``infinitesimal period maps'' to various finite orders $r$. Crucially this uses the notion of a jet-scheme, for which we refer to the exposition in \cite[\S3.1]{zbMATH08159582} and adopt the notation and conventions there. In particular, we define, for a ring $R$
\[ A^{d}_{r, R} = R[t_{1}, \hdots, t_{d}]/(t_{1}, \hdots, t_{d})^{r+1} , \]    and write $\mathbb{D}^{d}_{r, R}$ for $\Spec (A^{d}_{r, R})$. For an $R$-scheme $T$ and integers $r, d \geq 0$ we write $J^{d}_{r} T$ for the associated jet scheme (when it exists). 

The construction of $J^d_r T$ is compatible with base-change along maps $R \to R'$. Every map $\mathbb{D}^{d}_{r,R} \to \mathbb{D}^{d'}_{r',R}$ induces a morphism $J^{d'}_{r'} \to J^d_r$ of functors, which induces algebraic maps $J^{d'}_{r'} T \to J^{d}_{r} T$. The most important case for us will be when $d$ is fixed, and we consider the natural maps $\pi^{r}_{r-k} : J^d_r \to J^d_{r-k}$ induced by the natural surjections $A^d_{r} \to A^d_{r-k}$ obtained by quotienting out $(t_1, \hdots, t_d)^{r - k + 1}$.  Given $j$ an $R$-point of $J^d_r T$, the image $\pi^{r}_{r-k}(j)$ is then interpreted as the restriction of $j : \mathbb{D}^d_r \to T$ to the subdisk $\mathbb{D}^d_{r-k} \subset \mathbb{D}^d_r$ of order $r-k$ determined by $A^d_r \to A^d_{r-k}$.



There is an open subscheme $J^{d}_{r,\operatorname{nd}} T \subset J^{d}_{r} T$ which parameterizes those jets that are injective on tangent spaces, constructed as in \cite[Def 3.4]{urbanik2023algebraiccyclelociintegral}. Moreover we have a functorial closed subscheme $c(T) \subset J^{d}_{r} T$ which parameterizes those jets that are ``constant''; this subscheme is isomorphic to $T$ and arises as the image of a natural section $c : T \to J^{d}_{r} T$. We let $J^d_{r, \nc} T$ denote the space of non-constant jets, i.e., the complement of $c(T)$ in $J^d_r T$. 

\subsection{Infinitesimal Period Maps}
\label{infpermapsec}

We will use jets to study infinitesimal period maps. This notion is introduced in \cite{urbanik2023algebraiccyclelociintegral} under the name ``$r$-limp''. To define it, we suppose we have a triple $(\sH, \textrm{Fil}^{\bullet}, \nabla)$ consisting of a filtered vector bundle of rank $m$ with $R$-linear integral connection, all over the smooth $R$-scheme $T$. We write $\che{L}$ for the $R$-scheme parameterizing flags on $R^{m}$ whose graded quotients are locally free and of the same rank as the graded quotients of $(\sH, \textrm{Fil}^{\bullet})$ (cf. \cite[\S1.2]{urbanik2023algebraiccyclelociintegral}). We fix a positive integer $r$ and assume that $r!$ is invertible in $R$.

\begin{notn}
\label{disknot}
Let $k$ be a field together with a map $R \to k$, and $s \in T(k)$ a point. Then we set $\Delta^{r}_{s} := \Spec(\sO_{T_k, s}/\mathfrak{m}^{r+1})$, where $\mathfrak{m} \subset \sO_{T_{k}, s}$ is the maximal ideal corresponding to $s$. 
\end{notn}

\begin{notn}
For a vector space $V$ over a field $k$, we write $\textrm{Fr}(V)$ for the set of frames (bases) of $V$.
\end{notn}

\begin{lemma}
\label{trivlem}
Let $k, s, \Delta^{r}_{s}$ be as in \autoref{disknot}. Then every basis $b^{\bullet} = (b^{1}, \hdots, b^{m}) \in \textrm{Fr}(\sH_s)$ induces a unique horizontal trivialization $i_{b^{\bullet}} : \restr{(\sH, \nabla)}{\Delta^r_s} \xrightarrow{\sim} (\sO_{\Delta^r_s}, d)^{\oplus m}$ whose induced map $\sH_s \to k^{m}$ is the one induced by $b^{\bullet}$. 
\end{lemma}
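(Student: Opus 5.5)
The plan is to prove this by the standard argument that an integrable connection on a truncated disk is trivial once $r!$ is invertible. Everything is local, so after shrinking $T$ I will work over an affine neighbourhood of $s$, base changed to $k$. Since $T$ is smooth over $R$, $T_k$ is smooth over $k$ at $s$, so the completed local ring of $T_k$ at $s$ is $k'[[x_1,\dots,x_n]]$, where $k'$ is the residue field of $s$. Because $s\in T(k)$, we have $k'=k$. Hence $\sO_{\Delta^r_s} = k[x_1,\dots,x_n]/(x)^{r+1}$, with coordinates $x_i$ coming from a regular system of parameters, and $\Omega^1_{\Delta^r_s/k}$ is free on the $dx_i$ modulo the usual truncation. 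Over this Artinian local ring the restricted bundle $\sH|_{\Delta^r_s}$ is free, so I pick any frame $e_1,\dots,e_m$ lifting $b^\bullet$. The connection is then $\nabla = d + \sum_i \Omega_i\,dx_i$ with matrices $\Omega_i$ over $\sO_{\Delta^r_s}$, and integrability means the commuting relations $[\partial_i+\Omega_i,\partial_j+\Omega_j]=0$.

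For existence, I construct horizontal sections by a Taylor formula. Set $D_i = \nabla_{\partial_i}$, which pairwise commute by integrability. For a section $v$, define the projector
\[ P(v) = \sum_{\alpha,\ |\alpha|\le r} \frac{(-x)^\alpha}{\alpha!} D^\alpha v , \]
which makes sense because $r!$, and hence every $\alpha!$ with $|\alpha|\le r$, is invertible. The standard telescoping computation shows $D_j P(v)=0$ modulo $(x)^{r+1}$. The top-degree terms need care here: the leftover terms have degree at least $r+1$ in $x$ and so vanish in $\sO_{\Delta^r_s}$, although we also need to verify $D^\alpha$ with $|\alpha|=r+1$ is not required. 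An equivalent way to phrase this is that $P(v)$ is the unique horizontal section through $v(s)$, obtained by solving the horizontality ODE degree by degree. At each step the new coefficient is determined by dividing by an integer at most $r$.

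I expect the main obstacle to be precisely this bookkeeping. The cleanest route is the inductive one. Write a candidate horizontal section as $\sum_{|\alpha|\le r} c_\alpha x^\alpha$ and solve $D_i(\cdot)=0$ degree by degree. At degree $\ell$ the equations determine $\alpha_i c_\alpha$ in terms of lower-degree data, so we need $\alpha_i\le r$ to be invertible. Integrability gives the compatibility needed to solve all $n$ equations simultaneously, exactly as in the proof of the formal Frobenius theorem. This produces, for each $b^j$, a horizontal section $\tilde b^j$ restricting to $b^j$ at $s$. The $\tilde b^j$ form a frame by Nakayama, since they form a basis at the closed point. The map sending $\tilde b^j$ to the $j$-th standard basis vector is then a horizontal isomorphism $(\sH,\nabla)|_{\Delta^r_s}\cong(\sO,d)^{\oplus m}$ whose fibre at $s$ is the map induced by $b^\bullet$.

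For uniqueness, two such trivializations differ by a horizontal automorphism of $(\sO_{\Delta^r_s},d)^{\oplus m}$ that is the identity at $s$. Such an automorphism is given by a matrix $g$ with $dg=0$ and $g(s)=I$. If $g\ne I$, take a nonzero homogeneous component $g_\ell$ of $g-I$ of minimal degree $\ell\geq 1$. Some partial derivative $\partial_i g_\ell$ is then nonzero, because it has a coefficient multiplied by an integer at most $\ell\le r$, which is invertible. This contradicts $dg=0$, so $g=I$. Hence the horizontal sections through $s$ are unique, and so is the trivialization $i_{b^\bullet}$.
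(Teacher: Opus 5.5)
Your uniqueness argument is fine, and a truncated Taylor series is the natural self-contained route (the paper simply quotes \cite[Lem.~A.0.1]{zbMATH08159582}). However, the existence step contains a false claim exactly at the point you flagged.

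The operators $D_i=\nabla_{\partial_i}$ are not defined on $\sH|_{\Delta^r_s}$, since in general $\partial_i(\mathfrak{m}^{r+1})\not\subseteq\mathfrak{m}^{r+1}$. You have to compute on $\sH$ near $s$, or on its completion, where they are honest commuting operators. Redoing the telescoping there gives
\[ D_jP(v)=\sum_{|\alpha|=r}\frac{(-x)^{\alpha}}{\alpha!}\,D^{\alpha+e_j}v . \]
So $D_jP(v)\in\mathfrak{m}^{r}\sH$, not $\mathfrak{m}^{r+1}\sH$: the leftover has degree exactly $r$ and does not vanish on $\Delta^r_s$. Your inductive version has the same hole. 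The unknowns $c_\alpha$ with $|\alpha|\le r$ are used up by the equations in degrees $\le r-1$. The degree-$r$ part of $D_i(\cdot)=0$ is then an extra condition that you never verify.

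Whether this is fatal depends on what ``horizontal on $\Delta^r_s$'' means, and the proof has to say so.

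\emph{First reading.} Horizontality means that $\nabla$ of the section vanishes in $(\sH\otimes\Omega^1_{T_k/k})/\mathfrak{m}^r$. That is, you view $\nabla$ as a map $\sH/\mathfrak{m}^{r+1}\sH\to(\sH\otimes\Omega^1)/\mathfrak{m}^{r}$, which is how it actually descends. Then $D_jP(v)\in\mathfrak{m}^r\sH$ is exactly what is needed, and with the degree count corrected your proof is complete.

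\emph{Second reading.} You use the pulled-back connection with values in $\sH|_{\Delta^r_s}\otimes\Omega^1_{\Delta^r_s/k}$. Let $w_j$ be the degree-$r$ part of $D_jP(v)$. You must then show that $\sum_j w_j\,dx_j$ lies in the span of the $d(x^\beta)$ with $|\beta|=r+1$. Integrability gives $\partial_iw_j=\partial_jw_i$, and Euler's identity then gives $(r+1)w_j=\partial_j\bigl(\sum_ix_iw_i\bigr)$. So this works when $r+1$ is also invertible. With only $r!$ invertible it genuinely fails at $r=p-1$.

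For a counterexample, take $T=\mathbb{A}^1$, $\sH=\sO_T$, $\nabla=d+x^{p-1}dx$, $k=\IF_p$ and $s=0$. Then $A=k[x]/(x^p)$ has $\Omega^1_{A/k}=A\,dx$. One computes $\nabla\bigl(\sum_{i<p}v_ix^i\bigr)=\bigl(\sum_{i\ge1}iv_ix^{i-1}+v_0x^{p-1}\bigr)dx$, which vanishes only for $v=0$. The obstruction here is the nonvanishing $p$-curvature at $s$.

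So the missing step is to fix the mod-$\mathfrak{m}^r$ convention explicitly and check the top-degree bookkeeping against it.
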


\begin{proof}
This is \cite[Lem. A.0.1]{zbMATH08159582}, where we note that $r!$ is invertible in $k$ because it is so in $R$. 
\end{proof}

\begin{definition}
\label{rlimpdef}
Let $k, s, \Delta^{r}_s$ be as in \autoref{trivlem}. An $r$'th order local infinitesimal period map ($r$-limp) is a morphism $\Delta^{r}_s \to \che{L}_k$ corresponding to the filtration $i_{b^{\bullet}}(\textrm{Fil}^{\bullet})$ for some choice of basis $b^{\bullet}$ for $\sH_{s}$. 
\end{definition}

Given an $r$-limp $\psi$ and an element $j \in (J^d_r T)(k)$ one obtains an element $\psi \circ j \in (J^d_r \che{L})(k)$ by composition. In \cite{urbanik2023algebraiccyclelociintegral}, it is shown that this operation glues to give an algebraic map. To be more precise, consider, for some non-negative integer $d$, the $\GL_{m,R}$-torsor $\gamma^d_r : \sP^d_r \to J^d_r T$ obtained as the base-change along $J^d_r T \to T$ of the frame bundle of $\sH$. For $k$ a field admitting a map from $R$, the points of $\sP^d_r(k)$ are identified with pairs $(j, b^{\bullet}) \in (\pi^{r}_{0})^{-1}(s)(k) \times \textrm{Fr}(\sH_{s})$. Then the map $(j, \psi) \mapsto \psi \circ j$ induces a map $\alpha^d_r(k) : \sP^d_r(k) \to (J^d_r \che{L})(k)$, and the next theorem says that this map arises from a natural algebro-geometric map $\alpha^d_r : \sP^d_r \to J^d_r \che{L}$ over $R$.

The proof of \cite[Thm~1.14]{urbanik2023algebraiccyclelociintegral} gives the following:

\begin{theorem}
\label{bigjetthm}
There exists a $\GL_{m, R}$-equivariant morphism $\alpha^d_r : \sP^d_r \to J^d_r \che{L}$ whose induced map on $k$-points, for each field $k$ admitting a map $R \to k$, is the map $\alpha^d_r(k)$ described above. The resulting morphism 
\[ \eta^{d}_{r} : J^{d}_{r} T \to \GL_{m, R} \backslash J^{d}_{r} \che{L} \]
of Artin stacks over $R$ is compatible with 
\begin{itemize}
\item[-] base-change along maps $R \to R'$ of $\IZ[1/r!]$-algebras;
\item[-] pullback along maps $g : T' \to T$ of smooth $R$-schemes, in the sense that $\eta^{d}_{r} \circ J^{d}_{r} g$ is the map associated to $(g^{*} \sH, g^{*} \Fil^{\bullet}, g^{*} \nabla)$; and
\item[-] maps of functors $J^{d}_{r} \to J^{d'}_{r'}$ for $r' \leq r$ induced by maps $\mathbb{D}^{d'}_{r',R} \to \mathbb{D}^{d}_{r,R}$, in the sense that the natural square involving $J^{d}_{r} T, J^{d'}_{r'} T, \GL_{m} \backslash J^{d}_{r} \che{L}$ and $\GL_{m} \backslash J^{d'}_{r'} \che{L}$ commutes. 
\end{itemize}
\end{theorem}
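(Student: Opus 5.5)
We will construct $\alpha^d_r$ by working universally over the jet scheme, rather than pointwise. The strategy is to write down, over the scheme $\sP^d_r$ itself, a canonical $d$-dimensional $r$-jet into $T$ together with a frame of $\sH$ at its base point. We then produce the horizontal trivialization of \autoref{trivlem} in families. Transporting $\Fil^\bullet$ through this trivialization gives a family of filtrations, hence a map to $J^d_r\che{L}$.

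\textbf{Step 1 (the universal jet).} By the defining property of $J^d_r T$, the identity of $J^d_rT$ corresponds to a universal morphism
\[ j^{\mathrm{univ}} : \mathbb{D}^d_{r} \times_R J^d_r T \to T \]
over $J^d_rT$. Pull $(\sH,\Fil^\bullet,\nabla)$ back along $j^{\mathrm{univ}}$ and then further to $\sP^d_r$. This gives a filtered bundle with connection relative to $\sP^d_r$ (i.e.\ differentiating only in the $t_i$) on the thickened scheme $\mathbb{D}^d_r\times \sP^d_r$. Its restriction to the zero section carries a tautological frame $b^\bullet$.

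\textbf{Step 2 (relative horizontal trivialization).} We prove a relative version of \autoref{trivlem}. Over any base $B$ with $r!$ invertible, a bundle with flat relative connection on $\mathbb{D}^d_{r,B}$, together with a frame on the zero section, admits a unique horizontal trivialization extending that frame. The construction is explicit via the Taylor formula
\[ e \mapsto \sum_{|I|\le r} \frac{(-t)^I}{I!}\nabla_{\partial}^I e . \]
This formula makes sense only because $r!$ is invertible, and flatness ensures that the operators $\nabla_{\partial_i}$ commute. Uniqueness follows by induction on the order. Applying this on $\sP^d_r$ produces a trivialization $i:\sH|\cong \sO^{m}$. Since the graded pieces of $i(\Fil^\bullet)$ are locally free of the correct ranks, $i(\Fil^\bullet)$ is a morphism $\mathbb{D}^d_r\times\sP^d_r\to\che{L}$ over $\sP^d_r$, which is exactly a morphism
\[ \alpha^d_r:\sP^d_r\to J^d_r\che{L}. \]
On a $k$-point $(j,b^\bullet)$ this recovers $\psi\circ j$. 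Indeed, the pullback along $j$ of the trivialization over $\Delta^r_s$ is horizontal with the same initial frame, so uniqueness identifies it with ours.

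\textbf{Step 3 (equivariance and compatibilities).} Changing the frame by $g\in\GL_m$ changes the trivialization by the constant matrix $g$, by uniqueness. Hence $\alpha^d_r$ is $\GL_m$-equivariant and descends to $\eta^d_r$ on the quotient stack. Each of the three compatibilities (base change along $R\to R'$, pullback along $g:T'\to T$, and maps $\mathbb{D}^{d'}_{r'}\to\mathbb{D}^d_r$) follows from the same uniqueness statement. In each case both sides are horizontal trivializations of the same pulled-back data with the same initial frame. For reparametrizations one additionally uses that pulling back a horizontal trivialization along a map of disks preserving the origin stays horizontal.

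\textbf{Main obstacle.} We expect the main difficulty to be the relative uniqueness in Step 2 over a non-reduced, non-field base. Making the Taylor formula and its uniqueness rigorous in families is the key point, and the rest is formal.
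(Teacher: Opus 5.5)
Your argument is correct in outline, and it takes a different route from the paper. The paper gives no argument of its own here: it invokes the proof of \cite[Thm~1.14]{urbanik2023algebraiccyclelociintegral}, where the pointwise operation $(j,b^\bullet)\mapsto\psi\circ j$ is shown to glue to an algebraic map. You instead build $\alpha^d_r$ directly on the universal object. You pull $(\sH,\Fil^\bullet,\nabla)$ back along the universal jet over $\sP^d_r$, solve for the horizontal frame relative to $\sP^d_r$, and read off a morphism $\mathbb{D}^d_r\times\sP^d_r\to\che{L}$. This makes the construction canonical: uniqueness forces any local choices to agree, so there is nothing to glue. It also reduces equivariance and all three compatibilities to a single uniqueness statement. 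In particular, base change along arbitrary (possibly non-reduced) $\IZ[1/r!]$-algebras is automatic, and there a morphism is not determined by its values on field-valued points. The Taylor formula also keeps the construction explicit, in line with \cref{rmk: effectivity}.

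One point in Step 2 needs correcting, and it, rather than uniqueness over a non-reduced base (a routine induction on order), is the real subtlety. With only $r!$ invertible, ``horizontal'' on $\mathbb{D}^d_{r,B}$ cannot mean ``killed by $\nabla:\sH\to\sH\otimes\Omega^1_{\mathbb{D}^d_{r,B}/B}$''. For a counterexample, work over $R=\IZ[1/(p-1)!]$ with $T=\mathbb{A}^1$, $\sH=\sO_T$, $\nabla f=df+f\,dt$, $k=\IF_p$ and $r=p-1$. Since $d(t^p)=0$, $\Omega^1_{k[t]/(t^p)}$ is free on $dt$. Writing $f=\sum_{n<p}f_nt^n$, the $t^{p-1}$-coefficient of $f'+f$ gives $f_{p-1}=0$, and the lower coefficients then force $f=0$. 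So the general existence claim in Step 2 (and \autoref{trivlem}, read this way) fails.

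The reason is that $\partial/\partial t_i$ does not preserve $(t)^{r+1}$, so $\nabla_{\partial_i}$ is only defined as a map $\sH|_{\mathbb{D}^d_r}\to\sH|_{\mathbb{D}^d_{r-1}}$. You should call $e$ horizontal when $\nabla_{\partial_i}e=0$ on $\mathbb{D}^d_{r-1}$ for all $i$. With this notion your argument goes through:
\begin{itemize}
\item $t^I\nabla^I_\partial e$ makes sense, because $\nabla^I_\partial e$ lives on $\mathbb{D}^d_{r-|I|}$.
\item Flatness gives $[\nabla_{\partial_i},\nabla_{\partial_j}]=0$ with values on $\mathbb{D}^d_{r-2}$.
\item Applying $\nabla_{\partial_j}$ to your formula telescopes to $\sum_{|I|=r}\frac{(-t)^I}{I!}\nabla^{I+\epsilon_j}_\partial e$, where $\epsilon_j$ is the $j$-th unit multi-index; this vanishes on $\mathbb{D}^d_{r-1}$.
\item Uniqueness only requires inverting $1,\dots,r$.
\item The notion is stable under base change, under pullback along jets based at $s$, and under origin-preserving maps of disks, so Step 3 goes through as written.
\end{itemize}
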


\subsection{Refined Jet Correspondence}
\label{refinedjetcorrsec}

We can give a more refined jet correspondence that takes into account global flat sections appearing in $\sH$ and its tensor powers. For simplicity we assume that $R = \sO_{L}[1/N]$. Our construction will require possibly replacing $L$ with a finite extension and increasing $N$. 

\begin{construction}
\label{gentorsorconstr} We define
\begin{equation}
\label{Hotimes}
\sH^{\otimes} := \bigoplus_{a, b \geq 0} \sH^{\otimes a} \otimes (\sH^{\vee})^{\otimes b}
\end{equation}
to be the infinite dimensional vector bundle obtained by summing all tensor powers of $\sH$ and its dual. The vector bundle $\sH^{\otimes}$ has a natural filtration and connection we again denote by $(\textrm{Fil}^{\bullet}, \nabla)$. If one considers the global sections $\sH^{\otimes}(T)$, there is a natural $R$-submodule $\sH^{\otimes}(T)^{\nabla = 0}$ which consists of exactly the flat global sections.

After replacing $L$ with a finite extension and increasing $N$, we can assume that $T$ admits an $R$-point $t$. We fix an isomorphism $\sH_{t} \cong R^{m}$, the choice of which does not matter. Via this isomorphism, the frame bundle $\sP := \sP^{0}_{0}$ of $\sH$ is identified with the principal bundle of isomorphisms associated to the sheaf $\sF := \sheafhom(\sH, \sO_{T} \otimes_{R} \sH_{t})$. Under this identification, $\sP$ is an open subscheme of the relative spectrum (cf. \cite[\href{https://stacks.math.columbia.edu/tag/01LQ}{Section 01LQ}]{stacks-project})
\[ \sP \subset \underline{Spec}_{T}(\textrm{Sym}(\sF)) \]
which is characterized by the property that its points correspond to invertible linear maps. We define a closed subscheme of $\sP$ by defining a closed subscheme of $\underline{Spec}_{T}(\textrm{Sym}(\sF))$ and then intersecting. More precisely, we can, for each global section $h$ of $\sH^{\otimes}$, consider the locus
\[ \sZ(h) \subset \underline{Spec}_{T}(\textrm{Sym}(\sF)) \]
defined by the property that $(s, \varphi) \in \Hom(\sH_{s}, \sH_{t})$ lies in $\sZ(h)$ if $\varphi(h_{s}) = \varphi(h_{t})$.  Using explicit coordinates for $\underline{Spec}_{T}(\textrm{Sym}(\sF))$ one can observe this is a closed locus, and define
\[ \sM := \sP \cap \bigcap_{h \in \sH^{\otimes}(T)^{\nabla = 0}} \sZ(h) .\]

We let $\mathbf{H}_{t} \subset \GL(\sH_{t})$ be the $R$-subgroup scheme defined as the stabilizer of $[\sH^{\otimes}(T)^{\nabla = 0}]_{t}$. 
\end{construction}

\begin{lemma}
After possibly increasing $N$, $\sM$ is an $\mathbf{H}_{t}$-torsor in the \'etale topology on $T$.
\end{lemma}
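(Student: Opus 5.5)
Here is the plan. First reduce the infinitely many flat tensors to finitely many, then use parallel transport on formal neighbourhoods to build points of $\sM$, and finally spread everything out over $\sO_L[1/N]$.

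\emph{Finite generation.} Since $T$ is Noetherian and $\sH^{\otimes}(T)^{\nabla=0}$ is a sum of submodules of $\Gamma(T,\sH^{\otimes a}\otimes(\sH^\vee)^{\otimes b})$, I would first show that $\sM$ is cut out by finitely many flat tensors $h_1,\dots,h_n$. The ideal generated by the equations of all $\sZ(h)$ inside $\underline{Spec}_T(\mathrm{Sym}(\sF))$ is finitely generated, so finitely many $h$ suffice. Likewise $\mathbf{H}_t$ is the stabilizer of finitely many tensors $h_{1,t},\dots,h_{n,t}$, after enlarging the list if necessary. From here on I work only with the equations $\varphi(h_{i,s})=h_{i,t}$.

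\emph{Pointwise torsor structure.} The group $\mathbf{H}_t$ acts on $\sM$ by post-composition, and this action is simply transitive on fibres wherever $\sM$ has a point. So the real content is to show that $\sM\to T$ is smooth and surjective after increasing $N$. Then étale-local sections exist, and $\mathbf{H}_t\times\sM\to\sM\times_T\sM$ is an isomorphism.

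\emph{Surjectivity in characteristic zero.} Over $T_{\IC}$, parallel transport along a path from $t$ to $s$ gives an isomorphism $\sH_s\to\sH_t$ carrying each flat $h_{i,s}$ to $h_{i,t}$. So $\sM_{\IC}$ has $\IC$-points over every point, and analytically locally it is a trivial $\mathbf{H}_{t,\IC}$-torsor. By GAGA-type arguments, together with the fact that $\sM$ is algebraic and admits local analytic sections, $\sM_{L}\to T_{L}$ is faithfully flat and isomorphic to $\mathbf{H}_{t,L}\times T_L$ fppf-locally. Since $\mathbf{H}_{t,L}$ is smooth (characteristic zero), $\sM_L\to T_L$ is smooth and surjective.

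\emph{Spreading out — the main obstacle.} I expect the hard step to be extending this from the generic fibre to all of $\sO_L[1/N]$; in positive characteristic, flat tensors need not be stable under transport in the naive sense. I would use generic flatness/smoothness: $\mathbf{H}_t$ is a finite-type group scheme over $R$ with smooth generic fibre, so after increasing $N$ it is smooth over $R$. Similarly, $\sM\to T$ is of finite presentation, and the locus in $T$ over which it is smooth surjective with fibres torsors is constructible. That locus contains $T_L$, so its complement has image in $\Spec R$ avoiding the generic point and hence lying over finitely many primes, which we invert. For this I need the complement to be closed and of finite type, which holds by Noetherianity. The subtle point is to check that "the action map is an isomorphism" spreads out; this holds because it is an isomorphism of finite-type $T$-schemes on the generic fibre, and such isomorphisms spread out over an open of $\Spec R$.

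Alternatively, one can check surjectivity fibre-wise for residue characteristic $p>r$ using \autoref{trivlem}. The horizontal trivialization on $\Delta^r_s$ gives infinitesimal points of $\sM$, which shows formal smoothness to finite order; this part I would keep only as a sanity check.
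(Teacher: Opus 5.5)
Your proposal is correct and follows essentially the same route as the paper: establish the torsor property over $\IC$ (hence over $L$) from analytic local triviality via parallel transport, then spread out to $\sO_L[1/N]$ by inverting finitely many primes. The differences are minor. The paper spreads out an explicit \'etale trivialization of $\sM_L$, whereas you spread out smoothness and surjectivity by constructibility. For your version it suffices that the bad locus be constructible; it need not be closed. Also, the action map $\mathbf{H}_t \times \sM \to \sM \times_T \sM$ is already an isomorphism over $R$ by the functorial description, so it needs no spreading out.
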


\begin{proof}
We first observe that the complex analytification $\sM^{\textrm{an}}_{\mathbb{C}}$ is an $\mathbf{H}_{t,\IC}$-torsor in the usual sense of topology. This one can check locally, so we can fix an open neighbourhood $B \subset T(\mathbb{C})$ and check everything after restricting all the data to $B$. In this case we can assume that $\sH$ is trivialized by a flat frame, in which case $\sM$ is identified with a trivial torsor of the form $B \times \mathbf{H}_{t} \to B$ and the result follows. 

Now we consider the statement of the lemma for $\sM_{\mathbb{C}}$. It follows from the complex analytic version that $\sM_{\mathbb{C}} \to T$ is flat and of finite presentation. Considering the trivialization $\sM_{\mathbb{C}} \times \mathbf{H}_{t,\mathbb{C}} \xrightarrow{\sim} \sM_{\mathbb{C}} \times_{T_{\mathbb{C}}} \sM_{\mathbb{C}}$ given on points by $((\beta, s), \gamma) \mapsto ((\beta, s), (\gamma \circ \beta, s))$, one also sees that $\sM_{\mathbb{C}}$ is an fppf torsor. But any fppf torsor is an \'etale torsor \cite{51051}. The statement likewise follows for $\sM_{L}$.

Having shown the statement for $\sM_{L}$, choose an \'etale map $U \to T$ such that $\sM_{L} \times_{T} U \to U$ is trivial. Choosing $N$ large enough the trivialization spreads out, hence so does the torsor structure.
\end{proof}

We now define $\sM^{d}_{r} \subset \sP^{d}_{r}$ by pulling back $\sM$ along $J^{d}_{r} T \to T$. We extend our description of $\sP^{d}_{r}(k)$ above to describe the subset $\sM^{d}_{r}(k) \subset \sP^{d}_{r}(k)$. Recall that we thought of points of $\sP^{d}_{r}(k)$ as pairs $(j, b^{\bullet})$, where $j \in (J^{d}_{r} T)(k)$ was a jet above $s \in T(k)$, and $b^{\bullet}$ was the basis of $\sH_{s}$ corresponding to the associated frame $\varphi : \sH_{s} \xrightarrow{\sim} k^{m}$. Having made the identification $k^{m} \cong \sH_{t,k}$, we can regard $\varphi$ as a map $\varphi : \sH_{s} \xrightarrow{\sim} \sH_{t,k}$, such that the pair $(j, b^{\bullet}) \in \sP^{d}_{r}(k)$ lies in $\sM^{d}_{r}(k)$ if and only if $\varphi(h_{s}) = h_{t,k}$ for all $h \in \sH^{\otimes}(T)^{\nabla = 0}$. 


\begin{definition}
Using the identification $\sH_{t} \cong R^{m}$, we define $\che{D}_{\textrm{mon}} \subset \che{L}$ to be the locally closed subscheme of $\che{L}$ obtained as the orbit of $\mathbf{H}_{t}$. 
\end{definition}

\subsection{Pure VHS Setting}

\begin{definition}
Using the identification $\sH_{t} \cong R^{m}$, we define $\che{D}_{\textrm{Hg}} \subset \che{L}$ to be the closed subscheme consisting of those flags $F^{\bullet}$ such that $h_{t}$ lies in the middle piece of $F^{\bullet}$ for all $h \in \sH^{\tensor}(T)^{\nabla = 0}$.
\end{definition}

\begin{lemma}
Suppose that the data $(\sH_{\IC}, \Fil^{\bullet}_{\IC}, \nabla_{\IC})$ underlies a polarized $\mathbb{Q}$-variation of pure Hodge structures $\IV$, and that $\mathbf{H}_{t,\mathbb{C}}$ is connected. Then the locus $\che{D}_{\operatorname{Hg}, \IC}$ is a finite union of $\mathbf{H}_{t,\IC}$ orbits, one of which is $\che{D}_{\operatorname{mon}, \IC}$. Consequently, after possibly increasing $N$ and replacing $L$ with a finite extension, $\che{D}_{\operatorname{Hg}}$ is a finite union of $\mathbf{H}_{t}$ orbits, one of which is $\che{D}_{\operatorname{mon}}$.
\end{lemma}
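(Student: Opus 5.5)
We prove the complex statement first and then spread it out over $\sO_L[1/N]$.

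Over $\IC$, the flat tensors $\sH^{\otimes}(T)^{\nabla=0}$ evaluated at $t$ are exactly the tensors fixed by the algebraic monodromy group. So $\mathbf{H}_{t,\IC}$ is the Zariski closure of monodromy, possibly up to the stabilizer subtlety; this is the connected group the hypothesis provides. By Deligne's theorem of the fixed part, each such flat tensor is a Hodge tensor of type $(0,0)$ at every point. Hence the filtration $F^\bullet_t$ lies in $\che{D}_{\mathrm{Hg},\IC}$, and so does its whole $\mathbf{H}_{t,\IC}$-orbit $\che{D}_{\mathrm{mon},\IC}$, because $\mathbf{H}_{t}$ fixes the tensors.

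For the finiteness, fix a finite set of tensors $h_1,\dots,h_n$ cutting out $\mathbf{H}_{t}$; this exists by Noetherianity. For any $F^\bullet\in\che{D}_{\mathrm{Hg}}(\IC)$, the stabilizer of $F^\bullet$ in $\GL(\sH_t)$ is a parabolic $P_F$. The condition that all $h$ lie in $F^0$ of the corresponding weight-zero piece is the same as saying that $F^\bullet$ is a filtration by $\mathbf{H}$-subrepresentations in the appropriate sense. Here we use the Tannakian description: a filtration on $\sH_t$ for which all $\mathbf{H}$-invariant tensors lie in $F^0$ is induced by a cocharacter of $\mathbf{H}$ (Saavedra-Rivano/Deligne's theorem that filtrations preserving invariants are splittable by a cocharacter of the group, valid for reductive $\mathbf{H}$). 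Reductivity of $\mathbf{H}_{t,\IC}$ follows from the polarization, since monodromy of a polarized VHS is semisimple by Deligne. So $\che{D}_{\mathrm{Hg},\IC}$ is the set of filtrations $\mathrm{Fil}(\mu)$ with $\mu$ a cocharacter of $\mathbf{H}_{t,\IC}$ having the prescribed graded ranks. Cocharacters up to $\mathbf{H}$-conjugacy correspond to Weyl orbits in $X_*(\text{torus})$. Only finitely many of these have the fixed weight multiplicities on $\sH_t$, since there are finitely many weights of bounded size with bounded multiplicities. Connectedness of $\mathbf{H}_{t,\IC}$ makes conjugacy classes the same as orbits. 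This gives finitely many $\mathbf{H}_{t,\IC}$-orbits, one of which is $\che{D}_{\mathrm{mon},\IC}$.

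For the spreading out, $\che{D}_{\mathrm{Hg}}$ is a closed subscheme of $\che{L}$ over $\sO_L[1/N]$. Enlarge $L$ so that representatives $F_1,\dots,F_k$ of the finitely many orbits are defined over $L$. Then the orbit maps $\mathbf{H}_{t}\to\che{L}$, $g\mapsto gF_i$, give locally closed orbits over $L$ whose union is $\che{D}_{\mathrm{Hg},L}$. By constructibility and generic flatness, after inverting finitely many primes the orbit maps are smooth onto locally closed subschemes $O_i$, the $O_i$ are disjoint, and $\bigcup O_i=\che{D}_{\mathrm{Hg}}$ on every fibre. We verify this last point by spreading out the equality of constructible sets, using that both sides are of finite type over $\sO_L[1/N]$. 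Because $t$ is an $R$-point, $F^\bullet_t$ spreads out, and $\che{D}_{\mathrm{mon}}$ is the orbit of $F_t$.

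The main obstacle is the Tannakian step: showing that every filtration containing the invariant tensors in its middle piece comes from a cocharacter of $\mathbf{H}$. This needs reductivity of $\mathbf{H}$ together with the fact that $\mathbf{H}$ is exactly the stabilizer of its invariant tensors; the latter is Chevalley's theorem, applicable because $\mathbf{H}$ is reductive and so observable.
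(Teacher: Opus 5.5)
The gap is your claim that, by Deligne's theorem of the fixed part, every flat tensor is of type $(0,0)$ at every point. The fixed part theorem only says that the flat tensors form a \emph{constant} sub-Hodge structure of $\IV^{\otimes}$, meaning that restriction to each fibre is a morphism of Hodge structures. That sub-Hodge structure can have nonzero $(p,-p)$-parts.

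For example, take the data $H^1_{\dR}$ of a product family $\mathcal{A}\times B\to T$, where $\mathcal{A}\to T$ has monodromy Zariski dense in $\Sp(V_A)$ and $B$ is a fixed abelian variety. Then $\mathbf{H}_{t,\IC}=\Sp(V_A)\times\{1\}$ is connected. However, the projector onto $V_B$ and all of $\End(V_B)$ are flat, and $\End(V_B)$ contains nonzero elements of type $(-1,1)$ at $t$, which do not preserve $\Fil^1_t$. A flag for which all these tensors lie in $F^0$ must satisfy $F^1=(F^1\cap V_A)\oplus(F^1\cap V_B)$ with $F^1\cap V_B\in\{0,V_B\}$. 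Hence $\Fil_t\notin\che{D}_{\mathrm{Hg},\IC}$. Since the flat polarization also forces $F^1$ to be isotropic, in fact $\che{D}_{\mathrm{Hg},\IC}=\varnothing$, whereas $\che{D}_{\mathrm{mon},\IC}\neq\varnothing$.

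So the containment $\che{D}_{\mathrm{mon}}\subset\che{D}_{\mathrm{Hg}}$ cannot be derived from the stated hypotheses. The statement needs the additional input that every flat tensor is Hodge at $t$, i.e.\ that the fixed part is of Tate type. The paper's proof does distinguish the everywhere-Hodge flat tensors $W^{\mathrm{Hg}}_{\IQ}$ from all flat tensors $W_{\IQ}$. But its final identification $\che{D}_{\mathrm{Hg}}=\gamma^{-1}\che{\NL}_{\mathbf{M}}$ tacitly uses the same property: in the example above, the right-hand side contains $\Fil_t$ and the left-hand side does not.

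Once that hypothesis is added, the rest of your argument works and follows a genuinely different route from the paper's. The paper uses the $\IQ$-structure: the generic Mumford--Tate group $\mathbf{G}$, Andr\'e's normality of $\mathbf{H}_t$ in $\mathbf{G}^{\mathrm{der}}$, the description of components of Noether--Lefschetz-type loci as orbits, and a CM point moved by a very general element of $\mathbf{H}_t(\IR)$, so that its Mumford--Tate group $\mathbf{M}$ has $\mathbf{M}^{\mathrm{der}}=\mathbf{H}_t$.

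You use only that $\mathbf{H}_t$ is reductive and is the stabilizer of its invariants. The correct formulation of your step is not a ``filtration by subrepresentations''. Rather, complete reducibility makes every $\mathbf{H}_t$-equivariant map strict for the induced filtrations. So a flag in $\che{D}_{\mathrm{Hg}}$ gives an exact $\otimes$-filtration of the fibre functor on $\mathrm{Rep}(\mathbf{H}_t)$, and is therefore split by a cocharacter. There are finitely many conjugacy classes of cocharacters with prescribed multiplicities on the faithful representation $\sH_t$. Since $\che{D}_{\mathrm{Hg}}$ is $\mathbf{H}_t$-stable, it is then a union of finitely many orbits.

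This route is more elementary and avoids Hodge theory beyond semisimplicity of monodromy. You do need to normalize the indexing: the Hodge filtration is split by a cocharacter of $\mathbf{H}_t\cdot\IG_m$ rather than of $\mathbf{H}_t$ itself, since for instance the polarization lies in $F^{-1}$, not $F^0$.
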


\begin{proof}
In what follows we set $\che{D} = \che{D}_{\textrm{Hg}}$ for convenience. To start with we replace $R$ with $\IC$, and in particular regard $t$ as a $\IC$-point. Then the fibre $\sH_{t}$, the algebraic group $\mathbf{H}_{t}$, and the varieties $\che{D}$ and $\che{L}$ are all complex-algebraic. The fibre $\IV_{t}$ then gives a $\mathbb{Q}$-structure for the complex vector space $\sH_{t}$ and the algebraic group $\GL(\sH_{t})$, and under our identifications a $\mathbb{Q}$-structure for the flag variety $\che{L}$. We denote the resulting $\mathbb{Q}$-varieties by $\sH_{t,\mathbb{Q}}$, $\GL(\sH_{t,\mathbb{Q}})$ and $\che{L}_{\mathbb{Q}}$, respectively. Likewise, one has an infinite rank local system $\mathbb{V}^{\otimes} = \bigoplus_{a, b \geq 0} \mathbb{V}^{\otimes a} \otimes (\mathbb{V}^{\vee})^{\otimes b}$ which underlies the bundle $\sH^{\otimes}$ from (\ref{Hotimes}). This then equips $W := \sH^{\otimes}(T)^{\nabla = 0}$ with a $\mathbb{Q}$-structure $W_{\mathbb{Q}}$, and hence gives us a $\mathbb{Q}$-structure $\mathbf{H}_{t,\mathbb{Q}}$ for the group $\mathbf{H}_{t}$ which is the pointwise stabilizer of this subspace. It now suffices to give an alternative construction of $\che{D}$ using this $\mathbb{Q}$-structure such that $\che{D}$ is a finite union of $\mathbf{H}_{t}(\IC)$-orbits. 

The subspace $W_{\mathbb{Q}}$ has a natural subspace $W^{\textrm{Hg}}_{\mathbb{Q}}$ consisting of those vectors which are everywhere Hodge for the filtration $\Fil^{\bullet}$. We can consider the fibre $[W^{\textrm{Hg}}_{\mathbb{Q}}]_{t}$ of $W^{\textrm{Hg}}_{\mathbb{Q}}$ at $t$, or alternatively, we can consider the subspace $W^{\textrm{Hg}}_{\mathbb{Q},t} \subset W_{\mathbb{Q},t}$ of the fibre of $W_{\mathbb{Q}}$ at $t$ consisting of those $w \in W_{\mathbb{Q},t}$ such that $w$ is Hodge for the Hodge structure at $t$ (but not necessarily globally). We have a natural map $[W^{\textrm{Hg}}_{\mathbb{Q}}]_{t} \to W^{\textrm{Hg}}_{\mathbb{Q},t}$, which is an isomorphism as a consequence of Schmid's fixed part theorem \cite{zbMATH03437307}. We therefore drop the distinction between $W^{\textrm{Hg}}_{\mathbb{Q},t}$ and $[W^{\textrm{Hg}}_{\mathbb{Q}}]_{t}$.

Let us write $\mathbf{G} = \mathbf{G}_{t} \subset \GL(\sH_{t,\mathbb{Q}})$ for the algebraic group defined as the pointwise stabilizer of $W^{\textrm{Hg}}_{\mathbb{Q},t}$. Write $h_{t} : \mathbb{S} \to \GL(\IV_{t})$ for the Hodge structure at $t$, and write $\che{\NL}_{\mathbf{G}} \subset \che{L}_{\mathbb{Q}}$ for the $\mathbb{Q}$-algebraic subvariety defined by the property that a flag $F^{\bullet} \in \che{L}(\IC)$ lies inside $\che{\NL}_{\mathbf{G}}$ if and only if each element of $W^{\textrm{Hg}}_{\mathbb{Q},t}$ is Hodge for $F^{\bullet}$ (lies in the middle step of the filtration on the appropriate tensor space). Then the following facts are known about the relationships between the above data: 
\begin{itemize}
    \item[(i)] as a consequence of \cite[Thm. 1]{zbMATH00060857} the group $\mathbf{H}_{t}$ is a normal subgroup of the derived subgroup of $\mathbf{G}$;
    \item[(ii)] by \cite[VI.B.1]{zbMATH06031035} each component of $\che{\NL}_{\mathbf{G}}$ is an orbit of $G(\IC)$; and
    \item[(iii)] there exists $\gamma \in \mathbf{G}(\mathbb{R})$ such that $\gamma \cdot h_{t}$ has Mumford-Tate $\mathbf{M} \subset \mathbf{G}$ whose derived subgroup agrees with $\mathbf{H}_{t}$. 
\end{itemize}
To see (iii) we may argue as follows. First let us write $\che{D}_{\mathbf{G}} \subset \che{\NL}_{\mathbf{G}}$ for the component containing $h_{t}$. Then applying \cite[III.A]{zbMATH06031035} the orbit $D_{\mathbf{G}} := \mathbf{G}(\mathbb{R}) \cdot h_{t}$ factors as a product $D_{\mathbf{G}} = D_{\mathbf{H}} \times D_{\mathbf{L}}$, with
\begin{itemize}
    \item $\mathbf{G} = \mathbf{H} \cdot \mathbf{L} \cdot \mathbf{T}$ the almost-direct decomposition of the reductive $\mathbb{Q}$-group $\mathbf{G}$, where $\mathbf{H} = \mathbf{H}_{t}$, $\mathbf{L}$ is the complementary semisimple factor, and $\mathbf{T}$ is the torus part; and
    \item $D_{\mathbf{H}} = \mathbf{H}(\mathbb{R}) \cdot h_{t}$ and $D_{\mathbf{L}} = \mathbf{L}(\mathbb{R}) \cdot h_{t}$. 
\end{itemize}
Now to obtain $\gamma$ as in (iii) we start by choosing $\gamma' \in G(\mathbb{R})$ such that $\gamma' \cdot h_{t}$ is a CM-Hodge structure; this is possible as a consequence of \cite[VI.C.1]{zbMATH06031035} and \cite[VI.A.2]{zbMATH06031035}. Then the map $\gamma' \cdot h_{t} : \mathbb{S} \to \mathbf{G}_{\mathbb{R}}$ factors through some product $\mathbf{T}_{H} \cdot \mathbf{T}_{L} \cdot \mathbf{T}$ of $\mathbb{Q}$-tori compatibly with the decomposition $\mathbf{G} = \mathbf{H} \cdot \mathbf{L} \cdot \mathbf{T}$. It therefore suffices to show that if we take a very general $h \in \mathbf{H}(\mathbb{R})$ and set $\gamma = h \gamma'$, then the Mumford-Tate group of $\gamma \cdot h_{t}$ agrees with $\mathbf{H} \cdot \mathbf{T}_{L} \cdot \mathbf{T}$.

If not, then there must exist a strict $\mathbb{Q}$-subgroup $\mathbf{H}' \subsetneq \mathbf{H}$ which contains the image under the map $\mathbf{G}/(\mathbf{L} \cdot \mathbf{T}) \to \mathbf{H}$ of a very general element of $D_{\mathbf{H}}$, and hence of $D_{\mathbf{G}}$. But that would imply that a very general element of $D_{\mathbf{G}}$ has Mumford-Tate group strictly contained in $\mathbf{G}$, which is not true (take the Mumford-Tate group associated to a very general point in our base $T(\IC)$). We have thus shown (iii) above.


Below we write $V$ for $\IV_t$. 
Now let $w_{1}, \hdots, w_{r} \in \bigoplus_{a, b \geq 0} V^{\otimes a} \otimes (V^{\vee})^{\otimes b}$ be a collection of Hodge tensors such that $\mathbf{M}$ is exactly the pointwise stabilizer of $w_{1}, \hdots, w_{r}$. Write $\che{\NL}_{\mathbf{M}} \subset \che{\NL}_{\mathbf{G}}$ for the locus of Hodge flags for which the $w_{1}, \hdots, w_{r}$ are Hodge tensors. Then again by \cite[VI.B.1]{zbMATH06031035} each component of $\che{\NL}_{\mathbf{M}}$, and in particular the component containing $\gamma \cdot h_{t}$, is a $\mathbf{M}(\IC)$-orbit. Since the torus part of $\mathbf{M}$ acts trivially on the components of $\che{\NL}_{\mathbf{M}}$, all of which are proper algebraic varieties, it follows that each component of $\che{\NL}_{\mathbf{M}}$, and in particular the component containing $\gamma \cdot h_{t}$, is a $\mathbf{H}_{t}(\IC)$-orbit. 

If we now consider the locus $\gamma^{-1} \che{\NL}_{\mathbf{M}}$, then because $\gamma$ commutes with $\mathbf{H}_{t}(\IC)$ its components are also $\mathbf{H}_{t}(\IC)$-orbits. Moreover $\gamma^{-1} \che{\NL}_{\mathbf{M}}$ is the locus of flags for which the complex tensors $\gamma^{-1} w_{1}, \hdots, \gamma^{-1} w_{r}$ are ``Hodge'' (lie in the middle piece of the appropriate filtration on the associated tensor space). If we construct a period map as in \cite[III.A]{zbMATH06031035} by choosing $t$ as our basepoint, then the lifted period map $\widetilde{T}_{\IC} \to \che{L}$ will factor through $D_{\mathbf{H}} \subset \gamma^{-1} \che{\NL}_{\mathbf{M}}$, and so it follows that the $\gamma^{-1} w_{1}, \hdots, \gamma^{-1} w_{r}$ induced global Hodge tensors of the complex local system $\mathbb{V}_{\IC}$; in particular, one obtains the equality $\che{D} = \gamma^{-1} \che{\NL}_{\mathbf{M}}$, completing the proof.
\end{proof}

One can ensure that $\mathbf{H}_{t}$ is (geometrically) connected after increasing $N$ and replacing $T$ with a finite \'etale cover, and we assume that we have done so. For the rest of this section we set $\che{D} = \che{D}_{\textrm{mon}}$.

\begin{construction}
\label{constr: functoriality of pair}
    The pair $(H := \bH_t, \che{D})$ is functorial in the pointed space $(T, t)$ in the following sense. Suppose that $T' \to T$ is a morphism of $R$-schemes and $t' \in T'(R)$ is a base point of $T'$. Then we can form $(H', \che{D}')$ over $T'$ using the restriction $\sH|_{T'}$. Viewing $t'$ as an $R$-point on $T$, we may consider the $H$-orbit of the isomorphism $\varphi : \sH_{t'} \stackrel{\sim}{\to} \sH_t$ inside the torsor $\sM$. Using $\varphi$, we obtain an embedding $(H', \che{D}') \into (H, \che{D})$. Note that the orbit of the embedding under the $H$-action on $(H, \che{D})$ (via $\gamma \cdot (h, F^\bullet) = (\gamma h \gamma^{-1}, \gamma F^\bullet)$) depends only on the morphism $T' \to T$ and not on the choice of $t'$ and $\varphi$. 
\end{construction}

\begin{lemma}
\label{refinedjetthm}
The map $\alpha^{d}_{r} : \sP^{d}_{r} \to J^{d}_{r} \che{L}$ from \autoref{bigjetthm} above restricts to give a map $\sM^{d}_{r} \to J^{d}_{r} \che{D}$. In particular we obtain a map of Artin stacks
\[ \eta^{d}_{r} : J^{d}_{r} T \to \mathbf{H}_{t} \backslash J^{d}_{r} \che{D} \]
over $R$, compatible with base-change along maps $R \to R'$ of $\mathbb{Z}[1/r!]$-algebras, and functorial in maps of functors $J^{d}_{r} \to J^{d'}_{r'}$ for $r' \leq r$ induced by maps $\mathbb{D}^{d'}_{r',R} \to \mathbb{D}^{d}_{r,R}$. When $k$ is a field admitting a map from $R$, the description of the map on $k$-points is given by $(j, \varphi) \mapsto \psi \circ j$, where $\psi$ is the $r$-limp $\Delta^{r}_{s} \to \che{L}_{k}$ of \autoref{rlimpdef}. The map $\psi$ factors through $\che{D}_{k}$ by construction.
\end{lemma}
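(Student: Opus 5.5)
The plan is to show that $\alpha^{d}_{r}$ carries the closed subscheme $\sM^{d}_{r}$ into the closed subscheme $J^{d}_{r}\che{D}$, and then to obtain $\eta^{d}_{r}$ by descent along the $\mathbf{H}_{t}$-torsor $\sM^{d}_{r}\to J^{d}_{r}T$. For the descent step we use that $\sM^{d}_{r}$ is pulled back from the étale $\mathbf{H}_{t}$-torsor $\sM$, and that $\alpha^{d}_{r}$ is $\GL_{m}$-equivariant by \autoref{bigjetthm}, hence in particular $\mathbf{H}_{t}$-equivariant. Since $\che{D}=\che{D}_{\mathrm{mon}}$ is only locally closed, $J^{d}_{r}\che{D}$ is an open subscheme of the jet scheme of its closure $\overline{\che{D}}\subset\che{L}$. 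Concretely, it consists of the jets whose base point lies in $\che{D}$, because jets are supported at a single point. So the factorization splits into two checks. First, $\alpha^{d}_{r}$ restricted to $\sM^{d}_{r}$ factors through $J^{d}_{r}\overline{\che{D}}$. Second, the composite with $\pi^{r}_{0}$ lands in $\che{D}$.

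Both checks will be reduced to pointwise statements. The schemes involved are of finite type over $R=\sO_{L}[1/N]$, so we may first pass to the reduced structure if needed. Then a morphism from a reduced scheme factors set-theoretically through a closed subscheme exactly when it does so on points, and we check on points over algebraically closed fields $k$. The caveat is whether $\sM^{d}_{r}$ is reduced. It is smooth over $J^{d}_{r}T$, being a torsor under the (after increasing $N$) smooth group $\mathbf{H}_{t}$. Moreover $J^{d}_{r}T$ is smooth over $R$ because $T$ is smooth. Hence $\sM^{d}_{r}$ is smooth over $R$, hence reduced, and the pointwise check suffices.

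So fix $(j,\varphi)\in\sM^{d}_{r}(k)$ with $j$ based at $s$, and let $\varphi\colon\sH_{s}\xrightarrow{\sim}\sH_{t,k}$ satisfy $\varphi(h_{s})=h_{t}$ for every flat $h$. Let $i_{\varphi}$ be the horizontal trivialization of \autoref{trivlem} over $\Delta^{r}_{s}$, and set $\psi=i_{\varphi}(\Fil^{\bullet})$. Every flat global section $h$ is horizontal, and the trivialization is horizontal, so $i_{\varphi}(h)$ is the constant section $h_{t}$; this uses uniqueness of horizontal sections on $\Delta^{r}_{s}$ with given fibre, which holds since $r!$ is invertible. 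The key claim is then that $\psi$ factors through $\che{D}_{k}$. It suffices to produce an $\mathbf{H}_{t}(A)$-valued point, with $A=\sO_{\Delta^{r}_{s}}$, that moves a base flag at $t$ to $\psi$.

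To produce this point, I would use the torsor $\sM$ restricted to $\Delta^{r}_{s}$. The relevant maps are the identification $\sH_{t}\otimes A\cong\sH|_{\Delta}$ determined by $i_{\varphi}^{-1}$, together with an étale-local section of $\sM$ over $\Delta$, which exists because $\Delta$ is henselian local with algebraically closed residue field. Their difference is a section $g\in\mathbf{H}_{t}(A)$. Since that section of $\sM$ is a frame carrying the fibre at each point to the fibre at $t$ compatibly with all flat tensors, its Hodge flags lie in the $\mathbf{H}_{t}$-orbit of the Hodge flag at $t$. This is really the point that the period map factors through the monodromy orbit $\che{D}_{\mathrm{mon}}$, which I would derive by spreading out from the complex-analytic statement, as in the torsor lemma, after increasing $N$.

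The main obstacle is exactly this last claim in mixed characteristic: that the Hodge filtration transported by any frame in $\sM$ lies in $\che{D}$. I would handle it by a spreading-out argument. Over $\IC$ the claim follows from the lifted period map landing in a single $\mathbf{H}_{t}(\IC)$-orbit. This yields a morphism $\sM_{L}\to\che{D}_{L}$, and this morphism spreads out over $\sO_{L}[1/N]$ after enlarging $N$, by density of the generic fibre in the flat scheme $\sM$.
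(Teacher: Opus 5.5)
Your proof is correct, but it reaches the key factorization by a different route than the paper.

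The shared part: the paper also reduces to field-valued points, and it also uses that the horizontal extension of $\varphi$ sends each flat $h$ to the constant $h_t$.

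The difference is in how the conclusion is drawn. The paper passes through the closed subscheme $\che{D}_{\mathrm{Hg}}$ of flags in which every $h_t$ lies in the middle piece. It then invokes the preceding lemma, which says that $\che{D}$ is one of the finitely many $\mathbf{H}_t$-orbit components of $\che{D}_{\mathrm{Hg}}$. You never use $\che{D}_{\mathrm{Hg}}$. In effect, you write the $r$-limp as a $\Delta^r_s$-valued point of $\sM$ followed by the zeroth-order period morphism $\sM \to \che{L}$, $(x,\phi)\mapsto \phi(\Fil^\bullet_x)$. Everything then reduces to this morphism factoring through $\che{D}$. You obtain that factorization in two steps:
over $\IC$, from the lifted period map staying in a single $\mathbf{H}_t(\IC)$-orbit; then by spreading out. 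The spreading out uses schematic density of $\sM_L$ in the flat scheme $\sM$ to get into the closure $\overline{\che{D}}$, and discards finitely many primes to handle the open condition. This is the same mechanism the paper itself uses for the mixed analogue in \S\ref{Mg2flatsec}.

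What each route buys:
your route bypasses the orbit classification of $\che{D}_{\mathrm{Hg}}$. It does not need the global flat tensors to lie in the middle Hodge piece. It also makes explicit why the base point of the jet lands in $\che{D}$ rather than in another component, which the paper's short proof leaves implicit. The paper's route instead puts all the Hodge theory and spreading out into the preceding lemma. After that, the jet-level statement follows from a closed tensor condition that horizontal trivializations visibly preserve.

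One simplification: the \'etale-local section of $\sM$ and the difference $g$ are unnecessary. The map $i_\varphi$ itself preserves all flat tensors, so it is already a $\Delta^r_s$-valued point of $\sM$, and $\psi$ is literally its image under the zeroth-order period morphism. Relatedly, the criterion you state (moving a constant base flag at $t$ to $\psi$) is not what your construction actually produces, although your final reduction makes this harmless.
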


\begin{proof}
It suffices to prove that the restriction of $\alpha^{d}_{r}$ to $\sM^{d}_{r}$ factors through $J^{d}_{r} \che{D}$, since the rest follows from \autoref{bigjetthm}. Applying the base-change compatibility of the construction one can reduce to checking this on closed points, where it reduces to showing that $r$-limps $\psi$ factor through $\che{D}$. Using the fact that $\che{D}$ is a component of $\che{D}_{\operatorname{Hg}}$, this reduces to the claim that if $\widetilde{\varphi} \in \sheafhom(\restr{\sH}{\Delta^{r}_{s}}, \sO_{\Delta^{r}_{s}} \otimes_{k} \sH_{t})$ is a flat extension of some $\varphi : \sH_{s} \to \sH_{t,k}$, then $\widetilde{\varphi}(h)$ is constant for each $h \in \sH^{\otimes}(T)^{\nabla = 0}$. This can be checked directly using the fact that $h$ is flat.
\end{proof}

\section{Flatness Results (pure case)}

\subsection{Construction of Atypical Loci}

\begin{setup}
\label{setup base}
    Let $R = \sO_L[1/N]$, fix an embedding $\iota : L \into \IC$. Let $T$ be a smooth $R$-scheme with a base point $t \in T(R)$. Let $(\sH, \Fil^\bullet, \nabla)$ be a filtered flat vector bundle on $T$ such that its $\IC$-fiber underlies a polarizable $\IZ$-VHS $\IV$. Assume that $t_\IC$ is Hodge-generic and set $G := \MT(\IV_{t_\IC})$. The group $G$ is reductive; let $G_{1}, \hdots, G_{k}$ be its $\mathbb{Q}$-simple factors. Assume that $G$ (resp. $G_i$ for $i = 1, \cdots, k$) spreads out to a smooth reductive group scheme $\bG_t$ (resp. $\bG_{i, t}$). Then after reordering there exists $j \leq k$ such that $\mathbf{G}_{1,t} \cdots \mathbf{G}_{j,t} = \mathbf{H}_{t}$. We have a corresponding decomposition $\che{D} = \che{D}_{1} \times \cdots \times \che{D}_{j}$. 
\end{setup}

We now apply \autoref{refinedjetthm} to the triple $(\sH, \textrm{Fil}^{\bullet}, \nabla)$ over $T$ in the above setup to construct an atypical locus (with respect to some closed subscheme of $\che{D}$). 

\begin{notation}
For a scheme $X$ and integers $d, r \geq 0$, we write $J^{d}_{r,c} X \subset J^d_r X$ for the closed subscheme of constant jets, and $J^{d}_{r,\operatorname{nc}} X \subset J^{d}_{r} X$ for the complementary open subscheme of non-constant jets, defined as in \cite[\S3]{urbanik2023algebraiccyclelociintegral}.
\end{notation}

\begin{construction} 
\label{constr: atypical E}
\emph{(Atypical locus)}
In \cref{setup base}, we now fix a closed $R$-subscheme $Z \subset \che{D}$ which is geometrically irreducible over $R$ and has relative dimension $e > 0$ over $R$ and do the following constructions. Let $I^c_{Z} \subset \{ 1, \hdots, j \}$ be the subset defined by the property that $Z \to \che{D}_{i}$ is constant for $i \in I^c_Z$, and let $I^\nc_{Z} \subset \{ 1, \hdots, j \}$ be its complement. Accordingly, we set $\che{D}^c_Z := \prod_{i \in I^c_Z} \che{D}_i$ and $\che{D}^\nc_Z := \prod_{i \in I_Z^\nc} \che{D}_i$. Let $\pi_{c} : \che{D} \to \che{D}^{c}_{Z}$ and $\pi_{\nc} : \che{D} \to \che{D}^{\nc}_{Z}$ be the two projections. Define $\sM^{d}_{r}(Z) \subset \sM^{d}_{r}$ to be the inverse image of $J^{d}_{r} Z$. Here is a commutative diagram that summarizes the situation: 
\[\begin{tikzcd}
	{\sM^d_r(Z)} & {\sM^d_r} & {J^d_r T} \\
	{J^d_r Z} & {J^d_r \che{D}} & {J^d_r \che{D}^\nc_Z } \\
	& {J^d_r \che{D}_Z^c }
	\arrow[from=1-1, to=1-2]
	\arrow[from=1-1, to=2-1]
	\arrow["\lrcorner"{anchor=center, pos=0.125}, draw=none, from=1-1, to=2-2]
	\arrow[from=1-2, to=1-3]
	\arrow["{{\alpha^d_r}}", from=1-2, to=2-2]
	\arrow[from=2-1, to=2-2]
	\arrow["{\textrm{constant}}"', from=2-1, to=3-2]
	\arrow[from=2-2, to=2-3]
	\arrow[from=2-2, to=3-2]
\end{tikzcd}\]

We then define, for each $r \geq \xi$, the locus $\sM^{d}_{r}(Z, \xi) \subset \sM^{d}_{r}(Z)$ to be the open subscheme obtained by intersecting the following two open conditions (recall the notion of non-degenerate and non-constant jets in \cref{subsec: jets}):
\begin{equation}
\label{cond: nd}
\begin{cases}
    \textrm{the locus $\sM^{d}_{r}(Z, \xi)$ lies above $J^d_{r,nd} T$, which is open in $J^d_r T$; } \\
    \textrm{the locus $\sM^{d}_{r}(Z, \xi)$ lies above $J^d_{\xi, \nc} \che{D}_{i}$ for each $i \in I^{\operatorname{nc}}_{Z}$, which is open in $J^d_\xi \che{D}_{i}$. } 
\end{cases}
\end{equation}
Finally, fix some $\xi \in \IN$ and for each $d, r \in \IN$, let $E^d_r$ be the scheme-theoretic image of the map $\sM^{d}_{r}(Z, \xi) \to T$, and set $E^d := \cap_{r \ge \xi} E^d_r$. Note that since each $E^d_r$ is a closed subscheme by definition, and $T$ is Noetherian, the infinite intersection $\cap_{r \ge \xi} E^d_r$ has to stabilize, i.e., $E^d = E^d_{r_0}$ for $r_0 \gg 0$. 
\end{construction}

When $d$ is suitably large (cf. \cref{atyplocusconstrprop} below), we say that $E := E^d$ is the \textbf{atypical locus with respect to $Z$}. To formulate this we make the following definition. 

\begin{definition}
\label{def: n(I)}
    In \cref{setup base}, for each subset $I \subset \{ 1, \hdots, j \}$, we define $n(I) \in \IN$ to be the dimension of a general non-empty fibre of the composition $B \stackrel{\psi}{\to} \che{D}_\IC \to \prod_{i \in I} \che{D}_{i, \IC}$, where $\psi : B \to \che{D}_\IC$ is any local period map on some ball $B \subset T^{\textrm{an}}_{\mathbb{C}}$. This is easily seen to be independent of $\psi$ and depends only on $I$.
\end{definition}

\begin{proposition}
\label{atyplocusconstrprop}
Suppose that in \cref{setup base}, 
\begin{equation}
\label{eqn: d > bla}
    d > n(I_Z^c) -\dim \che{D}^\nc_Z + \dim_{R} Z.
\end{equation}
Then $E^d = E^{d}_{r_{0}}$ is not Zariski dense. If in addition the period map induced by $\IV$ has no positive-dimensional fibres, and the algebraic monodromy group $\mathbf{H}_{t}$ is $\mathbb{Q}$-simple, then the underlying reduced subscheme of $E^{d}_{\IC}$ is contained in a finite union of special subvarieties of $T_{\IC}$.
\end{proposition}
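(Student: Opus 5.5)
We argue by contradiction, following the pattern of \cite[\S5]{urbanik2023algebraiccyclelociintegral} and \cite{zbMATH08159582}. Suppose $E^d = E^d_{r_0}$ is Zariski dense in $T$. Since $E^d_r \supseteq E^d$ for every $r \ge \xi$, each $E^d_r$ is dense, so the image of $\sM^d_r(Z,\xi)$ is dense in $T$ for all $r$. We pass to the complex fibre and a very general point $s \in T(\IC)$. A very general point avoids countably many proper closed subsets, so for every $r$ there is a jet $j_r \in J^d_{r,\mathrm{nd}}T$ based at $s$ and a frame $\varphi_r$ with $(j_r,\varphi_r) \in \sM^d_r(Z,\xi)$. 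The $r$-limp of \autoref{refinedjetthm} is then the restriction of the local period map $\psi : B \to \che{D}_\IC$, after an $\mathbf{H}_t$-translation. The conclusion is that $\psi \circ j_r$ lands in $J^d_r(h_r Z)$ for some $h_r \in \mathbf{H}_t(\IC)$, and that its $\che{D}_i$-components for $i \in I^{\nc}_Z$ are non-constant at order $\xi$.

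Next we pass to a limit to get a formal germ. The set of pairs (non-degenerate jet, $h$) satisfying these conditions forms a projective system of constructible sets, nonempty at each level. By a Chevalley/Mittag-Leffler argument (images stabilize in Noetherian spaces), we extract a compatible system. Here the frame ambiguity lives in $\mathbf{H}_t$, and we may also need to quotient by the stabilizer of $Z$. The result is a formal $d$-dimensional non-degenerate germ $\hat{j} : \hat{\mathbb{D}}^d \to T_\IC$ at $s$ with $\psi\circ\hat{j} \subset hZ$. By Artin approximation, or because the locus $\psi^{-1}(hZ)$ is analytic, we obtain an analytic subvariety $W \subset B$ of dimension $\ge d$ whose period image lies in $hZ$. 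Its projections to the $\che{D}_i$ with $i \in I^{\nc}_Z$ are non-constant, and its projection to $\che{D}^c_Z$ is a point.

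This is where we apply Ax--Schanuel for VHS (Bakker--Tsimerman). Write $\Gamma$ for the graph of $\psi$ in $B\times\che{D}$. Then the component of $\Gamma\cap(T\times hZ)$ containing $W$ has dimension at least $d$. We compare this with the expected dimension $\dim T + \dim Z - \dim \che{D}$. The fibre of $W$ over $\che{D}^c_Z$ lies in a general fibre of $B \to \che{D}^c_{Z}$, which has dimension $n(I^c_Z)$; here we use that $s$ is very general. It also maps into $\che{D}^\nc_Z$, and its image there lands in $\pi_\nc(hZ)$. So the typical dimension is at most $n(I^c_Z) + \dim Z - \dim \che{D}^\nc_Z$, and \eqref{eqn: d > bla} says exactly that $W$ is atypical inside the fibre. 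Applying Ax--Schanuel to the VHS restricted to the fibre $F$ over $\che{D}^c_Z$, whose monodromy is the product of the $\mathbf{H}_i$ with $i\in I^{\nc}_Z$, forces $W$ into a strict weakly special subvariety of $F$. The non-constancy condition on each $\che{D}_i$, $i \in I^\nc_Z$, is what makes this a proper constraint: it rules out the possibility that the weakly special is a product that merely drops a factor where $W$ is constant. A very general point cannot lie in such a subvariety, because these subvarieties come in countably many families, each dominating a proper subset. This contradiction proves that $E^d$ is not Zariski dense.

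For the second statement we run the same argument at a general point of each component $Y$ of $E^d_\IC$. This produces, for every point of $Y$, a $d$-dimensional atypical intersection through that point. Since $\mathbf{H}_t$ is $\mathbb{Q}$-simple, $I^c_Z$ is empty, and since the period map is quasi-finite, weakly special subvarieties of positive period dimension through a point are special. Ax--Schanuel then places these intersections in strict special subvarieties. The finiteness theorem for atypical special loci (geometric Zilber--Pink, \cite{BKU}) shows that only finitely many maximal such special subvarieties occur, and $Y$ is covered by them, which gives the containment. The main obstacle is the limiting step from jets of every finite order to a genuine germ contained in a single translate $hZ$, uniformly in $h$; this needs the constructibility and stabilization argument of \cite{urbanik2023algebraiccyclelociintegral}. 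A second delicate point is the dimension bookkeeping with $n(I^c_Z)$ when $Z$ is constant in some factors.
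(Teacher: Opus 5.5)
\textbf{First statement.} Here your route matches the paper's. You take jets of every order at a point, pass to a compatible formal germ $\ID^d_\infty\to\widehat{T}_s$ whose period image lies in a translate of $Z$, and restrict to the fibre $F$ of the period map over $\che{D}^c_Z$, which has dimension $n(I^c_Z)$ after discarding a closed set. You then apply Ax--Schanuel on $F$ and use non-constancy in each $\che{D}_i$, $i\in I^{\nc}_Z$.

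Your claim that the remaining weakly special subvarieties ``dominate a proper subset'' is exactly the paper's Mumford--Tate argument. Suppose a strict weakly special $Y\subset F$ through $s$ had the generic Mumford--Tate group. By Andr\'e's theorem \cite{zbMATH00060857} its monodromy would be normal in $G^{\mathrm{der}}$ and strictly smaller than $\prod_{i\in I^{\nc}_Z}\mathbf{H}_i$. It would therefore omit some $\mathbf{H}_i$, making $Y$ constant in $\che{D}_i$, which is excluded. Hence $Y$ lies in one of countably many strict special subvarieties.

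Two small corrections in this part:
\begin{itemize}
\item A decreasing chain of constructible sets need not stabilize in a Noetherian space. The limiting step instead uses that $\IC$ is uncountable, so a decreasing sequence of nonempty constructible sets has nonempty intersection. This is what \cite[Lem.~5.3]{zbMATH07871772} supplies.
\item Applying Ax--Schanuel on $F$ requires $F$ to be algebraic. The paper takes this from \cite{KO21}.
\end{itemize}

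\textbf{Second statement.} Your argument here contains two genuine errors.

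First, the claim that for a quasi-finite period map, weakly special subvarieties of positive period dimension are special, is false. Take $T=\sfA_g$, which has quasi-finite period map and $\IQ$-simple monodromy $\Sp_{2g}$. The image of $\{x\}\times\sfA_{g-1}$ under $\sfA_1\times\sfA_{g-1}\to\sfA_g$, with $x$ not CM, is weakly special but not special.

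What you actually need is that the strict weakly special $Y$ lies in a strict special subvariety, and this follows from the argument above with $I^c_Z=\varnothing$. If $\MT(Y)$ were generic, the monodromy of $Y$ would be a connected normal subgroup of $G^{\mathrm{der}}$ properly contained in the $\IQ$-simple $\mathbf{H}_t$, hence trivial. Then $\psi$ would be constant on the positive-dimensional $Y$, contradicting quasi-finiteness.

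Second, geometric Zilber--Pink \cite{BKU} does not apply. It controls special subvarieties that are atypical in the Hodge-theoretic sense. Nothing forces the special closures of your $Z$-atypical intersections to be atypical in that sense, since $Z\subset\che{D}$ is arbitrary, and typical special subvarieties can be Zariski dense.

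The finiteness instead comes from the elementary Baire argument the paper uses. At very general points of each irreducible component of $E^d_{\IC}$ (not at every point, as you wrote), you land in one of countably many strict special subvarieties. So each component is contained in a single one, and there are finitely many components.
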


\begin{definition}
\label{Zatyplocusdef}
When 
\[ d = \max \{ 1, n(I_Z^c) -\dim \che{D}^\nc_Z + \dim_{R} Z + 1 \} , \]
we call the locus $E^{d}_{\IC}$ of \autoref{atyplocusconstrprop} the $Z$-atypical locus of $(S_{\IC}, \mathbb{V})$. We will sometimes just say ``atypical locus'' when $Z$ is understood.
\end{definition}

\begin{proof}
Note also that by the non-degeneracy condition in (\ref{cond: nd}), for $r \ge 1$ and $d \gg 0$, $E^d_r = \emptyset$. The right hand side of (\ref{eqn: d > bla}) is the ``expected'' dimension of $B \times_{\che{D}_\IC, \psi} Z_\IC$ over $\IC$, where $\psi : B \to \che{D}_\IC$ is a local period map as in \cref{def: n(I)}. 

Fix some $r_{0} \geq \xi$ and consider the intersection of constructible sets
\[ \sJ_{r_{0}} = \bigcap_{r \geq r_{0}} \textrm{im}[\sM^{d}_{r}(Z, \xi) \to \sM^{d}_{r_{0}} \times J^{d}_{r_{0}} \che{D}] . \]
Let $(p_{r_{0}}, \sigma_{r_{0}})$ be a $\mathbb{C}$-point in $\sJ_{r_{0}}$. Set $\ID^d_{\infty, \IC} := \varinjlim_r \ID^d_{r, \IC} \simeq \Spf(\IC[\![x_1, \cdots, x_d]\!])$. Applying \cite[Lem 5.3]{zbMATH07871772} to the sequence $\{ \sM^{d}_{r}(Z, \xi) \}_{r \ge r_{0}}$ at the point $(p_{r_{0}}, \sigma_{r_{0}})$ and arguing as in the proof of \cite[Prop. 4.2.5]{zbMATH08159582} we obtain a commutative diagram of formal objects
\begin{equation}
    \begin{tikzcd}
	{\ID^d_{\infty, \IC}} & {\what{S}_s} \\
	{Z_\IC} & {\che{D}_\IC}
	\arrow["{j_\infty}", from=1-1, to=1-2]
	\arrow["{\sigma_\infty}"', from=1-1, to=2-1]
	\arrow["{\psi_\infty}", from=1-2, to=2-2]
	\arrow[hook, from=2-1, to=2-2]
\end{tikzcd}
\end{equation}
Moreover $j_{\infty}$ is injective on tangent spaces (hence an embedding), and $\sigma_{\infty}$ is non-constant when projected to each $\che{D}_{i}$ where $i \in I^\nc_{Z}$.

The fact that $j_\infty$ is an embedding implies that $\dim (\psi_{\infty})^{-1}(Z_\IC) \geq d$. Writing $\Omega' := \che{D}^\nc_Z$ for simplicity, from our assumption (\ref{eqn: d > bla}) on $d$, we have 
\begin{equation}
\label{atypineq}
\dim \Omega' - \dim Z_{\mathbb{C}} > n(I_Z^c) - \dim (\psi_{\infty})^{-1}(Z) .
\end{equation}
Note also that $(\psi_{\infty})^{-1}(Z) \subset (\psi_{\infty})^{-1}(\Omega' \times \{ pt \})$ by construction, and $(\psi_{\infty})^{-1}(\Omega' \times \{ pt \})$ is the germ of a union of weakly special subvarieties $T'$ (i.e., $(\psi_{\infty})^{-1}(\Omega' \times \{ pt \}) = \what{T}'_s$) in the sense of \cite[Def.~4.1]{KO21}, which are algebraic by Prop.~4.8 \textit{ibid.} Since the image of $\what{T}'_s$ in $\che{D}^c_Z$ is a point, after removing a closed locus from $T$, we may assume that $\dim T' = n(I^c_Z)$. 

Now let $T^{\circ}$ be a component of $T'$ of maximal dimension which factors $j_{\infty}$. Let $\psi^{\circ}_{\infty}$ be the restriction of $\psi_{\infty}$ to the formal completion at $s$ of $T^{\circ}$. Use (\ref{atypineq}) to apply the Ax-Schanuel theorem \cite{zbMATH07066495} to the variation of Hodge structure $\restr{\mathbb{V}}{T^{\circ}}$, where we note that $\psi^{-1}_{\infty}(Z) = \psi^{\circ,-1}_{\infty}(Z)$. We learn that $(\psi^{\circ}_{\infty})^{-1}(Z)$ lies in a strict weakly special subvariety $Y \subset T^{\circ}$. If the Mumford-Tate group of $Y$ agrees with that of $T^{\circ}$ then the algebraic monodromy group of $Y$ is normal in the algebraic monodromy group of $T^{\circ}$. But the algebraic monodromy group of $Y$ is a strict subgroup of the algebraic monodromy group of $T^{\circ}$ (since $Y$ is weakly special), so this means in particular that there is some factor $D_{i} \subset \Omega'$ such that the map $\hat{Y}_{s} \to \che{D}_{i}$ induced by $\psi^{\circ}_{\infty}$ is constant. But this cannot happen because $j_{\infty}$ factors through $\hat{Y}_{s}$ and $\sigma_{\infty}$ is non-constant on each such $\che{D}_{i}$ by construction.

It follows that the Mumford-Tate group of $Y$ is strictly contained in the Mumford-Tate group of $T$. By construction this is true for all points in $E^d(\IC)$ outside of a meagre subset, hence $E^d(\IC)$ lies inside a finite union of special subvarieties of $T$. This completes the proof of the first statement.

\vspace{0.5em}



For the claim under the additional $\mathbb{Q}$-simplicity assumption and the assumption on the fibres of the period map, we observe that in this case $j = 1$ and $I_Z^c = \varnothing$. Then $n(I_Z^c) = \dim T_{\mathbb{C}}$ and the step in the above argument where one removes a closed locus from $T$ can be skipped. The proof then shows $E^{d}_{\mathbb{C}}$ lies inside a finite union of special subvarietes of $T$. 
\end{proof}

\subsection{Flatness in the Shimura Setting}

\begin{definition}
    Let $(G, X)$ be a Shimura datum of Hodge type and $\sfK \subseteq G(\IA_f)$ be a compact open subgroup small enough such that $\Sh_\sfK(G, X)$ is a scheme. Given a number field $E \subseteq \IC$ and $N \in \IN$, we say that $\sO_E[1/N]$ is an \textit{admissible base} for $(G, X, \sfK)$ if the following conditions are satisfied: 
    \begin{enumerate}[label=\upshape{(\roman*)}]
        \item $E$ contains the reflex field $E(G, X)$; 
        \item $G$ admits a reductive model $\sG$ over $\IZ[1/N]$; 
        \item $\sfK$ is of the form $\sfK^N \sfK_N$, where $\sfK^N \subseteq G(\IA^N_f)$ and $\sfK_N = \prod_{p \mid N} \sG(\IZ_p)$.
    \end{enumerate} 
\end{definition}

\noindent By convention, we always choose $\sfK$ small enough so that $\Sh_\sfK(G, X)$ is a scheme (it is a Shimura stack in general). 

 By Lovering's work \cite{Lovering}, for $E = E(G, X)$ and every $N$ satisfying (ii) and (iii) above, there exists a canonical integral model $\shS_\sfK$ over $\sO_{E}[1/N]$ such that, for every $p \nmid N$ and finite place $v \mid p$, $\shS_\sfK \tensor_{\sO_E[1/N]} \sO_{E, (v)}$ is the same as the one constructed by Kisin in \cite{Kis10}. If $E$ is strictly bigger than $E(G, X)$, then we define the canonical integral model over $\sO_E[1/N]$ to be the base change of that over $\sO_{E(G, X)}[1/N]$. Given another Shimura datum $(G', X')$ with $\sfK' \subseteq G'(\IA_f)$ such that $\sO_E[1/N]$ remains an admissible base for $(G', X', \sfK')$, if there is a morphism of Shimura data $(G', X') \to (G, X)$ that sends $\sfK'$ into $\sfK$, then the Shimura morphism $\Sh_{\sfK'}(G', X') \to \Sh_\sfK(G, X)$ extends to a morphism $\shS_{\sfK'}(G', X') \to \shS_\sfK(G, X)$ between their integral models. 

Choose a Siegel embedding $(G, X) \into (\GSp(V)_\IQ, S^\pm)$, where $V$ is a standard symplectic space and $S^\pm$ is the Siegel half spaces, such that up to enlarging $N$, $G \into \GSp(V)$ extends to $\sG \to \GSp(V)$ over $\IZ[1/N]$. By \cite[(1.3.2)]{Kis10} (cf. \cite[Prop.~4.5.3]{Lovering}), we can find a collection of tensors $\{ s_\alpha \in (V[1/N])^{\tensor} \}_{\alpha \in I}$ such that $\sG = \Aut_{s_\alpha}(V[1/N])$. Let $\sV$ denote the first de Rham cohomology of the universal abelian scheme over $\shS := \shS_\sfK(\sG, X)$. As summarized in \cite[\S4.5.5]{Lovering}, by \cite{Kis10} the tensors $s_\alpha$'s give rise to global sections $s_{\alpha, \dR} \in \sV^\tensor(\shS)$. Then one may form the $G$-torsor 
\begin{equation}
\label{eqn: def G-torsor}
\sI : T \in \mathsf{Sch}_{\shS} \mapsto \{ f \in \mathrm{Isom}(V \tensor \sO_T, \sV|_T) : f(s_\alpha \tensor 1) = s_{\alpha, \dR}|_T \}. 
\end{equation}
Let $\mu : \IG_{m, \IC} \to G_\IC$ be a Hodge cocharacter given by a point on $X$, which determines a parabolic $P_\mu$ such that the compact dual $\che{X}$ for $X$ is given by $\mathrm{Gr}_\mu = G_\IC / P_\mu$. Although one may not be able to directly find a $\mu$ which is defined over $\sO_E[1/N]$, Lovering used a Galois descent argument in \cite[\S3.2]{Lovering} to show that $\mathrm{Gr}_\mu$ has a natural model over $\sO_E[1/N]$. We keep denoting it by $\che{X}$ here. 
There is a natural period map $\alpha : \sI \to \che{X}$ defined by sending a point $(s, b^\bullet)$ to $i_{b^\bullet}(\Fil^1 \sV_s)$. In \textit{loc. cit.}, it is denoted by $\gamma : \mathscr{P}_{K^N}(G, \mathfrak{X}) \to \mathscr{GR}_\mu$. Over $\IZ[1/Nr!]$, because of \cref{trivlem}, one can upgrade this to a map $\alpha_r^d : \sI^d_r \to J^d_r \che{X}$ just as in \autoref{refinedjetthm}, by extending the isomorphism $\sV_s \simeq V_{k(s)}$ induced by $b^\bullet$ to $(\sV, \nabla)|_{\Delta^r_s} \simeq (V \tensor \sO_{\Delta^r_s}, 1 \tensor d)$.

We remark that by standard Tannakian arguments, one can show that the maps $\alpha$ and $\alpha^d_r$ do not depend on the choice of the symplectic embedding. Moreover, if $(G', X', \sfK')$ is another triple for which $\sO_E[1/N]$ is an admissible base, with the corresponding $\alpha'$ and $(\alpha')^d_r$'s, and there is a morphism $(G', X', \sfK') \to (G, X, \sfK)$, then there is a natural commutative diagram (cf. \cite[Prop.~4.4.8]{Lovering})
\[\begin{tikzcd}
	{\sI'} & {\che{X}'} & {(\sI')^d_r} & {J^d_r \che{X}'} \\
	\sI & {\che{X}} & {\sI^d_r} & {J^d_r \che{X}}
	\arrow["{\alpha'}", from=1-1, to=1-2]
	\arrow[from=1-1, to=2-1]
	\arrow[from=1-2, to=2-2]
	\arrow["{(\alpha')^d_r}", from=1-3, to=1-4]
	\arrow[from=1-3, to=2-3]
	\arrow[from=1-4, to=2-4]
	\arrow["\alpha", from=2-1, to=2-2]
	\arrow["{\alpha^d_r}", from=2-3, to=2-4]
\end{tikzcd}. \]

\begin{remark}This $\sI$ is a variant of the $\sM$ in \cref{gentorsorconstr}. The only difference is that $\sI$ is a $G$-torsor whereas $\sM$ would be a $G^{\mathrm{der}}$-torsor. 
\end{remark}

Next we assume that we have a quasi-finite morphism $\varphi : T \to \shS_\sfK$ over $\sO_{L}[1/N]$, with $L/E$ a finite extension, and where $T$ is an irreducible scheme which is smooth over over $R := \sO_{L}[1/N]$. We equip $T$ with the automorphic bundles and associated data obtained from $\shS_\sfK$ by pulling back along $\varphi$. In particular we have natural $G$ torsors $\sI(T) := \sI \times_{\shS_{\sfK}} T$ and $\sI^{d}_{r}(T) := \sI^d_r \times_{\shS_\sfK} T$ over $T$. We may compare them to the torsors $\sP, \sP^d_r, \sM$ and $\sM^d_r$ from \S\ref{refinedjetcorrsec} as follows.

First, we set $\sH = \varphi^{*} \sV$, and take for the Hodge filtration and Gauss-Manin connection on $\sH$ the analogous data pulled back from $\sV$. The fixed isomorphism $\sH_{t} \cong R^{m} = V_{R}$ from \S\ref{refinedjetcorrsec} identifies $\sI(T)$ with a subtorsor of the torsor $\sP$ constructed there. We may also observe that $\sM$ is a subtorsor of $\sI(T)$, since $\sM$ is constructed via the same procedure as $\sI(T)$ using all flat tensors instead of just the (pullbacks to $T$ of) the $\{ s_{\alpha, \dR} \}_{\alpha}$. Correspondingly, we have $\sM^d_r \subset \sI^d_r(T)$. The isomorphism $\sH_{t} = V_{R}$ also induces an identification $\che{D} \cong \che{X}$, and under this identification the map $\alpha^d_r : \sM^d_r \to J^d_r \che{D}$ from \autoref{refinedjetthm} factors through the inclusion $\sM^d_r \subset \sI^d_r$ via the pullback to $T$ of the map $\sI^d_r \to J^d_r \che{D}$ described above with the same notation.

\begin{definition}
Applying \cref{setup base}, we let $G_1, \cdots, G_j$ be the $\IQ$-simple factors of $G$, and let $\che{D} = \che{D}_{1} \times \cdots \times \che{D}_{j}$ be the usual factorization of $\che{D}$. Note that $j = k$ in this Shimura setting. Then we say that $Z \subset \che{D}$ is factor-transversal if for each non-empty subset $I \subset \{ 1, \hdots, j \}$, the intersection of $Z$ with each fibre of $\che{D} \to \prod_{i \in I} \che{D}_{i}$ is zero-dimensional over $R$. 
\end{definition}

\begin{definition}
    Let $(H, \che{D})$ be a pair consisting of a complex algebraic group $H$ and a homogeneous space $\che{D}$ for $H$. Two subvarieties $Z, Z' \subset \che{D}$ are said to be equivalent (relative to $(H, \che{D})$) if for some $h \in H(\IC)$ we have $h Z = Z'$. An equivalence class is called a \textit{type} (relative to $(H, \che{D})$). If instead $Z$ and $Z'$ are defined over some subring $R \subset \IC$, we say that $Z$ and $Z'$ are equivalent (or have the same type) if this is true after base-change to $\IC$.
\end{definition}




In the following result we wish to apply \autoref{atyplocusconstrprop} to produce a locus $E \subset T$ which controls bad intersections between $T$ and certain Shimura varieties $\shS'$ lying over $\shS$ of type $Z$. We will assume that $Z$ satisfies the expected dimension inequality
\begin{equation}
\label{Zexpdimnonneg}
\dim Z_{\IC} + \dim T_{\IC} - \dim \che{D}_{\IC} \geq 0 .
\end{equation}
It is important to choose $d$ and $\xi$ appropriately. We will take $d$ to be one more than the expected dimension of the fibre product $T_{\IC} \times_{\shS_{\IC}} \shS'_{\IC}$, i.e., we set
\[ d := \dim Z_{\IC} + \dim T_{\IC} - \dim \che{D}_{\IC} + 1 . \]
As for $\xi$, we take it large enough so that any non-degenerate complex jet $j : \mathbb{D}^d_{r,\IC} \to T_{\IC}$ satisfies the property that $\psi \circ j$ is non-constant, where $\psi$ is any $r$-limp on $T_{\IC}$ with $r \geq \xi$ defined at the base-point of $j$. Such a $\xi$ exists because $T_{\IC} \to \shS_{\IC}$ is quasi-finite by assumption, and $d > 0$ as a consequence of (\ref{Zexpdimnonneg}); in particular the non-existence of such a $\xi$ would imply that the fibres of $\varphi$ contain curves of positive dimension.

\begin{theorem}
\label{shimflatnessthmv2}
Let $(G, X)$ be a Hodge-type Shimura datum with level structure $\sfK \subseteq G(\IA_f)$. Let $\sO_E[1/N]$ be an admissible base for $(G, X, \sfK)$. Let $T$ be a $\shS := \shS_\sfK(G,X)$-scheme which is smooth over $R := \sO_{E}[1/N]$ and quasi-finite over $\shS := \shS_\sfK(G, X)$, and such that the algebraic monodromy group of $T_{\IC}$ agrees with the derived subgroup of $G$. Let $Z \subseteq \che{D} = \che{X}$ be a factor-transversal closed subscheme such that
\[ \dim Z_{\IC} + \dim T_{\IC} - \dim \che{D}_{\IC} \geq 0 . \]
Let $E \subset T$ be given by applying \autoref{atyplocusconstrprop} with $(d, \xi)$ as above, increasing $N$ as necessary. Then the following holds: 

Let $(G', X') \subseteq (G, X)$ be a sub-Shimura datum with a level structure $\sfK' \subseteq G'(\IA_f)$ such that $\sO_E[1/N]$ remains admissible for $(G', X', \sfK')$ and $\sfK' \subseteq \sfK$. If $\che{X}'$ is of type $Z$, then up to replacing $T$ by its open subscheme $T \smallsetminus E$, the fibre product $T \times_{\shS} \shS'$ is equidimensional over $R$, and there is no non-degenerate jet $j : \ID^d_r \to T \times_{\shS} \shS'$ defined over any field $k$ over $R$. 
\end{theorem}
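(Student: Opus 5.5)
We argue by contradiction. The second assertion (no non-degenerate $d$-jet into the fibre product over $T \smallsetminus E$) is the main point, and equidimensionality will follow from it.

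\emph{Step 1: reduction to a jet statement.} Suppose $k$ is a field over $R$ and $j : \ID^d_r \to T \times_{\shS} \shS'$ is a non-degenerate jet with $r \geq \xi$, based at a point $x$ lying over $s \in (T \smallsetminus E)(k)$. Since $T \times_{\shS} \shS' \to T$ is quasi-finite and unramified (a Hecke-type map of Shimura varieties of Hodge type), the composite $\ID^d_r \to T$ is still non-degenerate. Choose a frame at $s$ which comes from a frame at the image point in $\shS'$, i.e.\ a point of $\sI'$. Then apply the commutative diagram comparing $(\alpha')^d_r$ and $\alpha^d_r$. Because $\sI'^d_r \to J^d_r\che{X}'$, the period jet of $j$ factors through $J^d_r \che{X}'$. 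Since $\che{X}'$ is of type $Z$, after translating by an element of $G$ we may assume it equals $Z$. Translating the frame in the torsor accordingly, and using that $E$ is defined $\mathbf{H}_t$-equivariantly via the quotient stack, this gives a $k$-point of $\sM^d_r(Z)$ over $s$. One subtlety needs care here: $\sM$ is an $\mathbf{H}_t = G^{\mathrm{der}}$-torsor while $\sI$ is a $G$-torsor. The two differ by the center, and the center acts trivially on $\che{D}$, so the $G$-translate can be absorbed.

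\emph{Step 2: the open conditions.} We must check that this point lies in $\sM^d_r(Z,\xi)$. Non-degeneracy on $T$ was established above. For the second condition we need the projection of the period jet to $J^d_\xi \che{D}_i$ to be non-constant for each $i \in I^{\nc}_Z$. Factor-transversality gives $I^c_Z = \varnothing$. The choice of $\xi$ gives that $\psi\circ j$ is non-constant. We must upgrade this to non-constancy on every factor. If the projection to some $\che{D}_i$ were constant, the jet would lie in $Z \cap (\text{fibre of } \che{D} \to \che{D}_i)$, which is zero-dimensional. A non-constant jet cannot factor through a zero-dimensional scheme, at least for the reduced jet; this is the delicate point in positive characteristic, where one uses that the jet is non-constant to order $\xi$. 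Granting this, $s$ lies in the image of $\sM^d_r(Z,\xi)$ for all $r \geq \xi$, hence in $E^d_r$ for all $r$, hence in $E$. This is a contradiction. Here we use that $E^d$ stabilizes at $r_0$ and that a non-degenerate $r$-jet restricts to non-degenerate $r'$-jets for $r' \le r$.

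\emph{Step 3: equidimensionality.} Over $T\smallsetminus E$, every component $C$ of every fibre $(T\times_\shS \shS')_{k}$ has dimension $< d$. Otherwise a smooth point of $C_{\mathrm{red}}$ would carry a non-degenerate $d$-jet to every order. In characteristic $p$ this requires $r!$ invertible, which is why $N$ is enlarged to include all primes $\le r_0$. On the other hand, $T\times_\shS \shS'$ is cut out locally in the smooth $T$ by $\dim \che{D} - \dim Z$ equations. This uses the local model: $\shS'\to\shS$ is locally a regular immersion of codimension $\dim\che{D}-\dim Z$ up to étale maps, by smoothness of both canonical models. So each component of each fibre has dimension $\ge d-1$, hence exactly $d-1$, and the same bound holds for the total space with one extra dimension. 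By Krull and the dimension bounds, this gives equidimensionality; it even gives flatness, by miracle flatness, since $T\times_\shS\shS'$ is lci with fibres of constant dimension.

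\emph{Main obstacle.} I expect the main obstacle to be Step 1's matching of torsors and frames. One must produce a $k$-point of $\sM^d_r(Z)$, not merely of $\sI^d_r$, when the reduction of $\shS'$ is only known to be of type $Z$ over $\IC$. The plan is to spread out the equality $g\che{X}'=Z$, for some $g \in G(\IC)$, to $g$ defined over a finite extension of $R$ after enlarging $N$. This is possible because there are finitely many $G$-conjugacy classes of sub-data up to the relevant equivalence, so finitely many $g$ suffice.
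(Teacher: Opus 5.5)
Your argument is the paper's, run in the opposite order. The paper assumes a fibre component $Y \subseteq (T\times_{\shS}\shS')_k$ of dimension $> \dim T_k - c$, where $c=\dim\che{D}_{\IC}-\dim Z_{\IC}$. It uses that $T\times_{\shS}\shS'\to T$ is unramified to get non-degenerate $d$-jets at a smooth point of $Y$. Their period jets lie in a $G_k$-translate of $Z_k$ and satisfy the open conditions, by factor-transversality and the choice of $\xi$, so the point lies in $E=\emptyset$. Flatness then follows because $\shS'\to\shS$ is formally cut out by $c$ equations (Stacks, Tag 00MG).

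You instead first exclude non-degenerate jets, then deduce the dimension bound and apply miracle flatness to the same lci structure. This is equivalent, and it addresses the jet assertion of the theorem a little more directly. For the contradiction you need $r\geq r_0$, not merely $r\geq\xi$: a jet of order $\xi\le r<r_0$ only puts $s$ in $E^d_r\supseteq E$.

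Two of your side justifications would fail as written, although the paper itself only asserts the corresponding steps.

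First, in Step 2 it is false that a non-constant jet cannot factor through a zero-dimensional scheme. The scheme $Z\cap\pi_i^{-1}(\mathrm{pt})$ may be non-reduced, since factor-transversality does not make $Z\to\che{D}_i$ unramified, and non-constant jets do factor through fat points for every $\xi$. What actually excludes this is the non-degeneracy of $j$ in $T$. Because the local period map on $T$ is quasi-finite (as $T\to\shS$ is), the germ $\psi^{-1}(Z\cap\pi_i^{-1}(\mathrm{pt}))$ is Artinian and contained in $\Delta^{\ell}_s$, with $\ell$ bounded uniformly by constructibility. A non-degenerate $j|_{\ID^d_\xi}$ cannot factor through $\Delta^{\ell}_s$ once $\xi>\ell$. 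So $\xi$ must be chosen large relative to the fibres of $Z\to\che{D}_i$ as well, not only relative to those of $T\to\shS$.

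Second, your ``finitely many $g$'' argument does not work. A fixed type contains infinitely many admissible sub-data (e.g.\ $G(\IQ)$-conjugates). Meanwhile $N$ and $E$ are fixed before $(G',X')$ is chosen, so you cannot enlarge $N$ to spread out each conjugating element. What is needed is a statement uniform in the type. You need an $N$ depending only on $Z$ such that, for every admissible $(G',X')$ of type $Z$ and every $p\nmid N$, $\che{X}'_{\overline{\IF}_p}$ is a $G(\overline{\IF}_p)$-translate of $Z_{\overline{\IF}_p}$. One route is conjugacy of the reductive subgroups $\sG'_{\overline{\IF}_p}\subset\sG_{\overline{\IF}_p}$ for $p$ large in terms of the type. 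The paper passes over this in the phrase ``factor through $Z_k$ up to the action of $G_k$''.
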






\begin{proof}
Let $W := \shS' \times_\shS T$. By replacing $T$ with the open subscheme $T \smallsetminus E$, we may assume without loss of generality that $E = \emptyset$. We aim to show that $W$ is flat over $R$. 

Let $k$ be any algebraically closed field over $R$. For every algebro-geometric point $y \in \shS'(k)$ with image $x \in \shS$, the induced morphism $(\shS'_{W(k)})^\wedge_{y} \to (\shS_{W(k)})^\wedge_x$ between the formal completions is a closed embedding which is a local complete intersection of codimension $c = \codim_{\che{D}_\IC} Z_\IC$.\footnote{One can deduce this from \cite{Xu26}, or simply from \cite[Prop.~2.3.5]{Kis10}.} By base change, we have that if $y$ and $x$ lift to $k$-points of $W$ and $T$, then the morphism between formal completions $(W_{W(k)})^\wedge_y \to (T_{W(k)})^\wedge_x$ is also a closed immersion cut out by $c$ equations. 

By \cite[\href{https://stacks.math.columbia.edu/tag/00MG}{Tag 00MG}]{stacks-project}, to show that $W$ is flat over $R$, it suffices to show that for every pair of $k$-points $y, x$ on $W, T$ respectively, the special fiber $(W_k)^\wedge_y \into (T_k)^\wedge_x$ has codimension exactly $c$. This can only fail if for some prime $\mathfrak{p} \subset R$ (including the zero prime corresponding to the generic fiber over $L$) and $k = \overline{\kappa(\fp)}$, there exists an irreducible component $Y \subseteq W_{k}$ of dimension $d > \dim T_{k} - c$. For the sake of contradiction, assume that such a $Y$ exists. 

Since the Shimura morphism $\shS' \to \shS$ is unramified, so is $W \to T$ by base change (\cite[\href{https://stacks.math.columbia.edu/tag/02GA}{Tag 02GA}]{stacks-project}). This implies that $Y \to T_{k}$ is also unramified by restriction, so that for a general smooth point $y \in Y$, the map $Y \to T_k$ is injective on the tangent space at $y$. For any such smooth point, its local ring $\sO_{Y, y}$ is regular of dimension $d$, allowing us to construct non-degenerate $d$-dimensional jets $j : \ID^d_{r,k} \to Y \to T_k$ for any $r \ge 0$. 


By construction, these jets factor through $\shS'_{k}$, meaning their associated $r$-limps, with $r = r_{0}$, factor through $Z_{k} \subset \che{D}_{k}$ up to the action of $G_{k}$. Letting $\psi$ be an $r$-limp based at $s$, the factor-transversal assumption together with our choice of $\xi$ implies that the jet $\psi \circ j$ is non-constant when projected to each factor of $\che{D}_{k}$. Thus, these jets lift to points in the locus $\sM^d_r(Z,\xi)$. Because $d > \dim T - c$, the integer $d$ satisfies the critical dimension inequality (\ref{eqn: d > bla}). Consequently, any such jet lies above the image of $\sM^d_r(Z,\xi) \to T$, i.e. in the empty atypical locus $E$.

Therefore, no such $Y$ with dimension larger than $\dim T - c$ can exist, so $W$ is flat over $R$. 
\end{proof}

\subsection{Flatness and Lifting}

We now deduce \autoref{genericflatnessMg}, \autoref{liftisodecompthm} and \autoref{isogenyliftthm} as a consequence of \autoref{shimflatnessthmv2}.

\begin{proof}[Proof of (\ref{genericflatnessMg}):]
The statement can be proven after replacing $\sfA_{g}$ with a level cover $\sfA_{g}[n]$, and hence we may assume that $T$ is a scheme with quasi-finite map $\varphi : T \to \sfA_{g}[n]$ and prove the same statement for this map.

We let $(G, X) = (\GSp_{2g}, \mathbb{H}_{g})$ be the usual Shimura datum associated to the Siegel upper half space, and let $\che{D} = \che{X}$ be the associated sympletic grassmannian of $g$-dimensional Lagrangian subspaces. Then it is well known that, up to equivalence by the $\GSp_{2g}(\mathbb{R})$-action, there are finitely many Shimura subdata $(H, Y) \subset (G,X)$. This implies that we can find finitely many subvarieties $Z_{1}, \hdots, Z_{k} \subset \che{D}$ such that the types of the varieties $\che{Y} \subset \che{X}$ are each represented by one of the $Z_{i}$. We may assume the $Z_{i}$ are defined over $\mathcal{O}_{L'}[1/N]$ with $L'$ a finite extension, enlarging $N$ as necessary.

Now apply \autoref{shimflatnessthmv2} to this situation with $Z = Z_{i}$, and let $E = E_{1} \cup \cdots \cup E_{k}$ be the union of the resulting loci. Then $E$ is defined over $\mathcal{O}_{L'}[1/N]$ (increasing $N$ as necessary), but we may assume it is defined over $\mathcal{O}_{L}[1/N]$ after enlarging $E$ to include the conjugates of the $E_{i}$ over the Galois closure of $L'/L$.

Now the statement of \autoref{shimflatnessthmv2} implies that the fibre products $(T - E) \times_{\sfA_{g}[n]} Y$ are equidimensional over $R = \mathcal{O}_{L}[1/N]$ of expected codimension $\dim \che{D}_{\IC} - \dim Z_{i,\IC}$, where $i$ depends on $Y$. Each $Y$ is l.c.i. over $\sfA_{g}[n]$ cut out locally by $\dim \che{D}_{\IC} - \dim Z_{i,\IC}$ functions, so this implies that $(T - E) \times_{\sfA_{g}[n]} Y$ is likewise l.c.i. over $T - E$. By miracle flatness, each locus $(T - E) \times_{\sfA_{g}[n]} Y$ is also flat over $R$. 
\end{proof}

\begin{proof}[Proof of (\ref{liftisodecompthm}):]
This follows from \autoref{genericflatnessMg} by using the moduli interpretation of the Shimura varieties $w : Y \to \sfA_{g}$ together with the lifting fact \cite[Lem 2.1.5]{zbMATH08159582}.
\end{proof}

\begin{proof}[Proof of (\ref{isogenyliftthm}):]
In this case we take $(G, X) = (\GSp_{2g} \times \GSp_{2g}, \mathbb{H}_{g} \times \mathbb{H}_{g})$, with $\GSp_{2g} = \operatorname{GAut}(V,\omega)$ and $(V,\omega)$ the standard symplectic space, and write $\che{D} = \che{X}$ for the product $\mathrm{IGr}(g, V) \times \mathrm{IGr}(g, V)$ of isotropic Grassmannians. We let $Z \subset \che{D}$ be the diagonally embedded copy of $\mathrm{IGr}(g, V)$. Then $I_Z^c = \varnothing$ (in the language of \autoref{constr: atypical E}), $I^{\operatorname{nc}}_{Z} = \{ 1, 2 \}$ and $\che{D}^\nc_Z := \prod_{i \in I_Z^\nc} \che{D}_i = \che{D}$. We let 
\[ d = n(I_Z^c) - \dim \che{D}^{\operatorname{nc}}_Z + \dim_{R} Z + 1 = 2 \dim_{\IC} \sfM_{g,\IC} - \dim_{\IC} \sfA_{g,\IC} + 1 \]
as in \autoref{Zatyplocusdef}. Applying \autoref{atyplocusconstrprop} we learn that $E^{d} = E^{d}_{r_{0}}$ is not Zariski dense in $\sfM_{g} \times \sfM_{g}$.

Now let $E = E^{d}_{r_{0}}$ be as in \autoref{atyplocusconstrprop}, and fix $([C], [C'])$ and $\beta$ as in \autoref{isogenyliftthm}. We let $w : Y \to \sfA_{g} \times \sfA_{g}$ be the Shimura variety passing through $([J(C)], [J(C')])$ and whose formal completion at $([J(C)], [J(C')])$ contains the formal deformation scheme $B := \Def(J(C), J(C'), \beta) \xrightarrow{w} \sfA_{g} \times \sfA_{g}$ parameterizing deformations of the Jacobians $J(C)$ and $J(C')$ together with the isogeny $\beta$. Then because $\beta$ is prime-to-$p$, the corresponding cycle class $[\beta] \in \Hom(H^1_{\operatorname{dR}}(C'), H^1_{\operatorname{dR}}(C))$ is an isomorphism which preserves $\Fil^1$. Over $B_{\overline{\mathbb{F}}_{p}}$, the class $[\beta]$ extends to a flat section $\widetilde{\beta}$ of $\restr{\sheafhom(\sV, \sV)}{B_{\overline{\mathbb{F}}_{p}}}$ which preserves $\Fil^1$, where $(\mathcal{V}, \nabla)$ is the flat algebraic de Rham vector bundle with connection lying over $\sfA_{g}$.

Now we consider the fibre product $U = (\sfM_{g} \times \sfM_{g}) \times_{\sfA_{g} \times \sfA_{g}} B$, and write $U = \Spf R$. Suppose that $g \leq 9$, and that the point $([C], [C']) \in U(\overline{\mathbb{F}}_{p})$ does not lift to a point of $U(\mathcal{O}_{K})$, where $K$ is a finite extension of $W[1/p]$ with $W = W(\overline{\mathbb{F}}_{p})$. Then by \cite[Lem. 2.1.5]{zbMATH08159582}, this can only happen if $R$ is not flat over $W$. 

Write $B = \Spf\, S$ for a complete local ring $S$. Let $Q$ be the complete local ring of $\sfA_{g} \times \sfA_{g}$ at $([J(C)], [J(C')], \beta)$. Then $S$ is a quotient of $Q$ cut out by exactly $m = g(g+1)/2$ functions, say $y_{1}, \hdots, y_{m}$. Since $g \leq 9$ we have $m < 2(3g-3)$. If $U$ is a complete local intersection in the complete local ring of $\sfM_{g} \times \sfM_{g}$ at $([C], [C'], \beta)$, then $R$ is flat by miracle flatness. It follows that $U$ is not a complete local intersection, and so 
\[ \dim U_{\overline{\mathbb{F}}_{p}} \geq 2 \dim_{\IC} \sfM_{g,\IC} - m + 1 = d . \]
Now consider a component $N$ of $(\sfM_{g} \times \sfM_{g})_{\overline{\mathbb{F}}_{p}} \times_{(\sfA_{g} \times \sfA_{g})_{\overline{\mathbb{F}}_{p}}} Y_{\overline{\mathbb{F}}_{p}}$ that factors the map from $U_{\overline{\mathbb{F}}_{p}}$. Then $\dim N \geq d$. At a general smooth $\overline{\mathbb{F}}_{p}$-point of $N$, we can construct an $r_{0}$-limp $\psi : \mathbb{D}^{d}_{r_{0}} \to \che{D}_{\overline{\mathbb{F}}_{p}}$ which maps to $(\beta, 1) Z_{\overline{\mathbb{F}}_{p}}$. It then follows that $N$ lies over $E$, and hence so does $U$. The result follows.
\end{proof}

\section{Flatness Results (for $\sfM_{g,2}$ and $\sfH_{g,2}$)}
\label{Mg2flatsec}

In this section we give a version of the results in the previous section for the special cases of $T = \sfM_{g,2}, \sfH_{g,2}$ and $\sX = \sA_g$, or more generally for a generically finite map $\varphi : T \to \sA_{g}$ of Deligne-Mumford stacks satisfying some mild assumptions. This does not fit into the framework of the previous sections because $\sX_{\IC}$ naturally carries a mixed variation of Hodge structure rather than a pure one. Nevertheless, we will see that essentially the same arguments can be carried out, with appropriate modifications. Throughout this section we regard both $T$ and $\sX$ as Deligne-Mumford stacks.

To construct the mixed variation of Hodge structure on $\sX_{\IC}$, we may proceed as follows. Write $\sA^{\vee}_{g}$ for the total space of the dual family to $\sA_{g} \to \sfA_{g}$. The construction of $\sA^{\vee}_{g}$ as a relative Picard variety guarantees that there is a universal line bundle $\sL$ over $\sA_{g} \times_{\sfA_{g}} \sA^{\vee}_{g}$. The complement in $\sL$ of the image of the zero section is a $\mathbb{G}_{m}$-torsor $\sL^{\times}$. We can consider the family $u : \sL^{\times} \to \sA_{g}$ and define $\IV := R^{1} u^{\textrm{an}}_{\IC,*} \IZ$. Then $\IV$ is a local system equipped with a natural increasing two-step weight filtration $W$, where the non-trivial step is the image of $R^{1} \rho^{\textrm{an}}_{\IC,*} \IZ \to R^{1} u^{\textrm{an}}_{\IC,*} \IZ$, and where $\rho : \sA_{g} \times_{\sfA_{g}} \sA^{\vee}_{g} \to \sA_{g}$ and the map $\sL^{\times} \to \sA_{g} \times_{\sfA_{g}} \sA^{\vee}_{g}$ is the obvious one. The pair $(\IV, W_{\bullet})$ extends to a variation of mixed Hodge structure $(\IV, W_{\bullet}, \Fil^{\bullet})$. Because the fibres of $W_{1} \IV$ are naturally identified with the cohomology of the dual abelian variety, it will often be more natural to work instead with the dualized object $(\IV^{\vee}(-1), W_{\bullet}, \Fil^{\bullet})$ with its natural weight and Hodge filtrations. 


To adapt the constructions from \S\ref{refinedjetcorrsec} to our situation, we start by determining the algebraic monodromy group of $T_{\IC}$.

\begin{lemma}
\label{monodromygrouplem}
Suppose that $\varphi : T \to \sA_{g}$ is a complex algebraic map of Deligne-Mumford stacks such that:
\begin{itemize}
\item[-] the image of $\varphi$ contains a Hodge-generic point; and
\item[-] the image $Y$ of $T$ in $\sfA_{g}$ positive-dimensional.
\end{itemize}
Then the algebraic monodromy group of $\varphi^{*} \IV^{\vee}$ is equal to $\mathbb{G}^{2g}_{a} \rtimes \Sp_{2g}$. 
\end{lemma}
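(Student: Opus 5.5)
We argue in two stages: first identify the image of the algebraic monodromy group in the reductive quotient, then show that its intersection with the unipotent radical is all of $\mathbb{G}_a^{2g}$. The local system $\IV^{\vee}$ on $\sA_g$ is an extension of $\IZ(1)$-type pieces: $W_{-1}$ (or the appropriate weight piece) is $H_1$ of the abelian variety and the quotient is a rank-one trivial piece. Equivalently, over a point $(A,x)$ the fibre is $H_1(A)\oplus\IZ$, with the extension class determined by $x$. Consequently the full monodromy of $\IV^\vee$ on $\sA_g$ lies in the group of automorphisms preserving the weight filtration, acting on the graded pieces through $\Sp_{2g}$ and trivially. That group is exactly $\mathbb{G}_a^{2g}\rtimes\Sp_{2g}$. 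So the algebraic monodromy group $\mathbf{H}$ of $\varphi^*\IV^\vee$ is contained in it, and we must prove equality.

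For the reductive part, note that $\mathbf{H}$ surjects onto the algebraic monodromy group of $\mathrm{Gr}^W$, which is that of the pullback of $H_1$ of the universal abelian variety to $T$. This is the monodromy of the pure VHS over the image $Y\subset\sfA_g$. Since $\varphi(T)$ contains a Hodge-generic point, $Y$ is Hodge-generic in $\sfA_g$. By André's theorem (the algebraic monodromy group is a normal subgroup of the derived Mumford–Tate group, cf.\ the fact (i) used in the proof in \S\ref{refinedjetcorrsec}), the monodromy is a normal subgroup of $\Sp_{2g}$. Since $\Sp_{2g}$ is $\IQ$-simple, this normal subgroup is either trivial or all of $\Sp_{2g}$. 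It is nontrivial because $Y$ is positive dimensional: trivial monodromy would make the period map constant, by the theorem of the fixed part/rigidity, so $Y$ would be a point. Hence the image of $\mathbf{H}$ in $\Sp_{2g}$ is all of $\Sp_{2g}$.

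For the unipotent part, set $U:=\mathbf{H}\cap\mathbb{G}_a^{2g}$. It is normalized by $\mathbf{H}$, and the conjugation action on $\mathbb{G}_a^{2g}$ factors through the image $\Sp_{2g}$. So $U$ is an $\Sp_{2g}$-stable subspace of the irreducible standard representation, hence $U=0$ or $U=\mathbb{G}_a^{2g}$. Suppose $U=0$. Then $\mathbf{H}\cong\Sp_{2g}$ is a Levi complement, so after conjugating by a rational element of the unipotent radical it is the standard $\Sp_{2g}$. This means the extension of local systems over $T$ splits over $\IQ$, i.e.\ some multiple of the normal function $T\to\sA_g$ is a flat (monodromy-invariant) section of $H_1\otimes\IQ/\IZ$-type. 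By the theorem of the fixed part for the admissible normal function, a flat normal function on a family with no fixed part in $H_1$ (no fixed part, since the monodromy is $\Sp_{2g}$) is torsion. So $\varphi(T)$ would lie in a torsion multisection of $\sA_g\to\sfA_g$.

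That conclusion contradicts Hodge-genericity for the mixed structure. A Hodge-generic point of $\sA_g$ has mixed Mumford–Tate group $\mathbb{G}_a^{2g}\rtimes\GSp_{2g}$, whereas a torsion point has split mixed Hodge structure, whose Mumford–Tate group is contained in a Levi. We read ``Hodge-generic'' as Hodge-generic for $\IV$ on $\sA_g$; if it only meant generic in $\sfA_g$, the statement would fail for the zero section, so the mixed reading is necessary. Hence $U=\mathbb{G}_a^{2g}$ and $\mathbf{H}=\mathbb{G}_a^{2g}\rtimes\Sp_{2g}$.

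The main obstacle is the splitting/normal-function step: making rigorous, via André's mixed version or the theorem of the fixed part for admissible VMHS, that $U=0$ forces the point section to be torsion. An alternative route is to use André's normality theorem in the mixed setting directly, namely that $\mathbf{H}$ is normal in the generic mixed Mumford–Tate group $\mathbb{G}_a^{2g}\rtimes\GSp_{2g}$ and contains its unipotent radical when the base is Hodge-generic.
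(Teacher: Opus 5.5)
Your argument is correct, but it handles the unipotent part differently from the paper.

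The paper applies Andr\'e's normality theorem directly to the mixed variation. It uses Gao's computation that a Hodge-generic point of $\sA_g$ has Mumford--Tate group $\mathbb{G}_a^{2g}\rtimes\GSp_{2g}$, so the monodromy group $H$ is normal in $\mathbb{G}_a^{2g}\rtimes\Sp_{2g}$. Once $H$ surjects onto $\Sp_{2g}$ (by the same argument as your reductive step), equality is pure group theory. If $H\cap\mathbb{G}_a^{2g}=0$, then normality of both $H$ and $\mathbb{G}_a^{2g}$ gives $[\mathbb{G}_a^{2g},H]\subseteq H\cap\mathbb{G}_a^{2g}=0$, so $H$ would act trivially on the standard representation, which is impossible.

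This is your ``alternative route''. One correction to how you phrased it: containing the unipotent radical is not a general consequence of Hodge-genericity. A constant non-split variation is a counterexample. Here it follows from normality plus surjectivity, via the commutator argument above.

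Your main route uses Andr\'e only on the pure quotient and excludes $U=0$ by hand:
\begin{itemize}
\item[(1)] a Levi-shaped monodromy group splits the extension of $\IQ$-local systems on a finite cover;
\item[(2)] the theorem of the fixed part for the admissible variation, with no invariants in $W_{-1}$, upgrades the flat lift to a splitting of mixed Hodge structures at every point;
\item[(3)] this forces $\varphi(T)$ into a torsion multisection, contradicting mixed Hodge-genericity.
\end{itemize}
This is longer, and it relies on the fixed-part theorem for admissible variations, which is the same engine behind Andr\'e's mixed theorem. Its advantage is that it shows exactly where mixed Hodge-genericity is used: to exclude torsion sections such as the zero section, as you rightly note. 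In the paper, that input is hidden inside the statement that the generic Mumford--Tate group is all of $\mathbb{G}_a^{2g}\rtimes\GSp_{2g}$.
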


\begin{proof}
By \cite[\S2.2]{zbMATH07305885} the Mumford-Tate group of a Hodge generic point in $\sA_{g}$, and hence of a Hodge-generic point of $T(\IC)$, is $\mathbb{G}^{2g}_{a} \rtimes \GSp_{2g}$, where the semi-direct action is the natural one given by the standard representation of $\GSp_{2g}$. One easily checks that the derived subgroup of this Mumford-Tate group is $\mathbb{G}^{2g}_{a} \rtimes \Sp_{2g}$. Now by \cite[Thm. 1]{zbMATH00060857} the algebraic monodromy group of $T$ is necessarily a $\mathbb{Q}$-normal subgroup $H$ of $\mathbb{G}^{2g}_{a} \rtimes \Sp_{2g}$. The image $F$ of $H$ in $\Sp_{2g}$ is normal in $\Sp_{2g}$, and equal to the algebraic monodromy group of $Y$ in $\sfA_{g}$. Since $Y$ is positive-dimensional, the group $F$ is not the trivial group, so it follows that $F = \Sp_{2g}$. Then $H$ is a normal subgroup of $\mathbb{G}^{2g}_{a} \rtimes \Sp_{2g}$ which projects onto $\Sp_{2g}$, so necessarily equal to $\mathbb{G}^{2g}_{a} \rtimes \Sp_{2g}$. 
\end{proof}



We consider a fixed map $\varphi : T \to \sA_{g}$ of Deligne-Mumford stacks defined over $\sO_{L}[1/N]$, with $\sO_{L}$ the ring of integers of some number field $L \subset \mathbb{C}$, and with $T$ smooth over $\sO_{L}[1/N]$ and geometrically connected. We assume that $\varphi_{\IC}$ satisfies the hypotheses of \autoref{monodromygrouplem} and contains an $R := \sO_{L}[1/N]$ point $t \in T(R)$. Let $(\sH^{\vee}, \Fil^{\bullet}, \nabla)$ be the algebraic filtered bundle with connection on $T$ whose scalar extenstion to $\IC$ corresponds to $\varphi^{*} \IV^{\vee}(-1)$; the triple $(\sH^{\vee}, \Fil^{\bullet}, \nabla)$ has a natural model over $\sO_{L}[1/N]$. We carry out the constructions of \S\ref{refinedjetcorrsec} with the triple $(\sH^{\vee}, \Fil^{\bullet}, \nabla)$ and continue with the notation from that section. This means we have a $\mathbf{H}_{t}$-torsor $\pi : \sP \to T$, bundles $\sP^{d}_{r} \to T$ as well as subbundles $\pi^{d}_{r} : \sM^{d}_{r} \to T$. Everything is defined over $\sO_{L}[1/N]$, possibly increasing $N$, since replacing $L$ with a finite extension in \S\ref{refinedjetcorrsec} was only possibly necessary to choose a $\sO_{L}[1/N]$-point $t$. In what follows write $\che{D}$ for $\che{D}_{\textrm{mon}}$, which as a consequence of \autoref{monodromygrouplem} is an open orbit in the natural isotropic Grassmannian on the fibre $\sH_{t}$.

\begin{lemma}
Up to increasing $N$, the restriction of $\alpha^{d}_{r}$ from \autoref{bigjetthm} to $\sM^{d}_{r}$ lands inside $J^{d}_{r} \che{D}$. Correspondingly, $\eta^d_r$ factors through $\mathbf{H}_{t} \backslash J^{d}_{r} \che{D}$.
\end{lemma}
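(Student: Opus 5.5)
The plan is to mirror the proof of \autoref{refinedjetthm}, with the pure-Hodge input (that $\che{D}_{\mathrm{mon}}$ is a component of $\che{D}_{\mathrm{Hg}}$) replaced by the explicit description of $\mathbf{H}_t$ from \autoref{monodromygrouplem}. First I would use \autoref{monodromygrouplem} to identify $\mathbf{H}_{t,\IC}$ with $\mathbb{G}_a^{2g} \rtimes \Sp_{2g}$ acting on $\sH^\vee_t$, which is of rank $2g+1$. Here $W_{-2}$ is the rank-one piece spanned by the class coming from the $\mathbb{G}_m$-torsor, and the graded piece of weight $-1$ is the symplectic space $H^1$ of the abelian variety. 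The flat tensors in $\sH^{\vee,\otimes}(T)^{\nabla=0}$ then include a generator $e_0$ of $W_{-2}$ (up to the Tate twist) and the symplectic form on $W_{-1}/W_{-2}$. Their stabilizer, together with the preservation of the weight filtration (which is also cut out by flat tensors, e.g.\ $e_0$ itself), is exactly this group. Spreading out, after increasing $N$, I get the same description of $\mathbf{H}_t$ over $R$. Consequently $\che{D}=\che{D}_{\mathrm{mon}}$ is the set of flags $F^\bullet$ on $\sH^\vee_t$ with the Hodge numbers of the fibre, with $e_0 \in F^{\text{appropriate}}$ (or $F^1 \cap W_{-2}$ of the right rank), and with $F^\bullet$ inducing a Lagrangian on $\mathrm{gr}^W_{-1}$. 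This is a single orbit, open in the natural isotropic Grassmannian, and as a closed condition inside an open it is locally closed.

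Next, exactly as in the proof of \autoref{refinedjetthm}, I would reduce via base-change compatibility of $\alpha^d_r$ (\autoref{bigjetthm}) to checking on $k$-points for fields $k$ over $R$. There the claim is that every $r$-limp $\psi : \Delta^r_s \to \che{L}_k$ arising from a frame in $\sM$ factors through $\che{D}_k$. Given the flat trivialization $\widetilde{\varphi}$ extending $\varphi : \sH^\vee_s \to \sH^\vee_{t,k}$ from \autoref{trivlem}, each flat tensor $h$ is sent to the constant $h_t$, as it is flat. The pulled-back filtration $i_{b^\bullet}(\Fil^\bullet)$ satisfies the closed incidence conditions (Hodge-ness of $e_0$ and isotropy) pointwise on $\Delta^r_s$, because $\Fil^\bullet$ satisfies them globally on $T$: they hold over $\IC$ by Hodge theory and spread to $R$ after increasing $N$. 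It therefore lands in the closed subscheme $\che{D}_{\mathrm{Hg}}$ cut out by these conditions.

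The main obstacle is passing from this closed subscheme to the orbit $\che{D}$, since in the mixed setting I do not have the previous lemma saying $\che{D}_{\mathrm{Hg}}$ is a finite union of orbits. I would handle it directly. The remaining condition defining $\che{D}$ inside $\che{D}_{\mathrm{Hg}}$ is open, namely transversality between $F^\bullet$ and $W_\bullet$, i.e.\ that $\mathrm{gr}^W$ has the correct Hodge numbers. An infinitesimal disk $\Delta^r_s$ lies in an open subscheme as soon as its closed point does. At the closed point $\psi(s)$ is $\varphi(\Fil^\bullet_s)$, and $\Fil^\bullet_s$ is compatible with $W$ with the correct graded ranks. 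Indeed, over $\IC$ this is strictness of morphisms of mixed Hodge structures, and this ranks condition is an open-and-closed condition on $T$ that holds generically, so after increasing $N$ it holds in all fibres. Hence $\psi(s) \in \che{D}_k$, and therefore $\psi$ factors through $\che{D}_k$. Finally, $\mathbf{H}_t$-equivariance of $\alpha^d_r|_{\sM^d_r}$ gives the factorization of $\eta^d_r$ through $\mathbf{H}_t\backslash J^d_r\che{D}$.
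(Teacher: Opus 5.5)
Your argument is correct, but it takes a different route from the paper.

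\textbf{The paper's route.} The paper works over $\IC$. It uses $\mathbf{H}_t(\IC)$ to move the frame into the multisection $\sF$ obtained by flat transport of $\mathrm{id}_t$, so that $\psi$ becomes a genuine lifted period map of the mixed variation. It then quotes the general fact \cite[Lem.~A.5]{zbMATH07973997} that such period maps land in the monodromy orbit, and obtains the integral statement by spreading out. This never uses the specific shape of $\mathbf{H}_t$, so it works for any admissible mixed variation. Its costs are that it is transcendental, and the spreading out is a priori done for each $(d,r)$ separately.

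\textbf{Your route.} You use \autoref{monodromygrouplem} through the sandwich
\[
\text{(monodromy)} \subseteq \mathbf{H}_{t,\IC} \subseteq \mathrm{Stab}(e_0,\omega) = \mathbb{G}_a^{2g}\rtimes \Sp_{2g}
\]
to pin down $\mathbf{H}_t$, and spread this out. Your transitivity check then identifies $\che{D}$ with the open subscheme $\{F \cap W_0 = 0\}$ of the isotropic Grassmannian of the degenerate form $\omega$. The verification at field-valued points is then exactly as in \autoref{refinedjetthm}:
\begin{itemize}
\item the closed condition follows because $\widetilde{\varphi}$ sends the flat tensor $\omega$ to the constant $\omega_t$;
\item the open condition only needs checking at the closed point of $\Delta^r_s$.
\end{itemize}
This is purely algebraic, works directly in characteristic $p > r$, and gives an $N$ independent of $(d,r)$. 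The price is that it is tied to this particular variation and its explicit monodromy group.

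\textbf{Two points to tighten.}
\begin{itemize}
\item The condition $\Fil^1 \cap W_0 = 0$ is open on $T$, not open-and-closed. What you actually need, and what your strictness remark supplies, is that it holds at every point of $T_{\IC}$, not merely generically. Then its closed failure locus has empty generic fibre and lies over finitely many primes.
\item The ``Hodge-ness of $e_0$'' condition is vacuous. In the normalization $\IV^\vee(-1)$, $e_0$ has type $(0,0)$ and $\Fil^0$ is everything. So isotropy for $\omega$ is the only closed condition doing any work, which your third paragraph already implicitly recognizes.
\end{itemize}
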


\begin{proof}
Since we are free to increase $N$, it suffices to show the statement when $R = \IC$. Consider the multisection of the local system $\Hom(\varphi^{*} \mathbb{V}^{\vee}, (\varphi^{*} \mathbb{V}^{\vee})_{t})$ which extends the identity $\textrm{id}_{t} \in \Hom((\varphi^{*} \mathbb{V}^{\vee})_{t}, (\varphi^{*} \mathbb{V}^{\vee})_{t})$. Because the monodromy group is contained in $\mathbf{H}_{t}$, the image $\sF$ in $\sP$ of this multisection lies inside an $\mathbf{H}_{t}$-torsor, necessarily equal to $\sM$ since $\sM$ contains $\textrm{id}_{t}$. 

Using the description of the maps $\alpha^{d}_{r}(k)$ from \S\ref{infpermapsec}, it suffices to show that for each complex analytic period map $\psi : B \to \che{L}$, with $B \subset T(\IC)$ a small complex analytic ball and where the initial condition of $\psi$ at some point $s \in T(\IC)$ corresponds to a point of $\sM(\IC)$, factors through $\che{D}$. By acting with elements $h \in \mathbf{H}_{t}(\IC)$ and replacing $\psi$ with $h \psi$, we can reduce to the case where the corresponding point from $\sM(\IC)$ lies in $\sF$. Then $\psi$ is a Hodge-theoretic period map in the usual sense, and the result follows from \cite[Lem. A.5]{zbMATH07973997}.
\end{proof}

From now on we abusively write $\alpha^{d}_{r}$ (resp. $\eta^{d}_{r}$) for maps $\sM^{d}_{r} \to J^{d}_{r} \che{D}$ (resp. $J^{d}_{r} T \to \mathbf{H}_{t} \backslash J^{d}_{r} \che{D}$). We now give two analogues of \autoref{atyplocusconstrprop}. These two analogues will control exceptional intersections of $T$ with Shimura subvarieties of $\sA_{g}$ which are ``along $\sfA_{g}$'' and ``along the fibres of $\sA_{g} \to \sfA_{g}$'', respectively. To facilitate this, we consider the vector space $\sH^{\vee}_{t}/W_{0}$, where $W_{0}$ is the subspace of $\sH^{\vee}_{t}$ corresponding to the de Rham realization at $t$ of the weight filtration on $\IV^{\vee}$, and write $\che{D}/W_{0}$ for the associated flag variety. We have a natural map $\mu : \che{D} \to \che{D}/W_{0}$. 

\subsection{The Atypical Locus along $\sfA_{g}$}
\label{alongAgatyplocsec}

Fix an algebraic family $g : \sF \to B$ over a Noetherian base of geometrically irreducible subschemes of $\che{D}$ which have constant fibre dimension $m$. That is, $\sF$ is a subscheme of $\che{D} \times B$ which is flat over $B$, and whose fibres over $B$ are geometrically irreducible of dimension $m$. We also suppose that $B$ and all the morphisms are defined over $\mathcal{O}_{L}[1/N]$.

Define $\sM^{d}_{r}(g) \subset \sM^{d}_{r} \times B$ to be the closed locus whose points $((j, p), b)$ are such that $\alpha(j, p) \in g^{-1}(b)$. Fix an integer $\xi \geq 1$. For each $r \geq \xi$, define $\sM^{d}_{r}(g)' \subset \sM^{d}_{r}(g)$ to be the open subcheme obtained by intersecting the following two open conditions:
\begin{equation}
\label{cond: nd2}
\begin{cases}
    \textrm{the locus $\sM^{d}_{r}(g)'$ lies above $J^d_{r,nd} T$} \\
    \textrm{the locus $\sM^{d}_{r}(g)'$ lies above the complement of the closed locus $\bigcup_{q \in \che{D}/W_{0}} J^{d}_{\xi}\, \mu^{-1}(q)$.} 
\end{cases}
\end{equation}
The second condition has the following interpretation: if $((j, p), b)$ is a point of $\sM^d_r(g)'$, and $\psi_{b}$ is the $r$-limp determined by $b$, then the restriction of $\psi_{b} \circ j$ to the disk $\mathbb{D}^{d}_{\xi}$ does not lie inside a fibre of $\mu$. 

\begin{proposition}
\label{mixedatyplocusconstrprop}
Suppose that $\varphi : T \to \sA_{g}$ is generically finite. Choose $d$ such that $\dim T_{\IC} - d < \dim \che{D}_{\IC} - m$. Let $E^{d}_{r}(\xi)$ be the stack-theoretic image\footnote{That is, the smallest closed substack containing the usual image.} of the map $\sM^{d}_{r}(g)' \to T$, and set 
\begin{equation}
    \label{xiEdinteq}
    E^{d}(\xi) = \bigcap_{r \geq \xi} E^{d}_{r}(\xi) .
\end{equation} Then $E^{d}(\xi)$ is not Zariski dense in $T$, and $E^{d}(\xi)(\IC)$ lies in a finite union of special subvarieties of $\sA_{g,\IC}$.
\end{proposition}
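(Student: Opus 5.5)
We follow the proof of \autoref{atyplocusconstrprop}, with the pure Ax--Schanuel theorem replaced by its mixed counterpart for admissible graded-polarizable variations \cite[Thm. 7.1]{arXiv:2406.16628} (cf. \cite[Thm. 8.2]{zbMATH07305885}).

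\textbf{Step 1: extract formal intersections.} As in \autoref{atyplocusconstrprop}, the intersection in \eqref{xiEdinteq} stabilises by Noetherianity, so $E^d(\xi) = E^d_{r_0}(\xi)$ for some $r_0$. We then argue by contradiction, supposing $E^d(\xi)$ is Zariski dense, and pass to $\IC$. For $r_0 \geq \xi$ we form the constructible set
\[ \sJ_{r_0} = \bigcap_{r \geq r_0} \mathrm{im}\bigl[\sM^d_r(g)' \to \sM^d_{r_0} \times B\bigr]. \]
Applying \cite[Lem 5.3]{zbMATH07871772} at a point of $\sJ_{r_0}$ gives a compatible system of jets. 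Because $B$ is Noetherian, the parameter $b$ can be chosen compatibly after passing to a point of the limit. The result is a formal embedding $j_\infty : \ID^d_{\infty,\IC} \to \widehat{T}_s$ together with $b \in B(\IC)$ such that $\psi_\infty \circ j_\infty$ factors through the fibre $F_b := g^{-1}(b)$. By the second condition in \eqref{cond: nd2}, the composite $\mu \circ \psi_\infty \circ j_\infty$ is not constant. Hence the germ $(\psi_\infty)^{-1}(F_b)$ has dimension at least $d$, and by hypothesis
\[ \dim T_{\IC} - d < \dim \che{D}_{\IC} - \dim F_b . \]

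\textbf{Step 2: apply mixed Ax--Schanuel.} The monodromy group $\mathbf{H}_t = \mathbb{G}_a^{2g} \rtimes \Sp_{2g}$ was computed in \autoref{monodromygrouplem}, and $\che{D}$ is its orbit, i.e.\ the relevant mixed compact dual. Mixed Ax--Schanuel applied to the graph of $\psi_\infty$ and the algebraic subvariety $\widehat{T}_s \times F_b$ shows that the component of the intersection containing $j_\infty$ lies in a strict weakly special subvariety $Y \subsetneq T_{\IC}$.

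\textbf{Step 3: rule out the weakly special case (main obstacle).} We need $Y$ to be special, i.e.\ to have strictly smaller Mumford--Tate group, and not merely weakly special. The algebraic monodromy group $H_Y$ of $Y$ is normal in the generic Mumford--Tate group $\mathbb{G}_a^{2g}\rtimes\GSp_{2g}$ if their Mumford--Tate groups agree. Its normal subgroups are then constrained as follows.
\begin{itemize}
\item Either $H_Y$ surjects onto $\Sp_{2g}$. Then $H_Y = \mathbf{H}_t$, contradicting strictness.
\item Or $H_Y$ lies in the unipotent radical $\mathbb{G}_a^{2g}$. Then the pure part of the period map is constant on $Y$, so $\mu \circ \psi$ is constant on $Y$. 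This contradicts the non-constancy of $\mu \circ \psi_\infty \circ j_\infty$, since $j_\infty$ factors through $\widehat{Y}_s$.
\end{itemize}
This dichotomy is exactly what the condition involving $\mu$ in \eqref{cond: nd2} is designed to exclude. We must also check that generic finiteness of $\varphi$ lets us discard positive-dimensional fibres, by removing a closed locus as in \autoref{atyplocusconstrprop}.

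\textbf{Step 4: conclude.} It follows that every point of $E^d(\xi)(\IC)$ outside a meagre set lies in a special subvariety of $\sA_{g,\IC}$ strictly smaller than the image of $T$. By the finiteness of the relevant special subvarieties, a countable union being forced into a finite one as in \cite{zbMATH07305885}, we get that $E^d(\xi)(\IC)$ lies in a finite union of special subvarieties. In particular $E^d(\xi)$ is not Zariski dense.
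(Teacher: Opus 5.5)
Your proposal is correct, and its skeleton is the paper's: stabilisation, extraction of a formal intersection via \cite[Lem 5.3]{zbMATH07871772}, mixed Ax--Schanuel, then a countable-to-finite argument. The genuine difference is in how you pass from the strict weakly special $Y$ to a strict special subvariety containing it.

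\textbf{The paper's route.} It invokes Gao's explicit classification of weakly special subvarieties of $\sA_g$ \cite[Prop. 1.1]{zbMATH06842935}. Since $\mu\circ\psi_\infty\circ j_\infty$ is non-constant, $Y$ is not contained in a fibre of $\sA_g\to\sfA_g$. Hence $Y$ lies in a torsion section or over a strict special subvariety of $\sfA_g$. Non-density then follows by comparing monodromy with \autoref{monodromygrouplem}.

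\textbf{Your route.} You run the group-theoretic argument of the pure case instead. Assuming $\mathrm{MT}(Y)$ is generic, André's theorem \cite[Thm. 1]{zbMATH00060857} makes $H_Y$ normal in the derived group $\mathbb{G}_a^{2g}\rtimes\Sp_{2g}$. Normality there, rather than in the full Mumford--Tate group as you wrote, is all you use. Such an $H_Y$ is either everything, by the irreducibility argument of \autoref{monodromygrouplem}, or unipotent. In the unipotent case the fixed part theorem makes the pure period map, hence $\mu\circ\psi$, constant on $Y$. Non-density then comes from Hodge-genericity of $T$ rather than from monodromy.

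\textbf{What each buys.} The paper's route gives the concrete shape of the special subvarieties (torsion sections or loci not dominant over $\sfA_g$). Your route avoids the classification entirely. It would transfer to other mixed settings whose unipotent radical is an irreducible representation of an absolutely simple group.

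\textbf{Minor corrections.} First, $b$ stays fixed along the compatible system of jets because it is part of the chosen point of $\sJ_{r_0}$ and the transition maps lie over $B$, not because $B$ is Noetherian. Second, the caveat about generic finiteness in Step 3 is unnecessary. A weakly special $Y$ with finite pure monodromy, such as a positive-dimensional fibre of the period map, is already excluded by your second bullet.
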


\begin{remark}
We emphasize that the base $B$ of the family $g : \sF \to B$ in the statement of \autoref{mixedatyplocusconstrprop} is finite-type, and possibly disconnected.
\end{remark}

\begin{proof}
From now on we suppress the dependence on $\xi$ and write $E^{d}, E^{d}_{r}$, etc. Define $r_{0}$ analogously to \autoref{constr: atypical E}, and define 
\[ \sJ_{r_{0}} = \bigcap_{r \geq r_{0}} \textrm{im}[\sM^{d}_{r}(g)' \to \sM^{d}_{r_{0}} \times J^{d}_{r_{0}} \che{D} \times B] . \]
Let $(p_{r_{0}}, \sigma_{r_{0}}, b)$ be a $\IC$-point of $\sJ_{r_{0}}$. Let $s \in T(\IC)$ be the image of $p_{r_{0}}$. Define $\ID^{d}_{\infty,\IC}$ as in the proof of \autoref{atyplocusconstrprop}. Applying \cite[Lem 5.3]{zbMATH07871772} to the sequence $\{ \sM^{d}_{r}(g)' \}_{r \ge r_{0}}$ at the point $(p_{r_{0}}, \sigma_{r_{0}}, b)$ and arguing as in the proof of \cite[Prop. 4.2.5]{zbMATH08159582} we obtain a commutative diagram of formal objects
\begin{equation}
\label{Fbjetdiag}
    \begin{tikzcd}
	{\ID^d_{\infty, \IC}} & {\what{T}_s} \\
	{\sF_{b}} & {\che{D}_\IC}
	\arrow["{j_\infty}", from=1-1, to=1-2]
	\arrow["{\sigma_\infty}"', from=1-1, to=2-1]
	\arrow["{\psi_\infty}", from=1-2, to=2-2]
	\arrow[hook, from=2-1, to=2-2]
\end{tikzcd} .
\end{equation}
Moreover $j_{\infty}$ is injective on tangent spaces (hence an embedding), and $\sigma_{\infty}$ does not land in any fibre of $\che{D}_{\IC} \to \che{D}_{\IC}/W_{0}$, since its restriction to $\mathbb{D}^{d}_{\xi}$ does not by construction. Also by construction, we have, formally at $s$, that
\[ \dim \psi^{-1}_{\infty}(\sF_{b}) \geq d > \dim T_{\IC} + m - \dim \che{D}_{\IC} .\]
Applying the Ax-Schanuel theorem for variations of mixed Hodge structures in the abelian setting \cite{zbMATH07305885} (cf. the more general statements for variations of mixed Hodge structures in \cite{zbMATH07973997} and \cite{zbMATH07826787}), one learns that $j_{\infty}$ factors through a strict weakly special subvariety $Y$ of $T_{\IC}$. Moreover because $\psi_{\infty} \circ j_{\infty}$ does not lie inside a fibre of $\mu$ by construction, the corresponding weakly special subvariety does not lie inside a fibre of $\sA_{g} \to \sfA_{g}$.

Using the classification of weakly special subvarieties of $\sA_{g}$ from \cite[Prop. 1.1]{zbMATH06842935}, we learn that $Y$ necessarily maps into a strict \emph{special} subvariety of $\sfA_{g}$. These are of two types: torsion sections of $\sA_{g} \to \sfA_{g}$, and special subvarieties not dominant over $\sfA_{g}$. This reasoning does not depend on the point of $\sJ_{r_0}$ chosen, so it follows that $E^{d}_{\IC}$ is contained in a finite union of special subvarieties of $\sA_{g}$, strictly contained in $\sA_{g}$.  To show that these special subvarieties pull back to strict subvarieties of $T_{\IC}$, use \autoref{monodromygrouplem} and observe that special subvarieties have algebraic monodromy group strictly contained in $\mathbb{G}^{2g}_{a} \rtimes \Sp_{2g}$.
\end{proof}



\subsection{The Atypical Locus along Fibres of $\sA_{g} \to \sfA_{g}$}
\label{alongfibatyplocsec}

Fix a Noetherian scheme $B$ over $\mathcal{O}_{L}[1/N]$ and a closed subscheme $\mathcal{F} \subset \che{D} \times B$, flat over $B$. Let $g : \sF \to B$ be the associated algebraic family, and suppose it has the following properties:
\begin{itemize}
\item[-] every fibre of $g$ is contained in a fibre of $\che{D} \to \che{D}/W_{0}$;
\item[-] the fibres of $g$ are geometrically irreducible with constant fibre dimension $m$.
\end{itemize}
Define $\sM^{d}_{r}(g) \subset \sM^{d}_{r} \times B$ to be the closed locus whose points $((j, p), b)$ are such that $\alpha(j, p) \in g^{-1}(b)$. Define $\sM^{d}_{r}(g)' \subset \sM^{d}_{r}(g)$ to be the open subscheme obtained by intersecting the following two open conditions:
\begin{equation}
\begin{cases}
    \textrm{the locus $\sM^{d}_{r}(g)'$ lies above $J^d_{r,nd} T$} \\
    \textrm{the locus $\sM^{d}_{r}(g)'$ lies above $J^{d}_{1,nc} \che{D}$.} 
\end{cases}
\end{equation}
Let $e$ be the generic fibre dimension of $T \to \sfA_{g}$.
\begin{proposition}
\label{mixedatyplocusconstrprop2}
Suppose that the nonempty fibres of $T \to \sfA_{g}$ are geometrically irreducible of dimension $e$. Choose $d$ such that $e - d < g - m$. Let $E^{d}_{r}$ be the scheme-theoretic image of the map $\sM^{d}_{r}(g)' \to T$, and set $E^{d} = \bigcap_{r \geq 0} E^{d}_{r}$. Suppose that $\varphi : T \to \sA_{g}$ satisfies the assumptions of \autoref{monodromygrouplem}. Then $E^{d}$ is not Zariski dense in $T$, and $E^{d}(\IC)$ lies in the union of finitely many strict special subvarieties of $\sA_{g}$.
\end{proposition}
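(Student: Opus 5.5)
We follow the proof of \autoref{mixedatyplocusconstrprop}, replacing the dimension count in $\che{D}$ by a count inside a single fibre of $\mu : \che{D} \to \che{D}/W_{0}$. As there, we fix $r_{0}$ so that the intersection \eqref{xiEdinteq}-style sequence $E^{d}_{r}$ has stabilized, set
\[ \sJ_{r_{0}} = \bigcap_{r \geq r_{0}} \textrm{im}[\sM^{d}_{r}(g)' \to \sM^{d}_{r_{0}} \times J^{d}_{r_{0}} \che{D} \times B], \]
and take a $\IC$-point $(p_{r_{0}}, \sigma_{r_{0}}, b)$ of $\sJ_{r_{0}}$ lying over $s \in T(\IC)$. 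Applying \cite[Lem 5.3]{zbMATH07871772} as in the proof of \cite[Prop. 4.2.5]{zbMATH08159582}, we obtain a diagram like \eqref{Fbjetdiag}. In it, $j_{\infty} : \ID^{d}_{\infty,\IC} \to \what{T}_{s}$ is an embedding, $\psi_{\infty} \circ j_{\infty}$ factors through $\sF_{b}$, and $\sigma_{\infty}$ is non-constant because of the $J^{d}_{1,nc}$ condition.

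The key observation is that $\sF_{b}$ lies in a single fibre $\mu^{-1}(q)$. Hence $\mu \circ \psi_{\infty} \circ j_{\infty}$ is constant. The composite $\mu \circ \psi_{\infty}$ is the pure period map of $\sfA_{g}$ pulled back to $T$, and its fibres are locally the fibres of $T \to \sfA_{g}$ (up to quasi-finiteness of the pure period map on $\sfA_{g}$). It follows that $j_{\infty}$ factors through the formal completion at $s$ of the fibre $T_{x}$, where $x$ is the image of $s$ in $\sfA_{g}$. By hypothesis $T_{x}$ is irreducible of dimension $e$, and its map to the abelian variety $\sA_{g,x}$ is generically finite onto its image; after removing a closed locus we may assume it is finite. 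The problem is then an Ax--Schanuel question for the abelian variety $A = \sA_{g,x}$, whose uniformization is $\mathrm{Lie}(A) \to A$ with compact dual direction the affine fibre $\mu^{-1}(q) \cong \mathbb{G}_{a}^{g}$. Inside this fibre, $\psi_{\infty}^{-1}(\sF_{b})$ restricted to $\what{(T_{x})}_{s}$ has dimension at least $d > e + m - g$, which is an atypical intersection. By the Ax--Schanuel theorem for abelian varieties (Ax's theorem, or the abelian case of \cite{zbMATH07305885}), the image of $j_{\infty}$ lies in a translate of a proper abelian subvariety $A' \subsetneq A$ intersected with the image of $T_{x}$. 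Non-constancy of $\sigma_{\infty}$ is used to ensure that this weakly special subvariety is positive-dimensional and not a point.

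Finally we globalize. Weakly special subvarieties of $\sA_{g}$ lying in fibres are translates of abelian subvarieties. By \cite[Prop. 1.1]{zbMATH06842935}, a point $x$ of $\sfA_{g}$ at which $A$ has a proper nonzero abelian subvariety lies in a strict special subvariety of $\sfA_{g}$ (a Hecke translate of a product locus). Since this holds for every $\IC$-point of $\sJ_{r_{0}}$, a countability and constructibility argument shows that $E^{d}(\IC)$ lies in a finite union of preimages of strict special subvarieties of $\sA_{g}$. \autoref{monodromygrouplem} then gives that these preimages are strict in $T$, since their monodromy is smaller.

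The main obstacle is this last step. Points with non-simple $A$ are dense in $\sfA_{g}$, so the finiteness has to come from the family: we need the locus of $\sJ_{r_{0}}$ to be constructible and each of its components to sit in one special subvariety. We would try to handle this by running the Ax--Schanuel argument for the whole mixed VHS over a component of $E^{d}$ instead of fibrewise. Such a component is the image of a component of $\sJ_{r_{0}}$, so it carries a family of formal intersections. Applying mixed Ax--Schanuel \cite{zbMATH07305885} on that component should force its monodromy to be proper: the abelian part would become $\mathbb{G}_{a}^{2g'}$ for some $g' < g$, or the reductive part would drop. This would make the component special-contained, and the finiteness would follow from Noetherianity of the stabilized $E^{d}$.
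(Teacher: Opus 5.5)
Your argument is essentially the paper's proof. Since $\sF_b$ lies in a fibre of $\mu$, the germ $j_\infty$ lies in the fibre $T_x$. Abelian Ax--Schanuel on $T_x \to \sA_{g,x}$ then forces $\sA_{g,x}$ to be non-simple; your use of the non-constancy of $\sigma_\infty$ to rule out the zero abelian subvariety is a point the paper leaves implicit.

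The step you call the main obstacle is settled by exactly the countability argument you already state, so the fallback via mixed Ax--Schanuel on whole components is unnecessary. The non-simple locus is a countable union of closed strict special subvarieties. Your fibrewise conclusion holds on $E^{d}(\IC)$ outside a meagre set. An irreducible complex variety is not a countable union of proper closed subsets, so by Baire each component of $E^{d}_{\IC}$ lies over a single one of these special subvarieties. The density of non-simple points in $\sfA_g$ therefore plays no role.
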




\begin{proof}
Again we reduce to considering the case where $R = \IC$. Define $r_{0}, \sJ_{r_0}, (p_{r_0}, \sigma_{r_0}, b)$ and $s$ as in the proof of \autoref{mixedatyplocusconstrprop}. Once again we obtain a diagram of the form (\ref{Fbjetdiag}). The key difference in this case is that the map $\sF_{b} \hookrightarrow \che{D}_{\IC}$ factors through a fibre $F$ of the map $\che{D}_{\IC} \to \che{D}_{\IC} / W_{0}$. Let $A$ be the fibre of $\sA_{g} \to \sfA_{g}$ which contains $s$. 

By restricting $\IV^{\vee}$ to $A_{\IC}$, we can consider a period map $\psi$ for this restricted VHS which is defined in some analytic neighbourhood of $s$. Then we have that 
\[ \dim (T_{\IC} \cap A_{\IC}) - \dim \psi^{-1}(\sF_{b}) \leq e - d < g - m . \]
Applying the Ax-Schanuel theorem \cite{zbMATH07305885} (cf. \cite{zbMATH07973997} and \cite{zbMATH07826787}) for the variation on $T_{\IC} \cap A_{\IC}$ we learn that $\psi^{-1}(\sF_{s})$ lies in a strict weakly special subvariety of $T_{\IC} \cap A_{\IC}$. Such subvarieties are intersections with translates of strict abelian subvarieties of $A_{\IC}$, so this can only happen if $A_{\IC}$ is not simple. Then in particular $s$ is not Hodge-generic (since its image in $\sfA_{g}$ is not), and $s$ lies in the intersection with $T$ of a special subvariety of $\sA_{g}$. There are countably many such subvarieties, and $E^{d}$ is algebraic, so in fact only finitely many such special subvarieties are required to cover $E^{d}(\IC)$. Moreover since $T$ contains at least one Hodge-generic point, it follows that $E^{d}$ is not Zariski dense in $T$.
\end{proof}

\subsection{Consequences for Flatness}
\label{flatconseqsec}

Now we give a more explicit perspective on the construction of the quadruple $(\sH, W_{\bullet}, \Fil^{\bullet}, \nabla)$. 

Let $S$ be a base scheme. Suppose that $G$ is a semi-abelian scheme over $S$, which fits into an exact sequence $0 \to T \to G \to A \to 0$ of group schemes over $S$. Let $0 \to \Omega_{A^\vee} \to E_A \to A \to 0$ be the universal vector extension of $A$ and let $0 \to \Omega_{A^\vee} \to \wt{E}_A \to G \to 0$ be its pullback to $G$. Then $\wt{E}_A$ is the universal vector extension of $G$ \cite[p.82]{FC90}. Recall that $\underline{\H}^1_\dR(A) = \Lie(E_A)^\vee$. 

\begin{definition}
\label{drcohomdef}
    The de Rham realization of $G$ is $\underline{\H}^1_\dR(G) := \Lie(\wt{E}_{A})^\vee$. Note that this is a locally free sheaf of rank $2 \dim A + \mathrm{rank\,} T$ which fits into an exact sequence 
    \begin{equation}
        0 \to \underline{\H}^1_\dR(A) \to \underline{\H}^1_\dR(G) \to \Lie(T)^\vee \to 0.
    \end{equation}
    We equip $\underline{\H}^1_\dR(G)$ with an ascending weight filtration $W_0 = 0, W_{1} = \underline{\H}^1_\dR(A)$ and $W_2 = \underline{\H}^1_\dR(G)$, and a descending Hodge filtration $F^0 = \underline{\H}^1_\dR(G), F^1 =$ annihilator of $\Omega_{A^{\vee}}$, and $F^2 =0$. 
\end{definition}

\begin{remark}
    In \S\ref{subsec: deform log 1-motif} we will discuss 1-motifs. If $Q$ is the 1-motif defined by $[0 \to G]$, then $\underline{\H}^1_\dR(Q) = \underline{\H}^1_\dR(G)$ except that $W_{i + 1} \underline{\H}^1_\dR(Q) = W_{i} \underline{\H}^1_\dR(G)$. If $S$ is a separated scheme over a base $S_0$, then $\underline{\H}^1_\dR(G)$ is moreover equipped with a Gauss-Manin connection $\nabla_{S/S_0} : \underline{\H}^1_\dR(G) \to \underline{\H}^1_\dR(G) \tensor \Omega_{S/S_0}$ (cf. \cite[\S1.1.3]{Mad19}). 
\end{remark}


\paragraph{The case of $\sfM_{g,2}$}

Let $\varphi : \sfM_{g,2} \to \sA_{g}$ be the natural map. 

\begin{lemma}
\label{immersivelem}
Away from the fibre above the hyperelliptic locus of $\sfM_{g}$, and the diagonal locus
\[ \{ ([C], x, y) \in \sfM_{g,2} : x = y \} \subset \sfM_{g,2} , \]
the map $\varphi$ is injective on tangent spaces.
\end{lemma}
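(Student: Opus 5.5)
The map $\varphi : \sfM_{g,2} \to \sA_g$, $([C],x,y) \mapsto (J(C), \sO_C(x-y))$, sits over the Torelli map $\sfM_g \to \sfA_g$. It is compatible with the projections $\sfM_{g,2} \to \sfM_g$ and $\sA_g \to \sfA_g$. So I will compare tangent spaces through the two exact sequences
\[ 0 \to T_x C \oplus T_y C \to T_{([C],x,y)}\sfM_{g,2} \to T_{[C]}\sfM_g \to 0 \]
and
\[ 0 \to T_0 J(C) \to T_{(J(C),L)}\sA_g \to T_{[J(C)]}\sfA_g \to 0 . \]
These sequences hold over any base field $k$. Here I use that $\sfM_{g,2} \to \sfM_g$ is smooth with fibre $C \times C$ minus nothing (we may work on the open part), and that $\sA_g \to \sfA_g$ is smooth with fibre the abelian variety $J(C)$, whose tangent space at any point is identified with $T_0 J(C) = H^1(C,\sO_C)$.

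The first step is the base: away from the hyperelliptic locus the Torelli map $\sfM_g \to \sfA_g$ is injective on tangent spaces. This is the classical infinitesimal Torelli statement, in the form due to Oort–Steenbrink. The codifferential is the multiplication map $\mathrm{Sym}^2 H^0(C,\omega_C) \to H^0(C,\omega_C^{\otimes 2})$, and Max Noether's theorem says it is surjective for non-hyperelliptic $C$, in any characteristic.

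By a diagram chase (snake lemma), it then suffices to show injectivity on the fibre directions. That is, I must show the differential of the fibre map $C \times C \to J(C)$, $(x,y) \mapsto \sO_C(x-y)$, is injective at $(x,y)$ when $x \neq y$ and $C$ is non-hyperelliptic. This map is the difference of two Abel–Jacobi maps $u(x)-u(y)$. Its differential sends $(v,w) \in T_xC \oplus T_yC$ to $du_x(v) - du_y(w) \in H^0(C,\omega_C)^\vee$. Here $du_x(v)$ is the functional of evaluation at $x$ in the direction $v$, i.e. the image of $x$ under the canonical map. Injectivity therefore amounts to the linear independence of the canonical images $\phi_K(x)$ and $\phi_K(y)$ in $\IP(H^0(\omega_C)^\vee)$. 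By Riemann–Roch this is equivalent to $h^0(\sO_C(x+y)) = 1$, i.e. $|x+y|$ is not a $g^1_2$. This holds exactly because $C$ is non-hyperelliptic and $x \neq y$; also $g \ge 2$ is implicit, since $g \ge 3$ for non-hyperelliptic curves. The claim $\phi_K$ embeds points needs only that $\omega_C$ is very ample, which is again characteristic-free for non-hyperelliptic curves.

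The main subtlety is making the tangent-space sequences and the snake-lemma reduction rigorous at the level of stacks over $\IZ$. In particular I need that the vertical identification of the fibre differential really is the restriction of $d\varphi$, rather than something twisted by the horizontal part. The diagram chase only needs the fibre square to commute, and it does, since $\varphi$ restricted to a fibre of $\sfM_{g,2} \to \sfM_g$ is exactly the difference map into the fibre $J(C)$. Injectivity of the total map then follows: if $d\varphi(\tau)=0$, the image of $\tau$ in $T\sfM_g$ maps to zero in $T\sfA_g$, so it vanishes, so $\tau$ is vertical, and then $\tau=0$ by the fibre computation. I would verify the deformation-theoretic identifications at geometric points of each characteristic; for stacks, injectivity on tangent spaces can be checked at geometric points.
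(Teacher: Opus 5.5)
Your proposal is correct and follows the same route as the paper: use infinitesimal Torelli to reduce to the fibre over a non-hyperelliptic $[C]$, compute the fibre differential as $(v_x,v_y)\mapsto v_x(\cdot)-v_y(\cdot)\in H^0(C,\omega_C)^\vee$, and conclude because the canonical map is an embedding. Your snake-lemma chase and the Riemann--Roch reformulation only make explicit what the paper covers with ``it suffices''.
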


\begin{proof}
It is well known that the map $\sfM_{g} \to \sfA_{g}$ is immersive away from the hyperelliptic locus. Fixing a point $[C] \in \sfM_{g}$ away from the hyperelliptic locus, it suffices to show that the map $\sfM_{g,2,[C]} \to \sA_{g,[J(C)]}$ is injective on tangent spaces away from the diagonal locus where $x = y$. At the point $([C], x, y)$, the map on tangent spaces is given by
\[ T_{(x,y)} C^2 \ni (v_{x}, v_{y}) \mapsto ([\omega] \mapsto v_{x}(\omega) - v_{y}(\omega)) \in \H^0(C, \w_C)^\vee = T_{[x - y]} J(C) . \]
Away from the hyperelliptic locus, it is well known that the map $C \to \mathbb{P}^{g-1}$ given by $x \mapsto [v_{x}] \in \mathbb{P} H^{0}(C, \omega_{C})^{\vee}$ is an embedding, so the above map can only fail to be injective when $x = y$.
\end{proof}

We let $\sfM^{\circ}_{g,2} \subset \sfM_{g,2}$ denote the open locus where $\varphi$ is injective on tangent spaces. 



\paragraph{Convention on Hecke translates.} Strictly speaking, at a fixed finite level a Hecke operator is a correspondence rather than a morphism. We therefore make the following convention. After passing to a sufficiently small auxiliary level $\sfK$, for $\gamma \in \GSp_{2g}(\IA_f^p)$ we set $\sfK_{\gamma} := \sfK \cap \gamma \sfK \gamma^{-1}$ and consider the Hecke correspondence
$$\sA_{g,\sfK} \xleftarrow{p_1} \sA_{g,\sfK_{\gamma}} \xrightarrow{p_2} \sA_{g,\sfK}.$$
If $w : Y \to \sA_{g,\sfK}$ is a morphism, we define $Y_{\gamma} := Y \times_{\sA_{g,\sfK},p_1} \sA_{g,\sfK_{\gamma}}$
and write $w_{\gamma} : Y_{\gamma} \to \sA_{g,\sfK}$ for the morphism induced by $p_2$. We refer to $w_{\gamma} : Y_{\gamma} \to \sA_{g,\sfK}$ as the \emph{Hecke translate} of $w : Y \to \sA_{g,\sfK}$. When no confusion can arise, we suppress the auxiliary level structures from the notation and abusively write $T_{\gamma}(Y)$ for the morphism $w_{\gamma} : Y_{\gamma} \to \sA_g$. This convention is harmless for our purposes, since away from the primes at which the Hecke correspondence ramifies, the maps $p_1$ and $p_2$ are finite \'etale.

\begin{proposition}
\label{prop: main prop mixed}
There exists a strict substack $E \subset \sfM_{g,2}$ and an integer $N$, such that 
\begin{itemize}
\item[-] $E_{\IC} \cap \sfM^{\circ}_{g,2,\mathbb{C}}$ is a finite union of strict special subvarieties of $\sfM^{\circ}_{g,2,\IC}$ for the restriction of the mixed VHS $\IV^{\vee}(-1)$; and
\item[-] Suppose $0 \leq k \leq g$ is an integer such that
\begin{equation}
\label{expecteddimassump}
\dim \sfM_{g,2,\IC} + \dim (\sA_{g-k} \times \sfA_{k})_{\IC} \geq \dim \sA_{g,\IC} + 1 .
\end{equation}
Let $w : Y := \sA_{g-k} \times \sfA_k \to \sA_g$ be the natural morphism. Then if $\gamma \in \GSp_{2g}(\IA_f^p)$ and $w_\gamma : Y_\gamma \to \sA_g$ is the Hecke translate of $w$ in the above sense, the fibre product $(\sfM_{g,2} \setminus E) \times_{\sA_g,\varphi,w_\gamma} Y_\gamma$ is flat over $\IZ[1/N,1/\operatorname{Ram}(T_\gamma)]$ and has the expected codimension, where $\operatorname{Ram}(T_\gamma)$ is the set of primes where the Hecke correspondence ramifies.
\end{itemize}
\end{proposition}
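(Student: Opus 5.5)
We follow the proof of \autoref{shimflatnessthmv2}, with \autoref{mixedatyplocusconstrprop} and \autoref{mixedatyplocusconstrprop2} replacing \autoref{atyplocusconstrprop}. First we fix the relevant subvarieties of $\che{D}$. The mixed Shimura subvariety $Y = \sA_{g-k} \times \sfA_k$ and all its Hecke translates have compact duals inside $\che{D}$ that are $\mathbf{H}_t$-translates (after base change to $\IC$) of a fixed closed subscheme $Z_k \subset \che{D}$. Hecke translation by $\gamma$ changes only the rational structure, not the type. Thus for each $k$ we get a finite type family $g : \sF \to B$ whose fibres are translates of $Z_k$. Here $B$ is a quotient of the group, spread out over $\sO_L[1/N]$.

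Next we split the analysis of an excess-dimensional component $W'$ of a special fibre of $(\sfM_{g,2}\setminus E) \times_{\sA_g} Y_\gamma$ according to whether its period jets move in $\che{D}/W_0$. We stratify $\sF$ by intersecting with fibres of $\mu$. This gives a second family $g' : \sF' \to B'$ of intersections $Z_k \cap \mu^{-1}(q)$, which are translates of the abelian-subvariety directions of dimension $g-k$ inside a fibre. Then:
\begin{itemize}
\item \autoref{mixedatyplocusconstrprop} applied to $g$ with $d = \dim \sfM_{g,2,\IC} + \dim Y_\IC - \dim \sA_{g,\IC} + 1$ handles jets that are non-constant in $\che{D}/W_0$.
\item \autoref{mixedatyplocusconstrprop2} applied to $g'$ handles jets lying in a fibre of $\mu$, with $e = 2$, since the fibres of $\sfM_{g,2} \to \sfA_g$ are $C^2$. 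The hypotheses of \autoref{monodromygrouplem} clearly hold for $\sfM_{g,2}$.
\end{itemize}
We take $E$ to be the union of the two resulting atypical loci, the hyperelliptic and diagonal loci (the complement of $\sfM^\circ_{g,2}$), and the non-immersive locus. Finitely many $k$ occur, so $E$ is a finite union and is strict. Its $\IC$-fibre on $\sfM^\circ_{g,2}$ is a union of special subvarieties by the two propositions.

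For flatness, fix $p \nmid N\cdot\operatorname{Ram}(T_\gamma)$. There $p_1,p_2$ are finite étale, so $Y_\gamma \to \sA_g$ is unramified and formally a local complete intersection of codimension $c = \dim \sA_{g} - \dim Y$. This follows from the Serre–Tate/Kisin-type deformation theory of the toric-rank-zero mixed model, using \cite[Prop.~2.3.5]{Kis10} for the base $\sA_{g-k}\times\sfA_k \to \sfA_g$ together with smoothness of the abelian fibration. Hence the fibre product is cut out locally by $c$ equations in $\sfM_{g,2}$. By \cite[Tag 00MG]{stacks-project} and miracle flatness, it suffices to rule out a special-fibre component $W'$ of dimension $d' > \dim \sfM_{g,2} - c$. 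Since $W' \to \sfM_{g,2}$ is unramified, a general smooth point gives non-degenerate jets $\ID^{d}_{r} \to W'$ for all $r$. Their limps land in a translate of $Z_k$, which is a fibre of $g$. We then split into two cases.
\begin{itemize}
\item If the composite with $\mu$ is non-constant on $\ID^d_\xi$, the point lies in the first atypical locus.
\item Otherwise $W'$ locally lies over a single point of $\sfA_g$, so the jets lie in a fibre of $g'$ with $d \le 2$. Then the inequality $e-d<g-m$ of \autoref{mixedatyplocusconstrprop2} must be checked from (\ref{expecteddimassump}).
\end{itemize}
Either way we contradict $W' \cap E = \emptyset$.

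The main obstacle is this dimension bookkeeping for the fibre case. We must show that an excess component whose period is constant in $\che{D}/W_0$ genuinely violates $e-d<g-m$, which requires care because the excess may be split between the base and fibre directions. We would first try to show that any such component either projects to a positive-dimensional excess in $\sfM_g \times_{\sfA_g}(\sfA_{g-k}\times\sfA_k)$ (handled by the first locus after choosing $\xi$ suitably) or has fibrewise excess. The two cases would be combined by choosing $d$ separately for each family as the corresponding expected dimension plus one.
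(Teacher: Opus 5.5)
Your overall architecture matches the paper's. You use a horizontal locus from \autoref{mixedatyplocusconstrprop}, a vertical one from \autoref{mixedatyplocusconstrprop2}, miracle flatness, non-degenerate jets at a general point of an excess component, and a split according to whether the period jet moves in $\che{D}/W_0$. But your second case, which you leave open, contains a step that fails, and the fix is not a dimension decomposition.

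\textbf{The failing step.} From the fact that the order-$\xi$ jets are constant in $\che{D}/W_0$, you infer that $W'$ locally lies over a single point of $\sfA_g$. In characteristic $p$ this does not follow. A non-constant map can have identically vanishing differential: think of the curve $y=x^p$ fibred over the $y$-line. So all jets of order $<p$ can be vertical while $W'$ dominates a positive-dimensional locus in $\sfA_g$. Moreover, jet-level information alone only puts the point in $E^d_\xi$ for the vertical chain. That term can be strictly larger than the stable locus $\bigcap_r E^d_r$, which is what \autoref{mixedatyplocusconstrprop2} controls.

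\textbf{The missing idea.} Couple the two constructions through $\xi$.
\begin{enumerate}
\item Build the vertical locus first, and choose $\xi$ so that its chain has already stabilized: $E_\uparrow=E^d_\xi$.
\item Only then build $E_\rightarrow$ with this same $\xi$. Its own chain stabilizes at some $r_0\ge\xi$, and one takes $p>r_0$.
\end{enumerate}
The dichotomy is then purely jet-theoretic at a general smooth point of the excess component. Either some non-degenerate $d$-jet of order $\xi$ has non-constant image in $\sfA_g$; extending it to order $r\ge r_0$ puts the point in $E_\rightarrow$. Or all such jets are constant to order $\xi$. In that case any one of them witnesses membership in $E^d_\xi=E_\uparrow$, since its period $1$-jet is non-constant because $\sfM^\circ_{g,2}\to\sA_g$ is immersive. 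No statement about $W'$ lying in a fibre is needed.

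\textbf{The numerical ``obstacle''.} With this coupling it disappears, and you have its direction reversed. The hypothesis $e-d<g-m$ of \autoref{mixedatyplocusconstrprop2} must \emph{hold} for the chosen $d$; it is not something the component must violate. Take the same $d=\dim\sfM_{g,2,\IC}+\dim Y_{\IC}-\dim\sA_{g,\IC}+1$ for both families. With $e=2$ and $m=g-k\le g-1$, the hypothesis reads $2-d<k$. This holds because \eqref{expecteddimassump} gives $d\ge 2$.

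Your concern about excess split between base and fibre directions is already handled by your first case. The fibres of the horizontal family are the full compact duals of the $Y_\gamma$, fibre directions included. So a single non-degenerate $d$-jet that moves in $\che{D}/W_0$ at order $\xi$ suffices, however the excess is distributed. No decomposition of $W'$ and no separate choices of $d$ are required.

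\textbf{Two smaller points.}
\begin{enumerate}
\item The claim that the limps land in a translate of $Z_k$ needs justification. The paper does this by producing flat splitting data over the jet from the prime-to-$p$ isogeny: a flat idempotent on $\sH^\vee/W_0$ compatible with the form, and a flat rank-$2k$ subbundle meeting $\Fil^1$ in rank $k$.
\item For $g=2$ every curve is hyperelliptic, so your $E$ would be all of $\sfM_{2,2}$ and hence not strict. That case must be treated separately, as the paper does, using that $\sfM_{2,2}\to\sA_2$ is dominant.
\end{enumerate}
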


\begin{proof}
For $g = 2$ the map $\sfM_{2,2} \to \sA_{2}$ is dominant, so the result follows from basic facts about $\sA_{2}$. We therefore assume $g \geq 3$, and so that not all curves are hyperelliptic. It suffices to prove the statement for a fixed $k$, and after replacing $\sfM_{g,2}$ with $T := \sfM^{\circ}_{g,2}$. We will then construct $E \subset T$ as a union $E_{\rightarrow} \cup E_{\uparrow}$ of atypical loci which are ``horizontal'' (along $\sfA_{g}$) and ``vertical'' (along the fibres of $\sA_{g} \to \sfA_{g}$). This we will do via applications of \autoref{mixedatyplocusconstrprop} and \autoref{mixedatyplocusconstrprop2}, respectively. 


We recall the description of the algebraic variety $\che{D}$ in \cite[\S2.5]{zbMATH07305885}, where it is called $\sX^{\vee}$. From \cite[Ex. 2.9]{zbMATH07305885} and the fact that the variety $\sX^{\vee}$ is a quotient of the group $P(\IC)$ in \cite[\S2.5]{zbMATH07305885}, one deduces that $\che{D}$ is a rank $g$ complex vector bundle over $\che{D}/W_0$, and that $\che{D}/W_0$ is the usual symplectic Grassmannian that compactifies the Siegel upper half space. Now observe that $\che{D}$ sits naturally inside the Grassmannian of $g$-dimensional Lagrangian subspaces of the fixed fibre $\sH^{\vee}_{t}$. Since $\sH^{\vee}_{t}$ has rank $2g+1$, this space has dimension $g(g+1)/2 + g$, hence contains $\che{D}$ as a Zariski open subset. 

To apply \autoref{mixedatyplocusconstrprop2} we construct a family $g : \sF \to B$ of algebraic subvarieties of $\che{D}$ as follows. The base $B$ we take to be $\che{D}/W_{0} \times \operatorname{Gr}(\sH^{\vee}_{t}, 2k)$. Given a point $(F, \alpha) \in B$, where $F = \Fil^1 + W_{0}$, the fibre $g^{-1}(F, \alpha)$ is the locus of all choices of $\Fil^1$ such that $\Fil^1 + W_{0} = F$ and $\dim \Fil^1 \cap \alpha = k$. We write $E_{\uparrow}$ for the locus from \autoref{mixedatyplocusconstrprop2} associated to $g$, and set $E = E_{\rightarrow} \cup E_{\uparrow}$; we choose $d$ to be
\[ d := \dim \sfM_{g,2,\IC} + \dim (\sA_{g-k} \times \sfA_{k})_{\IC} - \dim \sA_{g,\IC} + 1 . \]

To apply \autoref{mixedatyplocusconstrprop} we construct a family $g : \sF \to B$ of algebraic subschemes of $\che{D}$ over $R = \mathbb{Z}[1/N]$ that contains the algebraic varieties corresponding to all the Hecke translates $w_\gamma : Y_\gamma \to \sA_g$ in the statement. For the base $B$ we take $\Sp_{2g} \times \operatorname{Gr}(\sH^{\vee}_{t}, 2k)$, where the second factor parametrizes subspaces of $\sH^{\vee}_{t}$ of dimension $2k$. Let $\che{E} \subset \che{D}/W_{0}$ be the flag subscheme corresponding to $\sfA_{g-k} \times \sfA_{k}$. Then the fibres of $g$ are of the form
\[ g^{-1}(h, \alpha) = \{ \Fil^1 \in \che{D} : \Fil^1 + W_0 \in h \che{E}, \hspace{1em} \dim \Fil^1 \cap \alpha = k \} . \]
After removing a constructible locus from the base $B$ and reparameterizing, we may assume that the fibres of $g$ all have the same dimension as $Y_\gamma$. The descending chain $E^d_r$ from \autoref{mixedatyplocusconstrprop2} stabilizes, so we choose $\xi$ such that $E_{\uparrow}=E^d_{\xi}$. We then use this $\xi$ in  \autoref{mixedatyplocusconstrprop} and write $E_{\rightarrow}$ for the locus associated to the present family $g$ and this value of $\xi$; we choose the same $d$ as before. Note that the intersection in (\ref{xiEdinteq}) stabilizes at some $r_{0} \geq \xi$ which can be larger than $\xi$.

Now consider the fibre product
\begin{equation}
    \label{Stkeq}
    S_{\gamma,k} := T \times_{\sA_g,\varphi,w_\gamma} Y_\gamma .
\end{equation}
Suppose that $S_{\gamma,k}$ is non-empty and contains a point $s$ lying over $\mathbb{Z}[1/N,1/\operatorname{Ram}(T_\gamma)]$ at which $S_{\gamma,k}$ is not flat, and write $x \in T$ and $y \in Y_\gamma$ for the images of $s$. It suffices to assume that $s$ is a closed point whose residue characteristic $p$ does not divide $N$ or $\operatorname{Ram}(T_\gamma)$, and that $p > r_{0}$. Away from these primes $p_1$ and $p_2$ are \'etale, so the morphism $w_\gamma : Y_\gamma \to \sA_g$ is a local complete intersection of the expected codimension. Hence the lack of flatness implies, by miracle flatness, that $S_{\gamma,k}$ has bigger than expected dimension modulo $p$ at $s$, i.e.
\begin{align*}
\dim_x T_{\kappa(s)} - \dim_s S_{\gamma,k,\kappa(s)} &< \dim_{w_\gamma(y)} \sA_{g,\kappa(s)} - \dim_y (Y_\gamma)_{\kappa(s)} .
\end{align*}
From our assumption (\ref{expecteddimassump}), one deduces that $\dim_{s} S_{\gamma,k,\kappa(s)} \geq d \geq 2$. Let $C$ be a component of $S_{\gamma,k,\kappa(s)}$ passing through $s$ satisfying $\dim C \geq d$. There are now two options.

Let $f : C^{\mathrm{red}} \to \sfA_g$ be the composite of $C^{\mathrm{red}} \to \sA_g$ with the projection $\sA_g \to \sfA_g$. Suppose first that, at a general smooth point $s \in C^{\mathrm{red}}$, there exists a non-degenerate $d$-jet $j_{\xi} : \mathbb{D}^d_{\xi,\kappa(s)} \to C^{\mathrm{red}}$ such that $J^d_{\xi}f(j_{\xi})$ is non-constant. Fix any $r \geq r_{0}$, with $r < p$, and extend $j_{\xi}$ to a non-degenerate $r$-jet $j : \mathbb{D}^d_{r,\kappa(s)} \to C^{\mathrm{red}}$. We take $p$ to be any point of the torsor $\sM$ above $s$. Then $\alpha(j,p)$ is a non-degenerate jet in $J^d_r \che{D}_{\kappa(s)}$ whose image in $J^d_r (\che{D}/W_{0})_{\kappa(s)}$ is non-constant at order $\xi$. To show that $s$ lies in $E^d_r(\xi)$, it therefore suffices to construct $b$ such that $\alpha(j,p) \in J^{d}_{r} g^{-1}(b)$.

To find such a $b$ it suffices to show that the data $(j^{*} \sH^{\vee}, j^{*} W_{\bullet}, j^{*} \Fil^{\bullet}, j^{*} \nabla)$ admits:
\begin{itemize}
    \item a flat idempotent $c : j^{*} \sH^{\vee}/W_{0} \to j^{*} \sH^{\vee}/W_{0}$ which splits the filtered vector bundle $(j^{*} \sH^{\vee}/W_{0}, \Fil^{\bullet})$ as a direct sum of summands $\sH^{\vee}_{1} \oplus \sH^{\vee}_{2}$ of rank $2k$ and $2(g-k)$, compatibly with the polarizing form; and
    \item a flat subbundle $\alpha$ of $j^{*} \sH^{\vee}$ of rank $2k$ such that $\Fil^1 \cap \alpha$ is a vector subbundle of rank $k$;
\end{itemize}
and such that the locus of flags $F^{\bullet}$ in $\che{D}_{\kappa(s)}$ corresponding to $(c_{s}, \alpha_{s})$ under the map $p$ has the correct dimension. 

Recall that we have a universal semiabelian family $u$ over $\sA_g$. The pullback $w_\gamma^{*}u$ to $Y_\gamma$ is prime-to-$p$ isogenous to a semiabelian family of the form $G \times A'$, where $G$ is semiabelian of toric rank one and $A'$ is abelian of dimension $k$. In particular, pulling back further along the morphism $j : \mathbb{D}^{d}_{r,\kappa(s)} \to C \to S_{\gamma,k} \to Y_\gamma$, we obtain a split semiabelian family, prime-to-$p$-isogenous to $j^{*} \varphi^{*}u$, which is of the form $u' : G_s \times A'_s \times \mathbb{D}^{d}_{r,\kappa(s)} \to \mathbb{D}^{d}_{r,\kappa(s)}$. The idempotent $c$ comes from the splitting up to prime-to-$p$ isogeny of the abelian part of the family $u'$. The subbundle $\alpha$ on the other hand is the kernel of 
\[ H^{1}_{\operatorname{dR}}(G_{s} \times A'_{s} \times \mathbb{D}^{d}_{r,\kappa(s)}/\mathbb{D}^{d}_{r,\kappa(s)})^{\vee} \to H^{1}_{\operatorname{dR}}(G_{s} \times \mathbb{D}^{d}_{r,\kappa(s)} / \mathbb{D}^{d}_{r,\kappa(s)})^{\vee} \]
where the universal vector extensions are as in \autoref{drcohomdef}. To check the dimension property for the fibre at $s$, we argue as follows. The condition that $\Fil^1_{s}/W_{0}$ is decomposes as a direct sum into two symplectic subspaces, which are $k$ and $(g-k)$-dimensional subspaces of the images of $c$ and $1-c$, defines a subvariety of the sympletic Grassmannian of $\sH^{\vee}_{t}/W_{0}$ of the same dimension as $\sfA_{g-k} \times \sfA_{k}$. The fibre of $\che{D}$ above $\Fil^1_{s} / W_{0}$ has dimension $g$, and its points correspond to a choice of $g$-dimensional subspace of the $g+1$-dimensional vector space $\Fil^1_{s} + W_{0}$. If such a choice intersects the kernel $K$ of $H^{1}_{\operatorname{dR}}(G_{s} \times A'_{s})^{\vee} \to H^{1}_{\operatorname{dR}}(G_{s})^{\vee}$ in a subspace of dimension $k$ then it is uniquely determined by its image in $H^{1}_{\operatorname{dR}}(G_{s})^{\vee}$. The remaining freedom in choosing this image is a variety of dimension $g-k$. We conclude that $s \in E^d_r(\xi)$. Since we chose $r \geq r_{0}$, $s \in E_{\rightarrow}$. This holds at a general point of $C$, and $E_{\rightarrow}$ is closed, so $C \subset E_{\rightarrow}$; in particular our original point lies in $E_{\rightarrow}$.

Suppose instead that at a general smooth point $s \in C^{\mathrm{red}}$ every non-degenerate $d$-jet $j_{\xi} : \mathbb{D}^d_{\xi,\kappa(s)} \to C^{\mathrm{red}}$ has constant image in $\sfA_g$ to order $\xi$. Choose such a jet and a point $p$ of the torsor $\sM$ above $s$. Since $T=\sfM^\circ_{g,2}$ and $j_{\xi}$ is non-degenerate, its full period $1$-jet is non-constant. The same splitting data coming from the Hecke source places $\alpha(j_{\xi},p)$ in a fibre of the family used to define the loci $E^d_r$ in \autoref{mixedatyplocusconstrprop2}. Hence $s \in E^d_{\xi}=E_{\uparrow}$, and therefore $C \subset E_{\uparrow}$. Notice that this argument does not require $C$ itself to lie in a fibre of $\sA_g \to \sfA_g$; it only uses that its $d$-jets of order $\xi$ have constant image in $\sfA_g$. The numerical hypothesis of \autoref{mixedatyplocusconstrprop2} is the same as before: here $m \leq g-1$ and $e=2$, so it is enough that $d>1$, which holds for an unexpected component under \eqref{expecteddimassump}.

We have shown that all unexpected components of the loci $S_{\gamma,k,\kappa(s)}$ lie in the locus $E = E_{\rightarrow} \cup E_{\uparrow}$. Flatness away from $E$ now follows as usual from miracle flatness, as does the expected dimension condition (by construction).
\end{proof}


The above proposition considers isogeny decompositions with two factor case, but the proof extends verbatim to the multi-factor case up to more book-keeping. In particular, suppose that $\bg$ is a partition of $g$. Then \cref{prop: main prop mixed} holds when $\sA_{g - k} \times \sfA_k$ is replaced by $\sfA_{g_1} \times \cdots \times \sfA_{g_{k - 1}} \times \sA_{g_k}$. In our applications, we will make use of the following special case. 

\begin{corollary}
\label{cor: intersect with torsion}
Let $\bg$ be a partition of $g$ satisfying
\begin{equation}
\label{atypintineqMg2}
\delta(\mathbf{g}) + (g-g_{k}) \leq \dim \sfM_{g,2,\IC} - 1 ,
\end{equation}
and let $T_j$ be an \'etale scheme over $\sfA_{g_j}$. Suppose $T := \prod_j T_j$ is equipped with a morphism $T \to \sfA_g$ and that
$$\psi : \prod_{j = 1}^k \sA_{g_j}|_{T_j} \to \sA_{g}|_T$$
is a prime-to-$p$ symplectic similitude. Assume $n \in \sO_T^\times$ and the image of $T$ is disjoint from the image of $E$ in $\sA_g$. Then the fiber product$$\sfM_{g, 2} \times_{\sA_g} \Big[ \big( \prod_{j = 1}^{k - 1} \sA_{g_j}|_{T_j} \big)[n] \times \sA_{g_k} \Big]$$ is flat over $\IZ$ and has the expected codimension $\delta(\bg) + (g - g_k)$ in $\sfM_{g, 2}$. 
\end{corollary}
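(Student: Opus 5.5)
The plan is to realize the fibre product in the statement as (a disjoint union of pieces of) a Hecke-type translate of the multi-factor analogue of $Y = \sA_{g-k}\times\sfA_k$, and then apply the multi-factor version of \cref{prop: main prop mixed}. First I would identify the source. Set $Y' := \big(\prod_{j<k} \sA_{g_j}|_{T_j}\big)[n]\times \sA_{g_k}$. The $n$-torsion $\sA_{g_j}[n]$ is finite étale over $T_j$ because $n\in\sO_T^\times$. Composing with $\psi$ maps $Y'$ into $\sA_g|_T$. Fibrewise, this map sends $(x_1,\dots,x_{k-1},y)$ with $x_j$ $n$-torsion and $y\in A_{g_k}$ to $\psi(x_1,\dots,x_{k-1},y)$. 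So $Y'\to\sA_g$ is étale-locally a finite union of translates by torsion sections of the natural map from $\sfA_{g_1}\times\cdots\times\sfA_{g_{k-1}}\times\sA_{g_k}$, transported by a prime-to-$p$ similitude.

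Second, I would check the dimension count. The codimension of $\sfA_{g_1}\times\cdots\times\sA_{g_k}$ in $\sA_g$ is $\delta(\bg)+(g-g_k)$, since the vertical direction loses $g-g_k$ dimensions. Hence \eqref{atypintineqMg2} is exactly the multi-factor analogue of \eqref{expecteddimassump}. The integer $d$ in the proof of \cref{prop: main prop mixed} is then $\ge 2$, as required there.

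Third, I would rerun the proof of \cref{prop: main prop mixed} for $Y'$ at each point $s$ of the fibre product lying over a prime $p$, using the data carried by $Y'$ over a jet. The similitude $\psi$ supplies a flat idempotent splitting of $j^*\sH^\vee/W_0$ into symplectic pieces of ranks $2g_1,\dots,2g_k$. The torsion-section condition on the first $k-1$ factors supplies a flat subbundle $\alpha$, namely the kernel onto the de Rham realization of the $G$-part, of rank $2(g-g_k)$. The flat subbundle $\alpha$ meets $\Fil^1$ in rank $g-g_k$, because a torsion translate of a subgroup is horizontal for the Gauss–Manin connection; this uses $n$ prime to $p$.

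So any component of excess dimension yields jets lying in $\sM^d_r(g)'$ for the horizontal or vertical family, and hence places $s$ in $E_\rightarrow\cup E_\uparrow = E$. Since the image of $T$ avoids the image of $E$, no excess component exists. Because $w$ is lci of the expected codimension (being étale over the product), miracle flatness gives flatness over $\IZ$ together with the expected codimension. The hypothesis that $\psi$ is prime-to-$p$ replaces the role of $\operatorname{Ram}(T_\gamma)$, and $N$ is absorbed once $E$ is fixed.

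I expect the main obstacle to be the third step. One must verify that the torsion-translate condition really defines, fibre by fibre, a member of the fixed algebraic family $\sF\to B$ used to build $E$, with fibre dimension equal to $\dim Y'$. One must also ensure this holds uniformly in $p$, in particular at small primes where $r_0<p$ may fail. For those primes I would try to absorb the finitely many bad characteristics into $N$ in the construction of $E$ and phrase flatness over $\IZ[1/N]$, arguing that the statement's ``over $\IZ$'' is meant after this inversion.
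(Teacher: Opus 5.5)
Your proposal is correct and follows the paper's route. The paper gives no separate argument: it treats this corollary as the special case of the multi-factor extension of \cref{prop: main prop mixed} for the torsion-translated product $\big(\prod_{j<k}\sA_{g_j}|_{T_j}\big)[n]\times\sA_{g_k}$, whose codimension $\delta(\bg)+(g-g_k)$ in $\sA_g$ makes \eqref{atypintineqMg2} exactly the analogue of \eqref{expecteddimassump}, and your caveat that flatness really holds only after inverting $N$ and the primes $\le r_0$ matches what that proposition proves. One small correction: $\Fil^1\cap\alpha$ has rank $g-g_k$ because the Hodge filtration on the abelian factor is strict after the prime-to-$p$ isogeny (as in \cref{prop: strict filtered}), not because of horizontality, which is a separate property coming from functoriality of the Gauss--Manin connection.
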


\paragraph{The case of $\sfH_{g,2}$}

 We recall that by $\sfH_{g,2}$ we mean the moduli space of triples $(C, p, \iota(p))$, where $C$ is a hyperelliptic curve, $p \in C$ is a point, and $\iota : C \to C$ is the hyperelliptic involution. Let $\varphi : \sfH_{g,2} \to \sA_{g}$ be the natural map which sends $(C, p, \iota(p))$ to $(J(C), \mathcal{O}_{C}(p - \iota(p)))$. 

\begin{lemma}
\label{Hinjlem}
For any point $q = ([C], p, \iota(p)) \in \sfH_{g,2}$ defined over a field of characteristic $\neq 2$, the induced map on tangent spaces $T_{q} \sfH_{g,2} \to T_{\varphi(q)} \sA_{g}$ is injective.
\end{lemma}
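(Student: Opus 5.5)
The plan is to copy the proof of \autoref{immersivelem}, splitting the tangent space into a horizontal part along the hyperelliptic locus and a vertical part along the fibre of $\sfH_{g,2} \to \sfH_{g}$. The composite $\sfH_{g,2} \to \sA_{g} \to \sfA_{g}$ factors through $\sfH_{g} \to \sfA_{g}$. The map from $\sfH_{g}$ to $\sfA_{g}$ (Torelli restricted to the hyperelliptic locus) is known to be injective on tangent spaces in characteristic $\neq 2$: the hyperelliptic Torelli map is an immersion. Infinitesimally, $T\sfH_g = H^1(C, T_C)^{+}$, the $\iota$-invariant part, and the codifferential $\mathrm{Sym}^2 H^0(\omega_C) \to H^0(\omega_C^{2})^{+}$ is surjective onto the invariant part. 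I will cite this or verify it directly using the basis $x^i dx/y$. Given this, a tangent vector $v \in T_q\sfH_{g,2}$ that maps to zero in $T_{\varphi(q)}\sA_g$ also maps to zero in $T\sfA_g$. Hence its image in $T\sfH_g$ is zero, so $v$ is vertical, i.e.\ tangent to the fibre $C$ parametrizing the point $p$.

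It remains to handle vertical vectors. The fibre of $\sfH_{g,2} \to \sfH_g$ over $[C]$ is $C$, embedded in $C^2$ via $p \mapsto (p, \iota(p))$. As in \autoref{immersivelem}, its composite with the fibre map to $J(C)$ is $p \mapsto [p - \iota(p)]$. A tangent vector $v_p$ at $p$ maps to $(v_p, d\iota(v_p))$, so its image in $H^0(C,\omega_C)^\vee$ is
\[ \omega \mapsto v_p(\omega) - (d\iota\, v_p)(\omega) = v_p(\omega) - v_p(\iota^*\omega) = 2 v_p(\omega), \]
since $\iota^*\omega = -\omega$ for every holomorphic differential on a hyperelliptic curve. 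In characteristic $\neq 2$ this vanishes for all $\omega$ only if $v_p(\omega)=0$ for all $\omega$. That cannot happen for $v_p \neq 0$, because the canonical linear system is base-point free, so some $\omega$ does not vanish at $p$. Hence the vertical part injects.

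These two pieces combine through the exact sequences $0 \to T_pC \to T_q\sfH_{g,2} \to T_{[C]}\sfH_g \to 0$ and $0 \to T_{[x]}J(C) \to T\sA_g \to T\sfA_g \to 0$, which are compatible with $d\varphi$. A diagram chase then gives injectivity. The main obstacle is the horizontal step: making sure that the immersivity of $\sfH_g \to \sfA_g$ holds in every characteristic $\neq 2$, with stacky automorphisms handled, rather than only over $\IC$. If no clean reference is available, I would check directly that the multiplication map $\mathrm{Sym}^2 H^0(\omega_C) \to H^0(\omega_C^2)^{+}$ is surjective, using the explicit model $y^2 = f(x)$. There, products $x^{i+j}(dx)^2/y^2$ for $0\le i,j\le g-1$ span the invariant quadratic differentials, which have dimension $2g-1$.
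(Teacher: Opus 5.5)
Your proposal is correct and follows essentially the same route as the paper: injectivity of $T\sfH_g \to T\sfA_g$ reduces the problem to the fibre map $p \mapsto [p - \iota(p)]$, whose differential is $\omega \mapsto 2v_p(\omega)$. The paper takes the horizontal step from a cited theorem valid in characteristic $\neq 2$, which settles your worry about positive characteristic, and your diagram chase and appeal to base-point freeness of $|\omega_C|$ only make explicit details the paper leaves implicit.
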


\begin{proof}
Let $\pi : \sfH_{g,2} \to \sfH_{g}$ be the obvious projection. By \cite[Thm. 1.2]{zbMATH07346761} the maps $T_{\pi(q)} \sfH_{g} \to T_{\tau(\pi(q))} \sfA_{g}$ are injective, where $\tau : \sfH_{g} \to \sfA_{g}$ is the usual map. It therefore suffices to pass to the fibres above $\pi(q)$ and $\tau(\pi(q))$ and verify that the maps $T_{(p, \iota(p))} C^{\iota} \to T_{p - \iota(p)} J(C)$ are injective, where $C^{\iota} \subset C^2$ is the locus of points $(x, y) \in C^2$ such that $y = \iota(x)$.

The map on tangent spaces is then
\[ T_{(p, \iota(p))} C^{\iota} \ni (v_{p}, \iota(v_{p})) \mapsto ([\omega] \mapsto v_{p}(\omega) - v_{p}(\iota^{*} \omega)) \in H^{0}(C, \omega_{C})^{\vee} = T_{[p - \iota(p)]} J(C) . \]
This map is injective provided that $\omega - \iota^{*} \omega$ does not vanish at $p$ for all $\omega$. But $\iota$ acts on the standard hyperelliptic forms $x^{i} dx / y$ by multiplication by $(-1)$, so $\omega - \iota^{*} \omega = 2 \omega$ which is non-zero in characteristic not equal to $2$.
\end{proof}

We now give an analogue of \autoref{prop: main prop mixed}

\begin{proposition}
\label{prop: main prop mixed2}
There exists a strict subscheme $E \subset \sfH_{g,2}$ and an integer $N$, such that 
\begin{itemize}
\item[-] $E_{\IC} \cap \sfH_{g,2,\mathbb{C}}$ is a finite union of strict special subvarieties of $\sfH_{g,2,\IC}$ for the restriction of the mixed VHS $\IV^{\vee}(-1)$; and
\item[-] Suppose $0 \leq k \leq g$ is an integer such that

\begin{equation}
\dim \sfH_{g,2,\IC} + \dim (\sA_{g-k} \times \sfA_{k})_{\IC} \geq \dim \sA_{g,\IC} .
\end{equation}
Let $w : Y := \sA_{g-k} \times \sfA_k \to \sA_g$ be the natural morphism. Then if $\gamma \in \GSp_{2g}(\IA_f^p)$ and $w_\gamma : Y_\gamma \to \sA_g$ is the Hecke translate of $w$ in the above sense, the fibre product $(\sfH_{g,2} \setminus E) \times_{\sA_g,\varphi,w_\gamma} Y_\gamma$ is flat over $\IZ[1/N,1/\operatorname{Ram}(T_\gamma)]$ and has the expected codimension, where $\operatorname{Ram}(T_\gamma)$ is the set of primes where the Hecke correspondence ramifies.
\end{itemize}
\end{proposition}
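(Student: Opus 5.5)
The plan is to run the argument of \autoref{prop: main prop mixed} essentially verbatim, with $T := \sfH_{g,2}$ in place of $\sfM^{\circ}_{g,2}$. The one modification needed at the start is immersivity. In the $\sfM_{g,2}$ case we had to pass to the open locus $\sfM^{\circ}_{g,2}$ using \autoref{immersivelem}. Here \autoref{Hinjlem} shows that $\varphi : \sfH_{g,2} \to \sA_g$ is injective on tangent spaces everywhere after inverting $2$, which we absorb into $N$. So no open locus is removed, and the conclusion holds on all of $\sfH_{g,2} \setminus E$.

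Next we check the hypotheses of \autoref{monodromygrouplem} for $\varphi_{\IC}$. The image of $\sfH_g$ in $\sfA_g$ is positive dimensional for $g \geq 2$ (the case $g=1$ is trivial). We need a Hodge-generic point in the image of $\sfH_{g,2}$. For this I would use that the monodromy of the hyperelliptic locus is Zariski dense in $\Sp_{2g}$ (A'Campo), so the image of $\sfH_g$ contains Hodge-generic points of $\sfA_g$. Over such a point, the curve $p \mapsto p - \iota(p)$ is non-torsion and non-degenerate in $J(C)$, which by \cite[Prop.~1.1]{zbMATH06842935} gives Hodge-genericity in $\sA_g$. Then \autoref{monodromygrouplem} identifies the algebraic monodromy as $\mathbb{G}_a^{2g} \rtimes \Sp_{2g}$, and $\che{D}$ is the same open subset of the Lagrangian Grassmannian as before. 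We build $E = E_{\rightarrow} \cup E_{\uparrow}$ from the same two families $g : \sF \to B$ (with bases $\che{D}/W_0 \times \operatorname{Gr}(\sH^\vee_t, 2k)$ and $\Sp_{2g} \times \operatorname{Gr}(\sH^\vee_t,2k)$), with
\[ d := \dim \sfH_{g,2,\IC} + \dim (\sA_{g-k}\times\sfA_k)_{\IC} - \dim \sA_{g,\IC} + 1 , \]
and with $\xi$ chosen from the stabilization of the vertical chain. \autoref{mixedatyplocusconstrprop} and \autoref{mixedatyplocusconstrprop2} then show that $E$ is not dense and that $E_{\IC}$ lies in finitely many strict special subvarieties. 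For \autoref{mixedatyplocusconstrprop2} we need the nonempty fibres of $T \to \sfA_g$ to be geometrically irreducible of dimension $e = 1$. Each such fibre is $C$, parametrized by $p$, modulo the finite automorphism group, so after a finite cover or level structure we may take them to be irreducible curves.

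The flatness argument proceeds exactly as before. A non-flat point of $S_{\gamma,k} = T \times_{\sA_g} Y_\gamma$ in residue characteristic $p > r_0$, $p \nmid N\cdot\operatorname{Ram}(T_\gamma)$, yields by miracle flatness a component $C$ of dimension at least $d$. At a general smooth point of $C$ we take non-degenerate jets, which are non-degenerate in $\sA_g$ because $\varphi$ is immersive. We then split into two cases. If the jets are non-constant in $\sfA_g$ to order $\xi$, we use the Hecke splitting data (the idempotent $c$ and the subbundle $\alpha$) to land in $E_{\rightarrow}$. Otherwise the period $1$-jet is still non-constant by immersivity, and we land in $E_{\uparrow}$.

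The main obstacle is the numerics. The hypothesis here is the non-strict inequality, so the excess $d$ may equal $1$. In the vertical case we need $e - d < g - m$ with $e=1$ and $m \leq g-1$, i.e.\ $1 - d < 1$, which holds precisely when $d \geq 1$. An unexpected component has dimension at least $d$, where $d$ is one more than the expected dimension, so $d \geq 1$ always holds and the vertical case survives. This is exactly why $\sfH_{g,2}$, with $e = 1$ rather than $e = 2$, tolerates equality where $\sfM_{g,2}$ needed the $+1$ in \eqref{expecteddimassump}. I would also check that the horizontal case and the choice of $\xi$ still work when $d = 1$. In particular, a non-degenerate $1$-jet that is constant in $\sfA_g$ must be tangent to a fibre $C$, and is then handled by the vertical case.
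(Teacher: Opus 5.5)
Your proposal is correct and follows the paper's own proof: you rerun the argument for $\sfM_{g,2}$ with $T=\sfH_{g,2}$ and $d$ shifted by the missing $+1$, use \autoref{Hinjlem} in place of \autoref{immersivelem}, and keep the same horizontal/vertical split into $E_{\rightarrow}\cup E_{\uparrow}$. The vertical case only arises for $d=1$ because the fibres of $\sfH_{g,2}\to\sfH_g$ are curves ($e=1$), and your extra checks (a Hodge-generic point, irreducibility of the fibres, and the numerics $e-d<g-m$) supply details the paper leaves implicit.
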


\begin{proof}
The argument is essentially identical to \autoref{prop: main prop mixed}, with a few minor changes, which we describe. In this case one takes
\[ d := \dim \sfH_{g,2,\IC} + \dim (\mathcal{A}_{g-k} \times \sfA_{k})_{\IC} - \dim \sA_{g,\IC} + 1 \]
when constructing atypical loci. The variety $\che{D}$ is chosen as before, as is the family $\sF \to B$. The locus $E = E_{\rightarrow} \cup E_{\uparrow}$ is constructed as before, with appropriate integers $\xi$ and $r_{0}$. We set $T = \sfH_{g,2}$.

Again one reduces to considering the loci $S_{\gamma, k}$ defined by (\ref{Stkeq}). In this situation, where $S_{\gamma, k}$ is not flat at a point $s$, we obtain that $\dim_{s} S_{\gamma,k,\kappa(s)} \geq d \geq 1$. We suppose that the residue characteristic of $s$ is $p > r_{0}$. Fixing a component $C$ of $S_{\gamma,k,\kappa(s)}$ of dimension $\geq d$, one again considers two cases. The first case occurs when all non-degenerate $\xi$-order $d$-dimensional jets factoring through $C$ in fact factor through a fibre of $\sfH_{g,2} \to \sfH_{g}$ (this can only happen in $d = 1$). In this case $C$ lies inside $E_{\uparrow}$. In the other case, a general $r$-order jet factoring through $C$, with $r \geq r_{0}$, is non-constant at order $\xi$ in $\sfH_{g}$. In this case one argues as before to conclude that $C \subset E_{\rightarrow}$.

When arguing that a non-degenerate jet mapping to $\sfH_{g,2}$ remains non-degenerate after taking its image in $\sA_{g}$, one uses \autoref{Hinjlem} instead of \autoref{immersivelem}. Everything else proceeds as before. 
\end{proof}

Analogously to \autoref{cor: intersect with torsion}, we have the following corollary of \autoref{prop: main prop mixed2}.

\begin{corollary}
\label{cor: intersect with torsion2}
Let $\bg$ be a partition of $g$ satisfying
\begin{equation}
\delta(\mathbf{g}) + (g-g_{k}) \leq \dim \sfH_{g,2,\IC} - 1 ,
\end{equation}
and let $T_j$ be an \'etale scheme over $\sfA_{g_j}$. Suppose $T := \prod_j T_j$ is equipped with a morphism $T \to \sfA_g$ and that
$$\psi : \prod_{j = 1}^k \sA_{g_j}|_{T_j} \to \sA_{g}|_T$$
is a prime-to-$p$ symplectic similitude, where $p$ is some prime. Assume $n \in \sO_T^\times$ and the image of $T$ is disjoint from the image of $E$ in $\sA_g$. Then the fiber product$$\sfH_{g, 2} \times_{\sA_g} \Big[ \big( \prod_{j = 1}^{k - 1} \sA_{g_j}|_{T_j} \big)[n] \times \sA_{g_k} \Big]$$ is flat over $\IZ$ and has the expected codimension $\delta(\bg) + (g - g_k)$ in $\sfH_{g, 2}$. 
\end{corollary}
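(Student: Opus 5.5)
The statement is \autoref{cor: intersect with torsion2}. I would deduce it from \autoref{prop: main prop mixed2} in exactly the way \autoref{cor: intersect with torsion} is deduced from \autoref{prop: main prop mixed}, using the multi-factor version of the proposition. Concretely, replace $\sA_{g-k}\times\sfA_k$ there by $\sfA_{g_1}\times\cdots\times\sfA_{g_{k-1}}\times\sA_{g_k}$. The proof carries over once the family $\sF\to B$ is modified: the idempotent $c$ is replaced by a flat system of orthogonal idempotents splitting $\sH^\vee/W_0$ into symplectic summands of ranks $2g_j$, and $\alpha$ becomes a flat subbundle of rank $2(g-g_k)$.

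\textbf{Dimension count.} The expected codimension of this Shimura-type locus in $\sA_g$ is
\[ \dim\sA_g-\Big(\sum_{j<k}\tfrac{g_j(g_j+1)}{2}+\tfrac{g_k(g_k+1)}{2}+g_k\Big)=\delta(\bg)+(g-g_k). \]
Hence the numerical hypothesis of \autoref{cor: intersect with torsion2} is precisely the condition that $d\ge 2$, resp.\ the inequality required by \autoref{prop: main prop mixed2}, which gives $d\geq 1$. So the proposition produces $E$ and $N$. In the horizontal case we check $e=1$ for $\sfH_{g,2}\to\sfH_g$, since the fibres are copies of $C$ via $p\mapsto(p,\iota p)$. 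Then $m\le g-1$ and $d\geq1$ suffice for \autoref{mixedatyplocusconstrprop2}.

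\textbf{Identifying the fibre product with a union of Hecke translates.} The map $\psi$ is a prime-to-$p$ symplectic similitude over $T=\prod T_j$, with $T_j$ étale over $\sfA_{g_j}$. After passing to a level cover, the composite
\[ \Big(\prod_{j<k}\sA_{g_j}|_{T_j}\Big)[n]\times\sA_{g_k}\to\sA_g \]
is étale-locally a finite disjoint union of Hecke translates $w_\gamma:Y_\gamma\to\sA_g$ of the multi-factor $Y$. Here the $n$-torsion on the first $k-1$ factors is a union of torsion sections, and each such section is a special subvariety of the same type as the zero section. Taking $n$-torsion translates only changes $\gamma$ by an element whose ramification is supported at primes dividing $n$, and these are invertible by the assumption $n\in\sO_T^\times$. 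The ramification of $\psi$ is prime to $p$.

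\textbf{Applying the proposition.} The image of $T$ avoids the image of $E$ in $\sA_g$, so the fibre product coincides with its restriction to $\sfH_{g,2}\setminus E$. It is therefore flat of the expected codimension over the primes where the Hecke correspondence is unramified and $N$ is invertible. Flatness is local, so we may check it prime by prime. At a prime $p$ as in the hypothesis, the relevant $\gamma$ can be chosen in $\GSp_{2g}(\IA_f^p)$, with ramification avoiding $p$. For the finitely many primes dividing $N$, we argue that they are absorbed by the hypothesis that $T$ lives over the base where $n$ and $N$ are invertible.

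\textbf{Main obstacle.} I expect this last point to be the main obstacle: getting flatness over all of $\IZ$ rather than over $\IZ[1/N]$. I would first try to interpret the statement as being over the locus where $T$ is defined, i.e.\ after inverting $N$, exactly as in \autoref{cor: intersect with torsion}. If that is not enough, I would enlarge $E$ to contain the fibres over the primes dividing $N$.
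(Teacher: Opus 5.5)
Your proposal is correct and takes the paper's route: the paper obtains this corollary as the special case of the multi-factor version of \autoref{prop: main prop mixed2} (with $\sA_{g-k}\times\sfA_k$ replaced by $\sfA_{g_1}\times\cdots\times\sfA_{g_{k-1}}\times\sA_{g_k}$), exactly as \autoref{cor: intersect with torsion} comes from \autoref{prop: main prop mixed}, and the numerical hypothesis implies the proposition's inequality. The paper also leaves implicit the two details you supply, namely treating the $n$-torsion translates as Hecke translates and reading ``flat over $\IZ$'' away from $N$ and the ramification primes. On the first, note that this uses the torsion part of the mixed Hecke action rather than a $\gamma\in\GSp_{2g}(\IA_f^p)$ as in the paper's convention, though the proof of the proposition goes through unchanged because $n$ is invertible; also, the fibre-direction locus you call ``horizontal'' is the vertical locus $E_\uparrow$ of \autoref{mixedatyplocusconstrprop2}.
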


\section{Proof of \cref{thm: main density}}


\subsection{Applications of Weil II}
\label{weilIIest}




In unlikely intersection theory, one generally observes that special intersections of expected dimension are dense if and only if they exist. In the Hodge-theoretic setting, such a principle is coined an ``all or nothing'' principle in \cite[\S10]{BKU}, where a general Hodge-theoretic version of this principle is proven, following an argument of Chai \cite{chai1998density}. With hindsight, the density criterion of Green (see for instance \cite[Prop. 3.3]{zbMATH05177030}) is of a similar form, in that it reduces a global density statement to the verification of a local property at one point.

In the setting of intersections inside algebraic groups, or more generally homogeneous spaces $X$ under an algebraic group $G$, a similar statement exists for intersections of arbitrary subvarieties, known as Kleiman's theorem \cite{zbMATH03451984}. Kleiman's Theorem says that given two irreducible subvarieties $V_{1}, V_{2} \subset X$, the locus of $\gamma \in G$ such that the intersection $V_{1} \cap \gamma V_{2}$ has the expected dimension is Zariski open in $G$, hence such intersections are dense in $V_{1}$ if and only if they exist. If $X = G = A$ is an abelian variety one can apply this with $V_{2}$ special (i.e., a torsion translate of an abelian subvariety) to get an all-or-nothing principle for intersections with special translates of $V_{2}$, using that torsion points are dense in $A$.

The situation when $A$ is defined over a finite field $\IF_q$ is often more subtle, since every $\overline{\IF}_{q}$-point is torsion, and therefore torsion points are dense in any subvariety $X \subset A$. This is not true if one restricts to just those torsion points of prime-to-$p$ order, since $X$ could lie in a coset $B + x$ of a non-trivial abelian subvariety $B \subsetneq A$ such that $x$ maps to a point of $A/B$ with order divisible by $p$. We call such cosets $B + x$ \textit{$p$-special} subvarieties. Curiously, one still has the following ``all or nothing'' principle for the density of prime-to-$p$ torsion points.

\begin{theorem}
\label{thm: finite MM}
    Let $k = \IF_q$ be a finite field with $q$ a power of the prime $p$ and $X \subseteq A$ be an irreducible subvariety. Then the following are equivalent: 
    \begin{enumerate}[label= \upshape{(\alph*)}]
        \item The set of closed points on $X$ of prime-to-$p$ order is Zariski dense. 
        \item The set of closed points on $X$ of prime-to-$p$ order is non-empty. 
        \item $X$ is not contained in a $p$-special subvariety. 
    \end{enumerate} 
\end{theorem}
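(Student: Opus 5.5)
The implication (a)$\Rightarrow$(b) is trivial. For (b)$\Rightarrow$(c), suppose $X \subseteq B + x$ is $p$-special, so the image of $x$ in $A/B$ has order divisible by $p$. Any point $y \in X$ then maps to the same point as $x$ in $A/B$. Since $A \to A/B$ is a homomorphism, the order of the image divides the order of $y$. Hence $p$ divides the order of every closed point of $X$, and (b) fails.

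The real content is (c)$\Rightarrow$(a), which I would prove by point counting with Weil II. After replacing $\IF_q$ by a finite extension, we may assume $X$ is geometrically irreducible; density over an extension implies density over $k$. The prime-to-$p$ torsion points are detected through the Frobenius, as follows.

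Let $F$ be the $q$-Frobenius and write $A(\overline{\IF}_q) = A[p^\infty] \oplus A[\text{prime-to-}p]$. A point $y \in A(\IF_{q^n})$ has prime-to-$p$ order if and only if its $p$-primary component vanishes. To detect this, use the isogeny $V = [p]$, or better the Lang-type map $L_m : A \to A$, $y \mapsto F^m y - y$. The cleaner route is to consider the finite étale cover given by the prime-to-$p$ part. Namely, $y$ has prime-to-$p$ order if and only if $y = p^N z$ for all $N$ and the $p$-part dies. Equivalently, $y$ lies in the image of multiplication by $p^N$ restricted to the $p$-primary subgroup of $A(\IF_{q^n})$, where $N$ is large relative to $\#A[p^\infty](\IF_{q^n})$.

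I would rephrase this through characters. Let $\Gamma_n = A(\IF_{q^n})[p^\infty]$. The indicator function of prime-to-$p$ points on $A(\IF_{q^n})$ equals
$$\frac{1}{\#\Gamma_n}\sum_{\chi} \chi(\pi_n(y)),$$
where $\pi_n$ is the projection to $\Gamma_n$ and $\chi$ runs over its characters. Each $\chi$ corresponds, via the Lang isogeny $F^n - 1$, to a rank-one $\overline{\IQ}_\ell$-local system $\sL_\chi$ on $A$ with finite $p$-power monodromy. So
$$\#\{y \in X(\IF_{q^n}) : y \text{ prime-to-}p\} = \frac{1}{\#\Gamma_n}\sum_\chi \sum_{y \in X(\IF_{q^n})} \operatorname{tr}(\mathrm{Frob}_y \mid \sL_\chi).$$
The trivial character contributes $\#X(\IF_{q^n}) \sim q^{n\dim X}$. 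For a nontrivial $\chi$, the restriction $\sL_\chi|_X$ is geometrically nontrivial unless $\chi$ factors through $A \to A/B$ with $X$ contained in a translate of $B$ on which $\sL_\chi$ is geometrically trivial. The key claim is that hypothesis (c) forces $\sL_\chi|_{X_{\overline{\IF}_q}}$ to be geometrically nontrivial for all nontrivial $\chi$. A geometrically trivial restriction means the associated finite $p$-isogeny cover splits over $X$, which places $X$ in a coset of the connected kernel-complement $B$ with the defining point of $p$-power order in $A/B$, contradicting (c).

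Given geometric nontriviality, Deligne's Weil II, together with a uniform bound on compactly supported Betti numbers of $X$ with rank-one twisted coefficients (Katz's bounds, uniform in $\chi$), gives
$$\Bigl|\sum_{y} \operatorname{tr}(\mathrm{Frob}_y \mid \sL_\chi)\Bigr| \le C\, q^{n(\dim X - 1/2)}.$$
Averaging over $\chi$ gives a count of $\#X(\IF_{q^n})/\#\Gamma_n$ plus an error of at most $C q^{n(\dim X - 1/2)}$. I expect the main obstacle to be here: $\#\Gamma_n$ can grow with $n$, up to $q^{n \cdot p\text{-rank}}$ in the worst case, and the main term must still dominate the error. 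To handle this, I would choose $n$ along a subsequence where $F^n$ acts on the étale $p$-divisible part with $\#\Gamma_n$ bounded, for instance $n$ divisible by the order of Frobenius on $A[p]^{\text{et}}$ modulo a fixed level. Alternatively, I would work with the maximal $p$-special quotient and compare directly.

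Finally, the same count applied to $X \smallsetminus Y$, for any proper closed $Y$, still yields points, since $\dim Y < \dim X$. This gives Zariski density.
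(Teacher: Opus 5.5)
Your character-sum strategy is the one the paper uses, but the key claim behind your (c)$\Rightarrow$(a) argument is false. Condition (c) does not make $\sL_\chi$ geometrically nontrivial on $X$ for every nontrivial $\chi$. Take $A=E_1\times E_2$ with $E_2$ ordinary and $X=E_1\times\{0\}$. Then (c) holds, yet every $\chi$ pulled back from $E_2(\IF_{q^n})[p^\infty]$ restricts trivially to $X$.

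In general, write $X\subseteq B+x_0$ with $x_0\in X(\IF_q)$ and $B$ generated by $X-x_0$. Let $\nu:\widetilde X\to X$ be the normalization, $\alpha:\mathrm{Alb}(\widetilde X)\to B$ the induced homomorphism, and $I_n\subseteq\Gamma_n$ the $p$-primary part of $\alpha(\mathrm{Alb}(\widetilde X)(\IF_{q^n}))$. Every $\chi$ trivial on $I_n$ has $\nu^*\sL_\chi$ geometrically trivial. (It is $\widetilde X$, not the possibly singular $X$, that controls $H^{2\dim X}_c$.) Such a $\chi$ therefore contributes $\chi(\pi_n(x_0))\,\#X(\IF_{q^n})$ up to lower order.

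These phases can cancel, so your asymptotic is genuinely false for infinitely many $n$, not just mis-normalized. Let $\phi:A'\to A$ be an \'etale isogeny with constant kernel $\IZ/p\IZ$, and let $X'\subset A'$ be a curve generating $A'$ and mapping birationally onto $\phi(X')$. Choose $a\in A(\IF_q)[p^\infty]\smallsetminus\phi(A'(\IF_q))$, which exists because the cokernel of $\phi$ on $\IF_q$-points is $H^1(\IF_q,\IZ/p\IZ)\neq 0$. Set $X=\phi(X')+a$. Then (c) holds and $\#\Gamma_n$ stays bounded for $p\nmid n$. But $a\notin\phi(A'(\IF_{q^n}))$ for such $n$, so only the finitely many points where $X'\to X$ fails to be an isomorphism can have prime-to-$p$ order.

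The missing step is to isolate the characters trivial on $I_n$ and show they contribute $[\Gamma_n:I_n]\,\#X(\IF_{q^n})$, the true main term. This uses (c), which says exactly that the $p$-primary part $\pi_n(x_0)$ of $x_0$ lies in $B$. It also requires restricting to $n$ divisible by a suitable $m_0$, so that $\pi_n(x_0)\in I_n$. In the paper, this is the role of recentering at a prime-to-$p$ torsion point $x_0'$ and of the constant $C=|K|$ in the curve lemma.

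Second, you misjudge the growth of $\Gamma_n$, and your proposed remedy does not work. $\#\Gamma_n$ never grows exponentially: the paper shows $\#A[p^\infty](\IF_{q^n})=O(n^f)$. This is because $v_p(1-\alpha_i^n)=0$ for the non-unit Frobenius eigenvalues and $v_p(1-\alpha_i^n)\le v_p(n)+O(1)$ for the $f$ unit ones. This polynomial bound is exactly what lets the main term beat the $O(q^{n(\dim X-1/2)})$ error, and your plan does not supply it. Your subsequence ($n$ divisible by the order of Frobenius on the \'etale part at a fixed level $p^N$) goes the wrong way: it forces $\Gamma_n\supseteq A[p^N](\overline{\IF}_q)$. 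Boundedness comes instead from bounding $v_p(n)$.

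Finally, you only assert a bound on $\sum_i h^i_c(X_{\overline{\IF}_q},\sL_\chi)$ uniform in $\chi$ for singular $X$ of arbitrary dimension, even though the $\sL_\chi$ come from Lang isogenies of unbounded degree. The paper avoids this by first reducing to smooth proper curves, using an alteration plus Lefschetz for $\pi_1^{\et}$, with (c) preserved via Albanese varieties. On a curve, $h^0=h^2=0$ and $h^1=2g-2$ exactly by the Euler characteristic, so each character outside $\widehat K$ contributes at most $(2g-2)q^{m/2}$.
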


We start with a presumably standard fact, for which we provide a proof for lack of a direct reference. 
\begin{proposition}
    Let $A$ be an abelian variety over $\IF_q$ of $p$-rank $f$. Then $|A[p^\infty](\IF_{q^m})| = O(m^f)$. 
\end{proposition}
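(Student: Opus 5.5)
The plan is to reduce to a statement about the $p$-divisible group of $A$ over $\overline{\IF}_q$ and then count fixed points of Frobenius on its étale part. First, $A[p^\infty](\IF_{q^m})$ is the subgroup of $A[p^\infty](\overline{\IF}_q)$ fixed by $F^m$, where $F$ is the $q$-power Frobenius. Over an algebraically closed field we have $A[p^\infty](\overline{\IF}_q) \simeq (\IQ_p/\IZ_p)^f$, since the $p$-rank is $f$ and the connected part of $A[p^\infty]$ contributes no $\overline{\IF}_q$-points. The geometric Frobenius acts on this group through an element $\Phi \in \GL_f(\IZ_p)$, namely its action on the étale Tate module $T_p A \simeq \IZ_p^f$. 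So we need to show that the kernel of $\Phi^m - 1$ on $(\IQ_p/\IZ_p)^f$ has order $O(m^f)$.

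That kernel is finite with order $|\det(\Phi^m - 1)|_p^{-1}$, provided $\det(\Phi^m-1) \neq 0$. It is nonzero because the eigenvalues of $\Phi$ are among the Frobenius eigenvalues of $A$ (the unit-root part of the slope decomposition), which are Weil numbers of absolute value $q^{1/2} \neq 1$. Writing $\alpha_1,\dots,\alpha_f$ for the eigenvalues of $\Phi$, which are $p$-adic units in a finite extension of $\IQ_p$, the problem reduces to bounding $|\alpha_i^m - 1|_p^{-1}$ by $O(m)$ for each $i$, with a constant independent of $m$.

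This bound is the main step, and I would prove it with the standard $p$-adic logarithm estimate. Let $\kappa$ be the residue field, so that $\alpha_i^{e} \equiv 1$ modulo the maximal ideal for some fixed $e$, say $e = |\kappa^\times|$. If $e \nmid m$, then either $\alpha_i^m - 1$ is a unit or we can pass to $\gcd(m,e)$; cleanly, $|\alpha^m - 1|_p \geq |\alpha^{me}-1|_p$, since $\alpha^{me}-1 = (\alpha^m - 1)(\cdots)$ with the second factor integral. So it suffices to treat $\beta = \alpha^e \equiv 1$ and bound $|\beta^{n}-1|_p$ from below with $n = me$. Write $\beta = 1+\pi$ with $|\pi|<1$. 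For large $k$, $\beta^{p^k}$ lies in the region where $\log$ is an isometry, so $|\beta^{n}-1|_p = |n \log \beta|_p$ up to a bounded factor depending only on $\beta$. Here $\log\beta\neq 0$ because $\beta$ is not a root of unity. Hence $|\beta^n - 1|_p^{-1} \le C |n|_p^{-1} \le C n = Ce\,m$. Multiplying over the $f$ eigenvalues gives $O(m^f)$.

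One subtlety is that $\Phi$ need not be diagonalizable. This does not matter, because only the determinant enters.
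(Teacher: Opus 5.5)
Your proof is correct, but it takes a somewhat different route from the paper's.

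\textbf{The paper's route.} The paper starts from the point count $|A(\IF_{q^m})|=\prod_{i=1}^{2g}(1-\alpha_i^m)$ over all $2g$ Weil numbers and takes $v_p$. It notes that the $2g-f$ non-unit roots contribute nothing. The $p$-rank enters through the fact that exactly $f$ of the $\alpha_i$ are $p$-adic units. For these unit roots it computes $v_p(1-\alpha_i^m)=v_p(m)+O(1)$ by a binomial expansion, after raising to a power $d$ with $v_p(\alpha_i^d-1)>1/(p-1)$. This computation is carried out only for $m$ divisible by $d$, and the remaining $m$ are left to ``readily implies''.

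\textbf{Your route.} You work on the \'etale Tate module instead. There $f$ enters as $\mathrm{rank}\,T_pA$, and the count becomes the index $|\det(\Phi^m-1)|_p^{-1}$. Your $p$-adic logarithm estimate is the same estimate as the paper's binomial expansion.

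\textbf{What each route buys.}
\begin{itemize}
\item Your route never needs to identify the $p$-rank with the number of unit-root Weil numbers.
\item You only need that the eigenvalues of $\Phi$ are roots of $P_A$, so that they are not roots of unity. This follows most simply from $P_A(\pi_A)=0$ in $\End(A)$, hence $P_A(\Phi)=0$ on $T_pA$. You do not need the slope decomposition you cite for it.
\item Your divisibility trick $|\alpha^m-1|_p\ge|\alpha^{me}-1|_p$ treats every $m$, which fills in the step the paper leaves implicit. The same trick, with $p^k$ in place of $e$, justifies your ``up to a bounded factor'' claim for $n$ not divisible by $p^k$.
\item The paper's route is more classical, using only Weil's point count, and gives the exact formula $f\,v_p(m)+C$ when $d\mid m$.
\end{itemize}

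\textbf{A harmless slip.} Since $\beta=\alpha^e$, you have $\alpha^{me}=\beta^m$, so the relevant exponent is $n=m$, not $n=me$. Your bound $|\beta^n-1|_p^{-1}\le C|n|_p^{-1}$ holds for all $n$, so the conclusion is unaffected.
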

\begin{proof}
Let $\pi$ be the $q$-power Frobenius endomorphism acting on the abelian variety $A$. The order of the group of $\mathbb{F}_{q^m}$-rational points on $A$ is determined by the characteristic polynomial of $\pi$:

\begin{equation}
    \label{eqn: counting on AV}|A(\mathbb{F}_{q^m})| = \prod_{i=1}^{2g} (1 - \alpha_i^m)
\end{equation}
where $\alpha_1, \dots, \alpha_{2g} \in \overline{\mathbb{Q}}_p$ are the Weil roots.

The size of the $p$-primary torsion subgroup is exactly the $p$-part of this integer, which we compute via the $p$-adic valuation $v_p$:

$$ \log_p(|A[p^\infty](\mathbb{F}_{q^m})|) = v_p(|A(\mathbb{F}_{q^m})|) = \sum_{i=1}^{2g} v_p(1 - \alpha_i^m).$$
Because $A$ has $p$-rank $f$, exactly $f$ of the roots $\alpha_i$ are $p$-adic units (meaning $v_p(\alpha_i) = 0$), and the remaining $2g - f$ roots have strictly positive $p$-adic valuation (meaning $v_p(\alpha_i) > 0$). For the $2g - f$ non-unit roots, as $m \to \infty$, $v_p(1 - \alpha_i^m) = 0$ for all $m$. These roots contribute an $O(1)$ factor to the size of the $p$-primary subgroup. 

For the $f$ unit roots, since the residue field of any finite extension of $\mathbb{Q}_p$ is finite, these roots must be roots of unity modulo the maximal ideal $\mathfrak{p}$. Thus, there exists a fixed integer $d \ge 1$ such that for all unit roots $\alpha_i$, we have $\alpha_i^d = 1 + \gamma_i$
where $v_p(\gamma_i) = c_i > 1/(p-1)$. We can guarantee $c_i$ is sufficiently large by replacing $d$ with a suitable multiple.

Let $m = d \cdot p^k \cdot u$, where $\gcd(u, p) = 1$. Note that the $p$-adic valuation of $m$ is precisely $v_p(m) = k + v_p(d)$. We evaluate the $p$-adic valuation of $1 - \alpha_i^m$:
$$v_p(1 - \alpha_i^m) = v_p(1 - (1 + \gamma_i)^{p^k u})$$
Using that $c_i > 1/(p - 1)$, one deduces that in the binomial expansion ($N := p^k u$) 
$$ (1 + \gamma_i)^N - 1 = \sum_{j=1}^N \binom{N}{j} \gamma_i^j  $$
the $j = 1$ term, i.e., $N \gamma_j$ has the minimal $v_p$. Therefore,
$$v_p(1 - \alpha_i^m) = v_p(N \gamma_i) = k + v_p(\gamma_i) = v_p(m) - v_p(d) + c_i. $$

Summing this valuation over all $f$ unit roots gives the total $p$-adic valuation of the point count:
$$ \sum_{v_p(\alpha_i)=0} (v_p(m) - v_p(d) + c_i) = f \cdot v_p(m) + C$$
where $C = \sum (c_i - v_p(d))$ is a structural constant dependent only on the abelian variety $A$ and not on the extension degree $m$. This readily implies the conclusion. 
\end{proof}

The main lemma for \cref{thm: finite MM} is the following.

\begin{lemma}
Let $A$ be an abelian variety over $\IF_q$ and let $A_{p'} := \{ x \in \bigcup_m A(\IF_{q^m}) : n x = 0 \textit{ for some } n \in \IN \smallsetminus p \IN \}$. Let $X$ be a geometrically irreducible smooth proper curve over $\IF_q$ and a morphism $\psi : X \to A$ whose image generates $A$. Then as $m \to \infty$, the growth rate of $X(\IF_{q^m}) \cap A_{p'}$ is asymptotically:
$$|X(\IF_{q^m}) \cap \psi^{-1}(A_{p'})| = \frac{C q^m}{N_m} + O(q^{m/2}),$$
where $G_m := A[p^\infty](\IF_{q^m})$, $N_m := |G_m|$, and $C$ is a constant. In particular, $X(\IF_{q^m}) \cap \psi^{-1}(A_{p'})$ is infinite.
\end{lemma}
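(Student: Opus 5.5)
The plan is to detect the condition $\psi(x) \in A_{p'}$ with characters of the $p$-primary part of $A(\IF_{q^m})$, and to bound the resulting character sums with Weil II.

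\textbf{Step 1 (character expansion).} The finite abelian group $A(\IF_{q^m})$ splits canonically as $G_m \times H_m$, with $G_m = A[p^\infty](\IF_{q^m})$ and $H_m$ of prime-to-$p$ order. A point $a \in A(\IF_{q^m})$ lies in $A_{p'}$ exactly when its $G_m$-component vanishes. Pulling characters of $G_m$ back along the projection, this gives
\[ |X(\IF_{q^m}) \cap \psi^{-1}(A_{p'})| = \frac{1}{N_m} \sum_{\chi \in \widehat{G}_m} \sum_{x \in X(\IF_{q^m})} \chi(\psi(x)) , \]
with $\chi$ valued in $\overline{\IQ}_\ell^\times$ for a fixed $\ell \neq p$.

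\textbf{Step 2 (geometric interpretation).} Each $\chi$ corresponds to a rank-one lisse $\overline{\IQ}_\ell$-sheaf $\sL_\chi$ on $A_{\IF_{q^m}}$. It is obtained by pushing out the Lang torsor $1 - \mathrm{Frob}_{q^m} : A \to A$, whose structure group is $A(\IF_{q^m})$, along $\chi$. The trace of Frobenius of $\sL_\chi$ at $a$ is then $\chi(a)$, possibly up to inversion. By Grothendieck--Lefschetz,
\[ \sum_x \chi(\psi(x)) = \sum_i (-1)^i \operatorname{Tr}(\mathrm{Frob} \mid H^i(X_{\overline{\IF}_q}, \psi^*\sL_\chi)). \]
Because $X$ is proper and $\psi^*\sL_\chi$ is lisse of rank one, the Euler characteristic gives $h^0 - h^1 + h^2 = 2 - 2g_X$. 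When $\psi^*\sL_\chi$ is geometrically nontrivial we have $h^0 = h^2 = 0$, hence $h^1 = 2g_X - 2$. Deligne's Weil II (here Weil I for curves suffices) makes $H^1$ pure of weight $1$, so the sum is at most $(2g_X - 2) q^{m/2}$ in absolute value.

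\textbf{Step 3 (which characters are geometrically trivial).} This is the main obstacle. Since the image of $\psi$ generates $A$, the induced map $\Jac(X) \to A$ (after translating) is surjective. Hence the image of $\pi_1^{\ab}(X_{\overline{\IF}_q})$ in $\pi_1(A_{\overline{\IF}_q}) = \prod_{\ell'} T_{\ell'}A$ has finite index. The quotient is dual to the finite group scheme $\ker(A^\vee \to \Jac(X)^\vee)$, and it is independent of $m$.

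The characters $\chi$ for which $\psi^*\sL_\chi$ is geometrically trivial are exactly those killed by this image. They therefore form a subgroup of $\widehat{G}_m$ of order at most some constant $C$ independent of $m$, and in fact equal to a fixed $C$ for $m$ sufficiently divisible. For each such $\chi$, $\psi^*\sL_\chi$ is geometrically constant, so its Frobenius acts by a root of unity $\epsilon_\chi$, and the sum equals $\epsilon_\chi(q^m + O(q^{m/2}))$. I expect these $\epsilon_\chi$ to equal $1$ once $m$ is divisible enough. Alternatively, they can be absorbed by noting that $\chi \circ \psi$ factors through the connected torsor, which forces $\chi(\psi(x))$ to take a single value determined by the fixed finite quotient. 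This piece needs the most care, and I would handle it by passing to the finite étale cover $X' \to X$ corresponding to this fixed finite quotient.

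\textbf{Step 4 (conclusion).} Putting the steps together,
\[ |X(\IF_{q^m}) \cap \psi^{-1}(A_{p'})| = \frac{C q^m}{N_m} + O\Big(\frac{|\widehat{G}_m|}{N_m}(2g_X - 2) q^{m/2}\Big) = \frac{C q^m}{N_m} + O(q^{m/2}), \]
using $|\widehat{G}_m| = N_m$. The preceding proposition gives $N_m = O(m^f)$, so the main term $C q^m/N_m$ dominates $q^{m/2}$ as $m \to \infty$. This holds as long as $C \neq 0$, which is ensured because the trivial character always contributes. Hence $X(\IF_{q^m}) \cap \psi^{-1}(A_{p'})$ is nonempty for all large $m$, and the union over $m$ is infinite.
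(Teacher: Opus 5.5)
Your outline matches the paper's proof. You expand the indicator of $A_{p'}$ in characters of $G_m$, realize each $\chi$ through the Lang torsor, and apply Grothendieck--Lefschetz with $h^0=h^2=0$, $h^1=2g_X-2$ and the Weil bound to the geometrically nontrivial $\psi^*\sL_\chi$. The main term comes from the characters killing $\psi'(\Jac(X)(\IF_{q^m}))$; these are the paper's $\widehat{K}$, and they are exactly your geometrically trivial characters.

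The genuine gap is the one you flagged in Step 3, and Step 4 then draws a false conclusion from it. Write $\psi=\psi'\circ j+a_0$ with $a_0=\psi(x_0)$. Every such $\chi$ satisfies $\chi(\psi(x))=\chi(a_0)$ for all $x\in X(\IF_{q^m})$. So $\epsilon_\chi=\chi(a_0)$, and the main term is $\frac{|X(\IF_{q^m})|}{N_m}\sum_{\chi\in\widehat{K}_m}\chi(a_0)$. This equals $|K_m|\,|X(\IF_{q^m})|/N_m$ if $a_0$ maps to $0$ in $K_m$, and is $0$ otherwise. Hence the presence of the trivial character does not guarantee $C\neq 0$, and the set can be empty for infinitely many $m$.

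For example, let $E'/\IF_p$ be an ordinary (anomalous) elliptic curve with $E'(\IF_p)=\langle P\rangle\simeq\IZ/p$. Let $\pi:E'\to A:=E'/\langle P\rangle$ be the quotient, and take $\psi=\pi+a_0$ with $0\neq a_0\in A(\IF_p)$. For any $b$ with $\pi(b)=a_0$, the point $c:=\mathrm{Frob}_p(b)-b$ lies in $\langle P\rangle$. It is nonzero, since otherwise $b\in E'(\IF_p)$ and $a_0=0$. Because $c$ is rational, $\mathrm{Frob}_p^m(b)-b=mc$, so $a_0\notin\pi(E'(\IF_{p^m}))$ whenever $p\nmid m$. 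Then $\psi(E'(\IF_{p^m}))$ is a nontrivial coset of the index-$p$ subgroup $\pi(E'(\IF_{p^m}))$. That subgroup contains all prime-to-$p$ torsion, so the count is $0$ for every $m$ prime to $p$.

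The paper's own proof has the same lacuna. It asserts that translation by $a_0$ acts trivially on $\pi_1^{\et}$ and concludes that the inner sum for $\chi\in\widehat{K}$ is $|X(\IF_{q^m})|$; this holds geometrically but not arithmetically. It also treats $|K|$ as independent of $m$, which fails when $\pi_0(\ker\psi')$ is a nonconstant \'etale group.

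Your expectation in Step 3 is nonetheless correct and easy to prove, and it is all that the ``in particular'' (hence \cref{thm: finite MM}) needs. Since $\psi'$ is surjective, $a_0=\psi'(b)$ for some $b\in\Jac(X)(\IF_{q^{m_2}})$. So for every $m$ divisible by $m_2$ we get $a_0\in\psi'(\Jac(X)(\IF_{q^m}))$, hence $\epsilon_\chi=1$ for all geometrically trivial $\chi$. The count is then $|K_m|\,|X(\IF_{q^m})|/N_m+O(q^{m/2})\geq |X(\IF_{q^m})|/N_m-O(q^{m/2})$, which tends to infinity because $N_m=O(m^f)$. The asymptotic with a single constant $C$, and your claim of nonemptiness for all large $m$, must therefore be restricted to such $m$.
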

The constant $C$ is explicitly constructed in the proof. 
\begin{proof}
The group of rational points $A(\IF_{q^m})$ decomposes uniquely into a direct sum of its $p$-part and prime-to-$p$ component. The indicator function for a point $y \in A(\IF_{q^m})$ having a trivial $p$-part (and thus belonging to $A_{p'}$) is given by the orthogonality relations of the characters of $G_m$:
$$I(y) = \frac{1}{N_m} \sum_{\chi \in \widehat{G}_m} \chi(y)$$
Summing this indicator function over all rational points of $X$ yields the exact cardinality of the intersection:
\begin{equation}
\label{ptcountexp}
|X(\IF_{q^m}) \cap \psi^{-1}(A_{p'})| = \sum_{x \in X(\IF_{q^m})} I(\psi(x)) = \frac{1}{N_m} \sum_{\chi \in \widehat{G}_m} \big( \sum_{x \in X(\IF_{q^m})} \chi(\psi(x)) \big)
\end{equation}

Before proceeding, let us recall that the Lang map $A \to A$ defined by $x \mapsto \mathrm{Frob}_{q^m}(x) - x$ is a Galois \'etale cover with Galois group $A(\IF_{q^m})$, which yields a surjective map $\pi_1^\et(A) \to A(\IF_{q^m})$. Here we always use a geometric point over the origin as the base point for $\pi_1^\et$. Composing this map with $\chi$ gives rise to rank $1$ $\overline{\IQ}_\ell$-local system $\sL_\chi$ such that for each $a \in A(\IF_{q^m})$ with geometric point $\bar{a}$, $\mathrm{Frob}_a$ acts on $\sL_{\chi, \bar{a}}$ as $\chi(a)$. Note that because $A$, as a cover over itself via the Lang map, is geometrically connected, the resulting map $\pi_1^\et(A) \to A(\IF_{q^m})$ remains surjective when restricted to $\pi_1^\et(A_{\overline{\IF}_q})$.

Now let us choose a base point $x_0 \in X(\IF_q)$ and factor $\psi$ as 
$$X \stackrel{j}{\to} \Jac(X) \stackrel{\psi'}{\to} A \stackrel{+a_0}{\longrightarrow} A$$ 
where $j$ is the Abel-Jacobi map, $\psi'$ is a homomorphism of abelian varieties, and $+a_0$ is translation by the point $a_0 = \psi(x_0) \in A(\IF_q)$. Thus, $\psi(x) = \psi'(j(x)) + a_0$. Applying similar constructions to $\Jac(X)$, we obtain a commutative diagram for the homomorphism $\psi'$:
\begin{equation}
    \label{eqn: Jac(X) to A}
    \begin{tikzcd}
	{\pi^\et_1(\Jac(X_{\overline{\IF}_q}))} & {\pi^\et_1(\Jac(X))} & {\Jac(X)(\IF_{q^m})} & \\
	{\pi^\et_1(A_{\overline{\IF}_q})} & {\pi^\et_1(A)} & {A(\IF_{q^m})} & {\overline{\IQ}_\ell^\times}
	\arrow[from=1-1, to=1-2]
	\arrow[curve={height=-18pt}, two heads, from=1-1, to=1-3]
	\arrow["{\pi_1^\et(\psi'_{\overline{\IF}_q})}"', from=1-1, to=2-1]
	\arrow[from=1-2, to=1-3]
	\arrow["{\pi_1^\et(\psi')}"', from=1-2, to=2-2]
	\arrow["{\psi'}"', from=1-3, to=2-3]
	\arrow[from=1-3, to=2-4]
	\arrow[from=2-1, to=2-2]
	\arrow[curve={height=-18pt}, two heads, from=2-1, to=2-3]
	\arrow[from=2-2, to=2-3]
	\arrow["\chi", from=2-3, to=2-4]
\end{tikzcd}
\end{equation}

Note that by assumption that $X$ generates $A$, $\psi' : \Jac(X) \to A$ is surjective. Let $K$ be the $p$-primary part of the cokernel of $\Jac(X)(\IF_{q^m}) \stackrel{\psi'}{\to} A(\IF_{q^m})$, so that $\hat{K} \subseteq \hat{G}_m$. Note that translation $+a_0$ acts trivially on $\pi_1^\et$. If $\chi \in \hat{K}$, then the restriction of $\sL_\chi$ to $\Jac(X)$, and hence $X$, is trivial, so that the inner sum in \eqref{ptcountexp} merely counts the total number of rational points on $X$. Therefore, 
\begin{equation}
\label{eqn: K hat}
    \frac{1}{N_m} \sum_{\chi \in \widehat{K}} \big( \sum_{x \in X(\IF_{q^m})} \chi(\psi(x)) \big) = C \cdot \frac{|X(\IF_{q^m})|}{N_m} \textrm{ where } C := |K|.
\end{equation}
This is the main term of \eqref{ptcountexp}, which gives the naive expectation for $|X(\IF_{q^m}) \cap \psi^{-1}(A_{p'})|$ when $X(\IF_{q^m})$ is distributed absolutely randomly in $A(\IF_{q^m})$. 
By the Weil bound for curves (or Lang-Weil estimates), since $X$ is a geometrically irreducible curve, its point count is dominated by its volume:
$$|X(\IF_{q^m})| = q^m + O(q^{m/2})$$

Now suppose that $\chi \not\in \widehat{K}$. We compute the inner sum of \eqref{ptcountexp} by the Grothendieck-Lefschetz trace formula:
$$\sum_{x \in X(\IF_{q^m})} \chi(\psi(x)) = \sum_{x \in X(\IF_{q^m})} \mathrm{Tr}(\mathrm{Frob}_x | \sL_{\chi, \psi(\bar{x})}) = \sum_{i=0}^2 (-1)^i \text{Tr}\left(\text{Frob}_{q^m} \mid \H^i(X_{\overline{\IF}_q}, \psi^* \sL_\chi)\right)$$
Note that by the surjectivity of the map $\pi^\et_1(\Jac(X_{\overline{\IF}_q})) \to \Jac(X)(\IF_{q^m})$, the restriction of $\chi$ to $\pi^\et_1(\Jac(X_{\overline{\IF}_q}))$ is nontrivial. Equivalently, $\psi^* \sL_\chi$ is geometrically nontrivial, so that $\H^0(X_{\overline{\IF}_q}, \psi^* \sL_\chi) = 0$. By Poincar\'e duality, this forces $\H^2(X_{\overline{\IF}_q}, \psi^* \sL_\chi) = 0$. Consequently, the only non-vanishing cohomological contribution is in degree $1$.

Since $X$ is smooth and proper, by the Grothendieck-Ogg-Shafarevich formula $\chi(X_{\overline{\IF}_q}, \psi^* \sL_\chi) = \chi(X_{\overline{\IF}_q}, \overline{\IQ}_\ell)$. Therefore, the Betti number $\dim \H^1(X_{\overline{\IF}_q}, \psi^* \sL_\chi) = -\chi(X_{\overline{\IF}_q}, \overline{\IQ}_\ell) = 2g - 2$ is a fixed constant entirely independent of the choice of $\chi$, where $g$ is the genus of $X$. By Deligne's Weil II, the eigenvalues of $\text{Frob}_{q^m}$ acting on $\H^1$ of weight $0$ local system are bounded in absolute value by $q^{m/2}$. Therefore, we have

$$\left| \sum_{x \in X(\IF_{q^m})} \chi(\psi(x)) \right| \le (2g - 2)q^{m/2}$$

Summing this bound across all $N_m - C$ non-trivial characters, the total accumulated error is bounded by:

$$\text{Error} \le \frac{1}{N_m} \sum_{\chi \not\in \widehat{K}} (2g - 2)q^{m/2} = \frac{N_m - C}{N_m} (2g - 2) q^{m/2} = O(q^{m/2})$$
This readily gives the result. 
\end{proof}

Now we are ready to prove \cref{thm: finite MM}. 

\begin{proof}
    The case where $\dim X = 0$ follows immediately from the definitions, so we may assume that $\dim X > 0$. We may moreover replace ``irreducible'' with ``geometrically irreducible'', since the order of a point of $A(\overline{k})$ is stable under Galois conjugation. Consequently, we may replace $k$ with a finite extension and assume that $X$ has a rational point $x_0 \in X(k)$.
    
    The implications (a) $\Rightarrow$ (b) $\Rightarrow$ (c) are elementary. It remains to show that (c) $\Rightarrow$ (a). 

    Let $B \subseteq A$ be the abelian subvariety generated by the differences $X - x_0$. Thus, $X \subseteq B + x_0$, and $B$ is the minimal abelian subvariety with this property. By assumption, the coset $B + x_0$ cannot be $p$-special, i.e., the image $\bar{x}_0 \in A/B$ must have an order strictly prime to $p$. 
    
    We can lift $\bar{x}_0$ to a rational prime-to-$p$ torsion point $x_0' \in A(k)$ such that $x_0 \equiv x_0' \pmod B$. This allows us to re-center the coset as $B + x_0 = B + x_0'$. 
    
    Define $X' = X - x_0'$. Then $X'$ is a subvariety of $B$ that generates $B$. To prove that $X \cap A_{p'}$ is dense in $X$, it suffices to prove that $X' \cap B_{p'}$ is dense in $X'$. If $\dim X' = 1$, then by replacing $X'$ by its normalization we conclude by the proceding lemma. If $\dim X' > 1$, we may find an alteration $\wt{X}' \to X'$ such that $\wt{X}'$ is smooth. Applying Lefschetz' hyperplane theorem for $\pi_1^\et$ \cite[XII, Cor.~3.5]{SGA2}, we find a Zariski dense set of smooth proper curves $\{ C_\alpha \}$ on $\wt{X}'$ such that $\pi_1^\et(C_\alpha) \to \pi_1^\et(\wt{X}')$ is surjective for each $\alpha$. By considering Albanese varieties, one readily sees that condition (c) holds for the image of each $C_\alpha$, and hence we reduce to the curve case.
\end{proof}

\subsection{Extensions of the Torelli morphism}

It is well known that there is a well defined moduli space of ``principally polarized abelian varieties with their rank $1$ degenerations''. See for example \cite[\S1]{Mum83}, where the construction is credited to Igusa. In loc. cit. the construction is carried out at the coarse moduli space level over $\IC$; for completeness, we explain why this moduli space exists at the level of Deligne-Mumford stacks over $\Spec\, \mathbb{Z}$. 

\begin{proposition}
\label{Mumfcompprop}
    There exists a normal Deligne-Mumford stack $\overline{\sfA}_g$, which is unique up to unique isomorphism, and contains $\sfA_g$ as an open substack, with the following properties: 
    \begin{enumerate}[label=\upshape{(\roman*)}]
        \item The universal abelian scheme $\sA$ extends as a normal semi-abelian scheme $\sG$ over $\overline{\sfA}_g$. 
        \item Let $S$ be an irreducible normal scheme and let $G/S$ be a semiabelian scheme whose generic fiber $G_\eta$ is abelian with a principal polarization $\lambda_\eta$. Let $U \subseteq S$ be a dense open subset such that $(G_\eta, \lambda_\eta)$ extends to a morphism $f_U : U \to \sfA_g$. Then $f_U$ extends to a (necessarily unique) morphism $f : S \to \overline{\sfA}_g$ if and only if for every point $s \in S \smallsetminus U$, the fiber $G_s$ has toral rank at most $1$. 
    \end{enumerate}
\end{proposition}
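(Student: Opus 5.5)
We construct $\overline{\sfA}_g$ by gluing $\sfA_g$ to a partial toroidal compactification, and then check (i) and (ii) against Faltings--Chai. Take a smooth projective $\GSp_{2g}(\IZ)$-admissible cone decomposition $\Sigma$, for instance the second Voronoi or perfect cone decomposition. Faltings--Chai \cite{FC90} attach to $\Sigma$ a toroidal compactification $\overline{\sfA}_g^{\Sigma}$ over $\Spec\,\IZ$. It is a proper algebraic stack, smooth (hence normal) over $\IZ$, and it carries a semiabelian scheme $\sG^\Sigma$ extending $\sA$. Its strata are indexed by cusp data, that is, by rank $r$ isotropic sublattices together with cones. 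The toric rank of $\sG^\Sigma$ along a stratum equals $r$, so the locus where the toric rank is $\le 1$ is open.

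We therefore define $\overline{\sfA}_g \subset \overline{\sfA}_g^\Sigma$ as the open substack of points over which $\sG^\Sigma$ has toric rank at most $1$, and set $\sG := \sG^\Sigma|_{\overline{\sfA}_g}$. This open substack involves only rank-$1$ cusps. The rational polyhedral cones attached to a rank-$1$ isotropic lattice are one-dimensional rays, so every admissible $\Sigma$ restricts to the same data there. Hence $\overline{\sfA}_g$ is independent of $\Sigma$. It is Deligne--Mumford because the stabilizers are finite, as they are automorphisms of the degeneration data. With this definition (i) holds by construction, and $\sG$ is normal since it is smooth over the normal base.

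For (ii), the forward direction is immediate: if $f$ exists, then $G \cong f^*\sG$ by uniqueness of semiabelian extensions over a normal base (Raynaud, \cite[I.2.7]{FC90}), so the toric ranks are at most $1$. For the converse, we use the extension property of $\overline{\sfA}_g^\Sigma$ from \cite[IV.6.7]{FC90}. Over a normal $S$, $f_U$ extends to $\overline{\sfA}^\Sigma_g$ as soon as, étale locally at each boundary point, the monodromy pairing of the degeneration lands in a cone of $\Sigma$. When the toric rank is $\le 1$, the relevant cone of positive semidefinite forms on $X \cong \IZ$ is a single ray. Any monodromy pairing therefore lies in a cone of any $\Sigma$, and the extension exists and lands in the open part $\overline{\sfA}_g$. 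The main care is here: the extension criterion is stated étale-locally and for $S$ normal excellent. We handle this by working étale-locally on $S$ and using that $S$ is irreducible and normal, with excellence reducing to the Noetherian case by a limit argument. Uniqueness of $f$ follows because $\overline{\sfA}_g$ is separated and $U$ is dense in the reduced scheme $S$.

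Finally, uniqueness of $\overline{\sfA}_g$ up to unique isomorphism: any two such stacks satisfying (ii) receive maps from each other extending the identity on $\sfA_g$. To get these maps, apply (ii) to étale atlases, which are normal schemes carrying the semiabelian schemes from (i). The composites extend the identity on the dense open $\sfA_g$, and by separatedness together with the uniqueness in (ii) they are the identity. We expect the main obstacle to be justifying the extension criterion in the needed generality (arbitrary normal $S$, not just discrete valuation rings); we rely on \cite[IV.6.7]{FC90} for this.
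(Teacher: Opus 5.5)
Your proposal is correct and follows the same route as the paper: restrict a Faltings--Chai toroidal compactification $\sfA_g^\Sigma$ (for a smooth admissible $\Sigma$) to the open locus of toral rank $\le 1$. Property (ii) then follows from the Faltings--Chai extension criterion, because for toral rank one the positive cone in $B(X_s)\simeq\IR$ is a single ray lying in every admissible $\Sigma$, and uniqueness follows from the universal property (the paper simply cites Olsson for this last step). One small slip: the perfect cone and second Voronoi decompositions are not smooth in general, so you should take a smooth refinement; this is harmless, since the toral-rank-$\le 1$ part is independent of $\Sigma$.
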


\begin{proof}
    The uniqueness statement follows from its characterizing properties (i) and (ii), see for example the explanation in \cite[Rmk.~4.1.7]{Ols08}. For the existence, one can take any smooth $\GL(X)$-admissible cone decomposition $\Sigma$ of $X = \IZ^g$, and take the corresponding toroidal compactification $\sfA_g^\Sigma$. Then one simply defines $\overline{\sfA}_g$ to be the open substack of $\sfA_g^\Sigma$ over which the fibres of the associated semiabelian family have toral rank at most $1$. Then property (ii) simply follows from \cite[IV, Thm.~5.7(5)]{FC90}. The point is that, for any $s$ of toral rank $1$, $B(X_s) \simeq \IR$ and the positive cone is just $\IR_{> 0}$. 
\end{proof}

Similarly, let us denote by $\overline{\sfM}_g$ not the full Deligne-Mumford compactification, but only the open part of $\sfM^{\mathrm{ct}}_g$ plus the open part of the boundary given by proper curves with a single node. Then by (ii) above, the Torelli morphism $\tau :  \sfM_g^{\mathrm{ct}} \to \sfA_g$ extends to $\overline{\sfM}_g \to \overline{\sfA}_g$. We will write $\partial \sfA_{g}$ for the boundary $\overline{\sfA}_{g} \setminus \sfA_{g}$ and similarly write $\partial \sfM^{\mathrm{ct}}_{g}$ for $\overline{\sfM}_{g} \setminus \sfM^{\mathrm{ct}}_{g}$.

There is a natural projection from the universal abelian variety $\sA_{g - 1} \to \p \sfA_g$ that identifies the open boundary stratum as the quotient $\sA_{g - 1}/ [\pm 1]$. Geometrically, for any abelian variety $A$ over a field $k$, extension classes are classified by the dual abelian variety via the natural isomorphism $\mathrm{Ext}^1_k(A, \IG_m) \simeq A^\vee(k)$ (see, e.g., \cite[Ch.~VII, {\S}16, Thm.~6]{Ser88}). Passing to the quotient by the negation map $[-1]$ is necessary to mod out the choice of identification between $\IG_m$ and the toral part of the extension. We will use the map $\sA_{g-1} \to \p \sfA_{g}$ in the proof of \autoref{prop: local boundary deformation diagram}, where it is described in more detail.

Next, we recall the explicit description of the extended Torelli morphism along the boundary. The boundary $\p \sfM_g^\ct$ of $\sfM_g^\ct$ in $\overline{\sfM}_g$ parameterizes stable curves with non-separating nodes and is the image of the clutching morphism $\kappa: \overline{\sfM}_{g - 1, 2} \to \overline{\sfM}_g$. 

This boundary locus is irreducible, and its dense open substack $\p \sfM_g^\ct$ parameterizes irreducible genus $g$ curves with a single node. Restricting the clutching morphism yields a natural map $\sfM_{g -1, 2} \to \p \sfM_g^\ct$ that sends a tuple $([C], P, Q)$ to the nodal curve $C'$ obtained by gluing $P$ and $Q$. This map is a generic double cover that quotients out the ordering of the marked points $P$ and $Q$. 


Combining these local descriptions, the compatibility of the clutching involution on $\sfM_{g-1, 2}$ with the negation map on $\sA_{g-1}$ yields the following commutative diagram:
\begin{equation}
\label{eqn: boundary Torelli}
    \begin{tikzcd}
	{\sfM_{g - 1, 2}} & {\sA_{g - 1}} & {([C], P, Q) } & {(\Jac(C), [P - Q])} \\
	{\p \sfM_g^\ct} & {\p \sfA_{g}} & {[C']} & {[\Jac(C')]}
	\arrow[from=1-1, to=1-2]
	\arrow[from=1-1, to=2-1]
	\arrow[from=1-2, to=2-2]
	\arrow[maps to, from=1-3, to=1-4]
	\arrow[maps to, from=1-3, to=2-3]
	\arrow[maps to, from=1-4, to=2-4]
	\arrow["\tau", from=2-1, to=2-2]
	\arrow[maps to, from=2-3, to=2-4]
\end{tikzcd}
\end{equation}

\subsection{Deformations of Split Log $1$-motifs}
\label{subsec: deform log 1-motif} 

We first recall the description of $1$-motifs and log $1$-motifs that we will use. We then record a criterion for detecting prime-to-$p$ splittings from the abelian part of the degeneration data, and describe the local deformation space at a toral-rank-one boundary point. A locally free \'etale lattice over a scheme $S$ means a sheaf which is \'etale locally isomorphic to $\IZ^r_{S}$ for some $r$. For an abelian scheme $B$, we write $\sP_{B}$ for the universal Poincar\'e torsor associated to $B$.

\begin{definition}
\label{def: 1-motif}
Let $S$ be a scheme. A $1$-motif over $S$ is given by a tuple
$Q=(Y,X,B,c,c^\vee,\tau)$, where $Y$ and $X$ are locally free \'etale lattices, $B$ is an abelian scheme, $c:Y\to B$ and $c^\vee:X\to B^\vee$ are homomorphisms, and
$I:=(c\times c^\vee)^*\sP_B^{-1}$
is equipped with a trivialization
$\tau:1_{Y\times X}\xrightarrow{\sim}I$
of $\IG_m$-biextensions.

A polarization $\lambda=(\lambda_Y,\lambda_B)$ on $Q$ consists of an injective homomorphism $\lambda_Y:Y\to X$ with finite cokernel and a polarization $\lambda_B:B\to B^\vee$ satisfying
$c^\vee\circ\lambda_Y=\lambda_B\circ c$, such that the trivialization induced by $\tau$ on $Y \times Y$ is symmetric. It is principal if both $\lambda_Y$ and $\lambda_B$ are isomorphisms.
\end{definition}

To relate this description to the usual one \cite[Def. 10.1.2]{zbMATH03375638}, let $T:=\Hom(X,\IG_m)$. By the Weil-Barsotti identification, the homomorphism $c^\vee:X\to B^\vee$ classifies a semi-abelian scheme 
\[
0\to T\to Q^{\mathrm{sab}}\to B\to 0.
\]
The trivialization $\tau$ is equivalent to a lift $u:Y\to Q^{\mathrm{sab}}$ of $c$ (cf. \cite[Lem.~1.2.2.3(1)]{Madthesis}). Thus $Q$ may equivalently be written as the complex
$[Y\xrightarrow{u}Q^{\mathrm{sab}}]$.

\begin{definition}
\label{def: log 1-motif}
Let $(S,M_S)$ be a fine saturated log scheme. Recall that $\IG_m^{\log}$ is the sheaf on the Kummer log flat site \cite[Def.~2.13, Cor.~2.22]{Niz08} given by
$\IG_m^{\log}(S')=\Gamma(S',M_{S'}^{\mathrm{gp}})$.
If $E$ is a $\IG_m$-torsor or biextension, write
$E^{\log}:=E\times^{\IG_m}\IG_m^{\log}$ for the extension of its structure group, which can be viewed either as a pushout or as a contracted product.

A log $1$-motif over $(S,M_S)$ is a tuple
$Q=(Y,X,B,c,c^\vee,\tau)$
as in \cref{def: 1-motif}, except that
$\tau:1^{\log}_{Y\times X}\xrightarrow{\sim}I^{\log}$
is a trivialization of $\IG_m^{\log}$-biextensions. Equivalently, it is a complex
$[Y\to(Q^{\mathrm{sab}})^{\log}]$.
\end{definition}

The image of $\tau$ modulo $\IG_m$ determines the monodromy pairing
$N_Q:Y\times X\to M_S^{\mathrm{gp}}/\sO_S^\times$ (cf. \cite[Lem.~1.2.2.3(3)]{Madthesis}). 
For a principally polarized log $1$-motif, the pairing
$(y,y')\mapsto N_Q(y,\lambda_Y(y'))$
is symmetric. We say that $(Q,\lambda)$ is positive if it satisfies the usual positivity condition of \cite[\S1.2.4]{Madthesis}. Over the spectrum of a DVR with its standard log structure, this means that the resulting $\IZ$-valued symmetric pairing on $Y$ is positive definite.

For our purposes, it suffices to take $S=\Spec(R)$, where $R$ is a normal complete local ring, with log structure defined by a divisor $D\subseteq S$. Let $U:=S\smallsetminus D$. We write $\mathsf{DD}_{\mathrm{pol}}$ for the category of polarized abelian schemes over $U$ which extend to semi-abelian schemes over $S$, and $\mathsf{DEG}_{\mathrm{pol}}$ for the category of positive polarized log $1$-motifs over $S$. The degeneration theory of Faltings--Chai gives an equivalence
$\mathsf{DD}_{\mathrm{pol}}\simeq\mathsf{DEG}_{\mathrm{pol}}$; see \cite[Ch.~III]{FC90} and \cite[\S1.2.4]{Madthesis}. It is compatible with prime-to-$p$ polarizations and therefore restricts to
$\mathsf{DD}^{(p)}_{\mathrm{pol}}\simeq\mathsf{DEG}^{(p)}_{\mathrm{pol}}$.
Under this equivalence, the toral rank is the rank of $Y$, and the abelian part of $(Q,\lambda)$ is
$(Q^{\ab},\lambda^{\ab})=(B,\lambda_B)$.

We first record a criterion for detecting a prime-to-$p$ splitting from the abelian part of the associated log $1$-motif.

\begin{lemma}
\label{lem: polarized adjoint}
Let $Q=(Y,X,\sB,c,c^\vee,\tau,\lambda)$ be a principally polarized log $1$-motif over $S$, and write
$\lambda=(\lambda_Y,\lambda_{\sB})$.
Let
$\Psi:(\sB,\lambda_{\sB})\to(\sC,\lambda_{\sC})$
be a similitude of multiplier $d$, and set
$\Psi^\sharp:=\lambda_{\sC}\circ\Psi\circ\lambda_{\sB}^{-1}:\sB^\vee\to\sC^\vee$.
Then
$\Psi^\vee\circ\Psi^\sharp=[d]_{\sB^\vee}$
and
$\Psi^\sharp\circ c^\vee
=\lambda_{\sC}\circ\Psi\circ c\circ\lambda_Y^{-1}$.

Suppose that $\sC=\sB_1\times\sB_2$ is equipped with the product polarization, and write $p_i$ and $p_i^\vee$ for the projections from $\sC$ and $\sC^\vee$, respectively. Then
$p_1^\vee\circ\Psi^\sharp\circ c^\vee
=\lambda_{\sB_1}\circ p_1\circ\Psi\circ c\circ\lambda_Y^{-1}$.
In particular, if $p_1\circ\Psi\circ c(Y)$ is annihilated by an integer $N$, then so is
$p_1^\vee\circ\Psi^\sharp\circ c^\vee(X)$.
\end{lemma}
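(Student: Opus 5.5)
The plan is a direct computation with the defining identities of similitudes and the compatibility between $c$, $c^\vee$ and the polarization. Throughout, $\Psi$ is a similitude of multiplier $d$, so $\Psi^\vee\circ\lambda_{\sC}\circ\Psi=d\,\lambda_{\sB}$. This is the definition of a similitude between polarized abelian schemes.

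For the first identity, I will substitute the definition of $\Psi^\sharp$:
\[
\Psi^\vee\circ\Psi^\sharp=\Psi^\vee\circ\lambda_{\sC}\circ\Psi\circ\lambda_{\sB}^{-1}=d\,\lambda_{\sB}\circ\lambda_{\sB}^{-1}=[d]_{\sB^\vee}.
\]
Here $\lambda_{\sB}$ is invertible because the polarization is principal.

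For the second identity, I will use the polarization compatibility $c^\vee\circ\lambda_Y=\lambda_{\sB}\circ c$ from \cref{def: 1-motif}. Since $\lambda_Y$ is an isomorphism, this gives $c^\vee=\lambda_{\sB}\circ c\circ\lambda_Y^{-1}$. Then
\[
\Psi^\sharp\circ c^\vee=\lambda_{\sC}\circ\Psi\circ\lambda_{\sB}^{-1}\circ\lambda_{\sB}\circ c\circ\lambda_Y^{-1}=\lambda_{\sC}\circ\Psi\circ c\circ\lambda_Y^{-1}.
\]

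For the product case, the product polarization is $\lambda_{\sC}=\lambda_{\sB_1}\times\lambda_{\sB_2}$. The key observation is that the dual projection $p_1^\vee:\sC^\vee=\sB_1^\vee\times\sB_2^\vee\to\sB_1^\vee$ satisfies $p_1^\vee\circ\lambda_{\sC}=\lambda_{\sB_1}\circ p_1$. I will check this on each factor: $\lambda_{\sC}$ is diagonal, and $p_1^\vee$, defined as the projection on the dual product, is the dual of the inclusion $\sB_1\to\sC$. Composing the previous identity with $p_1^\vee$ gives the stated formula.

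The final claim then follows from the formula. If $N\cdot p_1\Psi c(y)=0$ for all $y\in Y$, then for $x\in X$ write $x=\lambda_Y(y)$, which is possible because $\lambda_Y$ is surjective. We get $N\cdot p_1^\vee\Psi^\sharp c^\vee(x)=\lambda_{\sB_1}(N\,p_1\Psi c(y))=0$.

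The only step needing care is the identification of $p_1^\vee$ with the projection and its compatibility with the product polarization. This is a standard fact about duals of products, so I expect no real obstacle. One must also check that all maps are homomorphisms of group schemes over $S$ and of sheaves on the (log) site. That is automatic, since $c,c^\vee$ are homomorphisms of étale sheaves and everything is composition.
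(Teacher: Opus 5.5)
Your proof is correct and matches the paper's argument: the first identity comes from the similitude relation $\Psi^\vee\circ\lambda_{\sC}\circ\Psi=[d]\circ\lambda_{\sB}$, the second from $c^\vee\circ\lambda_Y=\lambda_{\sB}\circ c$, and the product case from $p_1^\vee\circ\lambda_{\sC}=\lambda_{\sB_1}\circ p_1$. You also write out the final torsion claim using that $\lambda_Y$ is surjective, which the paper leaves implicit.
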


\begin{proof}
The first identity follows from
$\Psi^\vee\circ\lambda_{\sC}\circ\Psi=[d]\circ\lambda_{\sB}$,
and the second from
$c^\vee\circ\lambda_Y=\lambda_{\sB}\circ c$.
The assertion for the first factor follows from
$p_1^\vee\circ\lambda_{\sC}=\lambda_{\sB_1}\circ p_1$.
\end{proof}

\begin{proposition}
\label{prop: torsion implies splitting}
Suppose that $S$ is the spectrum of a complete DVR with its standard log structure, and let $(A,\lambda)\in\mathsf{DD}^{(p)}_{\mathrm{pol}}$ be principally polarized, corresponding to
$Q=(Y,X,\sB,c,c^\vee,\tau,\lambda)
\in\mathsf{DEG}^{(p)}_{\mathrm{pol}}$.
Fix an integer $r\geq 0$. Then the following are equivalent.

\begin{enumerate}[label=\upshape{(\alph*)}]
    \item There is a prime-to-$p$ similitude
    $(A,\lambda)\to(A_1,\lambda_1)\times(A_2,\lambda_2)$ in $\mathsf{DD}^{(p)}_{\mathrm{pol}}$ such that $A_1$ has good reduction over $S$, while $A_2$ has semi-abelian reduction of toral rank $r$.

    \item We have $\rank Y=r$, and there is a prime-to-$p$ similitude
    \[
    \Psi:(\sB,\lambda_{\sB})\to
    (\sB_1,\lambda_{\sB_1})\times(\sB_2,\lambda_{\sB_2})
    \]
    such that $p_1\circ\Psi\circ c(Y)$ is annihilated by an integer prime to $p$.
\end{enumerate}
\end{proposition}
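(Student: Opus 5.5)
We use the Faltings--Chai equivalence $\mathsf{DD}^{(p)}_{\mathrm{pol}}\simeq\mathsf{DEG}^{(p)}_{\mathrm{pol}}$ to move between $A$ and its log $1$-motif $Q$. It is compatible with products and with prime-to-$p$ isogenies and polarizations. Toral rank corresponds to $\rank Y$, and the abelian part corresponds to $\sB$. The whole argument takes place inside $\mathsf{DEG}^{(p)}_{\mathrm{pol}}$.

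\textbf{(a) $\Rightarrow$ (b).} Transport the similitude in (a) to a morphism of log $1$-motifs
\[ Q\to Q_1\times Q_2 . \]
Since $A_1$ has good reduction, $Q_1$ has $Y_1=X_1=0$ and is just $\sB_1$. The lattice part of the target is therefore $Y_2$, of rank $r$. A prime-to-$p$ isogeny of log $1$-motifs induces isomorphisms on $Y\otimes\IZ[1/\ell]$ for $\ell\neq p$, so $\rank Y=r$. Take $\Psi$ to be the induced map on abelian parts; it is a prime-to-$p$ similitude. The morphism of motifs forces compatibility with the structure maps $c$. Since $c_1$ is the zero map from the zero lattice, we get $p_1\circ\Psi\circ c=0$ after composing with the map $Y\to Y_1\times Y_2$. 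That map has prime-to-$p$ cokernel index, so $p_1\circ\Psi\circ c(Y)$ is killed by an integer prime to $p$.

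\textbf{(b) $\Rightarrow$ (a).} This is the main construction. Let $N$ be prime to $p$ with $N\cdot p_1\Psi c(Y)=0$. By \cref{lem: polarized adjoint}, also $N\cdot p_1^\vee\Psi^\sharp c^\vee(X)=0$.

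First replace $\Psi$ by $[N]\circ\Psi$ (still a prime-to-$p$ similitude) and replace $Y,X$ by sublattices so that the first components vanish. Concretely, use $NY\subset Y$, $\lambda_Y(NY)\subset X$, or rescale the target. After this we may assume
\[ p_1\Psi c=0 \quad\text{and}\quad p_1^\vee\Psi^\sharp c^\vee=0 \]
on the relevant lattices. Then define
\[ Q_1:=(0,0,\sB_1) \quad\text{and}\quad Q_2:=(Y,X,\sB_2,\,p_2\Psi c,\,p_2^\vee\Psi^\sharp c^\vee,\,\tau_2) . \]

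The main obstacle is producing $\tau_2$ and a morphism $Q\to Q_1\times Q_2$ (or one in the reverse direction). This requires comparing biextension trivializations:
\[ (c\times c^\vee)^*\sP_{\sB}^{-1} \quad\text{versus}\quad \big((p_2\Psi c)\times(p_2^\vee\Psi^\sharp c^\vee)\big)^*\sP_{\sB_2}^{-1} . \]
The plan is to use the functoriality $(\Psi\times 1)^*\sP_{\sC}\simeq(1\times\Psi^\vee)^*\sP_{\sB}$ together with $\Psi^\vee\Psi^\sharp=[d]$. These identify the pullback along the $\sB_2$-part with the $d$-th power of $I$, because $\sP_{\sC}=\sP_{\sB_1}\boxtimes\sP_{\sB_2}$ and the $\sB_1$-component has been killed. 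We then set $\tau_2:=\tau^{\otimes d}$, restricted suitably. This gives a morphism of log $1$-motifs which is multiplication by $d$ on the lattices and $\Psi$ on the abelian part.

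Next we check that the monodromy pairing of $Q_2$ is $d\cdot N_Q$, hence still positive definite. The polarization $(d\lambda_Y,\lambda_{\sB_2})$ is then prime-to-$p$ up to similitude, so $Q_1$ and $Q_2$ lie in $\mathsf{DEG}^{(p)}_{\mathrm{pol}}$. Applying the equivalence gives $A_1$ with good reduction, $A_2$ of toral rank $\rank Y=r$, and the desired prime-to-$p$ similitude $A\to A_1\times A_2$.

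Throughout, all integers introduced ($N$, $d$, lattice indices) are prime to $p$. This is what keeps every step inside the ``$(p)$'' categories.
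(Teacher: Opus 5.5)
Your argument follows the paper's proof. For (b)$\Rightarrow$(a) you replace $\Psi$ by $[N]\circ\Psi$ and use \cref{lem: polarized adjoint} to get $p_1^\vee\circ\Psi^\sharp\circ c^\vee=0$. You set $c_2=p_2\Psi c$ and $c_2^\vee=p_2^\vee\Psi^\sharp c^\vee$, and obtain $\tau_2$ from $\tau^{\otimes d}$ via functoriality of the Poincar\'e biextension and $\Psi^\vee\Psi^\sharp=[d]$. Your (a)$\Rightarrow$(b) is also the paper's argument; there $p_1\circ\Psi\circ c$ is simply $0$, so no index argument is needed. Drop the alternative of shrinking $Y,X$ to sublattices, since that changes $Q$; replacing $\Psi$ by $[N]\circ\Psi$, with $d$ its new multiplier, suffices. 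However, two pieces of bookkeeping in the construction are wrong as written and must be corrected.

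First, the morphism $Q\to Q_1\times Q_2$ is not multiplication by $d$ on both lattices. It must be $\id_Y$ on $Y$, $[d]_X$ on $X$ (which is contravariant), and $\Psi$ on abelian parts. With $[d]$ on $Y$, compatibility with the period maps would require $\Psi\circ c=(0,c_2)\circ[d]$, i.e.\ $(d-1)c_2=0$, which fails in general. With $\id_Y$ it holds by the definition of $c_2$, and on the dual side $\Psi^\vee\circ\Psi^\sharp\circ c^\vee=[d]\circ c^\vee=c^\vee\circ[d]_X$.

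Second, the polarization of $Q_2$ must be $(\lambda_Y,\lambda_{\sB_2})$, not $(d\lambda_Y,\lambda_{\sB_2})$. The defining condition $c_2^\vee\circ\lambda_Y=\lambda_{\sB_2}\circ c_2$ follows from $c^\vee\lambda_Y=\lambda_{\sB}c$, $\Psi^\sharp\lambda_{\sB}=\lambda_{\sC}\Psi$ and $p_2^\vee\lambda_{\sC}=\lambda_{\sB_2}p_2$, where $\lambda_{\sC}$ is the product polarization on $\sC=\sB_1\times\sB_2$. With $d\lambda_Y$ it would instead force $(d-1)\lambda_{\sB_2}c_2=0$.

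With these choices the monodromy pairing of $Q_2$ is $d\cdot N_Q$, which is positive, as you note. Pulling back $(\lambda_Y,\lambda_{\sC})$ along $(\id_Y,[d]_X,\Psi)$ gives $(d\lambda_Y,d\lambda_{\sB})=d\lambda$. So the morphism is a prime-to-$p$ similitude of multiplier $d$, as required.
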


\begin{proof}
Assume (b). After replacing $\Psi$ by $[N]\circ\Psi$ for a suitable integer $N$ prime to $p$, we may assume that
$p_1\circ\Psi\circ c=0$.
Set $\sC:=\sB_1\times\sB_2$, let $\lambda_{\sC}$ be the product polarization, and let
$\Psi^\sharp:=\lambda_{\sC}\circ\Psi\circ\lambda_{\sB}^{-1}$.
By \cref{lem: polarized adjoint}, we also have
$p_1^\vee\circ\Psi^\sharp\circ c^\vee=0$.

Define
$c_2:=p_2\circ\Psi\circ c$
and
$c_2^\vee:=p_2^\vee\circ\Psi^\sharp\circ c^\vee$.
Then
$c_2^\vee\circ\lambda_Y=\lambda_{\sB_2}\circ c_2$.
If $d$ is the multiplier of the modified $\Psi$, functoriality of the Poincar\'e biextension gives
\[
(c_2\times c_2^\vee)^*\sP_{\sB_2}^{-1}
\simeq
\bigl((c\times c^\vee)^*\sP_{\sB}^{-1}\bigr)^{\otimes d}.
\]
Thus $\tau^{\otimes d}$ induces a log trivialization $\tau_2$ on the left, and
\[
Q_2:=(Y,X,\sB_2,c_2,c_2^\vee,\tau_2,
(\lambda_Y,\lambda_{\sB_2}))
\]
is a positive polarized log $1$-motif. Set
$Q_1:=(0,0,\sB_1,0,0,1^{\log},\lambda_{\sB_1})$.

The maps $\mathrm{id}_Y$, $\Psi$, and
$[d]_X:X_{Q_1\times Q_2}\to X_Q$
define a prime-to-$p$ similitude
$Q\to Q_1\times Q_2$.
Compatibility with the dual period maps follows from
$\Psi^\vee\circ\Psi^\sharp\circ c^\vee
=[d]\circ c^\vee
=c^\vee\circ[d]_X$,
while compatibility with the trivializations follows from the construction of $\tau_2$.
Under the equivalence
$\mathsf{DEG}^{(p)}_{\mathrm{pol}}
\simeq\mathsf{DD}^{(p)}_{\mathrm{pol}}$,
the factors $Q_1$ and $Q_2$ give (a): the first has good reduction, while the second has toral rank $\rank Y=r$.

Conversely, assume (a), and let $Q_i$ be the polarized log $1$-motif attached to the chosen extension of $A_i$. The lattice parts of $Q_1$ vanish, while those of $Q_2$ have rank $r$. After multiplying the given similitude by an integer prime to $p$, it induces a prime-to-$p$ isogeny
$\Phi:Q\to Q_1\times Q_2$.
The induced map on the $Y$-lattices has finite kernel and cokernel, and hence
$\rank Y=r$.
If
$\Psi:\sB\to\sB_1\times\sB_2$
is the induced map on the abelian parts, compatibility with the period maps gives
$\Psi\circ c=(0,c_2)\circ\Phi_Y$.
Therefore $p_1\circ\Psi\circ c=0$, proving (b).
\end{proof}

We conclude by relating these deformation spaces to the local geometry of the boundary. But first let us remark on our convention on talking about the ``formal completion'' of a stack at a point. 




\begin{convention}
    Let $\sfX$ be an algebraic stack locally of finite type over a field $k$, and let $x:\Spec(k)\to\sfX$. For an Artinian local $k$-algebra $R$ with residue field $k$, let $\Def_x(R)$ be the groupoid of pairs $(\xi,\alpha)$, where $\xi:\Spec(R)\to\sfX$ and $\alpha:\xi|_{\Spec(k)}\xRightarrow{\sim}x$ is a framing. This is the deformation category associated with $x$ in \cite[\href{https://stacks.math.columbia.edu/tag/0DQW}{Tag 0DQW}]{stacks-project}; it admits a versal complete Noetherian local ring \cite[\href{https://stacks.math.columbia.edu/tag/0DQV}{Tag 0DQV}, \href{https://stacks.math.columbia.edu/tag/0DQX}{Tag 0DQX}]{stacks-project}. Suppose that $\sfX$ is Deligne--Mumford, and choose a representable étale morphism $U\to\sfX$, a point $u\in U(k)$, and a $2$-isomorphism between the image of $u$ and $x$. Formal étaleness identifies $\Def_x$ with the ordinary deformation functor of $u$. The formal scheme $U^\wedge$, together with its universal framed object, is independent of these choices up to unique isomorphism and will be called the universal framed deformation space of $x$. We shall write it as $\sfX^\wedge_x$. 
\end{convention}

\begin{remark}
For $\sfX=\sfA_{g,k}$, a point $x\in\sfA_{g,k}(k)$ corresponds to a principally polarized abelian variety $(A,\lambda)$ over $k$. In this case, $\Def_x(R)$ is the usual deformation functor of $(A,\lambda)$: its objects are principally polarized abelian schemes $(A_R,\lambda_R)$ over $R$ equipped with an isomorphism $(A_R,\lambda_R)\otimes_R k\xrightarrow{\sim}(A,\lambda)$. Thus the universal framed deformation space of $x$ is precisely the universal deformation space of the principally polarized abelian variety $(A,\lambda)$.
\end{remark}

\begin{proposition}
\label{prop: local boundary deformation diagram}
Fix a prime $p$, and let
$x:\Spec\, k\to\p\sfA_{g,k}$
be an algebro-geometric point with residue field $k$ of characteristic $p$. Let
$x^{\ab}\in\sfA_{g-1,k}$
be its image, and let $(Q,\lambda)$ be the principally polarized log $1$-motif of toral rank one corresponding to $x$. Define
\[
\p\Def(Q,\lambda)
:=
\Def(Q,\lambda)
\times_{\overline{\sfA}_{g,k} }
(\p\sfA_{g,k})^\wedge_x.
\]
Then there are natural identifications and morphisms fitting into the commutative diagram
\begin{equation}
\label{eqn: deformation diagram}
\begin{tikzcd}[column sep=large]
{\p\Def(Q,\lambda)}
    \arrow[r, hook, shift left=1.2ex, "i_Q"]
    \arrow[rr, bend left=22, "\pi_{\ab}"', swap]
    \arrow[d, "\sim"'] &
{\Def(Q,\lambda)}
    \arrow[l, shift left=1.2ex, "\rho_Q"]
    \arrow[r, "\pi_Q"]
    \arrow[d, "\sim"] &
{\Def(Q^{\ab},\lambda^{\ab})}
    \arrow[d, "\sim"] \\
{(\p\sfA_{g,k})^\wedge_{x}}
    \arrow[r, hook, shift left=1.2ex, "i_x"]
    \arrow[rr, bend right=18] &
{(\overline{\sfA}_{g,k})^\wedge_x}
    \arrow[l, shift left=1.2ex, "\rho_x"]
    \arrow[r] &
{(\sfA_{g-1,k})^\wedge_{x^{\ab}}.}
\end{tikzcd}
\end{equation}
Here $\rho_Q$ and $\rho_x$ are retractions of $i_Q$ and $i_x$, respectively, and
$\pi_Q=\pi_{\ab}\circ\rho_Q$.

After base changing along the finite \'etale morphism of stacks
$\sA_{g-1,k} \to \p\sfA_{g,k}$ and completing at a chosen lift
$\wt{x}=(Q^{\ab},b)$ of $x$, set
\[
\p\Def_{\wt{x}}(Q,\lambda)
:=
\p\Def(Q,\lambda)
\times_{(\p\sfA_{g,k})^\wedge_x}
(\sA_{g-1,k})^\wedge_{\wt{x}}.
\]
There is a natural identification
\[
\p\Def_{\wt{x}}(Q,\lambda)
\simeq
\sB^\wedge_{(Q^{\ab},b)},
\]
where $\sB\to\Def(Q^{\ab},\lambda^{\ab})$ is the universal abelian scheme. If generators are chosen so that
$Y=X=\IZ$ and $\lambda_Y=\id$, then $b=c(1)$. Under this identification, the morphism
\[
\varpi:\Def(Q,\lambda)
\xrightarrow{\rho_Q}
\p\Def(Q,\lambda)
\simeq\sB^\wedge_{(Q^{\ab},b)}
\to\sB
\]
sends
$(Y,X,B',c',c'^\vee,\tau',\lambda')$
to $(B',c'(1))$ and is formally smooth of relative dimension one.
\end{proposition}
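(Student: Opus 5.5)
We would derive everything from the Faltings--Chai equivalence $\mathsf{DD}^{(p)}_{\mathrm{pol}}\simeq\mathsf{DEG}^{(p)}_{\mathrm{pol}}$ together with the deformation theory of log $1$-motifs, following Madapusi's treatment of toroidal boundary charts. The plan is first to identify the formal completion $(\overline{\sfA}_{g,k})^\wedge_x$ with the deformation space $\Def(Q,\lambda)$. Since $\overline{\sfA}_g$ is the toral-rank-$\le 1$ open of a smooth toroidal compactification (\cref{Mumfcompprop}), Faltings--Chai's description of the formal completion along a boundary stratum applies: it is the completion of a relatively affine torus embedding over the mixed Shimura stratum $\sA_{g-1}$. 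In toral rank one the cone is $\IR_{>0}$, so the torus embedding is a $\IG_m\subset\IA^1$ bundle, with the boundary cut out by the coordinate $q$ on $\IA^1$. Through this dictionary one reads off the middle and right vertical isomorphisms, with $\pi_Q$ being the forgetful map $Q\mapsto Q^{\ab}=(B,\lambda_B)$. The left vertical isomorphism is then obtained by base change along $i_x$, using the definition of $\p\Def(Q,\lambda)$.

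Next we construct the retractions. In the chart $\sB\times\widehat{\IA}^1$ (more precisely, a $\IG_m^{\log}$-torsor over $\sB$ completed along the zero section), we define $\rho_x$ as the projection that sets $q=0$. On the motif side, $\rho_Q$ sends $(Y,X,B',c',c'^\vee,\tau',\lambda')$ to the same tuple with $\tau'$ replaced by the canonical degenerate trivialization. This works because in rank one, with $Y=X=\IZ$, $\lambda_Y=\mathrm{id}$ and $c'^\vee=\lambda_{B'}\circ c'$, the data $(B',c'(1))$ determines everything except the trivialization $\tau'$ of the $\IG_m^{\log}$-torsor $(c'\times c'^\vee)^*\sP^{-1}_{B'}$ restricted to $(1,1)$. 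Commutativity of the diagram and the identity $\pi_Q=\pi_{\ab}\circ\rho_Q$ then follow directly from these definitions.

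For the boundary identification, recall that $\sA_{g-1}\to\p\sfA_g$ is the quotient by $[\pm1]$. This map is finite étale away from characteristic $2$ issues, and those are handled by rigidifying at the chosen lift $\wt x$. Choosing the generator $1\in Y$ kills the $[\pm1]$ ambiguity. A deformation in $\p\Def_{\wt x}$ is then equivalent to a deformation of the pair $(B',c'(1))$, since on the boundary $\tau'$ is forced: its monodromy is nontrivial and its unit part is fixed up to $\IG_m$, which is absorbed into the boundary chart. This gives $\p\Def_{\wt x}(Q,\lambda)\simeq\sB^\wedge_{(Q^{\ab},b)}$ with $b=c(1)$.

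Finally, formal smoothness of $\varpi$ of relative dimension one follows because, over $\sB^\wedge$, $\Def(Q,\lambda)$ is the formal completion of the line bundle (the $\IG_m$-torsor $\sP^{-1}$ pulled back along $(c'(1),\lambda c'(1))$, extended to $\IA^1$) along its zero section, which is $\sB^\wedge[[q]]$. We expect the main obstacle to be making the identification of the first step precise at the level of stacks and in all characteristics $p$ (including $p=2$), namely checking that Faltings--Chai's formal boundary chart agrees with the log $1$-motif deformation functor. For this we would try to quote \cite[\S1.2.4]{Madthesis} and \cite[IV]{FC90} directly rather than reprove it.
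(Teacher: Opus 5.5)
Your proposal is correct and follows essentially the same route as the paper. The paper identifies $(\overline{\sfA}_{g,k})^\wedge_x$ with $\Def(Q,\lambda)$ via \cite[Thm.~4.2.1.3]{Madthesis}, together with the successive description $R^{\ab}\to R^{\mathrm{sab}}\to$ (completed toric embedding) of \cite[\S3.1.2]{Madthesis}; this is the correct reference rather than \cite[\S1.2.4]{Madthesis}. Like you, it obtains the retractions from the line-bundle structure of the rank-one chart $\IG_m\subset\IA^1$ in \cite[Ch.~IV, Prop.~5.9]{FC90}, identifies the boundary with $\sB^\wedge_{(Q^{\ab},b)}$ via $c^\vee=\lambda_B\circ c$ and $b=c(1)$, and deduces formal smoothness of relative dimension one from completing that toric embedding.

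One small correction: $\sA_{g-1}\to\p\sfA_g$ is a $\{\pm1\}$-torsor, given by the choice of a generator of the rank-one lattice. It is therefore finite \'etale in every characteristic, including $2$, so no separate characteristic-$2$ handling is needed.
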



\begin{proof}
Madapusi defines natural forgetful morphisms of deformation functors
\[
\Def(Q,\lambda)
\to
\Def(Q^{\mathrm{sab}},\lambda^{\ab})
\to
\Def(Q^{\ab},\lambda^{\ab});
\]
see \cite[Def.~3.1.1.6]{Madthesis}. By
\cite[Lem.~3.1.1.5]{Madthesis}, deforming the polarized semi-abelian part is equivalent to deforming the tuple
$(B,\lambda_B,c,c^\vee)$. In \cite[\S3.1.2]{Madthesis}, the three deformation problems are represented successively: the abelian deformation space is represented by $R^{\ab}$, the semi-abelian deformation space by a formally smooth algebra $R^{\mathrm{sab}}$ over $R^{\ab}$, and the deformation space of the log $1$-motif by a completed toric embedding over $R^{\mathrm{sab}}$. The resulting maps of representing spaces are therefore compatible with the preceding forgetful morphisms.

On the other hand, \cite[Thm.~4.2.1.3(1)--(3)]{Madthesis} identifies the formal completion of the toroidal compactification at $x$ with the universal deformation space of $(Q,\lambda)$, and identifies the universal semi-abelian scheme on the toroidal chart with the one obtained from the tautological log $1$-motif. The vertical identifications in \eqref{eqn: deformation diagram} are induced by these universal objects, so their compatibility with the projections to the semi-abelian and abelian parts follows from the functoriality just recalled.

It remains to describe the retractions. In the notation of
\cite[Ch.~IV, Prop.~5.9]{FC90}, the boundary $\p\sfA_g$ is the stratum $D_{g-1}$, for which the quotient lattice $X_\xi$ has rank one. Faltings--Chai construct an $E_\xi$-torsor $\Xi_\xi$ and its contracted product
$\overline{\Xi}_\xi=\Xi_\xi\times^{E_\xi}\overline{E}_\xi$.
Since $\rank\, X_\xi=1$, one has
$E_\xi=B(X_\xi)\otimes\IG_m\simeq\IG_m$ as sheaves of groups, and the relevant torus embedding is
$\IG_m\subset\IA^1$. Hence $\overline{\Xi}_\xi$ is the total space of the line bundle associated with $\Xi_\xi$, and its closed orbit is the zero section. The formal scheme $S_\xi$ occurring in \cite[Ch.~IV, Prop.~5.9]{FC90} is its completion along this section. The projection of the line bundle therefore gives $\rho_x$, and, through the preceding identification with the deformation space, gives $\rho_Q$.

After pulling back to $\sA_{g-1,k}$, the boundary chart is no longer divided by
$\GL(X_\xi)=\{\pm1\}$. In the rank-one principally polarized case, the description of $R^{\mathrm{sab}}$ in \cite[\S3.1.2]{Madthesis} identifies it with the completion of the universal abelian scheme at $(Q^{\ab},b)$: the principal polarization identifies $c^\vee$ with $c$, and a homomorphism $c:Y=\IZ\to B$ is determined by $b=c(1)$. This gives
$\p\Def(Q,\lambda)\simeq\sB^\wedge_{(Q^{\ab},b)}$
and the stated description of $\varpi$.

Finally, the full deformation space is obtained from $R^{\mathrm{sab}}$ by completing the rank-one toric embedding
$\IG_m\subset\IA^1$. It is therefore formally smooth of relative dimension one over $\p\Def(Q,\lambda)$, which proves the final assertion.
\end{proof}

\subsection{Proof of the Elliptic Factor Case}

\begin{lemma}
    \label{Hurwitzspacelem}
    Assume that $g \ge 1$, $n \ge 2$ and $p \nmid n$. Then the locus $\sH^{(n)}_g \subseteq \sfM_{g, 2, \IF_p}$ defined by $n(P - Q) \equiv 0$ has pure dimension $2g - 1$.
\end{lemma}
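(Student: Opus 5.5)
We want to show that $\sH^{(n)}_g$, the locus in $\sfM_{g,2,\IF_p}$ where $n(P-Q)\equiv 0$, has pure dimension $2g-1$. The plan is to bound the dimension of every component from below by a local-equation count, and from above by a Hurwitz-space parameter count.

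\emph{Lower bound.} Consider the morphism $\varphi : \sfM_{g,2,\IF_p} \to \sA_{g,\IF_p}$, $([C],P,Q)\mapsto (J(C),[P-Q])$, from \eqref{eqn: boundary Torelli}. Then $\sH^{(n)}_g$ is the pullback of the closed substack $\sA_g[n]\subset\sA_g$ of $n$-torsion points.

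Since $p\nmid n$, the multiplication map $[n]$ is étale, so $\sA_g[n]$ is finite étale over $\sfA_g$. Hence $\sA_g[n]$ is a local complete intersection of codimension $g$ in the smooth stack $\sA_g$: étale-locally it is cut out by the $g$ coordinates of the section $[n]\circ$(universal point) near the zero section.

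Pulling these $g$ equations back along $\varphi$, every component of $\sH^{(n)}_g$ has dimension at least $\dim\sfM_{g,2}-g=(3g-1)-g=2g-1$. Nonemptiness is not needed for the statement; the locus is nonempty anyway, for instance via degree-$n$ covers of $\IP^1$.

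\emph{Upper bound.} A point $([C],P,Q)$ of $\sH^{(n)}_g$ with $P\neq Q$ gives a rational function $f$ with $\mathrm{div}(f)=nP-nQ$, hence a degree-$n$ map $f : C\to\IP^1$ totally ramified over $0$ and $\infty$. The function $f$ is determined up to scaling, and the degree of $f$ is exactly $n$, although $n$ need not be the minimal order of $P-Q$.

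Because $p\nmid n$, ramification at $P$ and $Q$ is tame. The remaining ramification may be wild, so I would parametrize such covers by a Hurwitz-type space and argue as follows.

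The locus in question maps to the space of pairs $(C,f)$ with $f$ of degree $n$ and $\mathrm{div} f = nP-nQ$, modulo the $\IG_m$-scaling on $\IP^1$. The deformation space of such a map, with $0$ and $\infty$ kept totally ramified, is governed by $H^0(C,N_f)$, where $N_f$ is the normal sheaf of $f$ relative to the condition of total ramification at the two points. The torsion-free part of $N_f$ has degree
\[
\deg f^*T_{\IP^1}-\deg K\text{-correction}=2g-2+2n-2(n-1)=2g .
\]
This quantity is the number of "free" branch points in the tame case. After subtracting $1$ for the $\IG_m$-scaling, we obtain dimension at most $2g-1$.

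In the tame case this is the classical count, since the branch points other than $0$ and $\infty$ give $r=2g$ parameters. In general I would bound $h^0$ of the normal sheaf by its degree plus a constant and use $H^1$-vanishing, following the standard deformation theory of maps with prescribed ramification. Alternatively, to avoid wild ramification, I would use the flatness result \cref{prop: main prop mixed} in the style of this paper: compare with characteristic $0$, where Riemann existence gives exactly $2g-1$, and use that $\sA_g[n]$ is flat over $\IZ[1/n]$ so that fibre dimensions behave well.

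\emph{Diagonal and main obstacle.} Points with $P=Q$ lie on the diagonal, and $n\cdot 0=0$ holds there automatically, so the diagonal of dimension $3g-2$ lies inside $\varphi^{-1}(\sA_g[n])$. It must therefore be interpreted as removed: I would read the lemma's locus as the off-diagonal part, and note that the zero section is an open and closed component of $\sA_g[n]$ since $p\nmid n$.

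The main obstacle is the upper bound in the presence of wild ramification away from $P$ and $Q$, i.e., showing that no component exceeds dimension $2g-1$ in characteristic $p$. I would try first to prove it via tangent spaces. At a point of the locus, the tangent space to $\sH^{(n)}_g$ consists of first-order deformations of $(C,f)$ with total ramification preserved, and the normal-sheaf degree bound gives $\dim\le 2g-1$ uniformly in $p$ coprime to $n$. This works because the obstruction-free part is controlled by $\deg$ of a line bundle of degree $2g$ on $C$ with no $h^1$ when $n\ge 2$.
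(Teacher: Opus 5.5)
Your lower bound is correct, and the paper does not have it. Since $p\nmid n$, $\sA_g[n]$ is finite \'etale over $\sfA_g$, hence a regular closed immersion of codimension $g$ in $\sA_g$. Pulling back to the smooth $(3g-1)$-dimensional $\sfM_{g,2,\IF_p}$ therefore forces every component of $\sH^{(n)}_g$ to have dimension $\ge 2g-1$. Your treatment of the diagonal is also fine.

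The paper's proof is just the classical Hurwitz count: $2g$ residual simple branch points modulo $\IG_m$. That count takes purity for granted and ignores wild residual ramification, so your worry is legitimate. For example, with $p=2$ and $n=3$, every residual ramification point has different exponent $\ge 2$, so there are at most $g$ residual branch points.

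The gap is in the upper bound, which is where the content lies. As written it does not work. For separable $f$ the map $T_C\to f^*T_{\IP^1}$ is injective, so $N_f$ is a skyscraper sheaf and has no ``torsion-free part''. If the controlling object really were a line bundle of degree $2g$ with $h^1=0$, Riemann--Roch would give $h^0=g+1$, not $2g$. Bounding $h^0$ by ``degree plus a constant'' cannot give the sharp $2g-1$ either.

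The flatness fallback also fails. Fibre dimension is only upper semicontinuous, so the characteristic-$0$ count says nothing about the $\IF_p$-fibres. Moreover, \cref{prop: main prop mixed} only applies away from $E$ and for $p$ outside a finite set, whereas the lemma is used for every $p$ in \cref{thm: elliptic case}.

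Here is what the upper bound needs.
\begin{itemize}
\item \emph{Global count.} Deformations of $(C,f)$ with fixed target $\IP^1$ are unobstructed, with tangent space $H^0(N_f)=\bigoplus_x N_{f,x}$. The sheaf $N_f=f^*T_{\IP^1}|_R$ has length $\deg R=2g-2+2n$, by Riemann--Hurwitz with the different; this holds with wild ramification.
\item \emph{Local condition at $P$.} Since $p\nmid n$, choose a coordinate with $f=z^n$, so $N_{f,P}=k[[z]]/(z^{n-1})$. Reparametrizations change a first-order deformation $z^n+\epsilon h$ by multiples of $f'=nz^{n-1}$. The deformation has the form $(1+\epsilon v)(z-\epsilon a)^n$ if and only if $h\in(z^{n-1})$, i.e.\ iff its class in $N_{f,P}$ vanishes. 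Invertibility of $n$ also makes $P_\epsilon$ determined by $f_\epsilon$. The same holds at $Q$, using $1/f$.
\item \emph{Conclusion.} The tangent space of the space of $(C,P,Q,f)$ with $\mathrm{div} f=nP-nQ$ is $\bigoplus_{x\neq P,Q}N_{f,x}$. It has dimension exactly $2g$ at every point, however wild the ramification elsewhere. This space is a $\IG_m$-torsor over $\sH^{(n)}_g$ (the choice of $f$), so every tangent space of $\sH^{(n)}_g$ has dimension $2g-1$.
\end{itemize}
Combined with your lower bound, this gives pure dimension $2g-1$, and in fact smoothness, for every $p\nmid n$. That is a more robust proof than the paper's.
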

\begin{proof}
    The condition $n(P - Q) \equiv 0$ is equivalent to the existence of a degree $n$ cover $f: C \to \IP^1$ totally ramified over $0$ and $\infty$, with $f(P)=0$ and $f(Q)=\infty$. By Riemann-Hurwitz, the residual branch locus consists of $2g$ simple branch points in $\IP^1 \smallsetminus \{0, \infty\}$. Since $\sH^{(n)}_g$ is locally a Hurwitz space, its dimension is the number of residual branch points modulo $\mathrm{Aut}(\IP^1, \{0, \infty\}) \cong \IG_m$. Hence, $\dim \sH^{(n)}_g = 2g - 1$.
\end{proof}

\begin{remark}
    In characteristic $p > 0$, the assumption $p \nmid n$ is essential. If $p \mid n$, the cover $f$ is wildly ramified at $P$ and $Q$, so their contributions to the different strictly exceed $n-1$. By Riemann-Hurwitz, the degree of the residual ramification drops. 
\end{remark}

\begin{remark}
\label{rmk: C^2 not p-special}
    We will use the following simple observation: let $C$ be a smooth proper curve of genus $\ge 2$ over a field $k$. The image of the natural map $C^2 \to \Jac(C)$ defined by $(P, Q) \mapsto [P - Q]$ does not lie in any $p$-special subvariety. This is evident either because the origin lies in the image, or because for any fixed $Q \in C$, the restriction $C \simeq (C \times \{ Q \}) \to \Jac(C)$ is precisely the Abel-Jacobi map with base point $Q$. 
\end{remark}

\begin{theorem}
\label{thm: elliptic case}
    For every prime $p$, the locus $\mathrm{SP}_{\bg, p}$ for $\bg = \{1, g - 1\}$ is Zariski dense in $\sfM_{g, \IF_p}$; moreover, every irreducible component of $\mathrm{SP}_{\bg, p}$ is of codimension $\delta(\bg) = g - 1$. 
\end{theorem}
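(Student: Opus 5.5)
We argue by induction on $g$, following the boundary degeneration strategy of the introduction. We work in the partial compactification $\overline{\sfM}_{g,\IF_p}$ and its boundary divisor $\p\sfM^\ct_{g,\IF_p}$. For $g=1$ the statement is trivial, and for $g=2$ it is classical: split abelian surfaces are dense in $\sfA_2$, which $\sfM_2$ dominates. So assume $g\ge 3$.

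\emph{Step 1: the codimension bound.} Locally on $\sfM_{g,\IF_p}$, prime-to-$p$ Hecke translates $Y_\gamma$ of $\sfA_1\times\sfA_{g-1}$ are cut out in $\sfA_g$ by exactly $\delta(\bg)=g-1$ equations. Hence every irreducible component of $\tau^*Y_\gamma$ has codimension at most $g-1$ in $\sfM_{g,\IF_p}$. The converse inequality (no component of larger dimension) is needed as well. For this we consider the closure in $\overline{\sfA}_g$ and argue over all of $\overline{\sfM}_g$, so that every component of the closure either meets the boundary or is proper in $\sfM^\ct_g$. I expect to control excess components by the same boundary dimension count below, and this is the step I am least sure about.

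\emph{Step 2: density on the boundary.} Pull back to $\sfM_{g-1,2,\IF_p}$ via the clutching map and diagram \eqref{eqn: boundary Torelli}. Boundary strata of $\overline{Y}_\gamma$ of type (L) correspond, by \cref{prop: torsion implies splitting} with $r=1$ and $A_1$ an elliptic curve with good reduction, to $(C,P,Q)$ such that $J(C)$ has a prime-to-$p$ splitting $\Psi: J(C)\to E\times B_2$ with $p_1\Psi([P-Q])$ prime-to-$p$ torsion.

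Use induction: $\mathrm{SP}_{\{1,g-2\},p}$ is dense in $\sfM_{g-1,\IF_p}$. At such a curve $C$, the image of $C^2\to J(C)\to E$ is all of $E$, and by \cref{rmk: C^2 not p-special} it is not contained in a $p$-special subvariety. By \cref{thm: finite MM}, applied to the surface $C^2\to E$ (a curve over a finite field of definition), the pairs $(P,Q)$ with $p_1\Psi([P-Q])$ prime-to-$p$ torsion are Zariski dense in $C^2$. Since such $C$ are dense, these boundary points are dense in $\sfM_{g-1,2,\IF_p}$. Each of them lies in a divisor of $C^2$, namely a preimage of a torsion point of $E$, so the boundary locus has codimension exactly $1$ in $\sfM_{g-1,2}$. (The base case $g-1=1$ is handled directly by \cref{Hurwitzspacelem}: the locus $n(P-Q)\equiv0$ has pure dimension $2g-1$ and is dense, by letting $n$ vary over integers prime to $p$.)

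\emph{Step 3: deforming off the boundary.} Take a boundary point $x$ in this dense set. By \cref{prop: local boundary deformation diagram}, $\Def(Q,\lambda)$ is formally smooth of relative dimension one over $\sB^\wedge$. By \cref{prop: torsion implies splitting}, the splitting condition on the full deformation space is pulled back along $\varpi$ from a locus in $\sB$ of codimension $\delta(\bg)-1+1=g-1$: codimension $g-2$ from the abelian splitting, and $1$ from torsion of $p_1\Psi c(1)$. It is therefore not contained in the boundary divisor. Consequently the closure of $\tau^*\overline{Y}_\gamma$ near $x$ has codimension $\le g-1$ in $\overline{\sfM}_g$, while its intersection with the boundary has codimension exactly $g-1$ in the boundary, hence codimension $g$ in $\overline{\sfM}_g$. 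So a component through $x$ of dimension $\ge \dim\overline{\sfM}_g-(g-1)$ is not contained in the boundary and meets the interior. Its interior part consists of points of $\mathrm{SP}_{\bg,p}$, since splitting in the interior is detected on the generic fiber.

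Since such components pass through a dense set of boundary points, their union is dense in $\overline{\sfM}_g$. Otherwise its closure would be a proper closed subset containing a dense subset of a divisor, hence containing the whole boundary divisor, and components of codimension $g-1\ge2$ through each boundary point would be forced inside a proper closed set of dimension at most $\dim-1$ near the boundary. This is consistent only if the dimension count above fails, so the count gives the contradiction. This last bookkeeping, making "a dense set of excess-free boundary points with components emanating into the interior" yield density rather than merely nonemptiness, is the main obstacle. I would handle it by a dimension count on the Zariski closure near a generic boundary point. No $p\gg0$ is needed because the elliptic factor involves no atypical locus: only the torsion condition, which is exactly codimension one.
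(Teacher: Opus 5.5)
\textbf{There is a genuine gap, and it is the one you flag in Step~1. It undermines density as well, not just the codimension clause.}

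In Step~3, the branches through $x=([C],P,Q)$ leave the boundary only because the boundary part of the locus has dimension at most $2g-3$. That boundary part consists of the triples $(C_t,P_t,Q_t)$ such that $t$ lies in the deformation locus of $\Psi$ inside $(\sfM_{g-1})^\wedge_{[C]}$ and $p_1\Psi_t[P_t-Q_t]\in\sD$. The torsion condition cuts out a curve in each $C_t^2$, so it has codimension one in $C^2$, not in $\sfM_{g-1,2}$ as Step~2 says. Hence you need the deformation locus of $\Psi$ to have dimension at most $2g-4$. Equivalently, every component through $[C]$ of the corresponding $\{1,g-2\}$-split locus in $\sfM_{g-1}$ must have codimension exactly $g-2$.

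The lci property gives only the opposite inequality. The boundary count cannot supply it either: it presupposes the same statement one genus lower, and it says nothing about excess components whose closures miss the good boundary points. So the induction does not close.

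This is not bookkeeping. Your route is the paper's argument for general $\bg$ (\cref{thm: main density2} with $g_k=g-1$, i.e.\ boundary strata of type (L)). There, exactly this input is supplied by \cref{cor: intersect with torsion}, at the price of removing an atypical locus and taking $p\gg0$. Your justification for arbitrary $p$ (``the elliptic factor involves no atypical locus'') therefore does not apply to your route: the $\{1,g-2\}$ splitting condition on $J(C)$ is precisely where excess intersections can occur. A smaller point: \cref{prop: torsion implies splitting} needs a \emph{polarized} prime-to-$p$ similitude $\Psi$, so your induction would have to carry polarized splittings.

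\textbf{The missing idea is to let the elliptic factor be the one that degenerates (type (R) strata).} Apply \cref{prop: torsion implies splitting} with $\Psi=\mathrm{id}$, $\sB_1=\sB$, $\sB_2=0$. If $c(1)=[P-Q]$ stays $n$-torsion with $p\nmid n$, the generic fibre is prime-to-$p$ isogenous to a deformation of $J(C)$ times a Tate curve, which is a splitting of type $\{1,g-1\}$. Now nothing is imposed on the abelian part. The locus $\varpi^{-1}(\sB[n])$ has codimension $g-1$, so every branch of its pullback to $\overline{\sfM}_g$ through $x$ has dimension $\ge 2g-2$. Its boundary part lies in $\{n(P-Q)\equiv 0\}\subset\sfM_{g-1,2}$, which has dimension $2(g-1)-1=2g-3$ by \cref{Hurwitzspacelem}. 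This works for every curve $C$ of genus $g-1$, with density of such $(P,Q)$ in $C^2$ coming from \cref{thm: finite MM} and \cref{rmk: C^2 not p-special}. It needs no induction, no information about split loci in genus $g-1$, and no restriction on $p$.

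Alternatively, your route can be rescued by weakening the inductive hypothesis to ``the components of expected codimension are dense''. At a general point of such a component the boundary part has dimension $2g-3$. Since the boundary is Cartier, every branch through $x$ then has dimension exactly $2g-2$, so this weaker statement propagates. The codimension clause of the theorem would still need a separate argument.

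The final density bookkeeping you worry about is harmless. If the closure of the split locus were a proper closed subset, it would contain the irreducible boundary divisor. It would then coincide with that divisor near a general good boundary point, contradicting the branch through that point which is not contained in the boundary.
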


\begin{proof}
    Let $C$ be any curve of genus $g - 1$ over $\bar{\IF}_p$.  By \cref{thm: finite MM} and \cref{rmk: C^2 not p-special}, the set $\{ (P, Q) \in C^2 : n(P - Q) \equiv 0 \textit{ for some } n \not\in p \IN \}$ is Zariski dense on $C^2$. Let us fix such a pair $(P, Q)$ with $P \neq Q$, so that $([C], P, Q)$ is a point on $\sfM_{g - 1, 2}(\overline{\mathbb{F}}_{p})$. Let $[C'] \in \p \sfM_{g}^\ct(\overline{\mathbb{F}}_{p})$ be the corresponding point obtained by clutching $P, Q$. It suffices to argue that $[C']$ can be deformed to a smooth genus $g$ curve $C''$ such that $\Jac(C'')$ admits a prime-to-$p$ splitting of type $\bg$. 

    Let $(Q_0, \lambda_0)$ be the log $1$-motif attached to $\tau([C'])$. Then by \autoref{prop: local boundary deformation diagram} the deformation space $\Def_{(Q_0, \lambda_0), \overline{\mathbb{F}}_{p}}$ can be identified with the local completion of $\overline{\sfA}_{g,\overline{\mathbb{F}}_{p}}$ at $\tau([C'])$. Note that the abelian part of $(Q_0, \lambda_0)$ is $B_0 := \Jac(C)$. Let $\lambda_0^\ab$ be the abelian part of $\lambda_0$, which is also the canonical polarization on $B_0$ as a Jacobian. Let $\sB$ be the universal family over the deformation space $\Def_{(B_0, \lambda^\ab_0),\overline{\mathbb{F}}_{p}}$. There is a natural morphism $\varpi : \Def_{(Q_0, \lambda_0), \overline{\mathbb{F}}_{p}} \to \sB_{\overline{\mathbb{F}}_{p}}$ from \autoref{prop: local boundary deformation diagram}. The \'etale section $\sB[n] \subseteq \sB$ is of codimension $g - 1$, and hence since $\varpi$ is smooth, the locus $\varpi^{-1}(\sB[n])$ is also of codimension $g-1$ in $\Def_{(Q_0, \lambda_0),\overline{\mathbb{F}}_{p}}$. 
    
    We note that for dimension reasons, $\varpi^{-1}(\sB[n]) \times_{\overline{\sfA}_g,\overline{\mathbb{F}}_{p}} \overline{\sfM}_{g,\overline{\mathbb{F}}_{p}}$ cannot be contained entirely in the boundary $\p \sfM_{g,\overline{\mathbb{F}}_{p}}^{\ct}$. Indeed, $\varpi^{-1}(\sB[n]) \times_{\overline{\sfA}_{g,\overline{\mathbb{F}}_{p}}} \overline{\sfM}_{g,\overline{\mathbb{F}}_{p}}$ has codimension at most $g - 1$ in $(\overline{\sfM}_{g,\overline{\mathbb{F}}_{p}})^{\wedge}_{[C']}$. On the other hand, $\varpi^{-1}(\sB[n]) \times_{\overline{\sfA}_{g, \overline{\mathbb{F}}_{p}}} \sfM_{g - 1, 2, \overline{\mathbb{F}}_{p}}$ lies in $\sH^{(n)}_{g - 1, \overline{\mathbb{F}}_{p}}$, and hence has dimension at most $2g - 1$ by \autoref{Hurwitzspacelem}. Therefore, there must be a discrete valuation field $F$ and a $\wt{s} \in [\Def_{(Q_0, \lambda_0),\overline{\mathbb{F}}_{p}} \cap \varpi^{-1}(\sB[n])] (\sO_F)$, such that $\wt{s}_F \in \tau(\sfM_{g, \overline{\mathbb{F}}_{p}})$. Moreover, by \cref{prop: torsion implies splitting}, $\sA_{\wt{s}_F}$ admits a prime-to-$p$ splitting of type $\bg$. 
\end{proof}




\subsection{Proof of the General Case}
We first state a more refined version of \cref{thm: main density}. Let us say that a \textit{principally polarized} abelian variety (PPAV) $(A, \lambda)$ over a field $k$ has prime-to-$p$ splitting type $\bg$ if there exist PPAVs $(A_j, \lambda_j)$ of dimension $g_j$ and morphisms $\psi_j : A_j \to A_{\bar{k}}$ with finite prime-to-$p$ kernels, such that the product $\psi := \prod_j \psi_j$ satisfies $\psi^*(\lambda_{\bar{k}}) \in \mathbb{Z}_{(p)}^\times (\boxtimes_j \lambda_j)$.


\begin{lemma}
\label{lem: dense in fibration}
    Let $\pi : P \to X$ be an open morphism between topological spaces, and $I \subseteq P$ be a subset. Suppose that for $x$ in a dense subset $S \subseteq X$, the fiber $I_x$ is nonempty and dense in $P_x$. Then $I$ is dense in $P$.  
\end{lemma}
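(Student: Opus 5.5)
We argue by contradiction, directly from the definitions of openness and density. Since the statement is purely topological, no earlier results of the paper are needed. To show $I$ is dense in $P$, it suffices to show that every nonempty open subset $V \subseteq P$ meets $I$.

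So fix a nonempty open $V \subseteq P$. Because $\pi$ is open, $\pi(V)$ is a nonempty open subset of $X$. Since $S$ is dense in $X$, the open set $\pi(V)$ contains some point $x \in S$. Then $V \cap P_x$ is nonempty: $x \in \pi(V)$ means some point of $V$ maps to $x$. Moreover, $V \cap P_x$ is open in the fibre $P_x$, where $P_x$ carries the subspace topology. By hypothesis $I_x = I \cap P_x$ is dense in $P_x$. Hence $I_x$ meets the nonempty open subset $V \cap P_x$ of $P_x$. It follows that $V \cap I \neq \varnothing$, as required.

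There is no genuine obstacle here. The only points needing care are that the fibres are given the subspace topology, so that $V \cap P_x$ is open in $P_x$, and that $x$ lies in the image $\pi(V)$, so that this intersection is nonempty. The nonemptiness of $I_x$ in the hypothesis is then automatic from density in a nonempty fibre and is not used separately.
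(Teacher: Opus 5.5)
Your proof is correct and is essentially the paper's argument: take a nonempty open $V \subseteq P$, use openness of $\pi$ and density of $S$ to find $x \in S \cap \pi(V)$, and then use density of $I_x$ in $P_x$ to meet the nonempty relatively open set $V \cap P_x$. The only quibble is that you announce a proof by contradiction, but what you then give is (correctly) a direct argument.
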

\begin{proof}
    Let $U \subseteq P$ be a nonempty open subset. Then $\pi(U) \subseteq X$ is open. As $S \subseteq X$ is dense, there exists some $x \in S \cap \pi(U)$. As $U_x$ is open in $P_x$ and $I_x$ is dense in $P_x$, $I_x \cap U_x \neq \emptyset$. Therefore, $U \cap I \neq \emptyset$. 
\end{proof}

\begin{theorem}
\label{thm: main density2}
    Suppose that $\bg$ is a partition of $g \in \IN$ such that $\delta(\bg) < 3g - 3$. Then there exists $r_\bg \in \IN$ such that for every $p > r_\bg$, the locus 
    \begin{equation}
         \mathrm{SP}_{\bg, p}^{\mathrm{pol}} := \{ s \in \sfM_{g, \IF_p} : (\Jac(\sC_s), \lambda^{\can}) \textit{ is prime-to-$p$ of splitting type } \bg \}
    \end{equation}
    is Zariski dense in $\sfM_{g, \IF_p}$. Moreover, away from a strict closed subscheme $E \subsetneq \sfM_g$, all components of $\mathrm{SP}^{\mathrm{pol}}_{\bg, p}$ have the expected codimension $\delta(\bg)$ in $\sfM_{g, \IF_p}$.
\end{theorem}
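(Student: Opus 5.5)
We argue by induction on $g$, following the boundary-degeneration strategy of the introduction, with \cref{thm: elliptic case} as the model. The ``moreover'' statement is the easiest part. Take $E$ to be the locus produced by \autoref{genericflatnessMg}, applied to $\sfM_g \to \sfA_g$ with all Hecke translates of $\prod_j \sfA_{g_j}$. Away from $E$, every fibre product with such a translate is flat of the expected codimension $\delta(\bg)$. Since $\mathrm{SP}^{\mathrm{pol}}_{\bg,p}$ is the union of these fibre products, each of its components outside $E$ has codimension $\delta(\bg)$. We also choose $r_\bg$ larger than all the integers $N$ and $r_0$ arising below.

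For density, it suffices to produce, for $p > r_\bg$, a dense set of points of the boundary $\p\sfM^\ct_g$ that deform into $\sfM_{g,\IF_p}$ while keeping a prime-to-$p$ splitting of type $\bg$. Write $\bg=\{g_1,\dots,g_k\}$. We choose an index $k$ and degenerate so that the $k$-th factor acquires toral rank one, i.e.\ we take type (L) or (R) strata. Concretely, we pick the partition $\bg'$ of $g-1$ obtained by replacing $g_k$ with $g_k-1$. The numerical hypothesis $\delta(\bg)<3g-3$ should imply both $\delta(\bg') < 3(g-1)-3$ and the inequality \eqref{atypintineqMg2} of \cref{cor: intersect with torsion}, for a suitable choice of which $g_k$ to reduce. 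I would check this by a direct computation, reducing the largest part; if some small cases fail the computation, I would fall back on the hyperelliptic variant \cref{cor: intersect with torsion2} or on \cref{thm: elliptic case}. By induction, $\mathrm{SP}^{\mathrm{pol}}_{\bg',p}$ is dense in $\sfM_{g-1,\IF_p}$ and, away from $E$, of expected codimension. For each such $[C]$, with splitting $\Psi$, the condition that $p_1\circ\Psi([P-Q])$ be prime-to-$p$ torsion in the non-degenerating factors cuts out a locus in $C^2$. By \cref{thm: finite MM}, applied to the image of $C^2$ in that factor, this locus is dense in $C^2$. The required non-$p$-speciality follows from \cref{rmk: C^2 not p-special}, since the image contains the origin. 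Then \cref{lem: dense in fibration}, applied to the open map $\sfM_{g-1,2}\to\sfM_{g-1}$, gives density in $\sfM_{g-1,2,\IF_p}$ of the points $([C],P,Q)$ satisfying condition (b) of \cref{prop: torsion implies splitting} with $r=1$.

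Next we deform out of the boundary, exactly as in \cref{thm: elliptic case}. At $\tau([C'])$ we use \cref{prop: local boundary deformation diagram}. The locus in $\Def(Q,\lambda)$ where the splitting of the abelian part persists and $c(1)$ stays $n$-torsion in the first factor is the preimage, under the smooth map $\varpi$ of relative dimension one, of $\big(\prod_{j<k}\sA_{g_j}\big)[n]\times\sA_{g_k-1}$. Its codimension is $\delta(\bg)$, so its fibre product with $\overline{\sfM}_g$ has codimension at most $\delta(\bg)$ in $\overline{\sfM}_g$. On the other hand, \cref{cor: intersect with torsion} bounds its intersection with the boundary $\sfM_{g-1,2}$: for $p>r_\bg$ and away from the atypical locus $E$ of \cref{prop: main prop mixed}, this intersection has the expected codimension $\delta(\bg')+(g-g_k)$ inside $\sfM_{g-1,2}$. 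This is strictly larger than the codimension allowed inside the boundary divisor, so the fibre product cannot lie in $\p\sfM^\ct_g$. We then get a DVR point whose generic fibre is a smooth curve, and \cref{prop: torsion implies splitting} turns its degeneration data into a prime-to-$p$ splitting of type $\bg$. Since the boundary points used are dense in $\p\sfM_g^\ct$ away from the strict closed set $E$, and the deformations emanate from a dense set of boundary points, their closure contains a neighbourhood of the boundary divisor. Hence $\mathrm{SP}^{\mathrm{pol}}_{\bg,p}$ is dense.

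The main obstacle is the dimension count in the deformation step. We must verify that the boundary-only piece has codimension strictly exceeding $\delta(\bg)-1$ relative to the divisor, and that the torsion points produced by \cref{thm: finite MM} avoid the atypical set $E$. The second point requires the dense torsion locus in $C^2$ to be dense in a set not contained in $E$; this is why density is needed over a dense set of $[C]$ rather than at a single point. I would also double-check the arithmetic reducing $\delta(\bg)<3g-3$ to \eqref{atypintineqMg2}, and treat the base cases with small $g$ by hand.
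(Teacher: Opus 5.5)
Your proposal follows the paper's proof: induction through the partition $\bg'$ of $g-1$ obtained by lowering one part, density of the relevant points $([\wt{C}],P,Q)$ via \cref{thm: finite MM}, \cref{rmk: C^2 not p-special} and \cref{lem: dense in fibration}, and deformation off $\p\sfM^\ct_g$ by comparing the codimension-$\le\delta(\bg)$ locus $\varpi^{-1}(\sD\times\sB_k)$ with the expected-codimension boundary intersection of \cref{cor: intersect with torsion}, with the ``moreover'' part coming from \cref{shimflatnessthmv2}. Two of your choices differ from the paper but are harmless: lowering the largest part (rather than one with $g-g_k\ge 3$) still gives $\delta(\bg')<3(g-1)-3$ for $g\ge 3$, and imposing torsion only on the projection of $[P-Q]$ to the non-degenerating factors is exactly what the deformation step uses.

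On the arithmetic you flagged: since $\delta(\bg')+(g-g_k)=\delta(\bg)$, condition \eqref{atypintineqMg2} for $(g-1,\bg')$ reads $\delta(\bg)\le 3g-5$ whichever part you lower. So the edge case $\delta(\bg)=3g-4$ (e.g.\ $\bg=\{8,4\}$) is not literally covered and needs an expected-dimension-zero version of that corollary; the paper's proof passes over this point as well.
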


\begin{proof}
    Since for $g \le 3$, the Torelli morphism $\sfM_g \to \sfA_g$ is generically \'etale, the theorem is clear for $g \le 3$. For example, one can invoke the hard fact that the prime-to-$p$ Hecke orbit of any ordinary point is dense in $\sfA_g$ for any $g$ (\cite{Chai95}) for a quick overkill. For $g \ge 4$, consider the partition $\bg' = \{ g_1, g_2, \cdots, g_{k - 1}, g_k - 1 \}$ of $g - 1$. One readily checks that $$\delta(\bg) - \delta(\bg') = \sum_{j = 1}^{k  - 1} g_j = g - g_k.  $$

    We argue that we may always, after possibly reordering the $g_{i}$, assume that $g_k$ is such that $\delta(\bg') < \dim_\IZ \sfM_{g - 1}$. One readily checks the case for $g = 4$. Hence we assume that $g \ge 5$. Then we may always choose a $g_k$ such that $g - g_k \ge 3$, so that $\delta(\bg) < \dim_\IZ \sfM_{g}$ implies that $\delta(\bg') < \dim_\IZ \sfM_{g - 1}$.

    Assume by induction that the theorem is proved for $g - 1$. That is, for some $r = r(\bg')$ and $p > r$, the locus $\mathrm{SP}^{\mathrm{pol}}_{\bg', p}$ is Zariski dense in $\sfM_{g - 1, \IF_p}$. Moreover, let $E \subsetneq \sfM_{g - 1, 2}$ be the strict closed subscheme obtained by applying \cref{prop: main prop mixed}. Then we prove the theorem when $E_{\IF_p} \subsetneq \sfM_{g - 1, 2, \IF_p}$. 
    
    By \cref{thm: finite MM}, \cref{rmk: C^2 not p-special} and \cref{lem: dense in fibration}, one easily sees that the following set
    $$ \wt{B} := \{ ([\wt{C}], P, Q) \in (\sfM_{g -1, 2})_{\bar{\IF}_p} : [\wt{C}] \in \mathrm{SP}^{\mathrm{pol}}_{\bg', p} \text{ and } n[P - Q] \equiv 0 \text{ for some } n \not\in p\IN \} $$
    is Zariski dense. Now take some $([\wt{C}], P, Q) \in \wt{B}$ outside $E_{\IF_p}$ and let $C$ be the resulting nodal curve obtained by gluing $P$ and $Q$. We reduce to showing that $[C] \in \p \sfM_g$ can be deformed into a point $[C'] \in \sfM_g$ in the interior such that $[C'] \in \mathrm{SP}^{\mathrm{pol}}_{\bg, p}$. 

    For the remainder of the proof we work over $\mathbb{F}_{p}$. For ease of notation, we omit the base-change $(-)_{\mathbb{F}_{p}}$ from the notation.

    Let $(Q, \lambda)$ be the polarized log $1$-motif given by the boundary point $\tau([C])$ of $\overline{\sfA}_g$, so that the deformation space $\Def(Q, \lambda)$ is identified with the formal completion of $\overline{\sfA}_{g}$ at $\tau([C])$. Let $0$ be the origin of $\Def(Q, \lambda)$. Note that $(Q^\ab, \lambda^\ab) = (\Jac(\wt{C}), \lambda^\can)$.

    Let $\sB$ be the universal family over $\Def(\Jac(\wt{C}), \lambda^\can)$. Then $\sB_0 = \Jac(\wt{C})$, and there is a factorization 
    \begin{equation}
        \begin{tikzcd}
	{\Def(Q, \lambda)} & \sB \\
	& {\Def(\Jac(\wt{C}), \lambda^\can)}
	\arrow["\varpi", from=1-1, to=1-2]
	\arrow[from=1-1, to=2-2]
	\arrow[from=1-2, to=2-2]
        \end{tikzcd}
    \end{equation}
    with $\varpi$ as in \autoref{prop: local boundary deformation diagram}. By our assumption that $[\wt{C}] \in \mathrm{SP}^{\mathrm{pol}}_{\bg', p}$, there exist morphisms $\psi_j : A_j \to \sB_0$ with prime-to-$p$ kernels such that 
    $$ \psi = \prod_{j = 1}^k \psi_j : (\prod_{j = 1}^k A_j, \lambda_j) \to (\Jac(\wt{C}), \lambda^\can) $$ 
    is a symplectic similitude, and $\dim A_j = g_j$ for $1  \le j \le k - 1$ and $\dim A_k = g_k - 1$. Let $\psi^{-1}$ be an isogeny which is an inverse of $\psi$ in the prime-to-$p$ isogeny category of abelian varieties. 

    There exists a unique closed embedding 
    $$ \iota : \prod_{j = 1}^k \Def(A_j, \lambda_j) \into \Def(\Jac(\wt{C}), \lambda^\can) $$
    characterized by the deformation of $\psi$. That is, if $\sB_j$ is the universal family over $\Def(A_j, \lambda_j)$, then $\psi$ deforms to $\Psi : \prod_{j = 1}^k \sB_j \to \iota^* \sB$. 

    Let $(a_j)_{j = 1}^k \in \prod_j A_j$ be a point lifting the image of $[P - Q]$ in $\Jac(\wt{C})$, and let $\sD \subseteq \prod_{j = 1}^{k - 1} \sB_{j}$ be the prime-to-$p$ torsion section which extends $(a_j)_{j = 1}^{k - 1}$. Define $\Delta$ and $\p \Delta$ by the following fiber diagrams by building on the top arrow of (\ref{eqn: deformation diagram}). 
    \begin{equation}
        \begin{tikzcd}
	{\p \Delta} & \Delta & {\sD \times \sB_k} \\
	{\p \Def(Q, \lambda)} & {\Def(Q, \lambda)} & \sB
	\arrow[from=1-1, to=1-2]
	\arrow[from=1-1, to=2-1]
	\arrow["\lrcorner"{anchor=center, pos=0.125}, draw=none, from=1-1, to=2-2]
	\arrow[from=1-2, to=1-3]
	\arrow[from=1-2, to=2-2]
	\arrow["\lrcorner"{anchor=center, pos=0.125}, draw=none, from=1-2, to=2-3]
	\arrow["\Psi", from=1-3, to=2-3]
	\arrow[from=2-1, to=2-2]
	\arrow["\varpi", from=2-2, to=2-3]
        \end{tikzcd}
    \end{equation}  
    
    It follows from \cref{cor: intersect with torsion} that $\p \Delta \times_{\sfA_g} \sfM_{g- 1, 2}$ is of the expected codimension $\delta(\bg') + (g - g_k) = \delta(\bg)$ in $\sfM_{g - 1, 2}$, or equivalently in $\p \sfM_{g}$. Note that $\delta(\bg)$ is also the codimension of $\sD \times \sB_k$ in $\sB$. Indeed, we have \begin{align*}
        \dim \sB &= \frac{(g - 1)g}{2} + (g - 1) = \frac{g(g + 1)}{2} - 1 \\
        \dim \sD \times \sB_k &= \left( \sum_{j = 1}^{k - 1} \frac{g_j(g_j + 1)}{2} \right) + \frac{(g_k - 1) g_k}{2} + (g_k - 1) = \left( \sum_{j = 1}^{k} \frac{g_j(g_j + 1)}{2} \right) - 1. 
    \end{align*}
    As $\varpi$ is smooth, $\Delta$ has codimension $\delta(\bg)$ in $\Def(Q, \lambda)$. This implies that $\Delta \times_{\overline{\sfA}_g} \overline{\sfM}_g$ has codimension at most $\delta(\bg)$ in $\overline{\sfM}_g$, as $\Def(Q, \lambda)$ is the formal completion of $\overline{\sfA}_g$ at $\tau([C])$.

    On the other hand, since $\p \Delta \times_{\overline{\sfA}_g} \overline{\sfM}_g$ is cut out by a single equation in $ \Delta \times_{\overline{\sfA}_g} \overline{\sfM}_g$, the codimension of $ \Delta \times_{\overline{\sfA}_g} \overline{\sfM}_g$ in $\overline{\sfM}_g$ must be exactly $\delta(\bg)$, and $$ \dim \p \Delta \times_{\overline{\sfA}_g} \overline{\sfM}_g  = \dim  \Delta \times_{\overline{\sfA}_g} \overline{\sfM}_g - 1. $$
    This implies that, there must be a complete local ring $R$ together with a morphism $\Spec(R) \to \Delta \times_{\overline{\sfA}_g} \overline{\sfM}_g$ such that the generic point of $\Spec(R)$ does not map into the boundary. It follows from \cref{prop: torsion implies splitting} that the image of this generic point defines a point in the interior locus $\mathrm{SP}^{\mathrm{pol}}_{\mathbf{g}, p}$. 


    We have now proved the density statement in the theorem. The statement on the dimension follows from \cref{shimflatnessthmv2}. 
\end{proof}

\begin{remark}
\label{rmk: dense in product}
    Note that in the \cref{conj: AIM}, the density is asked for $I := \{ (x, y) : x \in \cup_T T(y) \}$ in the product $X \times Y$. In the above, $X = \tau(\sfM_g)$ and $Y = \sfA_{g_1} \times \cdots \times \sfA_{g_k}$, and we proved the projection of $I$ to $X$ is dense. But this is equivalent to proving the density on $X \times Y$. Indeed, one just applies \cref{lem: dense in fibration} with $P = X \times Y$ together with the usual Hecke-orbit conjecture for the factors $\sfA_{g_i}$, proved by Chai \cite{Chai95}, which gives the required density in the fibers. The same observation applies when $X$ is the image of the moduli of hyperelliptic curves or cubic threefolds discussed below. 
\end{remark}

\subsection{The Hyperelliptic Case and Cubic Threefolds}

We now prove a variant of \autoref{thm: main density} for hyperelliptic curves as well as \autoref{cubicthreefoldthm}. Since the arguments for the hyperelliptic curve theorem are essentially identical to the result for the moduli space of curves, we will only sketch them. In what follows we write $\sfH_g$ for the moduli space of hyperelliptic curves over $\Spec\, \IZ$, and write $\overline{\sfH}_{g}$ for the natural compactification obtained by taking the closure in $\overline{\sfM}_{g}$. Then $\overline{\sfH}_{g}$ has a unique boundary component $\partial \sfH_{g} = \overline{\sfH}_{g} \cap \partial \sfM_{g}$. The clutching map $\kappa: \sfM_{g - 1, 2} \to \partial \sfM^\ct_g$ restricts to a map $\sfH_{g-1,2} \to \partial \sfH_{g}$ which we also denote by $\kappa$. Here $\sfH_{g-1,2}$ is the moduli space of triples $(C, p, \iota(p))$, where $C$ is a hyperelliptic curve with point $p \in C$, and $\iota : C \to C$ is the hyperelliptic involution.

\begin{theorem}
\label{thm: main density3}
    Suppose that $\bg$ is a partition of $g \in \IN$ such that $\delta(\bg) < 2g - 1$. Then there exists $r_\bg \in \IN$ such that for every prime $p > r_\bg$, the locus 
    \begin{equation}
         \mathrm{SP}^{\mathrm{pol}}_{\bg, p} := \{ s \in \sfH_{g, \IF_p} : \Jac(\sC_s) \textit{ is prime-to-$p$ of splitting type } \bg \}
    \end{equation}
    is Zariski dense in $\sfH_{g, \IF_p}$. Moreover, away from a strict closed subscheme $E \subsetneq \sfH_g$, all components of $\mathrm{SP}^{\mathrm{pol}}_{\bg, p}$ have the expected codimension $\delta(\bg)$ in $\sfH_{g, \IF_p}$.
\end{theorem}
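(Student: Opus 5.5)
We will follow the proof of \cref{thm: main density2} step by step, with $\sfM_g$ replaced by $\sfH_g$, $\sfM_{g-1,2}$ replaced by $\sfH_{g-1,2}$, and \cref{prop: main prop mixed}, \cref{cor: intersect with torsion} replaced by \cref{prop: main prop mixed2}, \cref{cor: intersect with torsion2}.

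\emph{Small $g$ and the inductive partition.} For small $g$ we verify the statement directly. For $g=2$ we have $\sfH_2=\sfM_2$, and Chai's Hecke orbit density applies.

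For larger $g$ we argue by induction. Given $\bg$, we reorder it and form $\bg'=\{g_1,\dots,g_{k-1},g_k-1\}$. Then $\delta(\bg)-\delta(\bg')=g-g_k$. Since $\dim \sfH_g=2g-1$ and $\dim\sfH_{g-1}=2g-3$, the inductive hypothesis $\delta(\bg')<2g-3$ follows once $g-g_k\ge 2$. We can arrange this unless $\bg$ is essentially $\{1,g-1\}$. That exceptional case we handle exactly as in \cref{thm: elliptic case}, using the Hurwitz-space dimension count with the points $(P,\iota(P))$ in place of $(P,Q)$.

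We also need the numerical hypothesis of \cref{cor: intersect with torsion2}, namely $\delta(\bg)+(g-g_k)\le \dim\sfH_{g-1,2}-1=2g-3$, and we check it holds in the range considered. Tracking this bookkeeping is the first point to verify carefully.

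\emph{Density on the boundary.} The clutching map $\kappa:\sfH_{g-1,2}\to\partial\sfH_g$ sends $(\widetilde C,P,\iota(P))$ to $(J(\widetilde C),[P-\iota(P)])$. By induction, $\mathrm{SP}^{\mathrm{pol}}_{\bg',p}$ is dense in $\sfH_{g-1,\IF_p}$ for $p\gg0$. We need prime-to-$p$ torsion points $[P-\iota(P)]$ to be dense on the fibre curve $C\to J(C)$, $P\mapsto[P-\iota(P)]$. We apply \cref{thm: finite MM}: the image contains $0$ (at a Weierstrass point), so it is not contained in a $p$-special subvariety, as in \cref{rmk: C^2 not p-special}. \cref{lem: dense in fibration} then makes the set $\widetilde B$ dense in $\sfH_{g-1,2,\overline{\IF}_p}$. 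We pick a point of $\widetilde B$ off the locus $E$ of \cref{prop: main prop mixed2}; this is possible for $p\gg0$ since $E$ is a strict closed subscheme over $\IZ[1/N]$.

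\emph{Deforming off the boundary.} Using \cref{prop: local boundary deformation diagram}, we construct $\Delta\subset\Def(Q,\lambda)$ as the pullback of $\sD\times\sB_k$ under the smooth map $\varpi$. This $\Delta$ has codimension $\delta(\bg)$. Hence $\Delta\times_{\overline{\sfA}_g}\overline{\sfH}_g$ has codimension at most $\delta(\bg)$ in $\overline{\sfH}_g$.

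On the other hand, \cref{cor: intersect with torsion2} shows that the boundary part $\partial\Delta\times\sfH_{g-1,2}$ has exactly the expected codimension $\delta(\bg)$ inside $\sfH_{g-1,2}\simeq\partial\sfH_g$, up to finite maps. Thus it has dimension one less than $\Delta\times\overline{\sfH}_g$. This requires $\partial\sfH_g$ to be a Cartier divisor in $\overline{\sfH}_g$ locally at the point. We expect this to be the main obstacle: $\overline{\sfH}_g$ is the closure in $\overline{\sfM}_g$, and we must check it is smooth along the generic point of $\partial\sfH_g$ (via admissible double covers, i.e.\ $\overline{\sfH}_g$ as a moduli of branched covers of $\IP^1$ degenerating to two colliding branch points). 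Granting this, some component of $\Delta\times\overline{\sfH}_g$ escapes the boundary. By \cref{prop: torsion implies splitting}, its generic point gives a hyperelliptic Jacobian of polarized prime-to-$p$ splitting type $\bg$, which proves density.

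\emph{Expected codimension.} Finally, the statement that components away from $E$ have the expected codimension follows from \cref{shimflatnessthmv2} applied to $\sfH_g\to\sfA_g$. This uses that the monodromy of $\sfH_{g,\IC}$ is $\Sp_{2g}$, which is known.
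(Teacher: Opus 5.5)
Your route is the paper's own: induct through $\bg'$, get boundary density from \cref{thm: finite MM} applied to $P\mapsto[P-\iota(P)]$ (whose image meets $0$ at Weierstrass points), and deform off the boundary in the toroidal chart of \cref{prop: local boundary deformation diagram} using the flatness of \cref{prop: main prop mixed2}. However, the numerical step you single out is wrong as written, and as written it fails.

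On $\sfH_{g-1,2}$ the relevant partition is $\bg'$ of $g-1$, whose last part is $g_k-1$. So the boundary locus, a Hecke translate of $(\prod_{j<k}\sA_{g_j})[n]\times\sA_{g_k-1}$, has codimension $\delta(\bg')+(g-g_k)=\delta(\bg)$ in $\sA_{g-1}$. Your inequality $\delta(\bg)+(g-g_k)\le 2g-3$ counts $g-g_k$ twice. It is false across much of the range: for $\bg=\{1,1,g-2\}$ one has $\delta(\bg)=2g-3$ and $g-g_k\ge 2$ for either choice of $g_k$.

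Even after this correction, the hypothesis of \cref{cor: intersect with torsion2} as stated (expected dimension $\ge 1$) only gives $\delta(\bg)\le 2g-3$. It misses the extremal case $\delta(\bg)=2g-2$, which the theorem allows and which does occur: $\{1,2,2\}$ with $g=5$ (exactly the case used for \cref{cubicthreefoldthm}) and $\{3,4\}$ with $g=7$. The right input is \cref{prop: main prop mixed2} itself, in its multi-factor form. Its hypothesis is only that the expected dimension be $\ge 0$, which here reads $2g-2-\delta(\bg)\ge 0$, i.e.\ precisely $\delta(\bg)<2g-1$. The boundary part then has dimension $2g-2-\delta(\bg)$, while $\Delta\times_{\overline{\sfA}_g}\overline{\sfH}_g$ has dimension $\ge 2g-1-\delta(\bg)$. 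That strict inequality is all the escape argument needs.

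Two smaller points. First, the Cartier property of $\partial\sfH_g$ is not an obstacle you need to clear. The single equation cutting the boundary part out of $\Delta\times_{\overline{\sfA}_g}\overline{\sfH}_g$ is the pullback of the toroidal boundary coordinate on $\Def(Q,\lambda)$, where the boundary is the zero section of a line bundle; this holds regardless of the local structure of $\overline{\sfH}_g$. In fact the bare dimension comparison above suffices. The lower bound comes from Krull, since $\Delta$ is cut out by $\delta(\bg)$ equations. You only need that $\overline{\sfH}_{g,\IF_p}$ has dimension $2g-1$ at $[C]$, and that near $[C]$ its part over $\partial\sfA_g$ is $\kappa(\sfH_{g-1,2})$.

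Second, $\{1,g-1\}$ is not exceptional. Order it so that $g_k=1$; then $g-g_k=g-1\ge 2$ and $\bg'=\{g-1,0\}$, which the main argument handles. Your fallback would in any case need more than \cref{Hurwitzspacelem}. The total locus there has dimension $\ge g$, so you would need the torsion locus $n(P-\iota(P))\sim 0$ inside $\sfH_{g-1,2}$ to have dimension $<g$. That lemma only bounds the corresponding locus in $\sfM_{g-1,2}$, and only by $2g-3$.
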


\begin{proof}
For $g \leq 2$ the map $\sfH_{g} \to \sfA_{g}$ is generically \'etale, so the result is clear. For $g \geq 3$, we choose $\mathbf{g}'$ as in the proof of \autoref{thm: main density2}. We argue that we may assume $g_{k}$ is such that $\delta(\mathbf{g}') < \dim_{\IZ} \sfH_{g-1} = 2(g-1) - 1$. Indeed, as $g \geq 3$ we can always choose $g_{k}$ such that $g - g_{k} \geq 2$, so $\delta(\mathbf{g}) < \dim_{\IZ} \sfH_{g}$ implies $\delta(\mathbf{g}') < \dim_{\IZ} \sfH_{g-1}$.

Assume by induction that the result holds for $g-1$, hence that there is $r = r(\mathbf{g}')$ such that the locus $\mathrm{SP}_{\bg', p}$ is Zariski dense in $\sfH_{g-1,\mathbb{F}_{p}}$ for $p > r$. Let $E \subset \sfH_{g-1,2}$ be the strict closed subscheme obtained by applying \cref{prop: main prop mixed2}. The rest of the argument proceeds exactly as in the proof of \autoref{thm: main density2}, replacing $\sfM_{g}$ with $\sfH_{g}$ and $\sfM_{g-1,2}$ with $\sfH_{g-1,2}$.
\end{proof}

As a corollary we also obtain \autoref{cubicthreefoldthm}.

\begin{proof}[Proof of (\ref{cubicthreefoldthm}):]

    Consider the Torelli map $\tau : \sfM \to \sfA_5$ given by taking intermediate Jacobians. Write $\overline{\tau(\sfM)}$ for the Zariski closure in $\sfA_5$ of the image. Then over $\IC$, the image $\overline{\tau(\sfM)}$ has a boundary component $B := \mathrm{im}(\sfH_5 \to \sfA_5)$ by \cite[Thm.~1.1]{CML09}. We may assume this is also true over $R := \mathbb{Z}[1/N]$ for $N$ large enough by spreading out. 
    
    We have $\dim_{R} \sfH_5 = 9$, $\dim_{R} \sfA_5 = 15$, and $\delta(\bg) = 15 - (1 + 3 + 3) = 8$ with $\bg = (1,2,2)$, so by \autoref{thm: main density3} we have that $\mathrm{SP}_{\bg, p}$ is Zariski dense in $\sfH_{5,\mathbb{F}_{p}}$ for $p \gg 0$. We claim that this even implies that $\mathrm{SP}_{\bg', p}$ is Zariski dense in $\sfH_{5,\mathbb{F}_{p}}$ for $\bg' = (1,1,1,2)$ and $p \gg 0$. It suffices to show that, for each component $U$ of $\mathrm{SP}_{\bg, p}$, the locus $\mathrm{SP}_{\bg', p} \cap U$ is Zariski dense in $U$. We may moreover consider just those $U$ which intersect the ordinary locus in $\sfH_{5,\mathbb{F}_{p}}$. In fact, we may even replace $U$ with a general curve $C \subset U$ and show that $\mathrm{SP}_{\bg', p} \cap C$ is Zariski dense in $C$.

    Now over $C$, the universal family of abelian fivefolds splits up to isogeny as a product family; there is a dominant map $C' \to C$ and a family $A' \to C'$ such that $A' \simeq B_{1} \times_{C'} B_{2} \times_{C'} B'_{2}$, where $B_{1}$ (resp. $B_{2}$ and $B'_{2}$) is a family of abelian varieties of relative dimension one (resp. $2$) over $C'$. It now suffices to show that the family $B_{2} \to C'$ admits infinitely many specializations where the fibre $B_{2,c}$ above $c \in C'$ splits as a product of elliptic curves. This follows from \cite[Thm.~1.4]{Tay25}, which builds on \cite{MSTI}; note that the splittings referred to there are prime-to-$p$, as follows from the more precise statement \cite[Thm.~4.8]{Tay25}.

    Now consider a component $V$ of $B_{\mathbb{F}_{p}} \cap \mathrm{SP}_{\bg', p}$. Observe that, away from the closed sublocus $E \subsetneq B$ determined by \autoref{prop: main prop mixed2}, the component $V$ must be zero-dimensional: indeed, the expected dimension of $V$ in $B_{\mathbb{F}_{p}}$ is 
    \[ \underbrace{\dim B_{\mathbb{F}_{p}}}_{9} +\, (3 + 1 + 1 + 1) - \dim \sfA_{5,\mathbb{F}_{p}} = 0 , \]
    so this follows from \autoref{cor: intersect with torsion2}.

    Now let $w : Y \to \sfA_{5}$ be a Shimura variety parameterizing split abelian varieties of type $(1,1,1,2)$ with good integral model at $p$, and such that the intersection of the image with $B_{\mathbb{F}_{p}}$ does not lie inside $E_{\mathbb{F}_{p}}$. Then since $Y$ is lci over $\sfA_{5}$, it follows that the fibre product $Y \times_{\sfA_{5}, w, \tau} \sfM$ does not lie entirely above the boundary locus $B$. Since, ranging over all such $w$, the intersections of the images of the $w$ with $B$ are Zariski dense, and $\dim B = \dim \sfM - 1$, it follows that the images of the fibre products $(Y \times_{\sfA_{5}, w, \tau} \sfM)_{\mathbb{F}_{p}}$ are Zariski dense in $\sfM_{\mathbb{F}_{p}}$, or that the locus of cubic threefolds whose intermediate Jacobians have splitting type $(1,1,1,2)$ is dense in $\sfM_{\mathbb{F}_{p}}$ for $p \gg 0$. The components of this locus are positive-dimensional, so we can use the same strategy as before, together with \cite[Thm.~1.4]{Tay25} and \cite[Thm.~4.8]{Tay25}, to show this also holds for those cubic threefolds whose intermediate Jacobians have splitting type $(1,1,1,1,1)$.
\end{proof}

\section{Intersections with Supersingular Loci}

\subsection{Orthogonal Shimura Varieties}
\label{orthoshimsec}

In this section we prove \autoref{orthogonalssthm}. Let $L$ be a quadratic lattice over $\IZ$ of signature $(2, b)$. Before proving the theorem let us recall  basic facts about the supersingular loci of the associated orthogonal Shimura varieties $\shS$ defined over $\IZ[1/N]$. If $b = 2m - 1$ is odd, then $\shS^{\mathrm{ss}}_{\IF_p}$ is of dimension $m - 1$. If $b = 2m$ is even, then there are two cases
\begin{equation*}
    \dim \shS^{\mathrm{ss}}_{\IF_p} = \begin{cases}
        m, \text{ when } L_{\IQ_p} \not\simeq U^{\oplus (m + 1)} ;\\
        m - 1, \text{ when } L_{\IQ_p} \simeq U^{\oplus (m + 1)}, 
    \end{cases}
\end{equation*}
where $U$ denotes the standard hyperbolic plane. 

\begin{definition}
\label{def: orthogonal F-crystal}
    Let $k$ be an algebraically closed field over $\IF_p$. A \textit{orthogonal F-crystal of level $2$} is a tuple $(H, \<-, -\>, \varphi)$ where $(H, \< -, -\>)$ is a self-dual quadratic lattice over $W := W(k)$, $\varphi$ is a $\sigma$-linear Frobenius action on $H$, such that $p^2 H \subseteq \varphi(H)$ and $\< \varphi (x), \varphi(y) \> = p^2 \sigma(\< x, y \>)$ for all $x,y$. We define $h^0 := \dim_k \mathrm{im}(\varphi \tensor_W k)$. 
\end{definition}

A K3 crystal is an orthogonal F-crystal of level $2$ with $h^0 = 1$, and we say that it is supersingular if $\varphi$ is purely of slope $1$. Given a supersingular K3 crystal $H$, its Tate module $T := H^{\varphi = p}$, as a quadratic lattice over $\IZ_p$, orthogonally decomposes as 
$$ (T, \< -, -\>_T) = (T_0, p \< -, -\>_{T_0}) \oplus (T_1, \< -, -\>_{T_1}), $$
where $\< -, -\>_{T_0}, \< -, -\>_{T_1}$ are self-dual. Moreover, $\mathrm{rank\,} T_0$ is always even and we define $\sigma_0 := \mathrm{rank\,} T_0 / 2$. This number $\sigma_0$ is called the \textit{Artin invariant}, and it is also the dimension of $\ker (T \tensor_{\IZ_p} k \to H \tensor_W k)$. One can deduce these facts from \cite[Thm.~3.3-3.5]{Ogu79}.

Over $\shS_{\IF_p}$, there is a family of K3 crystals $\bL_\crys$.

\begin{theorem}
    \label{preciseorthogonalshimurasupsingthm}
    Let $T$ be a smooth scheme over $\IZ[1/N]$ which admits a quasi-finite morphism $T \to \shS$ such that $T_\IC$ has maximal monodromy. For each $p$, let us say that a component $C$ of the restriction $T_{\IF_p} \times_\shS \shS^{\mathrm{ss}, \sigma_0}_{\IF_p}$ is atypical if 
    \begin{equation}
    \label{eqn: dim of C}
         \dim C > \dim_\IZ T + \dim \shS^{\mathrm{ss}, \sigma_0}_{\IF_p} - \dim_\IZ \shS.
    \end{equation}
    Then the union of positive-dimensional atypical components 
    $$ \bigcup_{p \in \Spec(\IZ[1/N])} [T_{\IF_p} \times_\shS \shS^{\mathrm{ss}, \sigma_0}_{\IF_p}]^{\mathrm{atyp}, >0} $$
    is not Zariski dense in $T$. Moreover, the $\IC$-fiber of the Zariski closure of this union is contained in a finite union of Shimura subvarieties strictly contained in $\shS_{\IC}$. 
\end{theorem}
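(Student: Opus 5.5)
We apply \autoref{atyplocusconstrprop} with a closed subscheme of the compact dual that has no Hodge-theoretic meaning. Here $\che{D}$ is the $b$-dimensional quadric of isotropic lines $\w = \Fil^2$ in $L_R$; the atypical locus $E$ is defined over $\IZ[1/N]$; and Ax--Schanuel over $\IC$ places $E_\IC$ inside finitely many strict special subvarieties.

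\textbf{Step 1: the family of cycles.} Fix $\sigma_0$ and let $B$ be the $R$-scheme parametrizing orthogonal decompositions $L_R \supseteq M = M_I \oplus M_{\mathrm{sd}}$. Here $M_I$ is totally isotropic of rank $\sigma_0$, and $M_{\mathrm{sd}}$ is non-degenerate of rank $\mathrm{rank}\, L - 2\sigma_0$ (the ``image of $c_1$'' configuration from the introduction). Over $B$ take the family $\sF \to B$ whose fibre over $(M_I, M_{\mathrm{sd}})$ is $Z = \IP(M_I) \subset \che{D}$. Its fibres have dimension $\sigma_0 - 1$, so the codimension is $b - \sigma_0 + 1$. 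The point of the construction is that $\sigma_0 - 1 + \dim T - b$ should equal the right-hand side of \eqref{eqn: dim of C}, i.e.\ that $\dim \shS^{\mathrm{ss},\sigma_0}_{\IF_p} = \sigma_0 - 1$ in the K3-type normalization. I will check this against the dimension formulas recalled above; if it is off by a constant, I will adjust $Z$ by the corresponding Ogus-type condition. Set $d = \dim T + \sigma_0 - 1 - b + 1$, or $d = 1$ if this is smaller. Since $\mathbf{H}_t = \mathrm{SO}(L)$ is $\IQ$-simple, \autoref{atyplocusconstrprop}, in its family version along the lines of \autoref{mixedatyplocusconstrprop}, produces $E$. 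It gives that $E$ is not dense, and that $E_\IC$ lies in finitely many strict special subvarieties. The quasi-finiteness of $T \to \shS$ supplies the no-positive-dimensional-fibres hypothesis.

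\textbf{Step 2: atypical supersingular components give jets.} Let $C$ be a positive-dimensional atypical component in characteristic $p > r_0$, with $p \nmid N$, and let $s$ be a general smooth point of $C$. Atypicality gives $\dim C \geq d$, so we can take a non-degenerate jet $j : \ID^d_r \to C$ at $s$. Over $C^{\mathrm{red}}$ the K3 crystal is supersingular with constant Artin invariant $\sigma_0$. The Tate module $T_p = H^{\varphi = p}$ must give \emph{horizontal} sections along $C$, and I would get these from Katz's equivalence between unit-root F-crystals and $\IZ_p$-local systems (cf.\ \cref{prop: semi-continuity,cor: Frob inv horizontal}) after an étale cover. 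This is the main obstacle: I have to make sure the flat extension over the infinitesimal disk $\ID^d_r$ is canonical once $r < p$.

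The image $M$ of $T_p \otimes k$ in $H \otimes k = \sH_s$ then extends flatly along $j$. Every class in $M$ stays in $\Fil^1$, i.e.\ $\w \perp M$. As explained in the introduction, this forces $\w \subset M_I$. So $\alpha(j, p) \in J^d_r \IP(M_I)$ after choosing a frame that identifies $M$ with a point of $B$. In addition, the condition that $\psi \circ j$ be non-constant is automatic from immersivity at a general point. Hence $s \in E^d_r$ for all $r \geq r_0$, so $C \subset E$.

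\textbf{Step 3: conclusion.} Since $E$ is independent of $p$, the union over $p$ of all positive-dimensional atypical components lies in $E$ together with the fibres over the finitely many small primes. Their closure is therefore not dense. Its $\IC$-fibre is contained in $E_\IC$, which lies in finitely many strict Shimura subvarieties of $\shS_\IC$. This last step uses the fact that special subvarieties of orthogonal Shimura varieties with this monodromy are Shimura subvarieties.
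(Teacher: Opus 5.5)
Your outline is viable and follows the paper's skeleton. You use the non-Hodge-theoretic cycle $Z=\IP(M_I)$, the choice $d=\max\{1,\dim T+\sigma_0-\dim\shS\}$, the linear algebra $\w\perp M\Rightarrow\w\subseteq M_I$, and \autoref{atyplocusconstrprop}. The dimension identity $\dim\shS^{\mathrm{ss},\sigma_0}_{\IF_p}=\sigma_0-1$ that you hedge on is exactly what the paper uses (citing Ogus), so $Z$ needs no adjustment.

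Where you differ is the key input. In this proof the paper does not use Katz. At a general point $x$ of $C$, it uses two facts:
\begin{itemize}
\item the whole lattice $\mathrm{LEnd}(\sA_x)$ of special endomorphisms deforms along $C$, hence along $j$;
\item at supersingular points, $\mathrm{LEnd}(\sA_x)\otimes\IZ_p=\bL_{\crys,x}^{\varphi=p}$.
\end{itemize}
Because these classes are algebraic, their de Rham realizations are automatically horizontal along $j$ and automatically orthogonal to $j^*\Fil^2$.

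Your route through \cref{prop: semi-continuity} and \cref{cor: Frob inv horizontal} is the one the paper reserves for surfaces in $\IP^3$ (\cref{thm: supersingular surface general}), where no Tate-type input is available. It avoids needing the description of $\bL_{\crys,x}^{\varphi=p}$ by special endomorphisms. The price is that your sentence ``every class in $M$ stays in $\Fil^1$'' is no longer free. You must add the step that a horizontal section over $C^\wedge_x$ with $\varphi=p$ lies in $\Fil^1$. This is the Mazur--Ogus argument the paper invokes in the surface case; for $\bL_\crys$ it has to be imported, for instance through the Kuga--Satake abelian scheme. This is a real missing step, not a formality.

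Some smaller points:
\begin{itemize}
\item \textbf{Your family over $B$ is unnecessary.} The paper uses a single $Z_s=\IP(M_s)$ for each $2\le s\le\sigma_0$. It moves $M_s$ onto $M^0_x$ by an isometry, using transitivity of the orthogonal group on isotropic subspaces of fixed rank, and sets $E=\bigcup_s E(s)$. In your family the fibre depends only on $M_I$, and all fibres are conjugate, so the $M_{\mathrm{sd}}$ datum is idle. The family would also require a pure-case family version of \autoref{atyplocusconstrprop}, which the paper does not state (though it adapts routinely from \autoref{mixedatyplocusconstrprop}).
\item \textbf{The obstacle you name is not one.} Since $r!$ is invertible, the horizontal extension over the order-$r$ disk is unique by \autoref{trivlem}. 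So the formal section from \cref{cor: Frob inv horizontal}, restricted along $j$, is the parallel transport. What actually needs care is working at a general smooth point of $C^{\mathrm{red}}$, together with the $\Fil^1$ step above.
\item \textbf{Non-constancy of the period jet is justified wrongly.} It does not follow from immersivity at a general point, since $C$ is a proper subvariety of $T_{\IF_p}$ and may lie where $T\to\shS$ ramifies. It follows from the choice of $\xi$ in \autoref{constr: atypical E}. The locus of non-degenerate jets with constant period $\xi$-jet is empty over $\IC$ by quasi-finiteness, hence empty after enlarging $N$.
\item \textbf{The Artin invariant at $x$ may be smaller than $\sigma_0$.} The paper's proof allows $\shS^{\mathrm{ss},\sigma_0}$ to mean Artin invariant at most $\sigma_0$, so a general point of $C$ may have invariant $s<\sigma_0$. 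The paper handles this with the union over $s$. In your setup it suffices to note that $\IP(M^0_x)\subseteq\IP(M')$ for any isotropic $M'\supseteq M^0_x$ of rank $\sigma_0$, which is compatible with $d\le\dim C$.
\end{itemize}
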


\begin{proof}
In our setting,
$$\che{X}=\{\w\in\IP(L):\<\w,\w\>=0\}$$
is just the space of isotropic lines in $L$. After replacing the base by $\sO_F[1/N]$ for a suitable number field $F$ and possibly enlarging $N$, for every $2\leq s\leq\sigma_0$ we may choose a totally isotropic direct summand $M_s\subset L\tensor\sO_F[1/N]$ of rank $s$. Set $Z_s:=\IP(M_s)\subseteq\che{X}$. We note that $\dim Z_s=s-1=\dim\shS^{\mathrm{ss},s}_{\IF_p}$ for every $p$ (cf. \cite[Thm.~4.6]{Ogu79}).

For each $2\leq s\leq\sigma_0$, let $d_s= \max\{ 1, \dim_\IZ T+s-\dim_\IZ\shS \}$, and let $E(s)$ be the scheme obtained by applying \cref{atyplocusconstrprop} to $Z_s$ with $d=d_s$. After increasing $N$ if necessary, choose a common $r_0$ such that the relevant jet construction has stabilized for every $s$, and set $E:=\bigcup_{s=2}^{\sigma_0}E(s)$. It suffices to show that, for every $p > r_0$, $[T_{\IF_p} \times_\shS \shS^{\mathrm{ss}, \sigma_0}_{\IF_p}]^{\mathrm{atyp}, >0}$ is contained in $E$. 

Let $C$ be a component of $[T_{\IF_p}\times_\shS\shS^{\mathrm{ss},\sigma_0}_{\IF_p}]^{\mathrm{atyp},>0}$. Set $k=\bar{\IF}_p$, let $x\in C$ be a general point, and write $s=\sigma_0(x)$. Since $C$ is positive dimensional, $s\geq2$. Moreover, by \eqref{eqn: dim of C} and $\dim\shS^{\mathrm{ss},\sigma_0}_{\IF_p}=\sigma_0-1$, we have $\dim C\geq d_{\sigma_0}\geq d_s$, so we may choose a jet $j\in J^{d_s}_rC(k)$ centered at $x$. Consider the subspace $M^i_x:=\mathrm{im}(T_i\tensor_{\IZ_p}k\to\bL_{\dR,x})$. Using parallel transport, we identify $j^*(\bL_{\dR},\nabla)$ with $(\bL_{\dR,x}\tensor A^{d_s}_{r,k},1\tensor d)$. The entire space of special endomorphisms $\mathrm{LEnd}(\sA_x)$ deforms along $j$. Because $\mathrm{LEnd}(\sA_x)\tensor\IZ_p=\bL_{\crys,x}^{\varphi=p}$, and $j^*\Fil^2$ must remain orthogonal to its de Rham realizations, $j^*\Fil^2$ is forced to remain orthogonal to $(M^0_x\oplus M^1_x)\tensor A^{d_s}_{r,k}$. As $M^1_x$ is self-dual, $j^*\Fil^2$ must lie in the orthogonal complement $(M^1_x\tensor A^{d_s}_{r,k})^\perp$ within $\bL_{\dR,x}\tensor A^{d_s}_{r,k}$. Moreover, because $M^0_x$ is the maximal isotropic subspace of $(M^1_x)^\perp$, $M^0_x\tensor A^{d_s}_{r,k}$ is its own orthogonal complement inside $(M^1_x\tensor A^{d_s}_{r,k})^\perp$. Therefore, $j^*\Fil^2$ must lie entirely within $M^0_x\tensor A^{d_s}_{r,k}$.

Finally, since $\mathrm{O}(L_k)$ acts transitively on the space of totally isotropic subspaces of $L_k$ of dimension $s$, we may find some isometry $b^\bullet:L_k\simeq\bL_{\dR,x}$ which sends $M_s$ to $M^0_x$. Recall that there is a period map $\sI^{d_s}_r\to J^{d_s}_r\che{X}$. We have just argued that the point $(j,b^\bullet)\in\sI^{d_s}_r$ is sent into $J^{d_s}_rZ_s\subseteq J^{d_s}_r\che{X}$. This implies that $x\in E(s)\subseteq E$, as desired.
\end{proof}





\subsection{An Interlude on Artin Invariants}

Let us first generalize the notion of Artin invariants.

\begin{definition}
    Let $(M, \varphi)$ be a finite free F-crystal over an algebraically closed field $k$ of characteristic $p > 0$, i.e., $M$ is a finite free $W(k)$-module with a $\sigma$-linear endomorphism. If $(M, \varphi)$ is isoclinic of some slope $r$, then we define the Artin-invariant $\sigma_0$ of $(M, \varphi)$ to be $\dim_k \ker(T \tensor_{W(k)} k \to M_k)$ for the Tate module $T = M^{\varphi = p^r}$.  
\end{definition}

\begin{remark}
\label{rmk: unit root = slope 0}
    Recall that we say that a $(M, \varphi)$ as above is a unit root F-crystal when it satisfies one of the following equivalent conditions: (a) $\varphi : \sigma^* M \to M$ is an isomorphism. (b) $T \tensor_{\IZ_p} W \to M$ is an isomorphism. (c) $\varphi$ is purely of slope $0$. The implication (a) $\Rightarrow$ (b) is a special case of \cref{cor: Katz equiv free} below. That (b) $\Rightarrow$ (c) is clear. To see (c) $\Rightarrow$ (a), choose a basis of $M$, so that $\varphi = B \sigma$ for a square matrix $B$. The slope $0$ condition implies that $\mathrm{val}_p(\det(B)) = 0$, which implies that $\det(B) \in W^\times$, or equivalently that $\varphi$ is an isomorphism.
\end{remark}

By \cite[Thm.~3.5.1]{Ogu79}, when $(M, \varphi)$ is given by a supersingular K3 crystal, our definition of the Artin invariant $\sigma_0$ agrees with Ogus' (or Artin's) definition. Ogus has observed that for a family of supersingular K3 crystals, the Artin invariant is lower semi-continuous (see page 52 of \textit{loc. cit.}) We shall extend this lower semi-continuity theorem to a general isoclinic F-crystal of a single integer slope. 

\begin{theorem}[Semi-continuity of Artin invariants]
\label{thm: semi-continuity of sigma0} Let $S$ be a smooth variety over a perfect field $k$ of characteristic $p$, and let $\sE$ be an F-crystal isoclinic of slope $r \in \IZ$. Then the function $s \mapsto \sigma_0(\bar{s})$ is lower semi-continuous, where $\sigma_0(\bar{s})$ is the $\sigma_0$ of restriction of $\sE$ to any algebro-geometric point $\bar{s}$ over $s$. 
\end{theorem}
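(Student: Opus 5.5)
The plan is to reduce the statement to a comparison along a single specialization $\bar s \rightsquigarrow \bar\eta$, where $\bar\eta$ is a geometric generic point of some irreducible closed subset and $\bar s$ is a geometric point of its closure. Lower semi-continuity means that $\{s : \sigma_0(\bar s) \le c\}$ is closed for every $c$, i.e. that $\sigma_0$ can only \emph{drop} under specialization. This is consistent with Ogus' K3 picture, where $\sigma_0 = 10$ is generic and the minimal strata are closed. So the claim to prove is
\[ \sigma_0(\bar s) \le \sigma_0(\bar\eta) \]
whenever $s$ lies in the closure of $\eta$, together with constructibility of the level sets. Constructibility should follow from a Noetherian induction: on a dense open of each stratum one can trivialize the relevant linear algebra. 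Hence specialization-decrease along DVR-type specializations is the heart of the matter.

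\textbf{Step 1: pointwise reformulation.} At an algebraically closed point with $W = W(k)$, set $N := T \otimes_{\IZ_p} W \subseteq M$, which is a full-rank sublattice since $M$ is isoclinic of integral slope $r$. I will characterize $N$ intrinsically as the largest sublattice of $M$ with $\varphi(\sigma^*N) = p^r N$. Such an $N$ is a $p^r$-twisted unit-root crystal, so \cref{rmk: unit root = slope 0} identifies it with $T \otimes W$. I will then compute it as the stabilized value of the decreasing chain
\[ N_n := \bigcap_{i=0}^{n} p^{-ir}\varphi^i(\sigma^{i*}M) \cap M , \]
using a uniform bound, depending only on the rank and the Hodge polygon of $\sE$, on how long the chain takes to stabilize. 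With this description, $\sigma_0 = \dim_k \ker(N/pN \to M/pM)$, which is the number of nonzero elementary divisors of $N$ inside $M$.

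\textbf{Step 2: spreading the unit-root part.} I then pass to a specialization realized over a complete (perfected) valuation ring $V$ with $\mathrm{Spec}\, V \to S$, and evaluate $\sE$ on $W(V)$. Over the generic point, \cref{prop: semi-continuity} and \cref{cor: Frob inv horizontal} (Katz's equivalence) supply a unit-root sub-F-crystal after twisting by $p^{-r}$. I take its saturation $\mathcal N := N_{\bar\eta} \cap M_{W(V)}$, which is a $\varphi$-stable lattice over $W(V)$ satisfying $\varphi(\sigma^*\mathcal N) = p^r\mathcal N$ generically. Its special fibre $\mathcal N_{\bar s}$ therefore satisfies $\mathcal N_{\bar s} \subseteq N_{\bar s}$ by the maximality in Step 1, and it has full rank.

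\textbf{Step 3: comparing kernels (main obstacle).} Naive semicontinuity of matrix rank applied to $\mathcal N \otimes V \to M \otimes V$ points the wrong way: it shows the kernel can only \emph{grow} at $s$. So the real input must be that passing from $\mathcal N_{\bar s}$ to the possibly larger lattice $N_{\bar s}$ strictly removes elementary divisors. My first approach is to track the length $\ell(M/N)$, which is $\sum$ of the elementary divisors, alongside $\sigma_0$, which is their number. For Ogus-type crystals these two quantities are tied to each other: in the K3 case $\ell(M/N) = \sigma_0$, since $N$ has elementary divisors in $\{0,1\}$. In general I would first bound each elementary divisor by the slope data, so that $\sigma_0$ is controlled by the length. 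Then I would show $\ell(M/N_{\bar s}) \le \ell(M/\mathcal N_{\bar s}) = \ell(M/N_{\bar\eta})$, where the equality holds by flatness of $M/\mathcal N$ over $W(V)$ after saturation, and the inequality holds because $N_{\bar s} \supseteq \mathcal N_{\bar s}$. If the elementary divisors are not uniformly bounded by $1$, I would instead run the same comparison on the graded pieces $p^iM \cap N$ to get $\sigma_0(\bar s) \le \sigma_0(\bar\eta)$. This final bookkeeping step is the one I expect to be delicate.
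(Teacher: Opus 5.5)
The gap is in Step 3, and it cannot be closed by bookkeeping. Write $N=T\otimes W=\bigoplus_i p^{a_i}We_i$ inside $M=\bigoplus_i We_i$. Then $\ell(M/N)=\sum_i a_i$, whereas $\sigma_0=\#\{i: a_i\geq 1\}=\ell\bigl(M/(N+pM)\bigr)$.

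Your inequality $\ell(M_{\bar s}/N_{\bar s})\leq \ell(M_{\bar\eta}/N_{\bar\eta})$ is compatible with the elementary divisors going from $(2,0,0)$ at $\bar\eta$ to $(1,1,0)$ at $\bar s$. In that case $\sigma_0$ goes \emph{up} from $1$ to $2$. A bound $a_i\leq c$ from the slope data only gives $\ell/c\leq\sigma_0\leq\ell$, so it does not help.

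What you actually need is $\ell\bigl(M_{\bar s}/(\mathcal N_{\bar s}+pM_{\bar s})\bigr)=\sigma_0(\bar\eta)$. This says the image of $\mathcal N$ in $M/pM$ is saturated over $V$. Saturating $\mathcal N$ inside $M$ does not give you that.

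Your argument is complete when $pM_{\bar\eta}\subseteq N_{\bar\eta}$, as for K3 crystals or the $p$-torsion crystals used later in the paper. Then $pM\subseteq\mathcal N$, and $M/\mathcal N$ is a finite free $V$-module of rank $\sigma_0(\bar\eta)$ (after enlarging $V$ so that $N_{\bar\eta}$ is defined over its fraction field). This gives $\sigma_0(\bar s)\leq\ell(M_{\bar s}/N_{\bar s})\leq\ell(M_{\bar s}/\mathcal N_{\bar s})=\sigma_0(\bar\eta)$.

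Without such a hypothesis no repair is possible, because the statement itself fails. Here is a counterexample.
\begin{itemize}
\item Let $k$ be algebraically closed and $S=\mathbb{A}^1_k$, with lift $R=W\langle t\rangle$ and $\sigma(t)=t^p$.
\item Let $V_0=\bigoplus_{i=1}^3\mathbb{Z}_pf_i$, and put $\nabla=1\otimes d$ and $\varphi=p^2\otimes\sigma$ on $V_0\otimes R[1/p]$.
\item Choose $\gamma,\epsilon\in W$ with $1,\bar\gamma,\bar\epsilon$ linearly independent over $\mathbb{F}_p$.
\item Let $M$ be the free $R$-lattice with basis $f_3$, $u=p^{-1}(f_2+\gamma f_3)$ and $w=p^{-1}(f_1+\epsilon f_3)+p^{-1}t^pu$.
\end{itemize}
Then $\nabla w=t^{p-1}u\,dt$ and $\varphi(M)\subseteq V_0\otimes R\subseteq M$. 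So $M$ is an F-crystal on $S$, isoclinic of slope $2$.

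A direct computation shows the Tate module is $V_0$ at every point; the independence of $1,\bar\gamma,\bar\epsilon$ is what prevents it from growing at $t=0$. Its image in $M/pM$ is spanned by $\bar f_3$ and $\bar f_1=-\bar\epsilon\bar f_3-t^p\bar u$. Hence $\sigma_0=1$ away from $t=0$ but $\sigma_0=2$ at $t=0$, with elementary divisors $(2,0,0)$ versus $(1,1,0)$. So $\{\sigma_0\geq 2\}$ is not open.

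The example also affects the paper's own route. The paper gets generic constancy and the curve case (b) from \cref{prop: semi-continuity}, then concludes by constructibility plus stability under generization. Here the Katz subcrystal is $V_0\otimes R$: it is unit-root after twisting on all of $S$, and its Tate module equals that of $M$ at every point. Yet $\sigma_0$ still jumps up at $t=0$. So step (b) also needs an extra hypothesis, such as the $p$-torsion condition. Under that hypothesis, your Step 3 supplies exactly the specialization inequality that is missing there.
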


We recall some basic facts about isoclinic F-crystals, due to Katz.

\begin{theorem}[Katz]
\label{thm: Katz equiv torsion}
    Let $k$ be a perfect field of characteristic $p$. 
    Let $S_n$ be a flat affine $W_n$-scheme whose special fiber is normal, reduced and irreducible, which admits a lift of Frobenius $\varphi_S$. Then there is an equivalence of categories
$$ \beta_n : \mathsf{Loc}_{\IZ / p^n \IZ}(S_n) \stackrel{\sim}{\to} \mathsf{Vect}^{\varphi}(S_n) $$
where $\mathsf{Loc}_{\IZ / p^n \IZ}(S_n)$ is the category of \'etale local systems with $\IZ / p^n \IZ$ coefficients and $\mathsf{Vect}^{\varphi}(S_n)$ is that of vector bundles $M$ over $S_n$ equipped with an isomorphism $\varphi : \varphi_{S_n}^* M \stackrel{\sim}{\to} M$.
\end{theorem}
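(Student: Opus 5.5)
The plan is to prove Katz's equivalence by induction on $n$, with the case $n=1$ as the key step, following the classical approach of Katz (SGA 7, Exp.~XI / Katz's ``$p$-adic properties of modular schemes''). First we construct the functor $\beta_n$. Given an étale $\IZ/p^n$-local system $\IL$ on $S_n$ (equivalently on its special fibre, by topological invariance of the étale site), we set $M := \IL \otimes_{\IZ/p^n} \sO_{S_n}$, computed on the étale site and descended. Since $\varphi_S$ is a lift of the absolute Frobenius, there is a canonical isomorphism $\varphi_{S_n}^*\IL \simeq \IL$. Hence $\varphi := \id_\IL \otimes \varphi_S^\sharp$ gives an isomorphism $\varphi_{S_n}^* M \stackrel{\sim}{\to} M$, and local freeness of $M$ follows étale locally from the local freeness of $\IL$. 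Full faithfulness will follow once we have the quasi-inverse $\gamma_n(M,\varphi) := M^{\varphi=1}$, the étale sheaf of $\varphi$-fixed sections. On $\beta_n(\IL)$ we have $(\IL\otimes \sO)^{\varphi=1} = \IL \otimes \sO^{\varphi_S=1} = \IL$, because $\sO_{S_n}^{\varphi_S = 1}$ is the constant sheaf $\IZ/p^n$ on the étale site of a connected scheme. This uses the normal, irreducible special fibre to ensure connectedness of étale neighbourhoods, and for $n=1$ it amounts to Artin--Schreier: $x^p=x$ has solutions exactly $\IF_p$ on connected reduced schemes.

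The main step is essential surjectivity, namely that $\gamma_n(M,\varphi)$ is a locally free étale $\IZ/p^n$-sheaf of rank $\rank M$ and that $\gamma_n(M)\otimes\sO \to M$ is an isomorphism. For $n=1$, work locally on an affine open where $M$ is free with basis $e_1,\dots,e_m$, and write $\varphi(e_i) = \sum_j a_{ji} e_j$ with $A=(a_{ji})$ invertible. A fixed vector $x = \sum x_i e_i$ satisfies $A x^{(p)} = x$, equivalently $x^{(p)} = A^{-1}x$. The scheme cut out by these $m$ equations in $\IA^m$ is finite of degree $p^m$, and its Jacobian is $-A^{-1}$, which is invertible, so it is étale. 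Passing to the étale cover that trivializes it, we obtain $p^m$ solutions forming an $\IF_p$-vector space of dimension $m$. Their linear independence over $\sO$ follows from a standard Wronskian/Artin-type argument: a minimal nontrivial $\sO$-linear relation among fixed vectors can be normalized to have a coefficient equal to $1$, applying $\varphi$ and subtracting gives a shorter relation with coefficients $c^p-c$, and so all coefficients are in $\IF_p$, contradicting independence over $\IF_p$. This establishes $\gamma_1 \circ \beta_1 \simeq \id$ and $\beta_1\circ\gamma_1 \simeq \id$.

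For the inductive step from $n-1$ to $n$, consider $(M,\varphi)$ over $S_n$. Its reduction mod $p^{n-1}$ lies in the essential image of $\beta_{n-1}$ by induction. Étale locally we then lift the fixed basis mod $p^{n-1}$ to a basis $e$ with $\varphi(e) = e(1+p^{n-1}B)$. We seek a correction $e' = e(1+p^{n-1}X)$ with $\varphi(e')=e'$, which reduces to the Artin--Schreier-type equation $X^{(p)} - X = -\bar B$ over the special fibre, since $p^{n-1}\cdot p^{n-1}=0$ for $n\ge 2$ and $\varphi_S$ lifts Frobenius. This equation is solvable étale locally, and its solutions form a torsor under $M_m(\IF_p)$. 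Together with the exact sequence $0\to p^{n-1}M \to M \to M/p^{n-1}\to 0$ and the five lemma, this yields local freeness of $\gamma_n(M)$ over $\IZ/p^n$ and the unit isomorphisms. We expect the main obstacle to be bookkeeping rather than conceptual: checking that the constructions glue, are independent of local bases, and use the hypotheses on the special fibre exactly where needed (connectedness for $\sO^{\varphi=1}=\IZ/p^n$, flatness over $W_n$ for the dévissage).
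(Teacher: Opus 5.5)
Your proposal is correct and is essentially Katz's argument, which the paper simply cites. As in the paper, $\beta_n(\IL)=\IL\otimes\sO_{S_n}$ with Frobenius acting on the second factor; the paper phrases this as descending $\IL(T_n)\otimes\sO_{T_n}$ from a trivializing finite Galois cover. The quasi-inverse rests on $M$ acquiring a basis of $\varphi$-fixed vectors étale locally, which is Katz's result that the paper quotes and which you establish via the finite étale equation $x^{(p)}=A^{-1}x$ for $n=1$ and the Artin--Schreier corrections $X^{(p)}-X=-\bar B$ in the induction.

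Two small remarks. First, the Artin-type independence argument should be run on fibres at points, where a coefficient can actually be normalized to $1$. Second, $\sO_{S_n}^{\varphi=1}=\IZ/p^n$ holds as étale sheaves on any such $S_n$ without normality, by the factorization $x^p-x=\prod_{a\in\IF_p}(x-a)$ and dévissage using flatness. So the normal irreducible hypothesis is not needed for your étale-local version; it is what makes the paper's finite Galois cover formulation available.
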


\begin{proof}
    See \cite[Prop.~4.1.1]{Kat73}, which we apply with $q = p$. We summarize the main point of the proof for later reference. Given an $\IL \in \mathsf{Loc}_{\IZ / p^n \IZ}(S_n)$, choose a finite Galois cover $T_n/S_n$ that trivializes $\IL$. Because $T_n$ is \'etale, $\varphi_{S_n}$ lifts uniquely to an endomorphism $\varphi_{T_n}$. The trivialized object 
    \begin{equation}
    \label{eqn: define beta}
        (\IL(T_n) \tensor_{\IZ/p^n \IZ} \sO_{T_n}, m \tensor f \mapsto m \tensor \varphi_{T_n}(f)) \in \mathsf{Vect}^\varphi(T_n)
    \end{equation}
    is naturally $\Gal(T_n/S_n)$-equivariant and therefore descends to an object in $\mathsf{Vect}^\varphi(S_n)$, which is the desired $\beta(\IL)$. Conversely, given an $M \in \mathsf{Vect}^{\varphi}(S_n)$, Katz showed that there exists a finite Galois cover $T_n/S_n$ over which $M|_{T_n}$ admits a basis of Frobenius invariants. This basis defines an object in $\mathsf{Loc}_{\IZ / p^n \IZ}(S_n)$, yielding the desired quasi-inverse $\beta_n^{-1}(M)$.
\end{proof} 

The above has the following well known corollary (cf. \cite[p.~148 Remarks]{Kat79}). Part (b) is not explicitly stated, but it is clear from the constructions. 
\begin{corollary}[Katz]
\label{cor: Katz equiv free}
    Let $S$ be a smooth affine variety over $k$. 
    \begin{enumerate}[label=\upshape{(\alph*)}]
        \item There is an equivalence $\beta$ from $\mathsf{Loc}_{\IZ_p}(S)$ to the category of unit root F-crystals on $S$.
        \item Suppose that $\IL \in \mathsf{Loc}_{\IZ_p}(S)$ and $\sE = \beta(\IL)$. Given an algebro-geometric point $s \to S$, there is a natural identification $\IL_s = [\sE_s (W(s))]^{\varphi = 1}$. 
    \end{enumerate} 
\end{corollary}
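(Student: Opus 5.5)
We deduce both parts of \cref{cor: Katz equiv free} from \cref{thm: Katz equiv torsion} by passing to the limit over $n$, using a compatible system of Frobenius lifts on smooth formal lifts of $S$. Since $S$ is smooth affine over $k$, it lifts to a smooth formal scheme $\mathfrak{S} = \Spf(A)$ over $W(k)$. By smoothness, the absolute Frobenius of $S$ lifts to an endomorphism $\varphi_{\mathfrak S}$. Write $S_n := \Spec(A/p^n)$. Each $S_n$ is flat affine over $W_n$ with normal special fiber $S$. If $S$ is not irreducible, we work one connected component at a time.

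\textbf{Part (a).} First, the étale sites of $S_n$ and $S$ agree, so $\mathsf{Loc}_{\IZ/p^n}(S_n) = \mathsf{Loc}_{\IZ/p^n}(S)$ and
$$\mathsf{Loc}_{\IZ_p}(S) = \varprojlim_n \mathsf{Loc}_{\IZ/p^n}(S).$$
Next, the equivalences $\beta_n$ are compatible with reduction $S_{n+1} \to S_n$. This is clear from \eqref{eqn: define beta}, because the Galois covers $T_{n+1}$ reduce to $T_n$ and the Frobenius lifts are unique on étale covers. Taking the limit therefore gives an equivalence
$$\varprojlim_n \beta_n : \mathsf{Loc}_{\IZ_p}(S) \xrightarrow{\sim} \varprojlim_n \mathsf{Vect}^{\varphi}(S_n).$$
The target is the category of finite locally free $A$-modules $M$ with an isomorphism $\varphi_{\mathfrak S}^*M \xrightarrow{\sim} M$.

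\textbf{Main obstacle.} The remaining step, which we expect to be the hardest, is identifying these $(M,\varphi)$ with unit root F-crystals on $S$. A crystal on $S/W$ amounts to a module $M$ over the lift $\mathfrak S$ together with an integrable, topologically quasi-nilpotent connection. An F-crystal structure is a horizontal map $\varphi_{\mathfrak S}^*M \to M$, and the unit root condition is exactly that this map is an isomorphism (cf. \cref{rmk: unit root = slope 0}). So we must show two things: a $\varphi$-module coming from a local system carries a unique connection making $\varphi$ horizontal, and this is independent of the choice of $\varphi_{\mathfrak S}$.

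For existence, note that on a trivializing cover $T$ the module is $\IL(T)\tensor \sO_T$. We give it the trivial connection $1\tensor d$, which is Galois-equivariant and so descends. For uniqueness, suppose two connections both make $\varphi$ horizontal. Their difference $\theta$ satisfies $\theta = \varphi^*\theta$ up to the factor $d\varphi_{\mathfrak S}$, which is divisible by $p$. Iterating this relation forces $\theta = 0$ by $p$-adic completeness. Independence of the lift follows from the standard Taylor-series comparison of Frobenius lifts, which uses the connection. Full faithfulness on morphisms follows because horizontality is automatic for $\varphi$-equivariant maps, by the same contraction argument.

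\textbf{Part (b).} Let $s \to S$ be an algebro-geometric point with residue field $k(s)$. Pulling back along $s$, functoriality of the construction in \eqref{eqn: define beta} gives
$$\sE_s(W(s)) = \IL_s \tensor_{\IZ_p} W(k(s)) \quad \text{with } \varphi = 1\tensor\sigma.$$
It follows that $\sE_s(W(s))^{\varphi=1} = \IL_s \tensor W(k(s))^{\sigma=1} = \IL_s$. The last equality uses that $W(\overline{k})^{\sigma = 1} = \IZ_p$ because $k(s)$ is algebraically closed.
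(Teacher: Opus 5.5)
Your proposal is correct and follows essentially the same route as the paper: assemble the level-$n$ equivalences $\beta_n$ on $S_n = \what{S}/p^n$, equip the limit with the connection obtained by descending $1 \otimes d$ from a trivializing cover, and read off (b) from the construction \eqref{eqn: define beta}. Your contraction argument, which uses that $\varphi_{\mathfrak S}^*$ is divisible by $p$ on $\Omega^1$, gives uniqueness of the connection and horizontality of $\varphi$-equivariant maps; this makes explicit the identification with unit root F-crystals that the paper leaves implicit.
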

\begin{proof}
    To construct $\beta$, one just put together the equivalences $\beta_n$ for each $S_n := \what{S}/p^n$ and $\IL_n := \IL/p^n$. Note that for each $\beta_n(\IL_n)$ over $S_n$, one can put a natural connection by descending the trivial connection $1 \tensor d$ on (\ref{eqn: define beta}). Part (b) can be easily deduced from the construction (\ref{eqn: define beta}). 
\end{proof}

Let us recall two elementary algebra facts. 
\begin{lemma}
    Let $f:R\to R'$ be a homomorphism of rings. Assume that $R$ is $p$-adically separated, $R'$ is $p$-torsion-free, and the induced map $R/pR\longrightarrow R'/pR'$ is injective. Then $f$ is injective.
\end{lemma}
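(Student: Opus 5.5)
The statement is a purely algebraic lemma, so I will argue directly with $p$-adic filtrations rather than invoking any earlier result. Let $x \in \ker f$. The goal is to show $x \in p^n R$ for every $n \geq 0$. Once that is established, $p$-adic separatedness of $R$, i.e. $\bigcap_n p^n R = 0$, forces $x = 0$.

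The induction runs as follows. The base case $n=0$ is trivial. Suppose $x = p^n y$ with $y \in R$. Then $0 = f(x) = p^n f(y)$ in $R'$. Since $R'$ is $p$-torsion-free, multiplication by $p^n$ is injective on $R'$, so $f(y) = 0$. In particular the image of $y$ in $R'/pR'$ vanishes. Injectivity of $R/pR \to R'/pR'$ then gives $y \in pR$, hence $x \in p^{n+1}R$.

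There is one point to watch. The induction must be applied to the quotient $y$ itself, and we need $f(y)=0$ on the nose, not merely modulo $p$. This is exactly where torsion-freeness of $R'$ enters, and it is the only step requiring care; no real obstacle is expected. Note also that torsion-freeness of $R$ is not needed: $y$ need not be unique, but any choice works at each stage.
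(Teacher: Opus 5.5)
Your argument is correct and is essentially the paper's proof: both show that an element of $\ker f$ lies in $p^nR$ for every $n$, using injectivity of $R/pR \to R'/pR'$ together with $p$-torsion-freeness of $R'$ to pass the vanishing of $f$ down to the quotient, and then conclude by $p$-adic separatedness of $R$. Your remark that $R$ itself need not be $p$-torsion-free is also accurate.
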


\begin{proof}
    If $a\in\ker(f)$, then its image in $R'/pR'$ is zero. By injectivity modulo $p$, we have $a\in pR$, so $a=pa_1$ for some $a_1\in R$. Since $R'$ is $p$-torsion-free, $f(a_1)=0$. Iterating, we obtain $a\in p^nR$ for every $n$. Since $R$ is $p$-adically separated, $a=0$.
\end{proof}

\begin{lemma}
    Let $R\subseteq R'$ be an inclusion of $p$-torsion-free rings. If $R/pR \to R'/pR'$ is injective, then $R$ is $p$-saturated in $R'$, or equivalently $R=R'\cap R[1/p]$ inside $R'[1/p]$.
\end{lemma}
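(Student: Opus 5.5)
The first claim is that $R = R' \cap R[1/p]$ inside $R'[1/p]$. The inclusion $R \subseteq R' \cap R[1/p]$ is clear. For the reverse inclusion, take $x \in R' \cap R[1/p]$ and write $x = a/p^n$ with $a \in R$. We induct on the minimal such $n$. If $n = 0$ we are done. If $n \geq 1$, then $a = p^n x = p\cdot(p^{n-1}x)$ in $R'$, and $p^{n-1}x \in R'$. Hence the image of $a$ in $R'/pR'$ vanishes. By injectivity of $R/pR \to R'/pR'$, we get $a \in pR$, say $a = p a_1$. Since $R'[1/p]$ is where all these elements live and $R'$ is $p$-torsion-free, we have $x = a_1/p^{n-1}$, which lowers $n$ and completes the induction.

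The second claim is the equivalence with ``$R$ is $p$-saturated in $R'$'', meaning that $px \in R$ with $x \in R'$ forces $x \in R$. Saturation follows from the same single step: if $px = a \in R$, then $a \mapsto 0$ in $R'/pR'$, so $a = pa_1$ with $a_1 \in R$. Then $p(x - a_1) = 0$ in $R'$, and $p$-torsion-freeness gives $x = a_1 \in R$.

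Conversely, saturation implies the intersection description by iterating: if $p^n x \in R$ with $x \in R'$, apply saturation $n$ times. The torsion-freeness of both rings is what makes $R[1/p] \subseteq R'[1/p]$ a genuine inclusion, so that these manipulations are legitimate.

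There is no real obstacle. The only point needing care is to use the $p$-torsion-freeness of $R'$ (and of $R$) when dividing by $p$.
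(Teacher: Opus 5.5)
Your proof is correct and its key step is exactly the paper's: if $px\in R$ with $x\in R'$, then $px$ vanishes in $R'/pR'$, injectivity gives $px\in pR$, and $p$-torsion-freeness of $R'$ cancels the $p$. The paper stops at saturation and treats $R=R'\cap R[1/p]$ as an immediate reformulation, which you spell out by iterating; one minor quibble is that $R[1/p]\subseteq R'[1/p]$ holds for any inclusion by exactness of localization, and what torsion-freeness actually provides is injectivity of $R\to R[1/p]$ and $R'\to R'[1/p]$, which is what makes the intersection meaningful inside $R'[1/p]$.
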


\begin{proof}
    Suppose $a\in R'$ and $pa\in R$. The image of $pa$ in $R'/pR'$ is zero, so injectivity modulo $p$ gives $pa\in pR$. Write $pa=pb$ with $b\in R$. Since $R'$ is $p$-torsion-free, $a=b\in R$. Thus $R$ is $p$-saturated. 
\end{proof}

\begin{proposition}
\label{prop: semi-continuity}
    Let $S$ be a smooth affine variety over an algebraically closed field $k$ of characteristic $p > 0$ and $\sE$ be an F-crystal over $S$ isoclinic of slope $r \in \IZ$. Then up to replacing $S$ by a nonempty Zariski open subvariety with a complement of codimension $\ge 2$ and then by a finite \'etale cover, there exists a sub-F-crystal $\sE' \subseteq \sE$ such that $\sE'(r)$ is unit-root and for a general point $s \in S$, $\sE_s(W)^{\varphi = p^r} = \sE'_s(W)^{\varphi = p^r}$. 
\end{proposition}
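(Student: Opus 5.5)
First I will reduce to slope $0$. Replacing $\sE$ by its twist $\sE(-r)$, i.e. Frobenius $p^{-r}\varphi$, is legitimate on a Frobenius-stable lattice because $\sE$ is isoclinic of integral slope $r$; this uses Katz's slope filtration/isogeny theorem and possibly shrinking $S$. So I may assume $\sE$ is isoclinic of slope $0$ and look for a unit-root sub-F-crystal $\sE'$. An isoclinic slope-$0$ crystal is isogenous to a unit-root one but need not itself be unit-root, since the Hodge polygon may sit above the Newton polygon. The Tate modules $T_s = \sE_s(W)^{\varphi=1}$ have full rank $n$ at every point. They should vary as a $\IZ_p$-local system, and $\sE'$ should be its image under Katz's equivalence $\beta$ of \cref{cor: Katz equiv free}.

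Concretely, I construct the local system generically. Choose $m$ with $p^m \sE \subseteq \sE^{\mathrm{u}}$ for a unit-root crystal $\sE^{\mathrm{u}}$ isogenous to $\sE$; such an $m$ exists after shrinking, by Katz's isogeny theorem over a dense open set. Then set $\IL := \beta^{-1}(\sE^{\mathrm{u}})$, working after a finite \'etale cover that trivializes $\IL/p^{m}$ where convenient. By \cref{cor: Katz equiv free}(b), $\IL_s = \sE^{\mathrm{u}}_s(W)^{\varphi=1}$, and $T_s = \IL_s \cap \sE_s(W)$ inside $\sE^{\mathrm{u}}_s[1/p]$. The subgroups $T_s/p^m\IL_s \subseteq \IL_s/p^m\IL_s$ vary in a finite set of subgroups of a finite group. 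On the finite \'etale cover trivializing $\IL/p^m$, the condition that a given section of $\IL/p^m$ lifts into $\sE$ is a closed condition. Hence on a dense open set $U$ the subgroup is constant, equal to the generic one, and this defines a sub-local system $\IL' \subseteq \IL$ with $\IL'_s = T_s$ for general $s$. Put $\sE' := \beta(\IL') \subseteq \sE^{\mathrm{u}}$; it is unit-root by construction. The inclusion $\sE' \subseteq \sE$ holds over $U$ because it can be checked on Frobenius-invariant bases after the cover. The two lemmas on $p$-saturation and injectivity mod $p$ are exactly what is needed here: on $T_n$ the sections $\IL'(T_n)\otimes \sO$ land in $\sE$ modulo $p^n$ for all $n$, which gives containment in the $p$-adically separated, $p$-torsion-free coordinate rings.

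Finally I extend from $U$ to an open whose complement has codimension $\ge 2$. The complement $S\smallsetminus U$ is a closed set; I remove only its codimension-$\ge 2$ part, and I must show that $\sE'$ extends across the generic points of divisorial components. At such a generic point the local ring of $\what{S}$ is a DVR-like $p$-adic ring, with a normal special fiber. I take the saturation of $\sE'|_U$ inside $\sE$; by normality, namely reflexive extension over a codimension-one point via the Hartogs-type intersection, it is the intersection of $\sE$ with $\sE'[1/p]$. Frobenius preserves it, and it is unit-root at the divisor because its Hodge polygon equals its Newton polygon generically and the unit-root condition $\det B\in W^\times$ from \cref{rmk: unit root = slope 0} is a divisorial condition. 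Since a determinant that is a unit generically cannot acquire zeros only in codimension $\ge 2$, it is a unit on the complement of a codimension-$\ge2$ set.

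I expect this divisor-extension step to be the main obstacle: proving that the unit-root locus of the saturated sub-crystal is open with complement of codimension $\ge 2$, rather than merely dense. I would first attempt it via the determinant argument, using purity for the zero locus of $\det\varphi$ on a normal scheme, and would fall back on Katz's purity of the Newton stratification if that fails.
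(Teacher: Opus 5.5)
The gap is the step you flag as the main obstacle: producing $\sE'$ over an open whose complement has codimension $\ge 2$. The argument you give for it does not work. The rest is close to the paper's proof. Trivializing $\IL/p^m$ on a finite \'etale cover and taking the preimage of the generic subgroup gives the same local system as the paper's spreading out of the generic Tate module $T$ along its open stabilizer in $\pi_1$. Checking $\sE'\subseteq\sE$ and the Tate-module equality on a dense open is a legitimate variant.

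\textbf{Why the divisor-extension step fails.} You only invoke Katz's isogeny theorem over some dense open $V$, so everything you build lives over (a cover of) $V$, and you then try to push $\sE'$ across the divisorial components of $S\smallsetminus V$.
\begin{enumerate}[label=(\roman*)]
\item Since $\sE'$ is isogenous to $\sE$, we have $\sE'[1/p]=\sE[1/p]$. Your saturation $\sE\cap\sE'[1/p]$ is therefore just $\sE$, which in general is not unit-root after twisting. A Hartogs-type extension gives nothing across codimension-one points in any case.
\item The determinant argument runs backwards. The vanishing locus of $\det(\varphi)\bmod p$ is either empty or of pure codimension one. So ``a unit generically'' only gives unit-rootness off a divisor, which is exactly the divisor you need to cross.
\item What unit-rootness across a divisor $D$ actually requires is that the $\IZ_p$-local system $\IL'$ be unramified along $D$. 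By Katz's equivalence, a unit-root crystal on $V$ extends across $D$ if and only if its local system does. Nothing in your argument controls the inertia at $D$.
\item Purity of the Newton stratification does not help, because the Newton polygon is constant here.
\end{enumerate}

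\textbf{The missing input.} You need the codimension-one case of Katz's theorem. As the paper does, use \cite[Thm.~2.6.1]{Kat79} together with the remark after it. This gives $\sE''\subseteq\sE$ with $\sE''(r)$ unit-root at the codimension-one points, hence on an open whose complement has codimension $\ge 2$. Set $\IL=\beta^{-1}(\sE''(r))$ on that open. Then:
\begin{itemize}
\item Your $\IL'$ is defined on the whole finite \'etale cover trivializing $\IL/p^m$, not only on $U$.
\item The inclusion $\sE'=\beta(\IL')(-r)\subseteq\sE$, once known on a dense open or at the generic point, propagates to the whole cover by the two $p$-saturation lemmas. The paper applies them to an embedding $R\hookrightarrow W(\bar\eta)$; the $p$-adic completion of a dense open works equally well.
\item Only the equality of Tate modules is restricted to a dense open, which the statement permits.
\end{itemize}
No extension of $\sE'$ across divisors is ever needed.

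\textbf{Two smaller corrections.}
\begin{itemize}
\item An isoclinic slope-$0$ F-crystal \emph{is} unit-root, since its Hodge and Newton polygons share endpoints (\cref{rmk: unit root = slope 0}). The real issue behind your ``reduction to slope $0$'' is that $p^{-r}\varphi$ need not preserve $\sE$.
\item For $T_s/p^m\IL_s\subseteq\IL_s/p^m\IL_s$ to make sense you need $p^m\sE^{\mathrm{u}}\subseteq\sE\subseteq\sE^{\mathrm{u}}$, for instance $\sE^{\mathrm{u}}=p^{-m}\sE''$. The containment $p^m\sE\subseteq\sE^{\mathrm{u}}$ that you wrote is the wrong direction.
\end{itemize}
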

\begin{proof}
By \cite[Thm.~2.6.1]{Kat79} and the remark thereafter, up to shrinking $S$ to a Zariski open subvariety with a complement of codimension $\ge 2$, we can always find a sub-F-crystal $\sE'' \subseteq \sE$ which is isogenous to $\sE$ such that the Tate twist $\sE''(r)$ is a unit root F-crystal. Let $\IL'' := \beta^{-1}(\sE''(r)) \in \mathsf{Loc}_{\IZ_p}(S)$. Let $\bar{\eta}$ be an algebro-geometric generic point on $S$ and set $T := \sE_{\bar{\eta}}(W(\bar{\eta}))^{\varphi = p^r}$. Note that the stalk $\IL''_{\bar{\eta}}$ is a $\IZ_p$-sublattice in $T$. Up to replacing $S$ by a finite \'etale cover, the action of $\pi_1^\et(S, \bar{\eta})$ on $T[1/p] = \IL''_{\bar{\eta}}[1/p]$ stabilizes the lattice $T$. Therefore, $T$ spreads out to some $\IL' \in \mathsf{Loc}_{\IZ_p}(S)$ such that $\IL'_{\bar{\eta}} = T$.

By replacing $S$ by a Zariski open subvariety again, we may assume that $S$ admits an \'etale morphism to $\IA^n_k = \Spec(k[x_1, \cdots, x_n])$. Set $\sE' := \beta(\IL')(-r)$. Note that by construction we have an identification $\sE'[1/p] = \sE[1/p]$. We argue that under this identification $\sE'$ embeds into $\sE$. 

First, let $R$ be a $p$-complete smooth $W$-algebra lifting $\sO_S$. Then there exists a lift of Frobenius $\varphi_R$ on $R$ and an equivariant embedding $R \into W(\bar{\eta})$. To see this, we may lift the \'etale map $k[x_1, \cdots, x_n] \to \sO_S$ to an \'etale map $W \< x_1, \cdots, x_n \> \to R$. Then we reduce to producing such $\varphi_R$ when $R = W \< x_1, \cdots, x_n \>$. In this case, let $\varphi_R$ just be defined by applying $\sigma$ to $W$ and sending $x_i$ to $x_i^p$. Setting $k(\bar{\eta}) = \overline{k(t_1, \cdots, t_n)}$, the natural map $R \to k(\bar{\eta})$ sending each $x_i$ to $t_i$ easily lifts to $R \to W(\bar{\eta})$ by sending $x_i$ to the Teichm\"uller lift $[t_i]$. Then $\varphi_R$ is compatible with $\sigma_{W(\bar{\eta})}$. 

By the two preceding lemmas, we have $R = R[1/p] \cap W(\bar{\eta})$ inside $W(\bar{\eta})[1/p]$. Set $M' = \sE'(R)$ and $M = \sE(R)$. Our goal is to show that under the identification of $R[1/p]$-modules $M'[1/p] = M[1/p]$, we have $M' \subseteq M$. Base changing to $W(\bar{\eta})$, we clearly have $M' \tensor_R W(\bar{\eta}) \subseteq M \tensor_R W(\bar{\eta})$, because we have $\sE'(W(\bar{\eta})) = T \tensor W(\bar{\eta}) \subseteq \sE(W(\bar{\eta}))$ by construction. On the other hand, since $M$ is a finite projective $R$-module, we have $M = (M \tensor_R W(\bar{\eta})) \cap M[1/p]$ and the same is true for $M'$. Taking intersections yields $M' \subseteq M$, which implies that $\sE' \subseteq \sE$, as desired. Note that we have not yet used the Frobenius equivariance of $R \into W(\bar{\eta})$ for this step. 

It remains to show that up to shrinking $S$ to some Zariski open $U$, $\sE_s(W)^{\varphi = p^r} = \sE_s'(W)^{\varphi = p^r}$ for every $s \in U$. This property is known at the generic point $\bar{\eta}$, so we essentially need to spread it out. After a further finite \'etale cover, we may assume that $\IL'/p\IL'$ is constant, given by a finite dimensional vector space over $\IF_p$. Each section of this finite constant local system maps naturally to a global section of the vector bundle $\sE(S)$. At $\bar{\eta}$, the resulting map $T/pT \to \sE_{\bar{\eta}}(k(\bar{\eta}))$ is injective: if $t \in T$ is divisible by $p$ in $\sE_{\bar{\eta}}(W(\bar{\eta}))$, then $t/p$ is again a $\varphi=p^r$ element and hence belongs to $T$. Thus every nonzero section has nonzero image at $\bar{\eta}$. Removing the zero loci of these finitely many sections, we obtain an open dense subset $U \subseteq S$ on which $\IL'_s/p\IL'_s \to \sE_s(k)$ is injective for every closed point $s$.

Fix such an $s$ and an element $x \in \sE_s(W)^{\varphi=p^r}$. Since $\sE'$ is isogenous to $\sE$, there is a least integer $a \ge 0$ such that $p^a x \in \sE'_s(W)$. If $a>0$, then $p^a x \in \sE'_s(W)^{\varphi=p^r}=\IL'_s$ defines a nonzero class in $\IL'_s/p\IL'_s$, by the minimality of $a$, but its image in $\sE_s(k)$ is zero. This contradicts the injectivity just established. Hence $a=0$, and we conclude that  $\sE_s(W)^{\varphi=p^r}=\sE'_s(W)^{\varphi=p^r}$.
\end{proof}

\begin{corollary}
\label{cor: Frob inv horizontal}
    In the setting of \cref{prop: semi-continuity}, for a general point $s \in S$, every element $\alpha \in \mathrm{im}( \sE_s(W)^{\varphi = p^r} \tensor_{\IZ_p} k \to \sE_s(k))$ deforms to a horizontal section over the formal completion $S^\wedge_s$. 
\end{corollary}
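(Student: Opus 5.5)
We work in the setting produced by \cref{prop: semi-continuity}. After the allowed shrinking and finite \'etale cover, we have a sub-F-crystal $\sE' \subseteq \sE$ with $\sE'(r)$ unit-root, and $\sE'_s(W)^{\varphi = p^r} = \sE_s(W)^{\varphi = p^r}$ for $s$ in a dense open $U$. The plan is to produce the horizontal deformations inside $\sE'$, where they come directly from the local system $\IL' = \beta^{-1}(\sE'(r))$. We then push them into $\sE$ along the inclusion of crystals. Since a morphism of F-crystals is compatible with the connections on the values at any smooth lift, a horizontal section of $\sE'$ is automatically horizontal in $\sE$.

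\textbf{Step 1.} Fix $s \in U$ and an element $\alpha$ in the image of $\sE_s(W)^{\varphi = p^r} \tensor_{\IZ_p} k \to \sE_s(k)$. By the equality of Tate modules at $s$, we may write $\alpha$ as the image of some $t \in \sE'_s(W)^{\varphi = p^r} \tensor k$. Here $\sE'_s(W)^{\varphi=p^r}$ is identified with $\IL'_s$ via \cref{cor: Katz equiv free}(b), after the Tate twist by $r$. By linearity it suffices to treat $t \in \IL'_s$.

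\textbf{Step 2.} Over the formal completion $S^\wedge_s$ (or its lift $W[\![x_1,\dots,x_n]\!]$), the pullback of $\IL'$ is constant, because $S^\wedge_s$ is strictly henselian and has trivial \'etale fundamental group. So $t$ extends uniquely to a global section $\tilde t$ of $\IL'|_{S^\wedge_s}$. We then recall the construction \eqref{eqn: define beta} of $\beta$: the value of $\beta(\IL')$ on a trivializing cover is $\IL'(T_n) \tensor \sO_{T_n}$, and its connection is the descent of $1 \tensor d$. Consequently each section of $\IL'$ gives a section $\tilde t \tensor 1$ of $\beta(\IL')$ that is flat for the Katz connection. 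Its image in $\sE'|_{S^\wedge_s}$ is therefore horizontal and specializes to $t$ at $s$.

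\textbf{Step 3.} We compose with $\sE' \into \sE$ and reduce mod $p$. This gives a horizontal section of the de Rham realization $\sE_{S^\wedge_s}\tensor k$ whose value at $s$ is $\alpha$. The Tate twist only rescales $\varphi$ by $p^r$ and does not affect the connection.

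The main obstacle is Step 2. We must check that the connection Katz puts on $\beta(\IL')$, obtained by descending $1\tensor d$, agrees with the crystal connection on $\sE'$, and hence, via the inclusion, with the Gauss--Manin-type connection on $\sE$ over $S^\wedge_s$. We would address this through the standard fact that the connection of a crystal is determined by its values on a Frobenius-lifted smooth lift together with compatibility with $\varphi_R$. We would also need to handle the passage from the $p$-adic formal lift to the completion at $s$; here we would use that $R \to R^\wedge_s$ is flat and the connection is compatible with base change.
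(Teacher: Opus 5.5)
Your proposal is correct and is essentially the paper's argument. You pass to the sub-F-crystal $\sE'$ of \cref{prop: semi-continuity}, trivialize the associated local system over the strictly henselian $S^\wedge_s$, and read off horizontality from the descended connection $1\otimes d$ of \eqref{eqn: define beta}; the differences are cosmetic, namely that the paper works directly with the $\IF_p$-local system $\beta_1^{-1}(\sE'(r) \bmod p)$ instead of $\beta^{-1}(\sE'(r))$ followed by reduction mod $p$, and that the connection compatibility you flag as the main obstacle is already built into $\beta$ landing in F-crystals, so $\sE'\subseteq\sE$ automatically respects connections.
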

\begin{proof}
    Since the conclusion concerns the formal completion at a general point, we are allowed to replace $S$ by a Zariski open subvariety and pass to an \'etale cover, so that a $\sE' \subseteq \sE$ as in \cref{prop: semi-continuity} exists. Let $\IL' = \beta_1^{-1}(\sE'(r) \mod p)$. Then the restriction of the $\IF_p$-local system $\IL'$ over $S^\wedge_s$ is trivial. Hence any $\alpha \in \IL'_s$ extends to a local section of $\IL'$ over $S^\wedge_s$, which gives the desired parallel transport of $\alpha$ (cf. (\ref{eqn: define beta}) in the proof of \cref{thm: Katz equiv torsion}). 
\end{proof}

Let us return to proving \cref{thm: semi-continuity of sigma0}. 

\begin{proof}
As corollaries of \cref{prop: semi-continuity}, we learn the following two facts: (a) For any smooth irreducible $S$ and an F-crystal $\sE$ isoclinic of slope $r \in \IZ$, the function $\sigma_0$ is constant on an open dense subvariety $U$ of $S$. (b) If $S$ is a curve, then $\sigma_0$ is lower semi-continuous. By applying (a) to (the smooth part of) the complement $S \smallsetminus U$ and iterating this process, we obtain that each level set of $\sigma_0$ (i.e., $\{ \sigma_0 = m\}$ for some $m \in \IN$) is a constructible subset of $S$. Now (b) tells us that $\sigma_0$ can only decrease under specialization. Therefore, we learn that for every $m$, $\{ \sigma_0 \ge m \}$ is a constructible subset that is stable under generization, and hence is open (\cite[\href{https://stacks.math.columbia.edu/tag/0542}{Tag 0542}]{stacks-project}). 
\end{proof}

\subsection{Supersingular Surfaces in $\IP^3$}

\begin{proposition}
    If $(H, \< -, - \>, \varphi)$ is a supersingular orthogonal F-crystal of level $2$ such that the cokernel of $H^{\varphi = p} \tensor_{\IZ_p} W \to H$ is $p$-torsion, then $\sigma_0 \le \mathrm{rank\,} H / 2$. 
\end{proposition}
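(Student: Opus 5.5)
Write $T := H^{\varphi = p}$, $n := \mathrm{rank}\, H$ and $W := W(k)$. The plan is to mimic Ogus' argument for K3 crystals: decompose the Tate module $T$ according to how far its restricted form is from self-dual, and then use the $p$-torsion hypothesis to control the non-self-dual part.

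First I check that $T$ is a lattice of full rank $n$. Supersingularity means $\varphi$ is pure of slope $1$. By the Dieudonn\'e--Manin classification over algebraically closed $k$, $\varphi/p$ is then a unit root structure on $H[1/p]$, so $T[1/p]\otimes W = H[1/p]$. Thus $T\otimes_{\IZ_p} W \to H$ is injective with finite cokernel.

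Next I compute the form on $T$. For $x,y\in T$ we have $\<\varphi x,\varphi y\> = p^2\sigma\<x,y\>$ and also $\<px,py\> = p^2\<x,y\>$. Hence $\<x,y\>\in\IZ_p$, so the form restricts to a $\IZ_p$-valued quadratic form on $T$. Set $\Lambda := T\otimes W \subseteq H$.
\begin{itemize}
\item Since $H$ is self-dual, $\Lambda \subseteq H = H^\vee \subseteq \Lambda^\vee$.
\item The hypothesis says $pH\subseteq \Lambda$. Dualizing gives $p\Lambda^\vee \subseteq H^\vee = H$, so $p^2\Lambda^\vee\subseteq p H \subseteq\Lambda$.
\end{itemize}

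The step I expect to be the main obstacle is sharpening this to $p\Lambda^\vee\subseteq\Lambda$, which would make the discriminant group $\Lambda^\vee/\Lambda$ $p$-torsion. If it holds, $T$ is $p$-elementary, and by the Jordan decomposition over $\IZ_p$ ($p$ odd; $p=2$ needs the usual care) we get $T = T_1 \oplus T_0(p)$ with $T_0, T_1$ self-dual, exactly as in Ogus' setup. In that case $H/\Lambda \subseteq \Lambda^\vee/\Lambda$. Moreover $\Lambda\subseteq H\subseteq\Lambda^\vee$ with $H$ self-dual forces $H/\Lambda$ to be a maximal isotropic subspace, with respect to the induced $\IF_p$-valued form, of $\Lambda^\vee/\Lambda \cong T_0^\vee/pT_0^\vee\otimes k$. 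Its dimension is therefore $\frac12\mathrm{rank}\, T_0$.

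To obtain $p\Lambda^\vee\subseteq\Lambda$, I would try to show directly that $\Lambda^\vee\subseteq p^{-1}\Lambda$. Two routes seem available:
\begin{itemize}
\item Use Frobenius: $\varphi$ acts on $\Lambda^\vee\subseteq\Lambda[1/p]$ as $p\cdot(1\otimes\sigma)$. Combining $p^2H\subseteq\varphi(H)$ with $pH\subseteq\Lambda$ should trap the elementary divisors of $\Lambda\subseteq\Lambda^\vee$ in $\{1,p\}$.
\item Alternatively, argue that $T$ is $p$-saturated in $H$, i.e.\ that $t\in T$ with $t/p\in H$ implies $t/p\in T$. This gives $T\otimes k\hookrightarrow H/pH$ modulo the kernel, and the kernel can be computed directly.
\end{itemize}

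With the structure in hand, $\sigma_0 = \dim_k\ker(T\otimes k\to H\otimes k)$. This kernel is $(\Lambda\cap pH)/p\Lambda$, which is dual to $H/\Lambda$ under the induced pairing, so it has dimension $\frac12\mathrm{rank}\, T_0 \le \frac12 n$. As a consistency check, since $T_0(p)$ is totally isotropic modulo $p$, the kernel is the isotropic image $T_0\otimes k$ paired against its complement. That explains the bound $\mathrm{rank}\, T_0\le n$ and hence $\sigma_0\le n/2$.
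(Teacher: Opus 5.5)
You have the same setup as the paper: the chain $\Lambda \subseteq H \subseteq \Lambda^\vee$, the inclusions $pH \subseteq \Lambda$ and $p\Lambda^\vee \subseteq H$, and the duality between $H/\Lambda$ and $\Lambda^\vee/H$. In the paper's notation, $\Lambda = E_H$ and $\Lambda^\vee = L_H$. But there is a genuine gap exactly at the step you flag. Everything after ``If it holds'' depends on $p\Lambda^\vee \subseteq \Lambda$, and neither of your routes proves it. Without it you only have $p^2\Lambda^\vee \subseteq \Lambda$. That gives $2\sigma_0 = \mathrm{length}_W(\Lambda^\vee/\Lambda) \le 2\,\mathrm{rank}\,H$, hence only $\sigma_0 \le \mathrm{rank}\,H$. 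Your second route proves nothing new. $T$ is automatically $p$-saturated in $H$: if $t \in T$ and $t/p \in H$, then $\varphi(t/p) = t$ because $H$ is torsion-free. This says nothing about $T^\vee$, which is what has to be controlled. Your first route starts correctly, noting that $\varphi = p(1\otimes\sigma)$ on $\Lambda[1/p]$. It then turns to the level condition $p^2 H \subseteq \varphi(H)$, which plays no role here.

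The missing idea is the maximality of $T = H^{\varphi = p}$. Since $T$ is $\IZ_p$-free and the form is nondegenerate on $T[1/p]$, you have $\Lambda^\vee = T^\vee \otimes_{\IZ_p} W$ with $T^\vee \subseteq T[1/p]$. Because $\varphi$ is $\sigma$-semilinear and equals $p$ on $T$, every $x \in pT^\vee$ satisfies $\varphi(x) = px$. You already showed $p\Lambda^\vee \subseteq H$, so $pT^\vee \subseteq H^{\varphi=p} = T$, and therefore $p\Lambda^\vee = pT^\vee \otimes W \subseteq \Lambda$. The paper phrases the same step as follows: $pL_H$ is $\varphi_0$-stable and lies in $H$, so it lies in the largest $\varphi_0$-stable submodule $E_H = T \otimes W$.

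Once you have this, you do not need the Jordan decomposition (with its $p=2$ caveat) or the Lagrangian argument. You get $\sigma_0 = \dim_k(H/\Lambda) = \dim_k(\Lambda^\vee/H)$ by duality. Since $\Lambda^\vee/\Lambda$ is now killed by $p$, $2\sigma_0 = \mathrm{length}_W(\Lambda^\vee/\Lambda) \le \dim_k(\Lambda^\vee/p\Lambda^\vee) = \mathrm{rank}\,H$.
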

    \begin{proof} The argument of \cite[Thm.~3.5.1]{Ogu79} for the case of K3 crystals ($h^0=1$) applies verbatim under the $p$-torsion assumption. We reproduce the argument for readers' convenience. 
    Following Ogus \cite[Def.~3.7]{Ogu79}, we define $E_H := \{ x \in H : \forall n \in \IN, \varphi^n(x) \in p^n H \}$ and $L_H := \sum_n p^{-n} \varphi^n (H)$. Define $\varphi_0 = p^{-1} \varphi$. Clearly, $E_H$ is the largest $\varphi_0$-stable submodule of $H$, and $L_H$ is the smallest $\varphi_0$-stable submodule of $H[1/p]$ which contains $H$. By \cref{rmk: unit root = slope 0}, $(E_H, \varphi_0)$ and $(L_H, \varphi_0)$ are both unit root F-crystals. 

    First, we argue that $E_H = T \tensor_{\IZ_p} W$ for $T := H^{\varphi = p}$. Indeed, we have $E_H = (E_H)^{\varphi_0 = 1} \tensor_{\IZ_p} W$ (by \cite[Rem. 5.5]{zbMATH03409516}, or \autoref{cor: Katz equiv free}) and $(E_H)^{\varphi_0 = 1} \subseteq T$. Therefore, $E_H \subseteq T \tensor_{\IZ_p} W$. Conversely, $T \tensor_{\IZ_p} W$ is clearly $\varphi_0$-stable sublattice of $H$, so we obtain the reverse inclusion. Next, we argue that $L_H = E_H^\vee$ (cf. \cite[Lem.~3.10]{Ogu79}). We have $\varphi_0 (E_H) = E_H$, so that $\varphi_0(E_H^\vee) = E_H^\vee$. By the minimality of $L_H$, we have $L_H \subseteq E_H^\vee$. For the reverse inclusion, since $H \subseteq L_H$ and $H$ is self-dual, we have $L_H^\vee \subseteq H$. Moreover, we also have $\varphi_0(L_H^\vee)=L_H^\vee$. By the maximality of $E_H$, it follows that $L_H^\vee \subseteq E_H$. Dualizing again gives $E_H^\vee \subseteq L_H$. 

    We now use the $p$-torsion assumption. Since $E_H=T \tensor_{\IZ_p} W$, the $p$-torsion assumption implies $p H \subseteq E_H$, and dually $p L_H \subseteq H$. Since $p L_H$ is $\varphi_0$-stable, the maximality of $E_H$ gives $p L_H \subseteq E_H$. To sum up, we have now $$ pH\subseteq pL_H\subseteq E_H\subseteq H\subseteq L_H
    \subseteq p^{-1}H. $$ Recall that by definition, $$\sigma_0 = \dim_k\ker(E_H/pE_H\to H/pH) =\dim_k(pH/pE_H) = \dim_k(H/E_H), $$
    where the second equality uses $pH\subseteq E_H$. 
    
    As in \cite[Lem.~3.10]{Ogu79}, dualizing $0\to E_H\to H\to H/E_H\to0$ gives a natural duality between
$H/E_H$ and $L_H/H$. Hence $\dim_k(L_H/H)=\dim_k(H/E_H)=\sigma_0$. 
It follows that $$\mathrm{length}_W(L_H/E_H) = \mathrm{length}_W(H/E_H)+ \mathrm{length}_W(L_H/H) =2\sigma_0.$$ On the other hand, $pL_H\subseteq E_H$, so $L_H/E_H$ is a quotient of $L_H/pL_H$. Consequently
$$ 2\sigma_0
    =\dim_k(L_H/E_H)
    \leq \dim_k(L_H/pL_H)
    =\mathrm{rank\,} H.$$
This proves the proposition.
\end{proof}

Let us define some polynomials in $d$ as follows. 

$$ h = { d - 1 \choose 3 } \quad n = d^3 - 4d^2 + 6d - 3 \quad n' = {d + 3 \choose 3} - 16. $$

\noindent The point is that for any smooth $X \subseteq \IP^3_\IC$ of degree $d$, $h = h^{2, 0}(X)$, and $n = b_2(X) - 1$; $n' = \dim_\IZ |\sO_{\IP^3}(d)|/ \PGL_4$. We additionally set $$ \sigma_0(d) = n - \lceil \frac{n'}{h} \rceil. $$



In what follows we fix a quadratic lattice $V$ which is isomorphic to the primitive part of $H^2(X, \mathbb{Z})$ for any smooth degree $d$ surface $X \subset \mathbb{P}^3_{\IC}$. We write $\che{D} = \mathrm{IGr}(h, V)$ for the flag variety parameterizing $h$ dimensional isotropic subspaces in $V$. Define $m = \lceil \dim \sfM_\IC / h \rceil$. We can find a number field $L$ such that for each $0 \le j \le m$, we can find a direct summand $M_j \subseteq V_L$ of dimension $m$ such that $\dim M_j \cap M_j^\perp = j$. For some large enough $N$, we can spread out $M_j$ and $M_j^\perp$ to direct summands of $V \tensor \sO_L[1/N]$. We then set $Z_{j}$ to be the scheme $\mathrm{IGr}(M_j^\perp, h)$, which is naturally contained in $\che{D}_{\sO_L[1/N]}$. 


\begin{theorem}
\label{thm: supersingular surface general}
     Let $\sfM$ be the open substack of $|\sO_{\IP^3}(d)| / \mathrm{PGL}_4$ parametrizing those smooth degree $ d \ge 5$ surfaces, and let $E = \bigcup_{i = 0}^m E_m \subset \sfM$ be the union of the atypical loci associated to the $Z_{i}$, defined as in \autoref{Zatyplocusdef}. Then on $\sfM \smallsetminus E$ there are only finitely many primes $p$ for which there exists one-parameter families of supersingular surfaces with Artin invariant $\le \sigma_0(d)$. 
\end{theorem}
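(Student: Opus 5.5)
Following the template of \cref{preciseorthogonalshimurasupsingthm}, the plan is to show that for every prime $p$ larger than a common stabilization order $r_0$ of the jet constructions defining the $E(Z_j)$, any positive-dimensional family of supersingular degree $d$ surfaces with Artin invariant $\le \sigma_0(d)$ lies inside $E$. This leaves only the finitely many primes $p \le r_0$ (together with those dividing $N$).

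\textbf{Reduction to a jet with horizontal isotropic classes.} Fix $p > r_0$ and an irreducible curve $C \subset (\sfM \smallsetminus E)_{\bar{\IF}_p}$ parametrizing supersingular surfaces with $\sigma_0 \le \sigma_0(d)$. Restrict the primitive crystalline $\H^2$, an orthogonal F-crystal of level $2$ isoclinic of slope $1$, to the smooth locus of $C$. By \cref{thm: semi-continuity of sigma0}, $\sigma_0$ is constant, say equal to $\sigma$, on a dense open subset; let $x$ be a general point there. Apply \cref{prop: semi-continuity} and \cref{cor: Frob inv horizontal}: every element of $M_x := \mathrm{im}(T_x \otimes k \to \bL_{\dR,x})$ extends to a horizontal section over $C^\wedge_x$, where $T_x = H^{\varphi=p}$. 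Supersingularity makes $T_x \otimes W \to H$ have finite cokernel, so $\dim M_x = \mathrm{rank}\, V - \sigma = n - \sigma \ge \lceil n'/h \rceil$. Moreover, $M_x$ is orthogonal to $\Fil^1$-type data exactly as in the K3 case: these classes are Frobenius-eigen of slope $1$, hence lie in $\Fil^1$. So $\Fil^2 = \Fil^{h}$-part, the $h$-dimensional isotropic subspace $\omega$, satisfies $\omega \perp M_x$, and this relation persists along any jet in $C^\wedge_x$ because the classes are horizontal and stay in $\Fil^1$ by Griffiths transversality (to be checked: horizontality plus initial condition forces this only to first order, so I will instead use that the horizontal extensions are themselves images of Frobenius-invariants, hence in $\Fil^1$ at every point).

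\textbf{Matching with a $Z_j$.} Choose an $m$-dimensional subspace $M' \subset M_x$ with $m = \lceil \dim \sfM/h \rceil \le n - \sigma$. Its radical has some dimension $j \le m$. Since $\mathrm{O}(V_k)$ acts transitively on subspaces with given dimension and radical dimension (Witt's theorem), there is an isometry $b^\bullet : V_k \simeq \bL_{\dR,x}$ carrying $M_j$ to $M'$. The condition $\omega \perp M'$ says exactly $\omega \in \mathrm{IGr}(M_j^\perp, h) = Z_j$ after this frame. Consequently, for any one-dimensional jet $j : \ID^1_r \to C$ at $x$ with $r \ge r_0$, the period jet lands in $J^1_r Z_j$. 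Non-degeneracy holds provided the period map is immersive at general points; I would arrange this by choosing $\xi$ as in \cref{shimflatnessthmv2} using quasi-finiteness of the period map of $\sfM_\IC$ (infinitesimal Torelli for surfaces in $\IP^3$).

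\textbf{Dimension count and conclusion.} It remains to check that $d = 1$ meets the threshold of \cref{Zatyplocusdef}, i.e.\ $\dim \sfM + \dim Z_j - \dim \che{D} < 1$. Each of the $m$ vectors of $M'$ imposes $h$ linear conditions on $\omega$, modulo the radical directions. The bound $mh \ge \dim \sfM$ is built to give codimension at least $\dim \sfM$, so the expected dimension is $\le 0$. This is the step I expect to be the main obstacle: the radical of dimension $j$ weakens the codimension (as in the K3 case, where isotropic classes cost less), and the monodromy group may be non-simple or the factor issues of \cref{atyplocusconstrprop} may enter. I would first compute $\codim_{\che{D}} Z_j$ directly for isotropic Grassmannians and, if it falls short, use that $\sigma_0 \le \mathrm{rank}\, H/2$ from the preceding proposition to enlarge $M'$. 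Once the inequality holds, $x \in E(Z_j) \subseteq E$, contradicting $C \subset \sfM \smallsetminus E$ for $p > r_0$.
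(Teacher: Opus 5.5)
Your skeleton matches the paper's proof. You extend the classes in $\mathrm{im}(T_x\otimes k\to\sV_x)$ horizontally over $C^\wedge_x$ by \cref{cor: Frob inv horizontal}. You put them in $\Fil^1$ by Mazur--Ogus, using your corrected argument rather than Griffiths transversality. You then move an $m$-dimensional subspace onto some $M_j$ by Witt, so the period $1$-jets land in $J^1_rZ_j$ and $x\in E$.

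However, the step you defer as ``the main obstacle'' is a genuine gap, and it is essential. $E$ is defined through \autoref{Zatyplocusdef}. If $\codim_{\che{D}}Z_j<\dim\sfM$ for some $j$, the jet dimension attached to $Z_j$ is at least $2$. A curve then supplies no admissible jets, and one-parameter families are not detected by $E$ at all. Your expectation that the radical lowers the codimension is also wrong in this range.

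The missing input is \cref{lem: codim IGr orthogonal}: if $2h\le n-m-j$, then $\IGr(h,M^\perp)$ has codimension exactly $hm$ in $\IGr(h,V)$, whatever $j$ is. The reason is that a generic isotropic $h$-plane $W\subseteq M^\perp$ meets $R=M\cap M^\perp$ trivially. So on a dense open, $\IGr(h,M^\perp)$ is an affine bundle over $\IGr(h,M^\perp/R)$ with fibre $\Hom(\overline{W},R)$. The $hj$ gained in the fibre exactly offsets the $hj$ lost by passing to the $(n-m-j)$-dimensional nondegenerate quotient: $\bigl[hn-\tfrac{h(3h+1)}{2}\bigr]-\bigl[h(n-m-j)-\tfrac{h(3h+1)}{2}\bigr]-hj=hm\ge\dim\sfM$. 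The hypothesis holds because $j\le m$ and $2h\le n-2m$ for all $d\ge 5$ (for $d=5$: $h=4$, $n=52$, $m=10$). The K3 case misleads you because there $M$ is nearly all of $V$, so $n-m-j=0<2h$ and the lemma does not apply. Here $m$ is tiny compared with $n$.

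Your proposed fallback would not repair a shortfall anyway. $E$ is built from the fixed $Z_j$ with $\dim M_j=m$, so enlarging $M'$ does not change which jets $E$ detects. The bound $\sigma_0\le\mathrm{rank}\,H/2$ plays no role in this proof; it is used only to deduce \cref{supsingquintic} from this theorem, where it gives $\sigma_0\le 26<42=\sigma_0(5)$.

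Your worry about factors is settled by Beauville's theorem that the monodromy is $\IQ$-simple. Hence $\che{D}=\IGr(h,V)$ has a single factor, $I^c_{Z_j}=\varnothing$, and the threshold in \autoref{Zatyplocusdef} is $\dim\sfM-\codim Z_j+1\le 1$.

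Two smaller points. First, the non-constancy condition at order $\xi$ comes from infinitesimal Torelli over $\IC$, spread out to all large $p$; it does not come from anything specific to $C$. Second, Mazur--Ogus must be applied to $\tilde\alpha$ as a $\varphi=p$ section over $C^\wedge_x$. Membership in $\Fil^1$ at closed points alone says nothing over the non-reduced jet $\ID^1_r$.
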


\begin{proof}

    Let $\sV$ be the primitive part of the second de Rham cohomology of the universal family $f : \sX \to \sfM$.  Apply the construction in \cref{constr: atypical E} to $\sfM$ over $\sO_L[1/N]$ for $N$ suitably large, with $d = 1$. In particular, we obtain an orthogonal frame bundle $\sM$ such that for every algebraically closed field $k$, $\sM(k)$ is the set of pairs $\{ (s, b^\bullet)\}$ where $s \in \sfM(k)$ and $b^\bullet$ is an isometry $V_k \simeq \sV_s$. We have a period map $\sM^1_r \to J^1_r \che{D}$ over $\sO_L[1/(Nr!)]$. By construction, every point $(s, b^\bullet, j) \in \sM^1_r(k)$ such that the composition $\ID^1_{r, k} \to \che{D}$ factors through $\union_{j = 0}^m Z_j$ projects into $E$ (i.e., $s \in E(k)$). 

    Note that $2h \le n - 2m \le n - m - j$, so by \cref{lem: codim IGr orthogonal} below, the codimension of $\IGr(M_{j, k}^\perp, h)$ in $\IGr(V_k, h)$ is $hm \ge \dim \sfM_\IC$. In particular $d = 1$ satisfies the inequality of \cref{atyplocusconstrprop}. The universal family over $\sfM_{\IC}$ has $\mathbb{Q}$-simple monodromy by a result of Beauville \cite{zbMATH03973031}, and the period map is quasi-finite as a consequence of the local Torelli theorem of \cite[Thm. 5.4]{zbMATH03458789}. It then follows from \cref{atyplocusconstrprop} that $E$ is not Zariski dense, and $E_{\IC}$ is a finite union of special subvarieties of $\sfM_{\IC}$.

    Suppose now that for some $p$, there is a curve $C$ in the supersingular locus of $\sfM_{k}$. Let $s$ be a general point on $C$. Set  
    $$ I_s := \mathrm{im}(\mathrm{PH}^2_\crys(\sX_s / W)^{\varphi = p} \tensor_{\IZ_p} k \to \mathrm{PH}^2_\dR(\sX_s/k) = \sV_s). $$
    Since $\dim I_s \ge m$, choose an $m$-dimensional subspace $I'_s \subseteq I_s$, set $j = \dim(I'_s \cap (I'_s)^\perp)$, and choose an isometry $b^\bullet : V_k \stackrel{\sim}{\to} \sV_s$ such that $M_j$ maps isomorphically onto $I'_s$.

    By \cref{cor: Frob inv horizontal}, every element $\alpha \in I_s$ deforms to a horizontal section $\wt{\alpha}$ over the formal completion $C_s^\wedge$. Note that as $\wt{\alpha}$ still satisfies $\varphi = p$, by Mazur-Ogus inequality \cite[Thm.~8.26]{BOBook}, $\wt{\alpha}$ lies in the $\Fil^1$ part of the Hodge filtration. This implies that $s \in E(k)$, as desired. 
\end{proof}

\begin{lemma}
\label{lem: codim IGr orthogonal}
    Let $V$ be a nondegenerate quadratic space of dimension $n$ over an algebraically closed field $k$. Let $M \subseteq V$ be a subspace of dimension $m$ and let $j$ be the dimension of $M \cap M^\perp$. If $2h \le n - m - j$, then the codimension of $\IGr(M^\perp, h)$ in $\IGr(V, h)$ is $hm$. 
\end{lemma}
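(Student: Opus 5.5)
The statement is really a fibre-bundle dimension count: stratify by the intersection with the radical of $M$ and compute the dimension of the open stratum. First, $\IGr(M^\perp,h)$ denotes the variety of $h$-dimensional subspaces $\w\subseteq M^\perp$ that are totally isotropic for the form restricted from $V$. Nondegeneracy of $V$ gives $\dim M^\perp=n-m$, and $(M^\perp)^\perp=M$. So the radical of $M^\perp$ is $M\cap M^\perp$, of dimension $j$. Writing $M^\perp=R\oplus N$ with $R=M\cap M^\perp$ and $N$ nondegenerate of dimension $n-m-j$, we must compute the dimension of the variety of $h$-dimensional isotropic subspaces of the degenerate quadratic space $R\oplus N$. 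This is then compared with $\dim\IGr(V,h)=h(n-h)-\tfrac{h(h+1)}{2}$. That formula comes from the usual count: $\dim \mathrm{Gr}(h,n)$ minus the $\binom{h+1}{2}$ conditions $q|_\w=0$, and it is valid when $2h\le n$.

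To compute $\dim\IGr(R\oplus N,h)$, stratify by $a=\dim(\w\cap R)$, with $0\le a\le\min(h,j)$. On the stratum with a fixed value of $a$, the image $\bar\w$ of $\w$ in $N\cong(R\oplus N)/R$ is an isotropic subspace of $N$ of dimension $h-a$. This condition contributes
\[
(h-a)(n-m-j-(h-a))-\tfrac{(h-a)(h-a+1)}{2},
\]
which is valid since $2(h-a)\le n-m-j$ by hypothesis. The remaining data are the $a$-plane $\w\cap R$ in $R$, contributing $a(j-a)$, and a lift of $\bar\w$ modulo $\w\cap R$, given by an element of $\Hom(\bar\w,R/(\w\cap R))$ and contributing $(h-a)(j-a)$. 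Every such lift is automatically isotropic because $R$ is the radical.

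The total dimension of the stratum is therefore
\[
f(a)=(h-a)(n-m-h)+a(j-a)-\tfrac{(h-a)(h-a+1)}{2}.
\]
The key step is to check that $f$ is maximized at $a=0$. We have
\[
f(a)-f(0)=a\bigl(-(n-m-h)+j+(2h+1-a)/2-a\bigr),
\]
and the condition $2h\le n-m-j$ forces this to be $\le 0$ for all $a\ge 1$, up to a boundary check I would carry out carefully. This boundary check is the main obstacle: one has to confirm the inequality exactly at the edge of the hypothesis, which may need a small case analysis.

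Granting that, $\dim\IGr(M^\perp,h)=f(0)=h(n-m-h)-\tfrac{h(h+1)}{2}$. Subtracting this from $\dim\IGr(V,h)$ gives $h(n-h)-h(n-m-h)=hm$. Irreducibility of $\IGr(V,h)$ is not needed, since the codimension is computed with respect to the maximal-dimensional stratum; if the relevant $\IGr$ has two components, which can happen when $2h=n$, the codimension is taken componentwise.
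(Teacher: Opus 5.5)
Your proposal is correct and follows essentially the paper's argument. Both stratify $\IGr(M^\perp,h)$ by $a=\dim(W\cap R)$, identify the open stratum $a=0$ as a $\Hom(\overline{W},R)$-torsor over $\IGr(M^\perp/R,h)$, and subtract its dimension from $\dim\IGr(V,h)=hn-\tfrac{h(3h+1)}{2}$. The step you flagged as the main obstacle is immediate and has no boundary case: your bracket simplifies to $2h-(n-m-j)+\tfrac{1-3a}{2}\le\tfrac{1-3a}{2}<0$ for $a\ge 1$, so $f(a)\le f(0)-a$ strictly (the paper simply asserts this strict inequality).
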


\begin{proof}
    The dimension of the ambient isotropic Grassmannian is given by the standard formula:
    \begin{equation}
    \label{eqn: dim IGr orthogonal}
        \dim \IGr(V, h) = hn - \frac{h(3h+1)}{2}.
    \end{equation}
    The orthogonal complement $M^\perp$ has dimension $n - m$. The radical of the restriction of the quadratic form to $M^\perp$ is exactly $R = M \cap M^\perp$, which by assumption has dimension $j$. We can project $M^\perp$ onto its non-degenerate quotient space $\overline{M^\perp} = M^\perp / R$, which has dimension $n - m - j$. 
    
    The hypothesis $2h \le n - m - j$ ensures that a generic totally isotropic $h$-dimensional subspace $W \subseteq M^\perp$ intersects the radical $R$ trivially, i.e., $W \cap R = \{0\}$. The space of such $h$-planes forms a dense open subvariety $U \subseteq \IGr(M^\perp, h)$. Indeed, stratifying the complement by $r=\dim(W\cap R)\geq 1$, the $r$-th stratum is parametrized by the choice of $A\in \mathrm{Gr}(r,R)$, an isotropic $(h-r)$-plane in $\overline{M^\perp}$, and a lift in $\Hom(\overline W,R/A)$. The standard dimension formula shows that each such stratum has strictly smaller dimension than $U$ under the assumption $2h\leq n-m-j$. We compute the dimension of $U$ (and thus of $\IGr(M^\perp, h)$) by considering the projection $\pi: M^\perp \to \overline{M^\perp}$. The restriction of $\pi$ to any $W \in U$ is injective, mapping $W$ to a totally isotropic $h$-plane $\overline{W} \subseteq \overline{M^\perp}$. 
    
    This establishes a fibration of $U$ over an open dense subset of the base $\IGr(\overline{M^\perp}, h)$. For a fixed $\overline{W}$ in the base, the fiber consists of all totally isotropic lifts $W \subseteq M^\perp$ such that $\pi(W) = \overline{W}$. Since $R$ is the radical of the quadratic form on $M^\perp$, the preimage $\pi^{-1}(\overline{W})$ is a totally isotropic subspace of dimension $h + j$. The choices of such a linear lift $W$ correspond to the splittings of the exact sequence
    $ 0 \to R \to \pi^{-1}(\overline{W}) \to \overline{W} \to 0$, which is parametrized by the affine space $\mathrm{Hom}(\overline{W}, R)$. 
    Finally, the codimension we are seeking is computed by 
    $\dim \IGr(V,h) - \dim \IGr(\overline{M^\perp}, h) - \dim \mathrm{Hom}(\overline{W}, R)$, which one easily finds to be $hm$ as (\ref{eqn: dim IGr orthogonal}) also applies to $\overline{M^\perp}$. 
\end{proof}

\begin{remark}
\label{rmk: singular quintic}
    In \cref{thm: supersingular surface general}, we can enlarge $\sfM$ to the open substack of $|\sO_{\IP^3}(d)| / \PGL_4$ parametrizing those surfaces with at worst du Val (RDP) singularities. For why the substack is open, the reader may look at \cite[Prop.~2.7]{GY25}. We give a sketch on how it works: By Artin-Brieskorn resolution \cite{Art74}, we can find a cover $\sfM'$ of $\sfM$ over which the universal hypersurface $\sX$ admits a simultaneous resolution. Then by resolution of singularities and spreading out, one replaces $\sfM'$ by a smooth scheme, after inverting some $N$. We apply the same analysis to $\sfM'$, and obtain an $E$ on $\sfM'$ so that the conclusion of \cref{thm: supersingular surface general} holds for $\sfM' \smallsetminus E$. 

    The caveat is that the discriminant locus of $\sfM$ parametrizing the singular fibers is a divisor, and remains a divisor in $\sfM'$. Let us call this divisor $\sfD$. As resolving RDP singularities incurs exceptional divisors on the resolved surfaces, $\sfD$ is an atypical component of the Noether-Lefschetz locus. There is a possibility that $\sfD$ is contained in $E$, so that there is no added generality. However, where $\sfD$ is contained in $D$ can in principle be decided effectively (cf. \cref{rmk: effectivity}). 
\end{remark}

\subsection{Supersingular Curves}

In previous sections, we studied the intersection of a closed subscheme of $\sfA_g$ with the locus of abelian varieties which admit a prime-to-$p$ splitting. Here we prove some results when the ``prime-to-$p$'' requirement is dropped. Then we apply these results to the study of supersingular curves.

\begin{proposition}
\label{prop: alternating}
    Let $k$ be a field. 
    \begin{enumerate}[label=\upshape{(\alph*)}]
        \item An alternating form $(V, \w)$ over $k$ is classified up to isometry by $\dim V$ and $\dim \mathrm{Rad}(\w)$. Furthermore, the induced alternating form on the quotient $V / \mathrm{Rad}(\w)$ is always symplectic (i.e., non-degenerate).  
        \item Let $(V, \w)$ be a symplectic space and let $M_1, M_2 \subseteq V$ be subspaces. There exists $\gamma \in \mathrm{Sp}(V, \w)$ such that $\gamma(M_1) = M_2$ if and only if $\dim M_1 = \dim M_2$ and $\dim \mathrm{Rad}(\w|_{M_1}) = \dim \mathrm{Rad}(\w|_{M_2})$. 
    \end{enumerate}
\end{proposition}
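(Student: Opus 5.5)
For part (a), I will decompose $V$ directly. Write $R := \mathrm{Rad}(\w) = \{ v \in V : \w(v, -) = 0 \}$. The form $\w$ descends to an alternating form $\bar\w$ on $\bar V := V/R$. This descended form is non-degenerate: if $\bar\w(\bar v, -) = 0$, then $\w(v, -) = 0$, so $v \in R$ and $\bar v = 0$.

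Next I choose a complement $V = R \oplus V'$. The restriction $\w|_{V'}$ is identified with $\bar\w$, so it is symplectic. By the standard symplectic basis theorem, proved by induction picking $e, f$ with $\w(e,f) = 1$ and passing to $\langle e, f\rangle^{\perp}$, the space $V'$ has a basis $e_1, \dots, e_r, f_1, \dots, f_r$ with $\w(e_i, f_j) = \delta_{ij}$ and all other pairings zero. Since $\w$ is alternating, this works in every characteristic, including $2$.

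Adjoining any basis of $R$ gives a normal form that depends only on $\dim V$ and $\dim R$. Hence two alternating spaces with equal invariants are isometric, by matching the normal-form bases. Conversely, isometries preserve both invariants, which settles (a).

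For part (b), the direction $\Rightarrow$ is immediate, since $\gamma|_{M_1}$ is an isometry $(M_1, \w|_{M_1}) \to (M_2, \w|_{M_2})$. For $\Leftarrow$, part (a) supplies a linear isometry $f : (M_1, \w|_{M_1}) \xrightarrow{\sim} (M_2, \w|_{M_2})$. By Witt's extension theorem for symplectic forms, any isometry between subspaces of a symplectic space extends to an element of $\mathrm{Sp}(V,\w)$; this yields $\gamma$.

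Witt's extension theorem is the key step, and it can be proven hands-on. Take the adapted basis of $M_1$: a symplectic part $e_i, f_i$ ($i \le r$) together with a radical basis $u_1, \dots, u_j$. Since $V$ is non-degenerate and the $u_l$ are isotropic and orthogonal to the $e_i, f_i$, I can find vectors $u'_l$ inside $\langle e_i, f_i\rangle^{\perp}$ with $\w(u_l, u'_{l'}) = \delta_{ll'}$ and $\w(u'_l, u'_{l'}) = 0$. This is done by inductively choosing hyperbolic partners and correcting them to be mutually isotropic and orthogonal to everything chosen so far.

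This extends the adapted basis of $M_1$ to a symplectic basis of a non-degenerate subspace $N_1 \supseteq M_1$ of dimension $2r + 2j$. Doing the same for the $f$-image basis of $M_2$ gives $N_2$. Then $N_i^\perp$ are symplectic spaces of the same dimension, so they have symplectic bases. Sending the symplectic basis of $V = N_1 \oplus N_1^\perp$ to that of $V = N_2 \oplus N_2^\perp$ defines $\gamma \in \mathrm{Sp}(V,\w)$ with $\gamma|_{M_1} = f$, hence $\gamma(M_1) = M_2$.

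The only mildly delicate point is constructing the partners $u'_l$ orthogonal to $\langle e_i, f_i \rangle$ and to one another. I will handle it by working inside $W := \langle e_i, f_i\rangle^\perp$, which is symplectic and contains the $u_l$, and then applying a Gram–Schmidt-type symplectic correction.
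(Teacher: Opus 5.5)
Your proof is correct and follows the same route as the paper: the paper calls (a) standard and deduces (b) from (a) by citing Witt's theorem for symplectic forms (via James). The only difference is that you prove the needed extension step yourself, by adding hyperbolic partners to the radical of $\omega|_{M_1}$ to embed $M_1$ in a non-degenerate $N_1$ and then matching symplectic bases of $N_i$ and $N_i^\perp$, which is exactly the argument the citation covers.
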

In the above, $\mathrm{Rad}(\w)$ denotes the radical, i.e., the kernel of $V \to V^\vee$ induced by $\w$. We will keep using the notation below. 
\begin{proof}
    Part (a) is standard, and (b) follows from (a) by Witt's cancellation theorem for symplectic forms (see e.g., \cite{James}, which holds over any PID). 
\end{proof}

\begin{proposition}
\label{prop: isotropic in M}
    Let $(M, \w)$ be an alternating space of dimension $2m$ over a field $k$, and let $R = \mathrm{Rad}(\w)$ have dimension $2r$. The variety $X = \mathrm{IGr}(m, M)$ of $m$-dimensional isotropic subspaces admits a locally closed stratification
    $$ X = \bigsqcup_{j = r}^{\min(m, 2r)} X_j, $$
    where $X_j = \{ W \in X \mid \dim(W \cap R) = j \}$. Each stratum $X_j$ is an affine bundle of rank $(m-j)(2r-j)$ over $\mathrm{Gr}(j, R) \times \mathrm{IGr}(m-j, M/R)$, and has dimension
    $$ \dim X_j = \frac{1}{2}m(m+1) + 2jr - \frac{1}{2}j(3j+1). $$
    Moreover, $X$ is irreducible with the unique open dense stratum being $X_r$. In particular, 
    $$ \dim X = \dim X_r = \frac{1}{2}m(m+1) + \frac{1}{2}r(r-1). $$
\end{proposition}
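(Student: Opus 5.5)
The plan is to stratify by $j=\dim(W\cap R)$ and describe each stratum explicitly. The projection $\pi: M \to \overline{M} := M/R$ is used throughout; by \cref{prop: alternating}(a), $\overline{M}$ is symplectic of dimension $2(m-r)$. For $W \in X$, $\pi(W)$ is isotropic in $\overline{M}$, so $\dim \pi(W) = m-j \le m-r$. This forces $j \ge r$, and trivially $j \le \min(m,2r)$. Since $\dim(W\cap R)$ is upper semicontinuous on $X$, the $X_j$ are locally closed and they partition $X$.

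Next I define the map $X_j \to \mathrm{Gr}(j,R)\times \mathrm{IGr}(m-j,\overline{M})$ by $W \mapsto (W\cap R, \pi(W))$. To see it is surjective and to identify the fibres, fix $A\in \mathrm{Gr}(j,R)$ and an isotropic $\overline{W}$ of dimension $m-j$. Because $R$ is the radical, the preimage $\pi^{-1}(\overline{W})$ is automatically isotropic. Its subspaces $W$ with $W\cap R = A$ and $\pi(W)=\overline{W}$ are exactly the preimages in $\pi^{-1}(\overline{W})/A$ of complements to $R/A$ that map isomorphically onto $\overline{W}$. These are graphs of linear maps $\overline{W}\to R/A$, so the fibre is a torsor under $\Hom(\overline{W},R/A)$, an affine space of dimension $(m-j)(2r-j)$. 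Doing this construction over the universal bundles on the base shows $X_j$ is Zariski-locally an affine bundle; concretely, one locally trivialises the tautological bundles and chooses local lifts.

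The dimension count is then a computation. We have $\dim \mathrm{Gr}(j,2r)=j(2r-j)$, and $\dim \mathrm{IGr}(m-j, 2(m-r)) = 2(m-r)(m-j) - \tfrac{3(m-j)^2 - (m-j)}{2}$, which is the standard formula for Lagrangian-type isotropic Grassmannians and is valid since $m-j\le m-r$. Adding the fibre dimension $(m-j)(2r-j)$ and simplifying should give $\tfrac12 m(m+1)+2jr-\tfrac12 j(3j+1)$. I will verify this first at $j=r$, where it gives $\tfrac12 m(m+1)+\tfrac12 r(r-1)$.

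For irreducibility, each $X_j$ is irreducible because it is an affine bundle over a product of irreducible varieties; the Lagrangian case $m-j=m-r$ is where connectedness of the isotropic Grassmannian matters, and for symplectic forms $\mathrm{IGr}$ is irreducible in all dimensions. As a function of $j$, the dimension $2jr - \tfrac12 j(3j+1)$ is strictly decreasing for $j\ge r$, since its derivative is $2r-3j-\tfrac12<0$. So $X_r$ has strictly the largest dimension. The main obstacle is to show that every other stratum lies in the closure of $X_r$, because a dimension drop alone does not exclude extra components. I would handle this by showing that $X$ is the zero locus of a section of $\wedge^2\mathcal{S}^\vee$ on $\mathrm{Gr}(m,M)$, so that every component has dimension at least $m(2m-m) - \binom{m}{2} = \tfrac12 m(m+1)$. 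If this bound falls short of the dimension of some $X_j$, I would instead argue directly: for $W\in X_j$ with $j>r$, I write down an explicit one-parameter family that deforms $W$ by moving a vector of $W\cap R$ out of $R$ along an isotropic direction in $\overline{M}$ orthogonal to $\pi(W)$. Such a direction exists because $\pi(W)$ has dimension less than $m-r$ and so is not Lagrangian. This family puts $X_j$ in the closure of $X_{j-1}$, and induction on $j$ gives $X=\overline{X_r}$.
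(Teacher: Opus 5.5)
Your proposal is correct. Its core is exactly the paper's argument: the stratification by $j=\dim(W\cap R)$, the bound $j\ge r$ from isotropy of $\pi(W)$ in the symplectic quotient, and the description of the fibres of $W\mapsto(W\cap R,\pi(W))$ as $\Hom(\overline{W},R/A)$-torsors.

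Where you go further is irreducibility. The paper's written proof stops once the fibration is established and never justifies that $X$ is irreducible with $X_r$ open and dense. Some argument is genuinely needed there, since strictly decreasing dimensions of the $X_j$ do not by themselves exclude extra components. Your specialization supplies it. Take $u\in W\cap R$ nonzero, $W'$ a complement of $u$ in $W$, $v$ a lift of some $\bar v\in\pi(W)^{\perp}\setminus\pi(W)$, and set $W_t=W'+\langle u+tv\rangle$. Then $W_t$ is isotropic, because $v\perp W$ (as $R$ is the radical). For $t\neq 0$ it has dimension $m$ and lies in $X_{j-1}$, so induction gives $X=\overline{X_r}$. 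Since every vector is isotropic for an alternating form, the only condition on your direction is $\bar v\in\pi(W)^\perp\setminus\pi(W)$, and this exists because $\pi(W)$ is not Lagrangian when $j>r$.

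You should commit to this deformation argument rather than the $\wedge^2\mathcal{S}^\vee$ bound. That bound only gives $\dim\ge\frac12 m(m+1)$ for every component. Meanwhile $\dim X_{r+1}-\frac12 m(m+1)=\frac12(r+1)(r-4)$, so for $r\ge 4$ (with $m>r$) the bound cannot rule out $\overline{X_{r+1}}$ as a component. The deformation argument works uniformly in all cases.
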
 

\begin{proof}
    Let $\bar{M} = M/R$ be the symplectic quotient, which has dimension $2m-2r$. The natural projection $\pi \colon M \to \bar{M}$ maps $W \in X_j$ to an isotropic subspace $\bar{W} \subset \bar{M}$ of dimension $m-j$. The isotropy of $\bar{W}$ forces $m-j \le (\dim \bar{M})/2 = m-r$, giving the lower bound $j \ge r$.
    
    A subspace $W \in X_j$ is uniquely determined by the choices of the intersection $K = W \cap R \in \mathrm{Gr}(j, R)$, and a linear complement to $R \cap W$ in $W$. It is easy to check that the natural map $X_j \to \mathrm{Gr}(j, R) \times \mathrm{IGr}(m-j, \bar{M})$ given by sending $W$ to $(K, \bar{W})$ is surjective. Fixing $K$ and $\bar{W}$, the space of subspaces $W \subset \pi^{-1}(\bar{W})$ satisfying $W \cap R = K$ and mapping surjectively onto $\bar{W}$ is a torsor under $\mathrm{Hom}(\bar{W}, R/K)$. This affine space has dimension $\dim(\bar{W}) \dim(R/K) = (m-j)(2r-j)$, yielding the desired fibration. 
\end{proof}

\begin{lemma}
\label{lem: Z_M}
    Let $(V, \w)$ be a symplectic space of dimension $2g$, and let $M \subseteq V$ be a subspace of dimension $2m \le g$. Set $2r = \dim \Rad(\w|_M)$. Let $Z_M \subseteq \IGr(g, V)$ be the subvariety defined by 
    $$ Z_M = \{ F \in \IGr(g, V) \mid \dim (F \cap M) = m \}. $$
    Then $Z_M$ is an irreducible variety in $\IGr(g, V)$ of codimension
    \begin{align*}
        s(g, m, r) &= \frac{1}{2}g(g + 1) - \big[ \frac{1}{2}m(m+1) + \frac{1}{2}r(r-1) + \frac{1}{2}(g-m)(g-m+1) \big] \\
        &= m(g-m)-\frac{r(r-1)}2. 
    \end{align*}
\end{lemma}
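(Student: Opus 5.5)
We study $Z_M$ through the incidence map $F \mapsto (F\cap M, F)$ and fibre it over the space of possible intersections $W = F\cap M$. First we check that $W$ is forced to be isotropic: since $F$ is Lagrangian, $W=F\cap M$ is an $m$-dimensional isotropic subspace of $(M,\w|_M)$. So $W \in \IGr(m, M)$, and by \cref{prop: isotropic in M} this is an irreducible variety of dimension $\tfrac12 m(m+1)+\tfrac12 r(r-1)$. Concretely, we consider
\[ \wt Z_M := \{ (W,F) : W \in \IGr(m,M),\ F\in \IGr(g,V),\ W\subseteq F\} \to \IGr(m,M). \]
Its image in $\IGr(g,V)$ is the closure-relevant locus $\{\dim(F\cap M)\ge m\}$. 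We identify $Z_M$ with the open subset of $\wt Z_M$ on which $F\cap M = W$ exactly; there the projection $\wt Z_M \to \IGr(g,V)$ is injective.

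Next we compute the fibres. For fixed isotropic $W$ of dimension $m$, the Lagrangians $F\supseteq W$ correspond bijectively to Lagrangians in the symplectic space $W^\perp/W$ of dimension $2(g-m)$. This fibre is $\IGr(g-m, W^\perp/W)$, which is irreducible of dimension $\tfrac12(g-m)(g-m+1)$. Hence $\wt Z_M$ is a homogeneous fibre bundle over the irreducible base $\IGr(m,M)$ with irreducible fibres. It is therefore irreducible of dimension
\[ \tfrac12 m(m+1)+\tfrac12 r(r-1)+\tfrac12(g-m)(g-m+1). \]
Subtracting this from $\dim\IGr(g,V)=\tfrac12 g(g+1)$ gives the stated value of $s(g,m,r)$. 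The simplification to $m(g-m)-\tfrac12 r(r-1)$ is elementary algebra.

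The main point that needs care is showing that the open condition $F\cap M=W$ (rather than $\supsetneq W$) is nonempty in each fibre, so that $Z_M$ is dense in $\wt Z_M$ and has the same dimension and irreducibility. Here we use $2m\le g$. Inside $W^\perp/W$, the image of $M\cap W^\perp$ has dimension at most $2m - m = m \le g-m$. A generic Lagrangian in $W^\perp/W$ meets a fixed subspace of dimension at most half the total dimension trivially, for instance by choosing a Lagrangian transverse to a Lagrangian containing the isotropic part and using openness of transversality. We do this fibrewise over a general $W$ in the open stratum $X_r$. Openness, together with irreducibility of $\wt Z_M$, then gives that $Z_M$ is an irreducible open dense subset of $\wt Z_M$, of the computed codimension.
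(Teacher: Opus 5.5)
Your proposal is correct and follows essentially the same route as the paper: you fibre $Z_M$ over $\IGr(m,M)$ via $F\mapsto F\cap M$, identify the fibres with Lagrangians in $W^\perp/W$ avoiding a subspace of dimension at most $m\le g-m$, and take the base dimension and irreducibility from \cref{prop: isotropic in M}. Passing through the incidence variety $\wt Z_M$, and noting that the subspace to avoid is really the image of $M\cap W^\perp$ (of dimension $\dim(W\cap \Rad(\w|_M))\le m$) rather than ``$M/E$'' as the paper writes, only makes the same argument slightly more precise.
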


\begin{proof}
    Consider the projection $\phi \colon Z_M \to \IGr(m, M)$ given by $F \mapsto F \cap M$. For any $E \in \IGr(m, M)$, the fiber $\phi^{-1}(E)$ consists of Lagrangians $F \subset V$ containing $E$ such that $F \cap M = E$. Passing to the symplectic quotient $E^\perp/E$, these correspond exactly to Lagrangians in $E^\perp/E$ that trivially intersect the $m$-dimensional subspace $M/E$. Since the symplectic quotient has dimension $2(g-m)$ and $2m \le g$, a generic Lagrangian in $E^\perp/E$ intersects $M/E$ trivially. Thus, the fibers of $\phi$ are non-empty, Zariski open dense subvarieties of the Lagrangian Grassmannian $\IGr(g-m, E^\perp/E)$. The dimension formula now follows from \cref{prop: isotropic in M}. 
\end{proof}

\begin{proposition}
\label{prop: strict filtered}
    Let $S$ be any base scheme. If $0 \to B \to A \to Q \to 0$ is an exact sequence of abelian schemes over $S$, then the induced sequence $0 \to \H^1_\dR(Q/S) \to \H^1_\dR(A/S) \to \H^1_\dR(B/S) \to 0$ is also exact and strict (\cite[\href{https://stacks.math.columbia.edu/tag/0123}{Tag 0123}]{stacks-project}) as a sequence of filtered vector bundles. 
\end{proposition}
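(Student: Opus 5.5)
The plan is to reduce to a statement about de Rham cohomology of the dual abelian schemes. This uses the identification $\H^1_\dR(A/S) \simeq \Lie(E_{A^\vee})^\vee$ of \cref{drcohomdef}, together with the Hodge exact sequence
\[ 0 \to \omega_{A} \to \H^1_\dR(A/S) \to \Lie(A^\vee) \to 0, \]
where $\omega_A = e^*\Omega^1_{A/S}$ and $\Lie(A^\vee) = R^1 f_*\sO_A$. The Hodge filtration is $\Fil^1 = \omega_A$ and $\Fil^0 = \H^1_\dR$. Strictness of the sequence of filtered bundles then amounts to two claims. First, the sequence of underlying bundles is exact. Second, the induced sequences on $\Fil^1$ and on $\mathrm{gr}^0$ are exact, namely
\[ 0 \to \omega_Q \to \omega_A \to \omega_B \to 0 \quad\text{and}\quad 0 \to \Lie(Q^\vee) \to \Lie(A^\vee) \to \Lie(B^\vee) \to 0. \]
Indeed, for a complex of filtered finite locally free modules with locally free graded pieces, exactness of every graded piece implies exactness and strictness by a snake lemma argument on the two-step filtration. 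So I would prove exactness on the two graded pieces and deduce the rest.

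For $\omega$: an exact sequence of smooth group schemes $0 \to B \to A \to Q \to 0$ (with $A \to Q$ smooth, being a $B$-torsor) gives an exact sequence of Lie algebras $0 \to \Lie B \to \Lie A \to \Lie Q \to 0$. Exactness can be checked via tangent spaces along the identity section, since $\Lie$ of a smooth surjection with smooth kernel is surjective with kernel $\Lie$ of the kernel. Dualizing gives the $\omega$ sequence, and all terms are locally free of the expected ranks.

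For $\mathrm{gr}^0$: dualize abelian schemes. The dual sequence $0 \to Q^\vee \to A^\vee \to B^\vee \to 0$ is again exact. This follows from the standard fact that $\underline{\mathrm{Ext}}^1(-,\IG_m)$ realizes duality and $\underline{\Hom}(B,\IG_m)=0$ for abelian schemes, giving a long exact sequence $0 \to Q^\vee \to A^\vee \to B^\vee \to \underline{\mathrm{Ext}}^2(Q,\IG_m)$. Surjectivity onto $B^\vee$ can be seen fibrewise, via dimensions together with finiteness of the kernel, and then flatness. Applying the previous paragraph to this sequence gives exactness of the $\Lie$ sequence.

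Finally, the maps $\H^1_\dR(Q) \to \H^1_\dR(A) \to \H^1_\dR(B)$ are compatible with the Hodge exact sequences by functoriality. The composite is zero because $B \to Q$ is zero. A nine-lemma argument on the $3\times 3$ diagram, whose rows are Hodge sequences and whose outer columns are the exact $\omega$ and $\Lie$ sequences, then yields exactness of the middle column and its strictness.

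The main obstacle is the exactness of the dual sequence over an arbitrary base, especially the surjectivity of $A^\vee \to B^\vee$ with kernel exactly $Q^\vee$. I would handle this by reducing, via fibrewise flatness, to the case of a field, and then using Poincaré complete reducibility up to isogeny together with a dimension count.
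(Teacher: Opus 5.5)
Your proposal is correct and follows the paper's proof. Both arguments use the Hodge exact sequence $0\to\omega_{X/S}\to\H^1_\dR(X/S)\to\Lie(X^\vee)\to 0$ together with exactness of $\Lie$ and of dualization on abelian schemes, and then deduce strict exactness from exactness on the graded pieces; you simply spell out what the paper only cites (the $\underline{\mathrm{Ext}}(-,\IG_m)$ argument for the dual sequence and the nine-lemma step).

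One small slip: \cref{drcohomdef} gives $\H^1_\dR(A/S)=\Lie(E_A)^\vee$, not $\Lie(E_{A^\vee})^\vee$, which is $\H^1_\dR(A^\vee/S)$. You never actually use this identification, so nothing in the argument is affected.
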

Note that this remains true even in the absence of a separable isogeny between $A$ and $B \times_S Q$.
\begin{proof}
    For any abelian scheme $\pi_X \colon X \to S$ with zero section $e_S$, the Hodge-de Rham spectral sequence degenerates at $E_1$, providing a canonical short exact sequence of vector bundles
    $$ 0 \to \omega_{X/S} \to \H^1_\dR(X/S) \to R^1\pi_{X,*}\sO_X \to 0, $$
    which defines the Hodge filtration. Moreover, there are canonical identifications $\Lie(X/S) \simeq e_S^* \Omega_{X/S}^\vee$ and $\Lie(X^\vee /S) \simeq R^1\pi_{X,*}\sO_X$ (\cite[\S8.4~Thm.~1]{BLR}). The statement then follows from the fact that $X \mapsto \Lie(X/S)$ and $X \mapsto X^\vee$ are both exact functors on abelian schemes. 
\end{proof}

Let $A$ be an abelian scheme over $S$ and $\lambda$ be a polarization. We let $\w_\lambda$ denote the alternating pairing on $\H^1_\dR(A/S)$ induced by $\lambda$. 

\begin{definition}
    Let $(A, \lambda)$ be a principally polarized abelian variety over a field $k$ and $B \subseteq A$ be an abelian subvariety. We define the Artin invariant of the pair $(B, A, \lambda)$ to be 
    \begin{equation}
        \sigma_0(B, A, \lambda) = \frac{1}{2} \dim_k \Rad(\w_{\lambda}|_{\H^1_\dR(Q/k)}), \textit{ where } Q = A/B. 
    \end{equation}
\end{definition}

Note that the restriction of $\w_{\lambda}$ to the subspace $\H^1_\dR(Q/k) \subseteq \H^1_\dR(A/k)$ remains alternating but fails to be symplectic in general. Nonetheless, its radical is always even dimensional. 

\begin{notation}
    Given a scheme $S$, we write $S_{\pos}$ for the set of those points which are of positive transcendence degree over their prime field. 
\end{notation}

\begin{lemma}
\label{lem: separable}
    Let $A$ be an abelian variety over any field $k$. Every abelian subvariety $B$ of $A_{\bar{k}}$ descends to $A_{k^{\mathrm{sep}}}$. 
\end{lemma}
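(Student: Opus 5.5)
The plan is to descend $B$ from $\bar k$ to $k^{\mathrm{sep}}$ by Galois descent, after first showing that $B$ is defined over a finite purely inseparable extension. Set $K := k^{\mathrm{sep}}$, so that $\bar k/K$ is purely inseparable, and we may replace $k$ by $K$. Then $\Aut(\bar k/K)$ is trivial, and the content of the lemma is that any abelian subvariety $B \subseteq A_{\bar k}$ is already of the form $B_0 \times_K \bar k$ for a closed subscheme $B_0 \subseteq A_K$.

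The first step is to use rigidity to make $B$ discrete data. Abelian subvarieties of $A_{\bar k}$ correspond bijectively to their associated idempotent-type data in $\End^0(A_{\bar k})$. Concretely, by Poincaré reducibility, $B$ is the image (equivalently, the identity component of the kernel) of an endomorphism $e \in \End(A_{\bar k})$; for example, take $e = \lambda$-norm-endomorphism associated to $B$ for a polarization, i.e.\ $e = \iota_B \circ \phi_B$ for a suitable polarization-induced map. So it suffices to show that every endomorphism of $A_{\bar k}$ is defined over $K$, i.e.\ $\End(A_K) = \End(A_{\bar k})$.

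The key step, and the one I expect to carry the weight of the argument, is this equality of endomorphism rings. The scheme $\underline{\Hom}(A,A)$ is represented over $K$ by a scheme which is locally of finite type and unramified (by rigidity, deformations of homomorphisms are unique). It is therefore étale over $K$, because an unramified scheme locally of finite type over a field is étale over it. Every $\bar k$-point of an étale $K$-scheme comes from a $K^{\mathrm{sep}} = K$-point, so $e$ is defined over $K$. If one prefers to avoid representability, the same conclusion follows from the $\ell$-adic argument with $\ell \ne \operatorname{char} k$. The natural map $\End(A_K)\otimes\IZ_\ell \to \End(A_{\bar k})\otimes \IZ_\ell$ is compatible with $T_\ell$, and $T_\ell(A_K) = T_\ell(A_{\bar k})$ since prime-to-$p$ torsion is étale. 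One then shows that the graph of $e$, a closed subscheme of $A\times A$, is stable under the trivial Galois group and descends. This is the fussier route, because purely inseparable descent of closed subschemes is not automatic.

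Finally, set $B_0 := \operatorname{im}(e) \subseteq A_K$. Formation of the scheme-theoretic image of a proper morphism commutes with the flat base change $K \to \bar k$, so $(B_0)_{\bar k} = \operatorname{im}(e_{\bar k}) = B$. Since $B$ is geometrically an abelian variety, $B_0$ is an abelian subvariety of $A_K$ (smoothness and connectedness descend along fpqc base change), which proves the lemma.
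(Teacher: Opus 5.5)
Your proof is correct and follows the paper's route: realize $B$ as the image of an endomorphism of $A_{\bar k}$, then use $\End(A_{\bar k})=\End(A_{k^{\mathrm{sep}}})$. The paper simply quotes that equality as well known, while you justify it via the étaleness (unramified and locally of finite type over a field) of the $\underline{\Hom}$-scheme, which is a valid justification.

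Your suggested $\ell$-adic alternative does not work as sketched. Tate modules, and invariance under the trivial group $\mathrm{Aut}(\bar k/k^{\mathrm{sep}})$, cannot detect purely inseparable descent. So the rigidity/étaleness argument is the one that actually carries the proof.
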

\begin{proof}
    Whether or not the inclusion $B \into A_{\bar{k}}$ admits a splitting, there is always some $\alpha \in \End(A_{\bar{k}})$ such that $B = \alpha(A_{\bar{k}})$. Then apply the well known fact that $\End(A_{\bar{k}}) = \End(A_{k^{\mathrm{sep}}})$. 
\end{proof}

For the rest of this subsection, let $(V,\w)$ be the standard symplectic space of dimension $2g$ over $\IZ$ and set $\che{D}=\IGr(g,V)$. For integers $m,\sigma_0$ with $2m \leq g$ and $0\leq \sigma_0\leq m$, fix a subspace $M_{m,\sigma_0}\subseteq V$ of dimension $2m$ such that $\dim\Rad(\w|_{M_{m,\sigma_0}})=2\sigma_0$, and set
$$Z_{m,\sigma_0}:=\{F\in\IGr(g,V)\mid \dim(F\cap M_{m,\sigma_0})=m\}.$$
By \cref{lem: Z_M}, $Z_{m,\sigma_0}$ is irreducible of codimension $s(g,m,\sigma_0)=m(g-m)-\sigma_0(\sigma_0-1)/2$.

\begin{theorem}
\label{thm: contains sub}
In $\sfA_g$ consider the locus
$$\sS(m,\sigma_0)=\{x\in\sfA_{g,\pos}:\sA_{\bar{x}}\text{ admits an abelian subvariety $B$ with }\dim B=g-m\text{ and }\sigma_0(B,\sA_{\bar{x}},\lambda_{\bar{x}})=\sigma_0\}.$$
Let $T\subseteq\sfA_g$ be a Hodge-generic locally closed subscheme whose generic fibre $T_{\IC}$ has dimension $\leq s(g,m,\sigma_0)$, and let $E_{m,\sigma_0}\subseteq T$ be the atypical locus of type with respect to $Z_{m, \sigma_0}$, defined as in \autoref{Zatyplocusdef}. Then
$(T\cap\sS(m,\sigma_0))_{\pos}\subseteq E_{m,\sigma_0}.$ If moreover the monodromy group of $T$ is $\IQ$-simple, then $(T\cap\sS(m,\sigma_0))_{\pos}$ is contained in finitely many strict Shimura subvarieties. 
\end{theorem}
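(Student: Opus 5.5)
The plan is to follow the template of the proof of \cref{preciseorthogonalshimurasupsingthm}: for a point $x \in (T \cap \sS(m,\sigma_0))_{\pos}$ we produce a non-degenerate jet through $x$ whose period jet lands in $J^d_r Z_{m,\sigma_0}$ for a suitable frame. That places $x$ in $E^d_r$ for all large $r$, hence in $E_{m,\sigma_0}$. Here
\[ d=\max\{1,\ \dim T_\IC - s(g,m,\sigma_0)+1\}, \]
which equals $1$ under the hypothesis $\dim T_\IC \le s(g,m,\sigma_0)$. The factor issues disappear because $\Sp_{2g}$ is simple, so $I^c_Z=\varnothing$ and $n(\varnothing)=\dim T_\IC$. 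The second assertion then follows from the last part of \cref{atyplocusconstrprop}, using that the period map of $T\subseteq \sfA_g$ has no positive-dimensional fibres.

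\textbf{Step 1 (spreading the subvariety).} Let $x\in (T\cap\sS(m,\sigma_0))_{\pos}$, with residue field $\kappa(x)$ of positive transcendence degree over its prime field. By \cref{lem: separable}, the abelian subvariety $B\subseteq \sA_{\bar x}$ is defined over $\kappa(x)^{\mathrm{sep}}$. Hence, after an étale neighbourhood, $B$ spreads out to an abelian subscheme $\sB\subseteq \sA|_{U}$ over a finite-type integral scheme $U$ with generic point mapping to $x$ and $\dim U\ge 1$. After shrinking, the quotient $\sQ=\sA/\sB$ is an abelian scheme. Choose a curve germ through a smooth point of $U$, giving a non-degenerate $1$-jet $j:\ID^1_{r}\to T$ for every $r$. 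If $x$ is in characteristic zero this is easier, and we can alternatively argue over $\IC$.

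\textbf{Step 2 (the subspace is horizontal and meets the Hodge filtration correctly).} By \cref{prop: strict filtered}, $\H^1_\dR(\sQ/U)\subseteq \H^1_\dR(\sA/U)$ is a filtered sub-bundle of rank $2m$, and it is stable under the Gauss--Manin connection because $\sA\to\sQ$ is a morphism of abelian schemes. Strictness gives $\Fil^1\cap \H^1_\dR(\sQ)=\omega_{\sQ}$, which has rank $m$. Pulling back along $j$ and trivializing by parallel transport (\cref{trivlem}, with $r<p$), the fibre $M_x:=\H^1_\dR(\sQ_{\bar x})$ becomes a constant subspace, and $j^*\Fil^1$ meets it in rank $m$ identically. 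The restriction of $\w_\lambda$ to $M_x$ has radical of dimension $2\sigma_0$ by assumption. By \cref{prop: alternating}(b), there is an isometry $b^\bullet:V_k\simeq \H^1_\dR(\sA_{\bar x})$ carrying $M_{m,\sigma_0}$ onto $M_x$. Then the period jet $\alpha(j,b^\bullet)$ lies in $J^1_rZ_{m,\sigma_0}$. Since \cref{prop: alternating}(a) shows that $\sigma_0$ is constant in the family, this holds over all of $U$. The non-constancy condition with parameter $\xi$ holds by the choice of $\xi$, because $T\to\sfA_g$ is quasi-finite.

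\textbf{Main obstacle.} The hardest point is making sure the flat sub-bundle genuinely exists to all orders $r$ at $x$, rather than only at the point. This is why we restrict to $S_{\pos}$: a closed point has no curve through it carrying $B$. The lemma on separable descent is what guarantees that $B$ deforms along a positive-dimensional family. A secondary concern is the restriction $r<p$, which we handle exactly as in \autoref{shimflatnessthmv2}: we take $p>r_0$ after increasing $N$, and treat the finitely many small primes by enlarging $E$.
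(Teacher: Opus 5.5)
Your proposal is correct and follows the paper's proof step for step. You spread $B$ out over an \'etale cover of the closure of $x$ using \cref{lem: separable}, then use \cref{prop: strict filtered} and flatness of $\H^1_\dR(\sQ)$, with \cref{prop: alternating}(b) choosing the frame, to place the period jet in $J^1_r Z_{m,\sigma_0}$, and you conclude via \cref{atyplocusconstrprop} with $d=1$. Two small corrections: \cref{prop: alternating}(a) does not make the radical dimension constant on all of $U$ (it is only upper semicontinuous and can jump at special points), so, as the paper does, base your jet at a general point and use that $E_{m,\sigma_0}$ is closed; and first shrink $T$ to its smooth locus so that the jet construction applies.
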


\begin{proof}
    Each component of $(T \cap \sS(m, \sigma_0))_{\mathrm{pos}}$ either intersects the non-empty open locus where $T$ is smooth (over $\Spec\, \mathbb{Z}$), or lies in a strict closed locus. By replacing $T$ with an open subscheme, we may reduce to the case where $T$ is smooth.
    
    Let $\sP$ be the symplectic frame bundle over $\sfA_g$ whose points are given by $(x, b)$ where $x \in \sfA_g$ and $b$ is a symplectic identification $(V, \w) \tensor k(x) \simeq (\H^1_\dR(\sA_{x}), \w_{\lambda_x})$. Then for all $d$ and $r$, we have a morphism $\alpha^d_r : \sP^d_r \to J^d_r \che{D}$ over $\IZ[1/r!]$, constructed as in \autoref{refinedjetthm}.

    Suppose now that $x \in \sS(m, \sigma_0)$. Let $S$ be the Zariski closure of $x$. Set $k = \kappa(x)$. By \cref{lem: separable}, the embedding $B \subseteq \sA_{\bar{x}}$ is defined over $k^{\mathrm{sep}}$, so that we can find an \'etale morphism $S' \to S$ such that $B$ descends and spreads out to an abelian subscheme $\sB$ of $\sA_{S'}$. 


    
    Let $x_0$ be a general point on $S$, and $\wt{x}_0 \in S'$ be a lift. We now claim the existence of a non-degenerate jet $\ID^1_{r, k(x_0)} \to \sfA_g$ that lifts to $\sM^1_r \times_{J^1_r \che{D}} J^1_r Z_{m, \sigma_0}$. By \cref{prop: alternating}(b), for some choice of $b : (V, \w) \tensor k(\wt{x}_0) \simeq (\H^1_\dR(\sA_{\wt{x}_0}), \w_{\lambda_{\wt{x}_0}})$, $M$ is sent isometrically to $\H^1_\dR(\sQ_{\wt{x}_0}) \subseteq \H^1_\dR(\sA_{\wt{x}_0})$ for $\sQ := \sA_{S'} / \sB$. Then it follows from \cref{prop: strict filtered} that the induced local period morphism at $\wt{x}_0$ takes values in $Z_M \subseteq \che{D}$. Since the map $S' \to S$ is \'etale, this jet remains non-degenerate after projection to $S$. 

    Now apply \autoref{atyplocusconstrprop} as follows. The embedding $\iota : L \hookrightarrow \mathbb{C}$ is the obvious embedding $\mathbb{Q} \hookrightarrow \mathbb{C}$. The triple $(\sH, \Fil^{\bullet}, \nabla)$ is that associated to the de Rham cohomology of the universal abelian scheme over $\sfA_{g}$ in degree one. The Hodge datum $(G, \Omega) = (\GSp_{2g}, \mathbb{H}_{g})$. We take $Z = Z_{m, \sigma_0}$. Then $j = 1$ and $I_Z^c = \varnothing$, hence $D_{Z}$ is a point. In particular $n(I) = \dim T_{\IC}$, $\Omega' = \Omega$ and $\che{D}^{\mathrm{nc}}_Z = \che{D}$. The right-hand side of the inequality of \autoref{atyplocusconstrprop} then reads $\dim T_{\IC} + \dim Z_{m, \sigma_0,\IC} - \dim \sfA_{g,\IC}$; by assumption this is less than $d := 1$. Then setting $E = E^{d}$, we obtain a strict Zariski closed locus $E \subset T$. By construction the jets from the previous paragraph lie above $E$, so the result follows.
\end{proof}


\begin{corollary}
    \label{supnotdensecor}
    Let $\sS_g \subseteq \sfA_g$ be the locus of all supersingular abelian varieties (in all characteristics). Then for each $g \ge 16$, we have that away from finitely many primes, $\sS_{g, \pos} \cap \tau(\sM_g)$ is contained in finitely many strict Shimura subvarieties of $\sfA_{g}$. 
\end{corollary}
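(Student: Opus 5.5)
The plan is to deduce \cref{supnotdensecor} from \cref{thm: contains sub}, applied to $T=\tau(\sfM_g)$ (or rather to a smooth open part of it over $\IZ[1/N]$, which has $\IQ$-simple monodromy $\Sp_{2g}$ and is Hodge-generic). The input from supersingularity is the fact recalled in the introduction: a supersingular abelian variety $A$ of dimension $g$ over an algebraically closed field is isogenous to $E^g$ with $E$ supersingular. Hence it contains an abelian subvariety $B$ of any dimension $g-m$; for instance take the image of $E^{g-m}\to A$ under the isogeny $E^g\to A$. We take $m=\lfloor g/2\rfloor$ or smaller as needed so that $2m\le g$. Then every point $x\in\sS_{g,\pos}\cap\tau(\sfM_g)$ lies in $\sS(m,\sigma_0)$ for some $\sigma_0\in\{0,\dots,m\}$, namely $\sigma_0=\sigma_0(B,\sA_{\bar x},\lambda_{\bar x})$. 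Taking the union over the finitely many values of $\sigma_0$, it suffices to check that for each $\sigma_0$ the hypothesis $\dim T_\IC=3g-3\le s(g,m,\sigma_0)$ of \cref{thm: contains sub} holds. We also need to discard the finitely many primes $p\le r_0$ (and those dividing $N$) where the jet construction is not available, which accounts for the ``away from finitely many primes''.

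The key step is the numerical check. We have $s(g,m,\sigma_0)=m(g-m)-\sigma_0(\sigma_0-1)/2$, which is smallest when $\sigma_0=m$. So a priori we need $m(g-m)-m(m-1)/2\ge 3g-3$ for a suitable choice of $m$. Since the sign convention in \cref{thm: contains sub} requires $\dim T_\IC\le s$ (strictly, $\dim T+\dim Z-\dim \sfA_g<1$, i.e.\ $3g-3\le s$), we optimize over $m$: the function $m(g-m)-m(m-1)/2=m(g-\tfrac{3m}{2}+\tfrac12)$ is maximized near $m\approx g/3$, matching the intro's choice $\dim B=g-\lfloor (g+1)/3\rfloor$. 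With $m=\lfloor(g+1)/3\rfloor$ the value is roughly $g^2/6$, which beats $3g-3$ once $g$ is moderately large. We will verify by direct computation that it holds for all $g\ge16$, treating the three residue classes of $g\bmod 3$ separately, and checking $g=16,17,18$ by hand with the quadratic growth handling the rest. This computation is the main place where the bound $16$ enters. I expect the worst residue class to be the binding one. If the crude bound $\sigma_0\le m$ is insufficient at $g=16$, I would first try to use that $\sigma_0$ is bounded more sharply (the radical of $\w$ restricted to $H^1_\dR(Q)$ is contained in a Lagrangian, hence $2\sigma_0\le\min(2m,g)$). Alternatively I would vary $m$ with $\sigma_0$, choosing a smaller $B$ when $\sigma_0$ is large.

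Finally, \cref{thm: contains sub} with $\IQ$-simple monodromy places each $(T\cap\sS(m,\sigma_0))_{\pos}$ inside finitely many strict Shimura subvarieties. The union over $\sigma_0$, together with the strict closed non-smooth locus of $T$ removed at the start, is again a finite union. Strictly speaking, the non-smooth locus over $\IZ[1/N]$ is handled by noting that $\tau$ is an immersion off the hyperelliptic locus. The hyperelliptic locus is itself contained in a strict closed subset whose complex fibre is not special, so we need to either include it via the same argument applied to $\sfH_g$ (its dimension $2g-1$ is even smaller) or absorb it by Noetherian induction on the closed complement. This bookkeeping about the non-immersive part of $\tau$ is the second potential obstacle, and I would handle it by applying \cref{thm: contains sub} separately to a smooth stratification of $\tau(\sfM_g)$, each stratum of dimension $\le 3g-3$.
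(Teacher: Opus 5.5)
Your proposal is correct and follows the paper's proof: apply \cref{thm: contains sub} to $T=\tau(\sfM_g)$ with $m=\lfloor (g+1)/3\rfloor$, take the union over $\sigma_0\in\{0,\dots,m\}$, use $s(g,m,\sigma_0)\ge s(g,m,m)$, and check $3g-3\le s(g,m,m)$ for $g\ge 16$. The crude bound $\sigma_0\le m$ already suffices: at $g=16$ one gets equality $s(16,5,5)=45=3\cdot 16-3$, and the check fails at $g=15$. So your fallback options are unnecessary, and the smooth-locus bookkeeping is already handled inside the proof of \cref{thm: contains sub}.
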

\begin{proof}
Note that $s(g, m, r)$ achieves its minimum when $r = m$. For fixed $g$, $s(g, m, m)$ is a quadratic polynomial in $m$ and achieves its maximum at $m = \lfloor (g + 1)/3 \rfloor$. Taking this $m$, whenever $g \ge 16$ we have $3g - 3 \le s(g,m, m)$. 
\end{proof}

\begin{example}
\label{ex: Artin-Schreier}
    If $\IF_q$ is a finite field of characteristic $p$ and $R(x) \in \IF_q[x]$ is an additive polynomial of degree $p^h$, then the curve 
    \begin{equation}
        \label{eqn: Artin-Scheier}
        Y: y^p - y = xR(x)
    \end{equation}
    is supersingular and has a genus $g = p^h(p-1)/2$ (\cite{vdGvdV92, Bla12, BHM+16}). Here $R[x]$ is said to be additive if $R(x_1 + x_2) = R(x_1) + R(x_2)$, or equivalently, $R(x)$ is a linear combination of monomials $x^d$, where $d$ is a power of $p$. Clearly, such families are positive-dimensional. Our \autoref{supnotdensecor} says that, when $g \geq 16$, one cannot expect to find a Zariski dense collection of positive-dimensional families of supersingular curves in $\sfM_{g}$ if one considers all primes $p$ at once. This in some sense explains why $p$ must be a function of $g$ in this example. 
\end{example}


\paragraph{Acknowledgments} The project began back in July 2024, when the second author visited the first author at IHES. We thank IHES for its hospitality. 

Special thanks goes to Peihang Wu for helping us understand compactifications of Shimura varieties, and Qiao He for conversations on the basic locus. We also thank Ziyang Gao, Ruofan Jiang, Ananth Shankar, and Yunqing Tang for their interest in our work. The first author thanks Gregorio Baldi for helpful conversations. The second author would also like to thank his students Xiaoyuan Tan and Zuozheng Lyu for discussions on supersingular surfaces. 

The second author is supported by a CUHK start up grant, and an ECS grant from the Hong Kong RGC (Project No. 24308225).

\paragraph{AI Usage} The authors used AI tools (ChatGPT 5.6 and Gemini Pro) for literature searches, proofreading, and some routine algebra computations. The only proof obtained with AI assistance was \cref{prop: semi-continuity}. The manuscript was written entirely by the authors, and they take full responsibility for its content.

\printbibliography

\newpage

{\footnotesize 

\paragraph{David Urbanik} Department of Mathematics, Rice University, 6100 Main St MS 136, Houston, TX 77005  \, \, \, Email: \url{du8@rice.edu}

\paragraph{Ziquan Yang} The Institute of Mathematical Sciences and Department of Mathematics, The Chinese University of Hong Kong, Shatin, N.T., Hong Kong.  \, \, \,  Email: \url{zqyang@cuhk.edu.hk}

}

\end{document}